\newcommand{\showcomments}{yes}
\renewcommand{\showcomments}{no}
\newsavebox{\commentbox}
\newenvironment{com}%
{\ifthenelse{\equal{\showcomments}{yes}}%
{\footnotemark
        \begin{lrbox}{\commentbox}
        \begin{minipage}[t]{1.25in}\raggedright\sffamily\tiny
        \footnotemark[\arabic{footnote}]}
{\begin{lrbox}{\commentbox}}}%
{\ifthenelse{\equal{\showcomments}{yes}}%
{\end{minipage}\end{lrbox}\marginpar{\usebox{\commentbox}}}
{\end{lrbox}}}
\newcommand{\visual}{\partial}
\newtheorem{thm}{Theorem}[section]
\newtheorem{lem}[thm]{Lemma}
\newtheorem{cor}[thm]{Corollary}
\newtheorem{prop}[thm]{Proposition}
\newtheorem{thmi}{Theorem}
\theoremstyle{definition}
\newtheorem{defn}[thm]{Definition}
\newtheorem{rem}[thm]{Remark}
\newtheorem{notation}[thm]{Notation}
\newtheorem{prob}[thmi]{Problem}
\newtheorem{claim}{Claim}
\newtheorem{claim*}{Claim}
\newtheorem{cons}[thm]{Construction}
\DeclareMathOperator{\kernel}{ker}
\DeclareMathOperator{\image}{im}
\DeclareMathOperator{\stabilizer}{Stab}
\DeclareMathOperator{\diam}{diam}
\newcommand{\neb}{\mathcal N}
\DeclareMathOperator{\grade}{grade}
\newcommand{\homology}{\ensuremath{{\sf{H}}}}
\newcommand{\field}[1]{\mathbb{#1}}
\newcommand{\integers}{\ensuremath{\field{Z}}}
\newcommand{\naturals}{\ensuremath{\field{N}}}
\newcommand{\reals}{\ensuremath{\field{R}}}
\newcommand{\closure}[1]{Cl\left({#1}\right)}
\newcommand{\boundary}{{\ensuremath \partial}}
\newcommand{\interior} [1] {{\ensuremath \text{\rm Int}(#1) }}
\DeclareMathOperator{\vertices}{\mathrm{Vertices}}
\newcommand{\Rmnum}[1]{\mathbf{{\expandafter\@slowromancap\romannumeral #1@}}}
\let\oldmarginpar\marginpar
\renewcommand\marginpar[1]{\-\oldmarginpar[\raggedleft\footnotesize #1]%
{\raggedright\footnotesize #1}}
\newcommand{\flowin}[1]{\ensuremath{\lceil\widetilde X^{#1}\rceil}}
\newcommand{\co}{\,\colon}
\DeclareMathOperator{\comapp}{\ensuremath{\mathbf A}}
\newcounter{enumitemp}
\newcommand{\leftw}{\overleftarrow{W}}
\newcommand{\rightw}{\overrightarrow{W}}
\newcommand{\dist}{\textup{\textsf{d}}}
\newcommand{\pushcrop}{\gamma^\star}
\newcommand{\lpc}{\widehat{\pushcrop_{_{\succ}}}}
\newcommand{\OL}{\overleftarrow}
\newcommand{\OR}{\overrightarrow}
\newcommand{\subdline}{\widetilde{\mathbb S}}
\newcommand{\tight}[1]{\lsem{#1}\rsem}
\renewcommand{\qedsymbol}{$\square$}
\begin{document}
\title{Cubulating hyperbolic free-by-cyclic groups: the general case}

\author[M.F.~Hagen]{Mark F. Hagen}
\address{Dept. of Math., University of Michigan, Ann Arbor, Michigan, USA }
\email{markfhagen@gmail.com}

\author[D.T.~Wise]{Daniel T. Wise}
\address{Dept. of Math. and Stat., McGill University, Montreal, Quebec, Canada}
\email{wise@math.mcgill.ca}

\date{\today}
\subjclass[2010]{Primary: 20F65; Secondary: 57M20}
\keywords{free-by-cyclic group, CAT(0) cube complex, relative train track map}

\begin{abstract}
Let $\Phi:F\rightarrow F$ be an automorphism of the finite-rank free group $F$.  Suppose that $G=F\rtimes_\Phi\integers$ is word-hyperbolic.  Then $G$ acts freely and cocompactly on a CAT(0) cube complex.
\end{abstract}

\maketitle
\tableofcontents

\section*{Introduction}
The goal of this paper is to prove:

\begin{thmi}\label{thmi:main}
Let $\Phi:F\rightarrow F$ be an automorphism of the finite-rank free group $F$ and suppose that $G=F\rtimes_\Phi\integers$ is word-hyperbolic.  Then $G$ acts freely and cocompactly on a CAT(0) cube complex.
\end{thmi}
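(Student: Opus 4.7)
The plan is to follow the now-standard Sageev--Bergeron--Wise strategy for cubulating a word-hyperbolic group $G$: construct a sufficiently rich $G$-invariant wallspace on which $G$ acts with enough codimension-one subgroups, run Sageev's dual cube complex construction, and then verify properness (via linear separation of points by walls) and cocompactness (via quasiconvexity of wall-stabilizers and Bergeron--Wise cocompactness). By Brinkmann's theorem, the hyperbolicity hypothesis forces $\Phi$ to be \emph{atoroidal}, i.e.\ no nontrivial conjugacy class in $F$ is periodic. The fully irreducible atoroidal case was handled in prior work; the novelty here is removing the full irreducibility assumption, so that one must work with an arbitrary improved relative train track representative in the sense of Bestvina--Feighn--Handel, with a nontrivial $\Phi$-invariant filtration of strata of mixed growth type.

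First, I would fix a topological model. Choose a relative train track map $f\co X\to X$ on a finite graph $X$ with $\pi_1X\cong F$, together with its filtration $\emptyset=X_0\subset X_1\subset\cdots\subset X_n=X$. The mapping torus $M_f$ is an aspherical $2$-complex with $\pi_1 M_f\cong G$; its universal cover $\widetilde{M_f}$ carries a $G$-equivariant flow in the $\integers$-direction. The task is to assemble a $G$-invariant collection of \emph{walls} on $\widetilde{M_f}$, built stratum by stratum from lifts of bi-infinite \emph{periodic lines} obtained by iterating edges in each stratum and then combining those lines with transverse flow data. Walls coming from an exponentially growing stratum will separate lifts of lines that spend most of their time in that stratum, while walls coming from lower strata refine the picture inside the filtration.

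Second, I would apply Sageev's construction to obtain a CAT(0) cube complex $\mathbf{C}$ with a $G$-action, and then verify the two nontrivial properties. Freeness is automatic since $G$ is torsion-free (being free-by-cyclic). For properness, it suffices by the Bergeron--Wise criterion to prove a \emph{linear separation} property: there is a constant $C>0$ such that any pair of points in a Cayley graph of $G$ at distance $d$ is separated by at least $d/C$ walls. For cocompactness, one shows that each wall-stabilizer is quasiconvex in $G$ and then applies the Hruska--Wise / Sageev--Wise cocompactness package, using hyperbolicity of $G$ to promote finitely many $G$-orbits of walls to a cocompact cubulation.

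The main obstacle, on which the bulk of the argument will hinge, is verifying linear separation in the presence of strata of mixed growth. In the fully irreducible setting a single exponentially-growing stratum controls every long geodesic in $\widetilde{M_f}$, so one family of walls suffices; in the general case a geodesic may be trapped inside a lower stratum for arbitrarily long subsegments, and walls associated to higher strata are blind to such behavior. The walls must therefore be organized hierarchically according to the filtration, and one must show that a geodesic which crosses many ``levels'' picks up, at each level and each scale, the appropriate number of walls built from that stratum's periodic lines. Controlling this requires a careful analysis of how flow lines in $\widetilde{M_f}$ interact with lifts of lower-stratum hyperplanes, together with control of exceptional paths and Nielsen paths that mediate between strata; this is where the atoroidality of $\Phi$ must be exploited at every level of the filtration, to prevent the accumulation of torus-like obstructions that would sabotage the separation estimates.
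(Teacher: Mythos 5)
Your high-level outline is in the right family — mapping torus, walls, Sageev's construction, quasiconvexity of wall-stabilizers, Bergeron–Wise — but the places where you get concrete diverge from what the paper actually does, and the proposal leaves the hardest steps as a black box.

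Two substantive issues.  First, the criterion you propose to verify (\emph{linear separation} of pairs of points at distance $d$ in the Cayley graph by at least $d/C$ walls) is not the one used.  The paper applies the \emph{boundary cubulation} criterion of Bergeron–Wise (their Theorem~1.4, restated as Theorem~\ref{thm:boundary_cubulation} here): one must show that for every pair of distinct points $p,q\in\visual G$ there is a quasiconvex codimension-$1$ subgroup whose limit set separates them.  Equivalently, for every bi-infinite geodesic $\gamma$ in $\widetilde X$ there is a quasiconvex wall cutting it (Proposition~\ref{prop:everything_cut}).  Working with linear separation would force you to also track multiplicities of walls along segments, which the boundary criterion avoids entirely; and the boundary criterion is precisely adapted to the dichotomy that actually drives the argument, namely whether $\gamma$ is \emph{deviating} (transverse to the flow) or \emph{leaflike} (shadowing a forward path).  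Your proposal never isolates this dichotomy, but it is the skeleton of Section~\ref{sec:cutting}: Proposition~\ref{prop:deviating_cut} handles the transverse case by locating, via the $\reals$-trees $\mathcal Y^{ij}$ and the splitting lemma, a periodic leaf meeting a push-crop of $\gamma$ in a finite odd set; Proposition~\ref{prop:horizontal_cut} handles the flow-parallel case by a quite different wedge construction $\comapp(\overline W)\vee_p\sigma$.

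Second, your description of the wall construction (``organized hierarchically according to the filtration,'' with separate families of walls per stratum) is not how the paper resolves the difficulty you correctly identify.  There is a single type of immersed wall, obtained by placing one \emph{primary bust} in each exponentially-growing edge and \emph{none} in polynomial or zero edges, so that the nuclei are ``slow subtrees'' of $V$.  The whole construction then hinges on Theorem~\ref{thm:polynomial_subgraph_quasiconvexity}, which shows that any connected subgraph of $V$ avoiding complete exponential edges has quasi-isometrically embedded preimage in $\widetilde X$; without this the nuclei would not be quasiconvex and the wall would fail to be quasi-isometrically embedded.  This is the genuinely new ingredient relative to the irreducible case, proved via disc diagrams and a homological argument (Lemma~\ref{lem:invariant_subgraph_isomorphism}); your proposal does not identify it, and the ``stratum-by-stratum hierarchy of walls'' you sketch would immediately stall on the problem that polynomial strata have no Perron–Frobenius expansion to produce separating periodic lines in the first place.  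Likewise, the paper neutralizes Nielsen-path obstructions concretely (the modified Brinkmann Lemma~\ref{lem:modified_brinkmann} and Lemma~\ref{lem:fellow_travel_orflow_lower_or_different_in_r_tree}), whereas your proposal only gestures at ``controlling exceptional paths and Nielsen paths.''  As written, the proposal is a plausible research plan but not a proof; the two points above are where it would need to be rethought, not merely filled in.
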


It follows that $G\cong F\rtimes_\Phi\integers$ is cocompactly cubulated if and only if $\Phi$ is \emph{atoroidal} in the sense that no iterate of $\Phi$ stabilizes a nontrivial conjugacy class in $F$ (see~\cite{BestvinaHandel,Brinkmann}).  Combining Theorem~\ref{thmi:main} with a result of Agol~\cite{Agol:virtual_haken} thus shows that the mapping torus of an atoroidal automorphism is virtually the fundamental group of a compact special cube complex, and is therefore virtually a subgroup of a right-angled Artin group~\cite{HaglundWiseSpecial}.  This reveals considerable structural information about $G$: for instance $G$ is $\integers$-linear and its quasiconvex subgroups are separable.  Since it implies that $G$ acts properly and cocompactly on a CAT(0) space, Theorem~\ref{thmi:main} also extends the class of hyperbolic groups known to be CAT(0).  Note that Gromov has raised the question of whether this class includes all hyperbolic groups~\cite{Gromov87}.

Theorem~\ref{thmi:main} generalizes part of the main result of~\cite{HagenWise:irreducible}, which cubulates the hyperbolic group $G\cong F\rtimes_\Phi\integers$ when $\Phi$ has an irreducible train track representative $\phi:V\rightarrow V$, where $V$ is some finite graph.  In the present paper, we attempt to follow the scheme laid out in~\cite{HagenWise:irreducible} in the situation where $\phi$ is an \emph{improved relative train track map} in the sense of~\cite{BestvinaFeighnHandelTits}.  Loosely speaking, the procedure is the same: we construct a collection of quasiconvex codimension-1 subgroups of $G$ and then apply Sageev's construction~\cite{Sageev:cubes_95} via the boundary cubulation result of~\cite{BergeronWise}, which provides a $G$-finite subcollection of walls on whose dual cube complex $G$ acts freely and cocompactly.  The codimension-1 subgroups are constructed by building immersed walls in the mapping torus $X$ of $\phi$.  However, that $\phi$ is a relative train track map, rather than an irreducible train track map, introduces considerable technical challenges; much of the paper is devoted to verifying that one can still obtain quasiconvex walls in this situation (see Section~\ref{subsec:quasiconvexity}), and that certain building blocks needed to build a wall separating the endpoints of a given geodesic are available (see Section~\ref{sec:cutting_with_leaves}).  The improved relative train track machinery of Bestvina-Feighn-Handel, and its progeny, are used extensively; the main tools are~\cite[Thm.~5.1.5]{BestvinaFeighnHandelTits} and the \emph{splitting lemma}, which we state as Lemma~\ref{lem:splitting_lemma} in the form given by Levitt~\cite[Lem.~6.5]{Levitt:split}.

Although there is not yet a systematic answer to the general question of which free-by-$\integers$ groups are cubulated, there are several reasonable lines along which Theorem~\ref{thmi:main} might generalize.  First, it appears that many of the arguments in this paper continue to hold under the weaker hypothesis that $G$ is hyperbolic to an appropriate collection of peripheral subgroups.  Although~\cite{GauteroLustig:relhyp} provides a canonical peripheral structure for any $G\cong F\rtimes\integers$, the peripheral subgroups are polynomially growing subgroups, so that cubulating the peripherals is no easier in general than cubulating arbitrary free-by-$\integers$ groups.  However, we expect that the methods of the current paper could be extended to solve the following problem:

\begin{prob}\label{problem:relhyp}
Let $G=\langle F,t\,\mid\,tft^{-1}=\Phi(f)\,\co\,f\in F\rangle$, where $\Phi:F\rightarrow F$ is an automorphism of a finite-rank free group $F$.  Suppose that $G$ is hyperbolic relative to a finite collection $\mathcal P$ of $\integers^2$ subgroups.  Show that $G$ acts freely, metrically properly, and relatively cocompactly (in the sense of~\cite{HruskaWise}) on a CAT(0) cube complex.
\end{prob}

One difficulty in solving Problem~\ref{problem:relhyp} is avoiding the assumption that $V$ contains no nontrivial closed Nielsen path, which is a consequence of hyperbolicity.  A more serious difficulty is that in the relatively hyperbolic case, one must allow walls to cut through polynomial strata while maintaining quasi-isometric embeddedness.

Although some form of (relative) hyperbolicity is probably necessary to achieve a (relatively) cocompact cubulation except in very special situations (compare the obstructions to cocompact cubulation in~\cite{WiseTubular,HagenPrzytycki:graph}), it is natural to ask for which polynomially-growing automorphisms $\Phi$ the group $G$ acts freely on a (possibly infinite-dimensional) CAT(0) cube complex.  Even Gersten's famous non-CAT(0) free-by-$\integers$ group, introduced in~\cite{Gersten:not_cat0}, was shown in~\cite{WiseTubular} to admit such an action, so there is more work to be done in this direction.

\begin{prob}\label{prob:catalan}
Let $\Phi:F\rightarrow F$ be a monomorphism and suppose the ascending HNN extension $G=F*_{_\Phi}$ is word-hyperbolic.  Show that $G$ acts freely and cocompactly on a CAT(0) cube complex.
\end{prob}

This was resolved when $\Phi$ is irreducible, $F$ is finitely-generated, and $G$ is hyperbolic in~\cite{HagenWise:irreducible}.  With few exceptions, our argument handles relative train track maps that are not $\pi_1$-surjective, but our results rely on the relative train track technology available when $\Phi$ is an isomorphism; we point the reader to~\cite{DicksVentura:book} for generalizations of this technology that exist when $\Phi$ is not necessarily $\pi_1$-surjective.

\subsection*{Summary of the paper}\label{subsec:organization}
In Section~\ref{sec:mapping_tori}, we define terminology and notation related to mapping tori of relative train track maps of graphs, which are the main objects with which we will work.  In particular, in Section~\ref{sec:r_tree_definition}, we define a collection of $\reals$-trees associated to a relative train track map; these seem to coincide with the \emph{stable trees} introduced in~\cite{BestvinaFeighnHandel_lamination} and play a very important role in ensuring that the codimension-1 subgroups that we use are quasiconvex and in cutting geodesics.

A relative train track map $\phi:V\rightarrow V$ defined on the finite graph $V$ comes equipped with a $\phi$-invariant filtration $V^0\subset\cdots\subset V^{{\bar h}}=V$ of $V$ by subgraphs.  There is thus a filtration of the mapping torus $X$ of $\phi$ by complexes $X^i$ that are mapping tori of the restrictions of $\phi$ to the various $V^i$.  As explained below, it is important that, for each component $K$ of certain $X^i$ (those for which the $i^{th}$ stratum has Perron-Frobenius eigenvalue 1), the subgroup of $G$ corresponding to $K$ is quasi-isometrically embedded; see Theorem~\ref{thm:polynomial_subgraph_quasiconvexity}.  This is proved using disc diagrams in Section~\ref{sec:polynomial_quasiconvexity}.

In Section~\ref{sec:building_immersed walls}, we describe a family of immersed walls $W\rightarrow X$.  The construction generalizes that in~\cite{HagenWise:irreducible}.  As in that paper, each immersed wall is the union of a \emph{nucleus} consisting of subgraphs of a graph $E$ parallel to $V$, and \emph{tunnels}, which are immersed rooted trees $T\rightarrow X$ whose leaves and root attach to the nucleus and whose depth is the \emph{tunnel length} of $W$.  As in~\cite{HagenWise:irreducible}, the tunnel length is one of two parameters determining $W$.  The other is the set of \emph{primary busts}, i.e. points in $E$ whose complementary components form the nuclei.  One must choose the primary busts so that the nucleus subgroups are quasiconvex in $G$.  In the irreducible case, this is accomplished by positioning a primary bust in each edge, so that the nuclei are contained in stars of vertices.  In the present more general case, however, doing this introduces problematic large coarse intersections between distinct tunnels.  Instead, primary busts appear in the \emph{exponential edges} -- those belonging to strata whose associated Perron-Frobenius eigenvalue exceeds 1 -- and other edges are left intact; this explains the necessity of Theorem~\ref{thm:polynomial_subgraph_quasiconvexity}.  Section~\ref{subsec:quasiconvexity} gives conditions ensuring that the stabilizer of $\image(\widetilde W\rightarrow\widetilde X)$ is a quasiconvex codimension-1 subgroup of $G$.

The bulk of the proof of Theorem~\ref{thmi:main} is contained in the proof of Proposition~\ref{prop:everything_cut}, which allows us to invoke the cubulation criterion of~\cite{BergeronWise}.  Section~\ref{sec:cutting} is devoted to establishing this proposition, which requires us to find, for any bi-infinite geodesic $\gamma$ in $G$, an immersed wall $W\rightarrow X$ such that the stabilizer of $\image(\widetilde W\rightarrow\widetilde X)$ is a quasiconvex, codimension-1 subgroup of $G$ whose boundary separates the endpoints of $\gamma$ in $\boundary G$.  The more difficult case is that in which $\gamma$ is \emph{deviating}, i.e. ``transverse'' to the forward flow on $\widetilde X$ induced by $\phi$.  The argument in this case relies on Corollary~\ref{cor:leaf_separation}, whose proof makes use of the \emph{improved relative train track} technology developed in~\cite{BestvinaFeighnHandelTits}; this discussion occupies Section~\ref{sec:cutting_with_leaves}.

Readers familiar with the relative train track map technology will understand that this is a quite complicated gadget which nevertheless appropriately covers all of the issues that arise when studying an automorphism.  Since our proof is married to this technology, and this technology is in turn significantly more complicated than the theory of train track maps, it is not surprising that our proof is significantly more complex than the argument in the irreducible case.

\subsection*{Acknowledgments}\label{subsec:acknowledgments}
This is based upon work supported by the NSF under Grant Number NSF 1045119 and by NSERC.  We thank the referee for helpful comments and corrections.

\section{Mapping tori of relative train track maps}\label{sec:mapping_tori}
Let $\phi:V\rightarrow V$ be a continuous map from a graph $V$ to itself and suppose that $\phi(\vertices(V))\subseteq \vertices(V)$ and $\phi$ maps each edge of $V$ to a nontrivial combinatorial path.  For each $L\in\naturals$, the map $\phi^L:V\rightarrow V$ also has these properties.  The \emph{mapping torus} of $\phi^L$ is the space $$X_L=V\times[0,L]\,\big\slash\,\{(v,L)\sim(\phi^L(v),0)\co v\in V\}.$$  We identify $V$ with $V\times\{0\}\subset X_L$.

We let $X$ denote $X_1$ and let $G=\pi_1X$.  The space $X$ admits the following cell structure.  The set of 0-cells of $X$ is $\vertices(V)$.  The 1-cells are of two types: \emph{vertical} 1-cells are the 1-cells of $V$.  For each 0-cell $a$ of $V$, there is a \emph{horizontal} 1-cell $t_a$ which is the image of $\{a\}\times[0,1]$.  The horizontal 1-cell $t_a$ is oriented so that $\{a\}\times[0,1]\rightarrow t_a$ is orientation-preserving, where $\{a\}\times[0,1]$ has initial point $(a,0)$.  For each vertical 1-cell $e$ of $X$, with endpoints $a,b$, there is a 2-cell $R_e$ with attaching map $t_a^{-1}et_b\phi(e)^{-1}$.  The interior of $R_e$ is the image of $\interior{e}\times(0,1)$.

The map $V\times[0,L]\rightarrow X$ that sends $(v,r)$ to the image of $(\phi^{\lfloor r\rfloor}(v),r-{\lfloor r\rfloor})$ under $V\times[0,1]\rightarrow X$ induces a map $X_L\rightarrow X$.  Indeed, any two representatives in $V\times[0,L]$ of a point in $X_L$ get sent to the same point since $(v,L)$ maps to the image of $(\phi^L(v),0)$.

For $L\in\naturals$, the space $X_L$ has a cell structure induced by $X_L\rightarrow X$.  A 1-cell of $X_L$ is \emph{vertical} or \emph{horizontal} according to whether its image is vertical or horizontal.  The horizontal 1-cells of $X_L$ are oriented so that $X_L\rightarrow X$ preserves their orientation.  As usual, for each $L\in\naturals$, the universal cover $\widetilde X_L$ of $X_L$ inherits a cell structure from $X_L$.

Let $\mathbb S$ be the mapping torus of the identity map from a single vertex $s$ to itself.  The graph $\mathbb S$ is homeomorphic to a circle and has a single directed horizontal edge.  The map $V\rightarrow\{s\}$ induces a quotient $X\rightarrow\mathbb S$, where the point represented by $(v,r)$ maps to $(s,r)$.  This yields a map $\mathfrak q:\widetilde X\rightarrow\subdline$ of universal covers, where $\subdline$ is regarded as a subdivided copy of $\reals$ with 0-skeleton $\integers$.

For each $n\in\integers$, let $\widetilde V_n=\mathfrak q^{-1}(n)$, and let $\widetilde E_n=\mathfrak q^{-1}(n+\frac{1}{2})$.  
The map $X_L\rightarrow X$ induces a map $\widetilde X_L\rightarrow \widetilde X$.  We record the following facts for use in Section~\ref{sec:cutting}:

\begin{rem}\label{rem:X_L_properties}
For each $n\in\integers$, the inclusion $\widetilde V_{nL}\rightarrow\widetilde X$ lifts to an embedding $\widetilde V_{nL}\rightarrow\widetilde X_L$ and the same is true for $\widetilde E_{nL}$.  We also use $\mathfrak q_L:\widetilde X_L\rightarrow\subdline$ to denote the composition $\widetilde X_L\rightarrow\widetilde X\stackrel{\mathfrak q}{\rightarrow}\subdline$.

The map $\widetilde X_L\rightarrow \widetilde X$ is combinatorial and is a $(1,L)$-quasi-isometry relative to the metrics described in Section~\ref{subsec:metric_on_universal_cover}.  This quasi-isometry induces a homeomorphism $\visual\widetilde X_L\rightarrow\visual\widetilde X$.
\end{rem}

\begin{defn}[Midsegment, leaf]\label{defn:midsegment_leaf}
Let $e$ be a vertical edge of $\widetilde X$.  For any $p\in e$, the \emph{midsegment} $t_p\rightarrow\widetilde X$ is the closure in $\widetilde X$ of $p\times(0,1)\subset R_e$ where $R_e$ is the 2-cell of $\widetilde X$ based at $e$.  Note that horizontal 1-cells are midsegments.  Each midsegment is mapped by $\mathfrak q$ to an edge of $\subdline$, and midsegments are oriented so that this map preserves orientation.  A \emph{leaf} is the subspace consisting of all points in an equivalence class of the transitive closure of the relation in which two points of $\widetilde X$ are related if they lie on the same midsegment.  The leaf $\mathcal L$ is \emph{regular} if $\mathcal L\cap\widetilde X^{0}=\emptyset$ and \emph{singular} otherwise.  Let $\mathcal L_x$ denote the leaf containing $x\in\widetilde X$.
\end{defn}

\begin{rem}[Leaves are trees]\label{rem:leaf_is_tree}
In the case of interest, where $\phi$ is a relative train track map, each leaf $\mathcal L$ is a directed tree: its vertices are the points of $\mathcal L\cap\left(\cup_{n\in\integers}\widetilde V_n\right)$ and its edges are midsegments.  Each vertex of $\mathcal L$ has exactly one outgoing edge, while the map $\mathfrak q$ shows that there is no directed cycle in $\mathcal L$.  The subspace $\mathcal L\subset\widetilde X$ is homeomorphic to this abstract graph because of the local finiteness provided by Lemma~\ref{lem:finite_intersection_leaf_edge_2}.
\end{rem}

\begin{defn}[Forward path, flow]\label{defn:forward_path}
A \emph{forward path} is a path $\sigma$ in a leaf such that $\mathfrak q\circ\sigma$ is injective.  The \emph{length} $|\sigma|$ of $\sigma$ is $|\mathfrak q\circ\sigma|$.  For any $x\in\widetilde X$ and $r\in[0,\infty)$, there exists a unique forward path of length $r$ with initial point $x$, and there also exists a unique forward path whose initial point $x$ maps to $[\mathfrak q(x),\infty)$.

Define $\psi:\widetilde X\times[0,\infty)\rightarrow\widetilde X$ by $\psi(x,r)$ is the endpoint of the forward path of length $r$ with initial point $x$.  For any fixed $r\geq 0$, define $\psi_r:\widetilde X\rightarrow\widetilde X$ by $\psi_r(x)=\psi(x,r)$.
\end{defn}

\begin{defn}\label{defn:periodi_regular_singular}
A point $v\in V$ is \emph{$m$-periodic} for $m\geq 1$ if $\phi^m(v)=v$, and \emph{periodic} if it is $m$-periodic for some $m$.  A point $\tilde v\in\widetilde V_n$ is \emph{periodic} if it maps to a periodic point in $V$.  A \emph{periodic line} $\ell$ in $\widetilde X$ is a line in a leaf such that $\stabilizer(\ell)\neq\{1\}$.  Note that for each $n\in\integers$, the point $\ell\cap\widetilde V_n$ is periodic.  A point $x\in\widetilde X$ is \emph{regular} if $\mathcal L_x$ is regular, and \emph{singular} otherwise.  The point $x\in X$ is \emph{regular} or \emph{singular} according to whether its lifts to $\widetilde X$ are regular or singular.
\end{defn}

\subsection{Improved relative train track maps}\label{subsec:improved_rtt}
\begin{defn}[Relative train track map, strata]\label{defn:rtt}
Let $\emptyset=V^0\subsetneq V^1\subsetneq\cdots\subsetneq V^{{\bar h}}=V$ be a filtration of $V$ by subgraphs and let $S^i=\closure{V^i-V^{i-1}}$.  Let $\phi:V\rightarrow V$ be a \emph{tight relative train track map} in the sense of~\cite{BestvinaHandel}.  This means that $\phi$ sends vertices to vertices and edges to combinatorial paths, and that each of the following holds (the terms ``exponential stratum'' and ``$i$-legal'' and the tightening $\tight{Q}$ of a path $Q$ are defined below):
\begin{enumerate}
 \item Each $V^i$ is $\phi$-invariant.
 \item For each edge $e$ in an exponentially growing stratum $S^i\subset V^i$, the path $\phi(e)$ starts and ends with an edge of $S^i$.
 \item For each exponentially growing $S^{i+1}$ and each nontrivial immersed path $P\rightarrow V^i$ starting and ending in $S^{i+1}\cap V^i$, the path $\phi(P)$ is \emph{essential}, in the sense that $\tight{\phi(P)}$ is nontrivial.
 \item For each exponentially growing $S^i$ and each legal path $P\rightarrow S^i$, the path $\phi(P)$ is $i$-legal.
\end{enumerate}

The \emph{tightening} $\tight{Q}$ of a combinatorial path $Q$ in $V$ is the immersed path in $V$ that is path-homotopic to $Q$.
A path $Q=e_1\cdots e_k$ in $V$ is \emph{legal} if $\phi^{n}(e_i)$ and $\phi^{n}(e_{i+1}^{-1})$ have distinct initial edges for all $n\geq0$.  A path $Q=e_1\cdots e_k$ is \emph{$i$-legal} if for all $n\geq0$ and $1\leq j<k$, the paths $\phi^{n}(e_j)$ and $\phi^{n}(e_{j+1}^{-1})$ cannot have the same initial edge $f$ unless $f\subset V^{i-1}$.

Ordering the edges of $S^i$ arbitrarily, let $\mathfrak M^i$ be the matrix whose $jk$-entry is the number of times the $j^{\text{th}}$ edge of $S^i$ traverses the $k^{\text{th}}$ edge of $S^i$ under the map $\phi$.  By choosing a filtration that is \emph{maximal} in the sense of~\cite[Sec.~5]{BestvinaHandel}, we can assume that the matrix $\mathfrak M^i$ is either a zero matrix or irreducible.  In the irreducible case, let $\lambda_i\geq 1$ be the largest eigenvalue of $\mathfrak M^i$.  If $\lambda_i>1$, then the \emph{stratum} $S^i$ is \emph{exponentially growing}.  If $\lambda_i=1$, then $S^i$ is \emph{polynomially growing}, and otherwise $\mathfrak M^i=0$ and we say $S^i$  is a \emph{zero stratum}.
\end{defn}

\begin{notation}
We refer to an edge of $S^i$, or to an edge of $\widetilde X$ mapping to an edge of $S^i$, as an \emph{$S^i$-edge}.
\end{notation}

\begin{rem}\label{rem:periodic_regular_exponential}
A periodic regular point in $V$ either lies in the interior of an exponential edge, or lies in the interior of an edge $e$ such that $\phi^m(e)=e$ for some $m\geq 1$.
\end{rem}


Let $X_i$ denote the mapping torus of $\phi|_{V^{i}}$.  Let $\{X^{ij}\}$ be the set of components of $X_i$.  For each $i,j$, let $\widetilde X^{ij}$ denote the universal cover of $X^{ij}$.  When $X^{ij}\subseteq X^{i'j'}$, there are many lifts of $\widetilde X^{ij}$ to $\widetilde X^{i'j'}$.  Since these lifts are embeddings, we often refer to the image of the lift of interest as $\widetilde X^{ij}$.

\begin{defn}[Lengths of vertical edges]\label{defn:length}
For each edge $e$ of the exponential or polynomial stratum $S^{i}$, we define the length $\omega_e$ of $e$ to be the magnitude of the projection of the basis vector corresponding to $e$ onto the $\lambda_i$-eigenspace.  If $S^i$ is a zero-stratum, we let $\omega_e=1$.  
\end{defn}

For the remainder of this text, we restrict to the case in which the relative train track map $\phi$ enjoys some of the properties of~\cite[Thm. 5.1.5]{BestvinaFeighnHandelTits}.  A \emph{periodic Nielsen path} is an essential path $P\rightarrow V$ such that $\tight{\phi^k(P)}=P$ for some $k>0$.  If $k=1$, then $P$ is a \emph{Nielsen path}.

\begin{defn}[Improved relative train track map]\label{defn:improved_rel}
The relative train track map $\phi:V\rightarrow V$ is \emph{improved} if:
\begin{enumerate}
 \item If $S^i$ is a zero stratum, then $S^{i+1}$ is exponentially growing.
 \item If $S^i$ is a zero stratum, then $S^i$ is the union of the contractible components of $V^i$.
 \item If $S^i$ is a polynomially growing stratum, then $S^i$ consists of a single edge $e$, and $\phi(e)=eP$, where $P$ is a closed path in $V^{i-1}$ whose initial point is fixed by $\phi$.  If $P$ is trivial, then $e$ is a \emph{periodic edge}.
 \item $\phi$ is \emph{eg-aperiodic} in the sense of~\cite{BestvinaFeighnHandelTits}.
 \item Every periodic Nielsen path is a Nielsen path.
\end{enumerate}
\end{defn}

\subsection{The $\reals$-trees $\mathcal Y^{ij}$}\label{sec:r_tree_definition}
For each $i,j$, fix a lift $\widetilde X^{ij}$ in $\widetilde X$.  Note that $\widetilde X^{{\bar h}1}=\widetilde X$.  For each $n\in\integers$, let $\widetilde V_n^{ij}$ be the tree isomorphic to $\widetilde V_n\cap\widetilde X^{ij}$.  We pseudometrize $\widetilde V_n^{ij}$ as follows: for each edge $\tilde e$ of $\widetilde V_n^{ij}$, let $e$ be its image in $V^i$.  If $e$ belongs to $S^i$, we let $|\tilde e|=\omega_e\lambda_i^{-n}$, and otherwise $|\tilde e|=0$.  We let $\dist_n^{ij}$ be the resulting path-pseudometric on $\widetilde V_n^{ij}$.

\begin{cons}[The spaces $\mathcal Y^{ij}$]\label{cons:rtree}
For each $i,j$, we now construct a space $\mathcal Y^{ij}$ associated $\widetilde X^{ij}$.  The nature of $\mathcal Y^{ij}$ depends on whether $S^i$ is exponentially or polynomially growing.  We will also construct a map $\rho_{ij}:\widetilde X^{ij}\rightarrow\mathcal Y^{ij}$.  It will be clear from the construction that for all $g\in G$ and all $i,j$, we have the following commutative diagram, where $\rho_{ij}^g$ is the map defined below for $g\widetilde X^{ij}$ and $\mathbf s$ is a dilation by a power of $\lambda_i$:

\begin{center}
$
\begin{diagram}
  \node{\widetilde X^{ij}}\arrow{e,t}{g}\arrow{s,l}{\rho_{ij}}\node{g\widetilde X^{ij}}\arrow{s,r}{\rho^g_{ij}}\\
  \node{\mathcal Y^{ij}}\arrow{e,b}{\mathbf s}\node{\mathcal Y^{ij}}
\end{diagram}
$
\end{center}

\textbf{Exponentially growing strata:}  Suppose that $S^i$ is an exponentially growing stratum.  Let $\mathcal Y_0^{ij}$ be the set of leaves of $\widetilde X^{ij}$ and define a pseudometric $\dist_\infty^{ij}$ by the following limit, which exists since $\dist^{ij}_n(\mathcal L\cap\widetilde V_n^{ij},\mathcal L'\cap\widetilde V_n^{ij})$ is nonincreasing, nonnegative, and finite for sufficiently large $n$:$$\dist_{\infty}^{ij}(\mathcal L,\mathcal L')=\lim_{n\rightarrow\infty}\dist^{ij}_n(\mathcal L\cap\widetilde V_n^{ij},\mathcal L'\cap\widetilde V_n^{ij}).$$  The quotient $\mathcal Y^{ij}$ of $\mathcal Y^{ij}_0$ obtained by identifying points at distance 0 is an $\reals$-tree with a $\pi_1X^{ij}$-action by homeomorphisms, and there is a $\pi_1X^{ij}$-equivariant map $\rho_{ij}:\widetilde X^{ij}\rightarrow\mathcal Y^{ij}$ sending each point to the leaf containing it.  Since $\rho_{ij}$ is distance non-increasing on each vertical edge, the pasting lemma implies that $\rho_{ij}$ is continuous.  The subgroup of $F$ stabilizing $\widetilde X^{ij}$ acts by isometries on $\mathcal Y^{ij}$.  We refer to $\mathcal Y^{ij}$ as a \emph{grade-$i$} $\reals$-tree.  Note that $X=X^{{\bar h} 1}$ and we have a map $\rho=\rho_{{\bar h}1}:\widetilde X\rightarrow\mathcal Y^{{\bar h}1}$.

\textbf{Polynomially growing strata:}  Let $e$ be the unique edge of a polynomially growing stratum $S^i$.  Let $R_e$ be the 2-cell whose boundary path is $t^{-1}etP^{-1}e^{-1}$, where $t$ is a horizontal edge and $P$ is a path in $V^{i-1}$.  Observe that the component $X^{ij}$ containing $e$ is a graph of spaces whose vertex-spaces are the mapping tori of the components of $V^i-\interior{e}$ that intersect $e$, and whose unique edge space is the union of the interior of $R_e$ and the interior of $e$.  There is a map $X^i\rightarrow\mathbf C$, where $\mathbf C$ is a connected graph with one edge, sending each vertex space to the corresponding vertex and sending $\interior{e}$ homeomorphically to the open edge.  This induces a map $\rho_{ij}:\widetilde X^{ij}\rightarrow\mathcal Y^{ij}$, where $\mathcal Y^{ij}$ is defined to be the Bass-Serre tree associated to $X^{ij}\rightarrow\mathbf C$.  Again, $\rho_{ij}$ is $\pi_1X^{ij}$-equivariant and continuous.

\textbf{Zero strata:}  We will not require an $\reals$-tree in this situation.
\end{cons}

\begin{rem}\label{rem:vi_connected}
Since $V$ is compact and $\phi$ is a $\pi_1$-isomorphism, Lemma~\ref{lem:vi_invariant} below implies that $\widetilde V_n^{ij}$ has a unique unbounded component $n,i,j$, although we shall not use this fact.
\end{rem}

A point $x\in V^i$ has \emph{grade} $k$ if $k$ is the minimal integer such that $\phi^n(x)\in V^k$ for some $n>0$.  Note that $\phi^m(x)$ is then in $V^k$ for all $m\geq n$.  The \emph{grade} of a lift of $x$ to $\widetilde V^{ij}$ is the grade of $x$.  The grade of $x$ is denoted by $\grade(x)$.

Regular points are dense in $V$ by countability of the set of singular points.  Singular points are not dense when there are periodic edges.  Nevertheless, the following is a consequence of irreducibility of the transition matrices:

\begin{lem}\label{lem:singular_almost_dense}
Let $S^i$ be an exponential stratum and let $e$ be an $S^i$-edge. Every point $x\in e$ such that $S^{\grade(x)}$ is exponential is the limit of a sequence of singular points.
\end{lem}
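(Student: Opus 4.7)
The plan is to reduce everything to two facts: singular points are dense in each edge of an exponential stratum, and singularity is a property of a whole leaf. Let $k=\grade(x)$ and choose minimal $N\geq 0$ with $x':=\phi^N(x)\in V^k$; by minimality of $k$, the point $x'$ lies in $V^k\setminus V^{k-1}=S^k$. If $x'$ happens to be a vertex of $V$, then the leaf through $x$ already contains a vertex, so $x$ is itself singular and there is nothing more to show. I therefore assume that $x'$ lies in the interior of an $S^k$-edge $e'$, and the task reduces to finding singular points in $e$ converging to $x$.

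First I would establish density of singular points in $e'$. Parametrizing $e'$ as $[0,1]$, the piecewise linear map $\phi^m|_{e'}$ sends $e'$ onto a combinatorial path of length $|\phi^m(e')|$, so the set $\Sigma_m:=(\phi^m|_{e'})^{-1}(\vertices(V))$ subdivides $e'$ into $|\phi^m(e')|$ sub-arcs of equal length $1/|\phi^m(e')|$. Every point of $\Sigma_m$ is singular, since its leaf meets $\vertices(V)$. Because $S^k$ is exponential, $\mathfrak M^k$ is irreducible with Perron--Frobenius eigenvalue $\lambda_k>1$, so the $e'$-row sum of $(\mathfrak M^k)^m$ grows like $\lambda_k^m$, and in particular $|\phi^m(e')|\to\infty$. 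The mesh of $\Sigma_m$ therefore tends to $0$, $\bigcup_m\Sigma_m$ is dense in $e'$, and I may choose singular points $y'_n\in e'$ with $y'_n\to x'$.

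Next I would pull these back through $\phi^N$. Because $x'$ is interior to $e'$, the point $x$ lies in the interior of a unique sub-arc $U\subseteq e$ of the subdivision of $e$ induced by $\phi^N$, and $\phi^N$ restricts to an affine homeomorphism $U\to e'$ sending $x$ to $x'$. For all large $n$ the point $y'_n$ lies in this image, so setting $y_n:=(\phi^N|_U)^{-1}(y'_n)$ produces a sequence in $e$ with $y_n\to x$. Each $y_n$ is singular, because $y_n$ and $y'_n=\phi^N(y_n)$ lie on a common leaf -- they are joined by $N$ consecutive forward midsegments -- and the leaf of $y'_n$ already meets $\vertices(V)$.

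The only step with genuine content is the divergence $|\phi^m(e')|\to\infty$, which I expect to fall out immediately from irreducibility of $\mathfrak M^k$ together with $\lambda_k>1$; the remainder is bookkeeping inside a single edge. I do not anticipate any subtler obstacle.
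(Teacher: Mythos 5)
Your reduction to $x' = \phi^N(x)$ in the interior of an $S^k$-edge $e'$, and your pullback at the end via $\phi^N|_U$, are both fine. The gap is in the middle step, where you claim singular points are dense in all of $e'$. There are two problems. First, the ``equal-length sub-arcs'' claim is simply wrong: even if $\phi$ is constant-speed on each edge, $\phi^m|_{e'}$ is not, because the different edges appearing in $\phi(e')$ are stretched by different amounts under later applications of $\phi$. So $|\phi^m(e')|\to\infty$ by itself does not force the mesh of $\Sigma_m$ to zero. Second, and more to the point, singular points are genuinely not dense in $e'$ --- this is exactly why the lemma carries the hypothesis that $S^{\grade(x)}$ be exponential, and the paper warns that ``singular points are not dense when there are periodic edges.'' An $S^i$-edge $e'$ can contain a subarc $J$ with $\phi(J)\subset\interior{p}$ for a periodic edge $p$; then $\phi^m(J)\subset\interior{p}$ for all $m\geq 1$, and since $y$ is singular precisely when $\phi^b(y)$ is a vertex for some $b\geq 0$, no point of $J$ is singular. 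Your argument proves too much, and the piece it proves too much with is the piece that fails.

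The fix keeps your skeleton but must use the grade hypothesis. Look only at the arc $J_m$ of the $\Sigma_m$-subdivision containing $x'$. Since $\grade(x')=k$, we have $\phi^m(x')\in V^k\setminus V^{k-1}$ for every $m$, so whenever $\phi^m(J_m)$ is contained in a single edge, that edge is an $S^k$-edge. Now measure with the eigenlengths $\omega_e$ of Definition~\ref{defn:length}: for each $S^k$-edge $e$, the $S^k$-content of $\phi(e)$ has total eigenlength $\lambda_k\omega_e$, so $\phi$ restricted to an $S^k$-edge expands eigenlength by a factor at least $\lambda_k>1$. If $J_m$ never contained a singular point (i.e.\ $\phi^m(J_m)$ never contained a vertex), the eigenlength of $\phi^m(J_m)$ would grow geometrically while staying bounded by $\max_f\omega_f$ --- a contradiction. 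Hence the endpoints of the $J_m$ (which are in $\Sigma_m$, hence singular) converge to $x'$, and pulling back by $\phi^N|_U$ finishes as you describe. Note that once $\phi^m(J_m)$ drops into a polynomial edge, the expansion factor becomes $1$ and this argument dies; the grade condition is precisely what prevents that.
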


\begin{lem}[$S^i$-edges $i$-embed]\label{lem:black_injective}
Suppose $S^i$ is a nonzero stratum. Let $e\subset\widetilde V_0^{ij}$ be an edge whose image lies in $S^i$.  Then $\rho_{ij}$ is injective on the set of regular grade-$i$ points of $e$.
\end{lem}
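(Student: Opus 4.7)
The plan is to handle the polynomial and exponential cases of Construction~\ref{cons:rtree} separately.

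In the polynomial case, $S^i$ is a single edge $e$ by Definition~\ref{defn:improved_rel}(3), and $\mathcal Y^{ij}$ is the Bass--Serre tree for the one-edge splitting $X^{ij}\to\mathbf C$. By construction $\rho_{ij}$ sends the interior of each lift of $e$ homeomorphically onto an open edge of $\mathcal Y^{ij}$. Since regular points are never vertices, every regular grade-$i$ point of $e$ lies in $\interior(e)$, and injectivity follows. (If $e$ is non-periodic polynomial then every interior point of $e$ eventually flows into $V^{i-1}$, so there are no regular grade-$i$ points in $e$ and the statement is vacuous.)

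For the exponential case, suppose $x\neq y$ are regular grade-$i$ points of $e$. The plan is to prove the strengthening $\dist^{ij}_n(\psi_n(x),\psi_n(y))=\omega_e|y-x|$ for every $n\ge 0$; the desired inequality $\dist^{ij}_\infty(\mathcal L_x,\mathcal L_y)>0$ then follows since $|y-x|>0$. The base case $n=0$ is immediate from Definition~\ref{defn:length}, and $\dist^{ij}_n(\psi_n(x),\psi_n(y))$ is known to be nonincreasing, so it suffices to rule out any strict decrease at the step from $n$ to $n+1$. Let $\sigma_n$ denote the embedded tree-path in $\widetilde V_n^{ij}$ from $\psi_n(x)$ to $\psi_n(y)$; unravelling the mapping-torus structure, $\sigma_n$ is the lift of $\tight{\phi^n([x,y])}$ in $\widetilde V_n^{ij}$. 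The eigenvector equation $\mathfrak M^i\omega=\lambda_i\omega$ together with the compensating $\lambda_i^{-1}$-rescaling in $\dist^{ij}_{n+1}$ makes $\phi$ preserve $\omega$-weighted $S^i$-length; the only way $\dist^{ij}_{n+1}$-length of $\sigma_{n+1}=\tight{\phi(\sigma_n)}$ could be strictly less than $\dist^{ij}_n$-length of $\sigma_n$ is via an $S^i$-cancellation occurring when $\phi(\sigma_n)$ is tightened.

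To rule out $S^i$-cancellation I would argue by induction that each $\sigma_n$ is $i$-legal. The segment $\sigma_0=[x,y]\subset e$ is trivially $i$-legal, as it contains no turn. For the inductive step, property~(4) of Definition~\ref{defn:rtt} gives that $\phi$ takes $i$-legal paths in $S^i$ to $i$-legal paths, so any cancellation in $\phi(\sigma_n)$ occurs only at $V^{i-1}$-edges (which have zero $\dist^{ij}$-weight), and one then checks that the tightened path $\sigma_{n+1}$ inherits $i$-legality at the newly created junctions. With this in hand, the induction gives $\dist^{ij}_n$ constant, and taking limits yields $\dist^{ij}_\infty(\mathcal L_x,\mathcal L_y)=\omega_e|y-x|>0$, whence $\rho_{ij}(x)\neq\rho_{ij}(y)$.

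The main obstacle is the iterated $i$-legality: Definition~\ref{defn:rtt}(4) is only a one-step statement, and one must verify that the new turns created by the tightening of $\phi(\sigma_n)$ at $V^{i-1}$-edges remain $i$-legal. This is a standard though somewhat technical bookkeeping fact in the BFH framework. The grade-$i$ hypothesis on $x$ and $y$ is essential here, for it ensures that $\psi_n(x)$ and $\psi_n(y)$ always lie in the interior of $S^i$-edges rather than in a $V^{i-1}$-subtree, on which the pseudometric $\dist^{ij}_n$ would vanish identically and collapse the argument.
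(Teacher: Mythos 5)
Your polynomial case is correct and matches the paper: $\rho_{ij}$ restricts to a homeomorphism on $\interior{e}$ by construction, and the grade-$i$ points of $e$ are nonempty only when $e$ is periodic, as the paper also remarks.

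In the exponential case you aim at a strictly stronger statement than the paper needs, and two ingredients of that stronger statement are not actually available. First, the identity $\dist^{ij}_n(\psi_n(x),\psi_n(y))=\omega_e|y-x|$ requires that $\phi|_e$ be parametrized to expand the eigenmetric on $S^i$ by exactly $\lambda_i$; the paper never assumes such a parametrization (it only assumes $\phi$ sends vertices to vertices and edges to combinatorial paths), and with an arbitrary parametrization the $\dist^{ij}_n$-distance between $\psi_n(x)$ and $\psi_n(y)$ can strictly decrease even when no $S^i$-cancellation occurs, because the partial edges at the two ends of $\tight{\phi^n([x,y])}$ are not controlled combinatorially. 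Second, the iterated $i$-legality claim is precisely where you say "one then checks"---this is not filled in, and it is the technical heart of your reduction. The paper's proof avoids both difficulties by arguing purely combinatorially: (i) it first shows, using Definition~\ref{defn:rtt}.(3) (essentiality of $V^{i-1}$-subpaths under iteration), that every full $S^i$-edge of $\phi^n(e)$ survives in $\tight{\phi^n(e)}$; (ii) it then uses Lemma~\ref{lem:singular_almost_dense} to produce a level $n$ and a full $S^i$-edge $f$ of $\tight{\phi^n(e)}$ lying \emph{strictly between} $\psi_n(x)$ and $\psi_n(y)$; (iii) the persisting $S^i$-edges of $\tight{\phi^m(f)}$ then give a lower bound $\dist^{ij}_{m+n}(\psi_{m+n}(x),\psi_{m+n}(y))\ge\omega_f\lambda_i^{-n}>0$ that is independent of any parametrization. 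Step (ii)---finding a whole separating edge via density of singular points---is exactly what lets the paper sidestep the partial-edge bookkeeping your constancy claim would require; without some replacement for it, your induction has a genuine gap rather than just unfinished routine details.
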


\begin{proof}
Let $e\subset S^i$ be an edge and let $x,x'\in e$ be grade-$i$ points.  For any $n\geq 1$, the path $\phi^n(e)$ is a concatenation of $S^i$-edges and $V^{i-1}$-edges.  Each $S^i$-edge of $\phi^n(e)$ lies in $\tight{\phi^n(e)}$.  Indeed, when $S^i$ is polynomial, $\phi^n(e)=eQ$, for some $Q\rightarrow V^{i-1}$, so the claim is immediate.  When $S^i$ is exponential, consider a path $u_1Pu_2$, where $u_1,u_2$ are edges of $S^i$ and $P$ is an essential path in $V^{i-1}$.  For each $n\geq 0$, the path $\phi^n(P)$ is essential by Definition~\ref{defn:rtt}.(3), and hence the terminal $S^i$-edge of $\phi^n(u_1)$ and the initial $S^i$-edge of $\phi^n(u_2)$ do not fold with each other.  It follows that no two successive $S^i$-edges in $\phi^n(e)$ can fold, and hence all $S^i$-edges of $\phi^n(e)$ appear in $\tight{\phi^n(e)}$.

If $S^i$ is polynomial, then $\widetilde X^{ij}\rightarrow\mathcal Y^{ij}$ is a homeomorphism on $e$, and is in particular injective on the set of grade-$i$ points.  (In fact, the set of grade-$i$ points in $e$ is nonempty if and only if $\phi(e)=e$.)

Let $x,x'$ be distinct grade-$i$ points of $e$, with $S^i$ exponential.  We claim that there exists $n$ such that $\phi^n(e)$ contains an $S^i$-edge $f$ between $\phi^n(x),\phi^n(x')$.  By Lemma~\ref{lem:singular_almost_dense}, there exists $n'>0$ such that $\phi^{n'}(x),\phi^{n'}(x')$ lie in distinct open edges $q,q'$.  Since $x,x'$ have grade~$i$, the edges $q,q'$ are in $S^i$.  As $S^i$ is exponential, then since $\phi(q)$ starts and ends with an $S^i$-edge, by the definition of a relative train track map, the claim holds for some $n\geq n'+1$.

For any $m\geq 0$, we have $\dist_{m+n}^{ij}(\phi^{m+n}(x),\phi^{m+n}(x'))\geq\omega_f\lambda_i^{m}$, whence $\dist_{\infty}(\mathcal L_x,\mathcal L_{x'})\geq\omega_f$, since $\lim_{m\rightarrow\infty}|\phi^m(f)|=\omega_f$ for each $S^i$-edge $f$, by construction, and since $\phi^{m+n}(x),\phi^{m+n}(x')$ are separated in $\tight{\phi^{m+n}(e)}$ by $\tight{\phi^m(f)}$.
\end{proof}

\begin{lem}[Bounded leaf-intersection]\label{lem:finite_intersection_leaf_edge_2}
For each edge $\tilde e\subset\widetilde V_n^{ij}$ and each regular $z\in \tilde e$, the intersection $\mathcal L_z\cap\tilde e$ is finite.
\end{lem}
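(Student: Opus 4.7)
My plan is to proceed by induction on the stratum level $i$ with $e\subset S^i$; the reduction $e\subset V^{i-1}$ gives $\tilde e\subset\widetilde V_n^{(i-1)j'}$ and the inductive hypothesis applies. The key preliminary observation is the \emph{leaf-invariance of grade}: if $y\in\mathcal L_z$ then $\psi_r(y)=\psi_r(z)$ for some $r\geq 0$, and $\phi$-invariance of the filtration $V^0\subsetneq\cdots\subsetneq V^{\bar h}$ forces $\grade(y)=\grade(z)=:g$. Hence every point of $\mathcal L_z\cap\tilde e$ has grade $g$. If $g=i$ — necessarily $S^i$ is nonzero, since zero strata have $\phi(S^i)\subset V^{i-1}$ and so contain no grade-$i$ points — then Lemma~\ref{lem:black_injective} together with constancy of $\rho_{ij}$ on leaves yields $|\mathcal L_z\cap\tilde e|\leq 1$.

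Assume $g<i$. Choose $m_0\geq 1$ so that $\psi_{m_0}(z)$ lies in the interior of some $S^g$-edge $\tilde e^*\subset\widetilde V_{n+m_0}^{g,j'}$; this is possible because $z$ is regular of grade $g$, so $\psi_m(z)\in S^g$ for all $m\geq m_z$ and avoids vertices. The inductive hypothesis applied to $\tilde e^*$ yields $|\mathcal L_z\cap\tilde e^*|<\infty$. I would then decompose $\tilde e$ into the finitely many maximal closed sub-intervals $I_j$ on each of which $\psi_{m_0}$ is an affine injection onto a subset of a single edge $\tilde f_j\subset\widetilde V_{n+m_0}$, as determined by the combinatorial structure of the path $\phi^{m_0}(e)$. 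For each such piece, the injectivity gives $|\mathcal L_z\cap I_j|\leq|\mathcal L_z\cap\tilde f_j|$. Sub-intervals with $\tilde f_j$ in a stratum below $i$ are bounded by the inductive hypothesis. For sub-intervals with $\tilde f_j\subset S^i$, each image $\psi_{m_0}(y)\in\tilde f_j\cap\mathcal L_z$ retains grade $g$ but its orbit reaches $V^g$ strictly sooner — in time $m_y-m_0<m_y$ — so a secondary induction on the minimum time-to-$V^g$ eventually reduces all sub-intervals to the primary-inductive regime. Summing the finitely many bounds yields $|\mathcal L_z\cap\tilde e|<\infty$.

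The main obstacle is justifying termination of this secondary reduction and avoiding pathologies when the accumulation point $y_\infty$ of a putative infinite sequence $y_k\in\mathcal L_z\cap\tilde e$ does not itself lie in $\mathcal L_z$ (or has a different grade). If $\mathcal L_z\cap\tilde e$ is infinite and the minimal merging times $r_k$ with $\psi_{r_k}(y_k)=\psi_{r_k}(z)$ are bounded by $R$, then all $y_k$ lie in the finite set $\psi_R^{-1}(\psi_R(z))\cap\tilde e$ (finite because $\psi_R|_{\tilde e}$ is a piecewise-affine parametrization of the finite combinatorial path $\phi^R(e)$), a contradiction. Hence $r_k\to\infty$; the finite fibers $\{y\in\tilde e:m_y=m\}$ — each a discrete subset of $\tilde e$ carved out by the combinatorial path $\phi^m(e)$ — then force the entry-times $m_{y_k}$ to be unbounded as well. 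The technical heart of the argument is to show that, along a well-chosen subsequence, local continuity and affine injectivity of $\psi_{m_0}$ near $y_\infty$ cause the $\psi_{m_0}(y_k)$ to accumulate inside a single edge in a stratum strictly below $i$ once $m_0$ is large enough, where the inductive hypothesis produces the required contradiction.
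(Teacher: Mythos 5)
Your first paragraph is fine (leaf-invariance of grade, and the $g=i$ case via Lemma~\ref{lem:black_injective}), and matches the paper. But the reduction when $g<i$ has a genuine gap that your second paragraph acknowledges without closing. The secondary induction on ``time-to-$V^g$'' does not terminate: when $S^i$ is the polynomial stratum $\{e\}$ with $\phi(e)=eP$, the sub-interval $I_0$ that $\psi_{m_0}$ carries onto a new lift $\tilde e'$ of $e$ again lies in $S^i$, and since leaves are invariant under the forward flow, $\psi_{m_0}$ restricts to a bijection $I_0\cap\mathcal L_z\to\tilde e'\cap\mathcal L_z$; the leaf-count on the $S^i$-piece therefore never decreases and you iterate forever. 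The accumulation-point patch also fails at precisely the crucial spot: the limit $y_\infty$ need not lie in $\mathcal L_z$ and can satisfy $\grade(y_\infty)=i$; for instance $y_\infty$ can be the initial vertex of the polynomial edge $e$, in which case $\psi_{m_0}(y_\infty)$ stays in $S^i$ for all $m_0$ and the nearby $\psi_{m_0}(y_k)$ do not fall into a lower stratum. This is exactly the case where the paper must bring in real machinery: it invokes~\cite[Thm.~5.1.5.ne-(iii)]{BestvinaFeighnHandelTits} together with atoroidality of $\Phi$ (no nontrivial closed Nielsen paths) to conclude that $\tight{\phi^n(eQ)}$ eventually splits as $eQ'$, which forbids $\mathcal L_z$ from accumulating both near $a$ (the initial vertex) and near $b$. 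Your argument has no substitute for that input and cannot avoid it.

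Two smaller issues. First, the sets $\{y\in\tilde e:m_y=m\}$ are not in general discrete, since $\phi^m(e)$ can traverse full edges of $V^g$, so $\{y:m_y\le m\}$ is a finite union of closed sub-intervals of $\tilde e$ rather than a finite set; the conclusion $m_{y_k}\to\infty$ is still recoverable (apply the inductive hypothesis to the finitely many low-stratum sub-intervals of $\phi^m(e)$ to see that $\mathcal L_z\cap\{y:m_y\le m\}$ is finite), but the justification you gave is not correct as written. Second, in the exponential case the paper does not run a secondary induction at all: it shows that the smallest interval $[a,b]\supseteq\mathcal L_z\cap e$ eventually maps into a single $V^{i-1}$-subpath $Q_j$ of $\phi^n(e)$, using the key observation that between any two $Q_j$'s sits a nontrivial $S^i$-path $P_k$ whose regular grade-$i$ leaves separate them, so $\mathcal L_z$ can meet at most one $Q_j$. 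You would need an analogue of this separation argument, in addition to the Bestvina--Feighn--Handel/atoroidality input for the polynomial case, before the proof could go through.
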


\begin{proof}
Suppose that $\tilde e$ maps to an edge $e$ of $S^i$, since otherwise the claim is true by induction on $i$; in the base case there are no edges.  If $z$ has grade $i$, then by Lemma~\ref{lem:black_injective}, $|\mathcal L_z\cap \tilde e|=1$.

Suppose that $S^i$ is exponential and that $\grade(z)<i$.  For any $n\geq 0$, write $\phi^n(e)=P_1Q_1\cdots P_kQ_kP_{k+1}$, where each $P_j$ is a nontrivial immersed path in $S^i$ and each $Q_j$ is an essential path in $V^{i-1}$.  As discussed in the proof of Lemma~\ref{lem:black_injective}, each $P_j$ embeds in $\tight{\phi^n(e)}$ and the images of $P_j,P_k$ in $\tight{\phi^n(e)}$ have disjoint interiors for $j\neq k$.  Suppose $j_1<j_2$ and let $x_1,x_2\in Q_{j_1},Q_{j_2}$.  Then for $j_1<k\leq j_2$, every regular leaf intersecting $P_k$ separates $x_1,x_2$, and hence $\mathcal L_z$ intersects at most one of the $Q_j$, since distinct leaves are disjoint.  

Let $[a,b] \subset e$ denote the smallest interval containing $\mathcal L_z\cap e$.  Choose $n$ such that $\phi^n(a)\in V^{\grade(a)}$ and  $\phi^n(b)\in V^{\grade(b)}$.  If $\phi^n([a,b])\subset Q_j$ for some $j$, then we are done by induction.  Thus $\phi^n(a)$ and $\phi^n(b)$ lie in different $V^{i-1}$ subpaths of $\phi^n(e)$.  Hence a $P_j$ separates them, whence some $S^i$-edge separates them. Hence some grade~$i$ leaf separates them and thus separates points of $L_z\cap e$ that are arbitrarily close to $a,b$ which is impossible.

Suppose now that $S^i$ is polynomial and $\grade(z)<i$.  Hence $e$ is not periodic, and thus contains no grade-$i$ points.  Thus $\phi(e)=eP$ for some nontrivial closed path $P$ in $V^{i-1}$.  Again, let $[a,b]\subset e$ be the smallest interval containing $\mathcal L_z\cap e$. If $[a,b]$ lies in $\interior{e}$, then $\phi^k([a,b])\subset V^{i-1}$ for some $k\geq1$ by compactness, whence the claim follows by induction.  Hence suppose that $a$ is the initial vertex of $e$.

For some $k\geq 1$, the path $\tight{\phi^k([a,b])}=eQ$ is a \emph{basic path} in the sense of~\cite[Def.~4.1.3]{BestvinaFeighnHandelTits}, where $Q$ is a path in $V^{i-1}$.  By~\cite[Thm.~5.1.5.ne-(iii)]{BestvinaFeighnHandelTits}, together with the fact that there are no nontrivial closed Nielsen paths since $\Phi$ is atoroidal, there exists $n\geq 1$ such that $\tight{\phi^n(eQ)}$ splits as a concatenation $eQ'$.  Hence the image in $X$ of the leaf $\mathcal L_z$ cannot cross both $(\phi^{n+k})^{-1}(e)\subset e$ and $(\phi^{n+k})^{-1}(Q')\subset e$, which contradicts that $a$ and $b$ are limits of points in $e\cap\mathcal L_z$.
\end{proof}

\begin{defn}
Let $y\in\mathcal Y^{ij}$.  The \emph{grade} of $y$ is $\min\{\grade(x):\rho_{ij}(x)=y\}$.
\end{defn}

\begin{lem}\label{lem:preimage_of_point}
Let $y\in\mathcal Y^{ij}$ and let $e$ be an $S^k$-edge of $\widetilde X^{ij}$.  If $k<i$ then $\rho_{ij}(e)$ consists of a single point, and is thus either equal to or disjoint from $y$.  If $k=i$, then $\rho^{-1}_{ij}(y)\cap e$ is:
\begin{enumerate}
 \item Empty or a single point if $\grade(y)=i$.
 \item Empty or a closed interval if $\grade(y)<i$.
\end{enumerate}
In particular, $\rho^{-1}_{ij}(y)\cap e$ is connected.
\end{lem}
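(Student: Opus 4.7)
My plan is to prove the lemma by cases on whether $k<i$ or $k=i$, and whether $S^i$ is exponential or polynomial. When $k<i$, I will show that $\rho_{ij}$ collapses $e$ to a single point. For exponential $S^i$, edges outside $S^i$ have zero length in $\dist_n^{ij}$, so $\dist_0^{ij}(\mathcal L_x\cap\widetilde V_0^{ij},\mathcal L_{x'}\cap\widetilde V_0^{ij})=0$ for any $x,x'\in e$, and the non-increasing property of the sequence used to define $\dist_\infty^{ij}$ forces $\dist_\infty^{ij}=0$. For polynomial $S^i$, $\rho_{ij}$ factors through the collapse $X^{ij}\to\mathbf C$ that sends every non-$S^i$-edge to a vertex, so $\rho_{ij}(e)$ is a vertex of the Bass-Serre tree. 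When $k=i$ and $S^i$ is polynomial, $e$ is the unique edge of $S^i$ and $\rho_{ij}|_e$ is a homeomorphism onto an edge of the Bass-Serre tree, giving a fiber over $y$ that is at most a singleton (consistent with both parts~(1) and~(2) of the statement).

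The substantive case is $k=i$ with $S^i$ exponential, where the main step is to establish monotonicity of $\rho_{ij}|_e$: for any three points $x_1<x<x_2$ on $e$ (with respect to some orientation), $\rho_{ij}(x)$ lies on the $\mathcal Y^{ij}$-arc from $\rho_{ij}(x_1)$ to $\rho_{ij}(x_2)$. I intend to pass to the tree $\overline V_n$ obtained from $\widetilde V_n^{ij}$ by collapsing each component of the preimage of $V^{i-1}$ to a point; the pseudometric $\dist_n^{ij}$ descends to an honest metric on $\overline V_n$ in which each $S^i$-edge retains its length $\omega_f\lambda_i^{-n}$. The crucial structural input, already isolated within the proof of Lemma~\ref{lem:black_injective}, is that no two successive $S^i$-edges in $\phi^n(e)$ fold; translated to $\overline V_n$, this says the induced walk $\overline{\phi^n(e)}$ is non-backtracking, hence traces out the unique geodesic from $\overline{\phi^n(a)}$ to $\overline{\phi^n(b)}$ (where $a,b$ are the endpoints of $e$). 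The images $\overline{\phi^n(x_1)},\overline{\phi^n(x)},\overline{\phi^n(x_2)}$ then appear on this geodesic in order, yielding the additivity $\dist_n^{ij}(\phi^n(x_1),\phi^n(x_2))=\dist_n^{ij}(\phi^n(x_1),\phi^n(x))+\dist_n^{ij}(\phi^n(x),\phi^n(x_2))$; passing to the $n\to\infty$ limit places $\rho_{ij}(x)$ on the required arc.

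Together with closedness of the fiber in $e$, monotonicity forces $\rho_{ij}^{-1}(y)\cap e$ to be a closed interval (possibly empty or a singleton), giving connectivity in all cases and the conclusion for $\grade(y)<i$. For $\grade(y)=i$, I will argue by contradiction: were the fiber to contain a non-degenerate closed interval $[a',b']$, every point of $[a',b']$ would be a preimage of $y$ and hence have grade at least $i$; meanwhile every point of $e\subset V^i$ has grade at most $i$, forcing $(a',b')$ to consist entirely of grade-$i$ points. Since the set of singular points in $V$ is countable, regular grade-$i$ points would be dense in $(a',b')$, and any two distinct such points both mapping to $y$ would contradict the injectivity of $\rho_{ij}$ on regular grade-$i$ points of $e$ established in Lemma~\ref{lem:black_injective}. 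I expect the monotonicity step to be the main obstacle, since that is where the relative train track axioms must be invoked, through the non-folding of $S^i$-edges.
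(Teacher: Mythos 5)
Your proposal is correct, but it follows a genuinely different route from the paper in the main case.  For the exponential case with $\grade(y)<i$, the paper takes the smallest interval $[a,b]\supseteq\rho_{ij}^{-1}(y)\cap e$, applies $\phi^n$ to get $\phi^n(a),\phi^n(b)$ in low strata, and then invokes Lemma~\ref{lem:transverse_to_regular_points} (a forward reference to a lemma stated and proved only in Section~\ref{subsec:610}) to conclude that if $\phi^n(a)$ and $\phi^n(b)$ were separated by an $S^i$-subpath $P_j$ of $\phi^n(e)$, then $\rho_{ij}(a)$ and $\rho_{ij}(b)$ would be separated by the image of a grade-$i$ point, contradicting $\rho_{ij}(a)=\rho_{ij}(b)=y$; hence $\phi^n([a,b])$ lies in a single $Q_j\subset V^{i-1}$, which $\rho_{ij}$ collapses.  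You instead prove convexity of the fibers directly by passing to the tree $\overline V_n$ obtained by collapsing the weight-zero $V^{i-1}$-components of $\widetilde V_n^{ij}$, observing that the non-folding of successive $S^i$-edges (already isolated in the proof of Lemma~\ref{lem:black_injective}) makes the collapsed walk non-backtracking, hence a geodesic, and deducing additivity of $\dist_n^{ij}$.  This is exactly the observation that underlies the paper's proof of Lemma~\ref{lem:transverse_to_regular_points} itself, so the mathematical content is identical, but your argument is self-contained and avoids the forward reference; you also treat the $k<i$ exponential subcase explicitly, whereas the paper only spells it out for polynomial strata.  One small caution: when ``passing to the $n\to\infty$ limit'' you tacitly replace the set-distance $\dist_n^{ij}(\mathcal L_x\cap\widetilde V_n^{ij},\mathcal L_{x'}\cap\widetilde V_n^{ij})$ appearing in the definition of $\dist_\infty^{ij}$ by the point-distance $\dist_n^{ij}(\psi_n(x),\psi_n(x'))$.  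These have the same limit --- any two points of $\mathcal L_x\cap\widetilde V_n^{ij}$ merge under a finite amount of forward flow, so the set-infimum is eventually realized by forward-flowed points --- but this deserves a sentence; the paper's proof of Lemma~\ref{lem:transverse_to_regular_points} makes the same implicit identification.
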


\begin{proof}
\textbf{$S^i$ is exponential:}  
Statement~(1) holds by Lemma~\ref{lem:black_injective}.  To prove Statement~(2), it suffices to prove that $\rho^{-1}_{ij}(y)\cap e$ is connected if $\grade(y)<k=i$.  Let $[a,b]\subseteq e$ be the smallest closed subinterval containing $\rho^{-1}_{ij}(y)\cap e$ and note that $\rho_{ij}(a)=\rho_{ij}(b)=y$.  Choose $n$ so that $\phi^n(a)\in V^{\grade(a)}$ and $\phi^n(b)\in V^{\grade(b)}$.  Observe that $\phi^n(e)=P_1Q_1\cdots P_kQ_kP_{k+1}$, where each $P_j$ is a nontrivial path in $S^i$ and each $Q_j$ is a path in $V^{i-1}$.  If $\phi^n(a)$ and $\phi^n(b)$ are separated by some $P_j$, then by Lemma~\ref{lem:transverse_to_regular_points} their images in $\mathcal Y^{ij}$ are separated by a grade-$i$ point $\rho_{ij}(q)$, where $q\in(a,b)$ is an arbitrary grade-$i$ point in $P_j$.  This contradicts the fact that $\rho_{ij}(a)=\rho_{ij}(b)=y$.  Hence $\phi^n([a,b])\subset Q_j$ for some $j$, and $\rho_{ij}$ collapses $Q_j$ to a point.

\textbf{$S^i$ is polynomial:}  If $k<i$, then $e$ belongs to a vertex space, so $\rho_{ij}(e)$ is a single vertex.  If $k=i$, then $\rho_{ij}$ is injective on $e$ and hence the preimage has at most one point.
\end{proof}

\subsection{The splitting lemma}\label{subsec:splitting_lemma}

The following is a rephrasing of Lemma~6.5 of~\cite{Levitt:split}, which splits into ~\cite[Lem. 4.1.4, Lem. 4.2.6, Lem. 5.5.1]{BestvinaFeighnHandelTits}.

\begin{lem}[Splitting lemma]\label{lem:splitting_lemma}
Let $V$ be a graph and let $\phi:V\rightarrow V$ be an improved relative train track map preserving a filtration $V^0\subset\cdots\subset V^\ell$.  Let $P\rightarrow V^i$ be a path such that $\tight{P}$ traverses an edge of the stratum $S_i=\closure{V^i-V^{i-1}}$.  Then there exists $n_0$ such that $\tight{\phi^{n_0}(P)}$ is a concatenation $Q_1\cdots Q_k$, where each $Q_s$ is of one of the following types:
\begin{enumerate}
\item a Nielsen path;
\item an edge of $S^i$;
\item an initial or terminal subinterval of an edge of $S^i$, when $s\in\{1,k\}$;
\item a path in $V^{i-1}$.
\end{enumerate}
Moreover, for all $n\geq n_0$, the immersed path $\tight{\phi^n(P)}$ is equal to a concatenation of the paths $\tight{\phi^{n-n_0}(Q_s)}$.
\end{lem}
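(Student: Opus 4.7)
The plan is to assemble the statement from its cited sources: Lemma~6.5 of~\cite{Levitt:split} and Lemmas~4.1.4, 4.2.6, and 5.5.1 of~\cite{BestvinaFeighnHandelTits}. The central notion to invoke is that of a \emph{complete splitting} of the tight path $\tight{\phi^n(P)}$: a decomposition whose pieces are (a) single edges of an irreducible stratum, (b) indivisible Nielsen paths, (c) exceptional paths, or (d) paths in a lower filtration element, arranged so that tightening commutes with further iterates of $\phi$ applied to each piece. First I would invoke BFHT's existence-of-splittings result (Lemma~4.1.4) to produce a complete splitting of $\tight{\phi^{n_0}(P)}$ for some $n_0$ depending on $P$, using the hypothesis that $\tight{P}$ traverses an edge of $S^i$ so that the splitting is nontrivially transverse to $S^i$.

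Second, I would translate the pieces of the complete splitting into the four types listed in the statement. Indivisible Nielsen paths that are contained in $V^{i-1}$ can be regrouped into the type-(4) pieces, whereas Nielsen paths that genuinely traverse $S^i$ are retained as type-(1). Edges of $S^i$ give type-(2), and the first and last pieces $Q_1,Q_k$ may be proper initial or terminal subintervals of $S^i$-edges (type-(3)) when the endpoints of $P$ lie in the interior of an $S^i$-edge; this boundary behavior is what Lemma~4.2.6 of~\cite{BestvinaFeighnHandelTits} is designed to handle. When $S^i$ is polynomial, so that $\phi(e)=eP_0$ for $P_0\subset V^{i-1}$, the splits occur at the initial vertex of $e$, and Lemma~5.5.1 of~\cite{BestvinaFeighnHandelTits} together with the improved relative train track hypotheses---eg-aperiodicity and the coincidence of periodic Nielsen paths with Nielsen paths, Definition~\ref{defn:improved_rel}(4),(5)---guarantees a single $n_0$ works, rather than only a multiple of some period.

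Finally, the ``moreover'' clause is built into the definition of a complete splitting: once a splitting of $\tight{\phi^{n_0}(P)}$ has been established, each later tight iterate $\tight{\phi^n(P)}$ coincides with the concatenation $\tight{\phi^{n-n_0}(Q_1)}\cdots\tight{\phi^{n-n_0}(Q_k)}$ without folding at the seams, because the splitting vertices are precisely those at which no folding ever occurs under further iteration of $\phi$. The main obstacle in this assembly is the bookkeeping at the boundaries between consecutive pieces: one must verify that the seam between $\tight{\phi^{n-n_0}(Q_s)}$ and $\tight{\phi^{n-n_0}(Q_{s+1})}$ is never folded. This is guaranteed by the $i$-legality of the splitting vertices (Definition~\ref{defn:rtt}(3),(4)) and is the technical heart of the cited BFHT lemmas; having reduced to their statements, the present proof is essentially a reorganization that highlights the role of the distinguished stratum $S^i$.
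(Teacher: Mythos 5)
The paper offers no proof of Lemma~\ref{lem:splitting_lemma} at all: it is stated as ``a rephrasing of Lemma~6.5 of~\cite{Levitt:split}, which splits into~\cite[Lem.~4.1.4, Lem.~4.2.6, Lem.~5.5.1]{BestvinaFeighnHandelTits},'' and the text then proceeds directly to the definition of ``splits as.'' Your proposal is therefore not an alternative argument but a reconstruction of the argument that the citation is delegating to the sources, and it is consistent with the paper's approach. Your outline --- produce a complete splitting via BFHT's existence results, regroup its pieces into the four listed types, use the eg-aperiodicity and Nielsen-path hypotheses to make a single $n_0$ suffice in the polynomial case, and observe that persistence of the splitting under further iteration is built into the definition of complete splitting --- is a reasonable account of how Levitt's Lemma~6.5 and the BFHT machinery combine. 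The one place you should be more careful is your mention of \emph{exceptional paths} as a piece type in a complete splitting: you list them among the possible pieces but never say which of the four types~(1)--(4) they become, and in general an exceptional path $E_a\tau^p\bar E_b$ cannot be subdivided at the seam without later folding. In the present paper this is harmless because word-hyperbolicity of $G$ excludes nontrivial closed Nielsen paths and hence exceptional paths, but if you intend your assembly to prove the lemma exactly as stated (without the hyperbolicity hypothesis), you would need to either absorb exceptional paths into type~(1) (which fails, since they are not Nielsen paths) or explain why Levitt's formulation avoids them; as written the translation step is incomplete at that point.
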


In the above situation, we say that $P$, or rather $\tight{\phi^{n_0}(P)}$, \emph{splits as $Q_1\cdots Q_k$}.

\begin{lem}\label{lem:split_band}
Within the framework of Lemma~\ref{lem:splitting_lemma},
if Case~(2)~or~(3) holds for some $Q_s$ in the splitting $Q_1\cdots Q_k$ of $\tight{\phi^{n_0}(P)}$,
then the endpoints of $P$ have distinct images in $\mathcal Y^{ij}$.
\end{lem}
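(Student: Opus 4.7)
The plan is to show the endpoints $a,b$ of $P$ satisfy $\rho_{ij}(a)\neq\rho_{ij}(b)$; the argument splits on whether $S^i$ is exponentially or polynomially growing.

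Exponential case: I aim to show $\dist_\infty^{ij}(\mathcal L_a,\mathcal L_b)>0$. The splitting lemma gives, for each $n\geq n_0$, a tight concatenation $\tight{\phi^n(P)}=\tight{\phi^{n-n_0}(Q_1)}\cdots\tight{\phi^{n-n_0}(Q_k)}$, which lifts to the unique geodesic in the tree $\widetilde V_n^{ij}$ between appropriate lifts of $\phi^n(a)$ and $\phi^n(b)$. Its $\dist_n^{ij}$-length is $\lambda_i^{-n}$ times the $\omega$-weighted $S^i$-edge count, since $V^{i-1}$-edges have zero pseudo-length. Because the weights $\omega_e$ arise from the Perron--Frobenius eigenvector of $\mathfrak M^i$, the $\omega$-weighted $S^i$-edge total of $\phi^m$ applied to any $S^i$-edge $e$ is exactly $\lambda_i^m\omega_e$. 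In Case~(2) this immediately gives the piece $\tight{\phi^{n-n_0}(Q_s)}$ a pseudo-length contribution $\lambda_i^{-n_0}\omega_{Q_s}>0$, independent of $n$. In Case~(3), $Q_s$ is only a proper subinterval of an $S^i$-edge; however, for some fixed $N_0$ the image $\phi^{N_0}(Q_s)$ already contains a full $S^i$-edge $f$, and thereafter the Case~(2) computation gives a uniform lower bound $\omega_f\lambda_i^{-(n_0+N_0)}>0$ for all $n\geq n_0+N_0$. Hence $\dist_n^{ij}(\phi^n(a),\phi^n(b))\geq c>0$ for all large $n$, and so $\dist_\infty^{ij}(\mathcal L_a,\mathcal L_b)\geq c$, giving $\rho_{ij}(a)\neq\rho_{ij}(b)$. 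For singular $a$ or $b$ the same bound applies to any pair of points in $\mathcal L_a\cap\widetilde V_n^{ij}$ and $\mathcal L_b\cap\widetilde V_n^{ij}$, because the level-$n$ tree geodesic between any such pair still contains $\tight{\phi^{n-n_0}(Q_s)}$ as a subpath.

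Polynomial case: $\mathcal Y^{ij}$ is the Bass--Serre tree of the graph-of-spaces decomposition of $X^{ij}$ along the unique $S^i$-edge $e$, and $\rho_{ij}$ collapses each vertex space to a tree vertex and sends each lift of $\interior{e}$ homeomorphically onto a tree edge. I would lift $\tight{\phi^{n_0}(P)}$ to a path $\widetilde P$ from $\widetilde a$ to $\widetilde b$ in $\widetilde X^{ij}$. In Case~(2) the piece $Q_s$ sits strictly between the first and last pieces, so $\widetilde P$ fully crosses some lift of $e$ and its projection to $\mathcal Y^{ij}$ traverses an entire tree edge. In Case~(3) one of $\widetilde a,\widetilde b$ lies in the interior of a lift $\widetilde e$ of $e$; since $\rho_{ij}$ is injective on $\interior{\widetilde e}$ and $\widetilde P$ covers a nondegenerate portion of $\widetilde e$, the two image points cannot coincide.

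The hard part is Case~(3) in the exponential setting, where one must extract a uniform-in-$n$ lower bound on the pseudo-length contributed by a piece which only starts as a partial $S^i$-edge. My approach handles this by exploiting exponential growth: after a bounded number $N_0$ of extra iterations, $\phi^{N_0}(Q_s)$ contains a full $S^i$-edge, at which point the Case~(2) Perron--Frobenius computation applies uniformly for all $n\geq n_0+N_0$. The remaining splitting pieces, namely Nielsen paths (Case~(1)) and paths in $V^{i-1}$ (Case~(4)), contribute nonnegatively to the $\omega$-weighted sum and so cannot destroy the positive contribution identified above.
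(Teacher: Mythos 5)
Your proposal is correct and takes essentially the same route as the paper's proof: the Perron--Frobenius weights make a type~(2) or~(3) piece $Q_s$ contribute a uniform positive lower bound to the level-$n$ pseudodistance between the flowed endpoints of $P$, and the splitting lemma prevents any cancellation across pieces. The paper compresses this to a citation of Lemma~\ref{lem:black_injective} together with the observation that grade-$i$ points occur near the endpoints of $Q_s$; you redo the underlying eigenvector computation, which is precisely the content of that lemma. Your handling of the polynomial case is in fact more careful than the paper's one-sentence treatment, whose nominal appeal to grade-$i$ points is vacuous when $\phi(e)=eP$ with $P$ nontrivial; your direct use of the fact that $\rho_{ij}$ restricts to a homeomorphism on each lift of $e$ is what is actually needed there.

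The step you flag as the hard part is exactly where an argument is still missing. It is not automatic that $\phi^{N_0}(Q_s)$ ever contains a full $S^i$-edge: a priori a proper subinterval of an $S^i$-edge can lie entirely in $\rho_{ij}^{-1}(y)$ for a single point $y\in\mathcal Y^{ij}$, in which case its $\phi$-iterates never acquire a complete $S^i$-edge and your lower bound collapses to zero. What saves a type-(3) piece is that one of its endpoints is a vertex of the ambient $S^i$-edge $e$; since $\phi^n(e)$ begins (respectively ends) with an $S^i$-edge for every $n$, the nested preimages of these leading edges force grade-$i$ points to accumulate at that vertex, so $Q_s$ contains two distinct grade-$i$ points, and then the argument in the proof of Lemma~\ref{lem:black_injective} produces a full $S^i$-edge in $\phi^{N_0}(Q_s)$ for some $N_0$. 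The paper makes the identical unspoken appeal (``grade-$i$ points arbitrarily close to the endpoints of $Q_s$''), so this is a shared gloss rather than a deviation from the paper's approach, but it is the one point at which your write-up needs an added sentence rather than an assertion.
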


\begin{proof}
Grade-$i$ points arbitrarily close to the endpoints of $Q_s$ map to distinct points in $\mathcal Y^{ij}$ by Lemma~\ref{lem:black_injective}.  They provide a positive lower bound on the distance between the endpoints of $\rho_{ij}\circ P$ by Lemma~\ref{lem:splitting_lemma} and the definition of $\dist_{\mathcal Y^{ij}}$.
\end{proof}

Lemma~\cite[Lem.~3.4]{Brinkmann} states:

\begin{lem}\label{lem:brinkmann3.4}
Let $G$ be word-hyperbolic.  Let $S^i$ be an exponential stratum.  There does not exist an indivisible periodic Nielsen path containing an edge of $S^i$  and having both endpoints in $V^{i-1}\cap S^i$.
\end{lem}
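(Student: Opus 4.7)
The plan is to argue by contradiction. Suppose such an indivisible periodic Nielsen path $P$ exists, with $\tight{\phi^k(P)}=P$ for some $k\geq 1$. Replacing $\phi$ by $\phi^k$ (which preserves the improved relative train track structure on the same filtration, and whose mapping torus embeds as a finite-index and hence still hyperbolic subgroup of $G$), we may assume $P$ is a Nielsen path and that its endpoints $v,w$ are fixed by $\phi$. The strategy is to extract from $P$ a nontrivial $\Phi^N$-periodic conjugacy class in $F$ for some $N\geq 1$, contradicting atoroidality of $\Phi$, which is equivalent to hyperbolicity of $G$ by Brinkmann's theorem~\cite{Brinkmann}.

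Because both endpoints $v,w$ lie in $V^{i-1}$, they lie in the same component of $V^{i-1}$ after a preliminary reduction (if not, replace $\phi$ by a further power until they do, using the permutation action of $\phi$ on components of $V^{i-1}$). Choose a path $Q$ in $V^{i-1}$ from $w$ to $v$ and set $\alpha=PQ$, representing an element $[\alpha]\in F=\pi_1 V$. The hypothesis that $P$ contains an $S^i$-edge guarantees $[\alpha]\neq 1$: by Lemma~\ref{lem:black_injective} the grade-$i$ points interior to any $S^i$-edge of $P$ map to distinct points in the grade-$i$ $\reals$-tree $\mathcal Y^{ij}$, whereas Lemma~\ref{lem:preimage_of_point} shows that $\rho_{ij}$ collapses the path $Q\subset V^{i-1}$ to a single point. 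Hence no closure of $P$ through $V^{i-1}$ can render $[\alpha]$ trivial in $F$.

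To show that some iterate $\Phi^N$ fixes the conjugacy class of $[\alpha]$, observe that $\tight{\phi^n(\alpha)}=\tight{P\phi^n(Q)}$ for all $n$, using $\phi(P)\simeq P$ rel endpoints. Apply the splitting lemma (Lemma~\ref{lem:splitting_lemma}) to $Q$ inside $V^{i-1}$ and induct on the filtration height of $V^{i-1}$: the splitting exhibits $\tight{\phi^n(Q)}$ as a concatenation of Nielsen paths, single edges of the top stratum of $V^{i-1}$, and subpaths strictly lower in the filtration, and the eg-aperiodicity of $\phi$ together with Definition~\ref{defn:improved_rel}(5) forces the combinatorial type of this concatenation to stabilize under further iteration. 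Gluing the stabilized splitting of $\tight{\phi^n(Q)}$ to the fixed $P$ yields a splitting of $\tight{\phi^n(\alpha)}$ in which, for some $n$, an isolated factor must be a nontrivial closed periodic Nielsen path; by Definition~\ref{defn:improved_rel}(5) such a factor is itself a Nielsen loop, and its conjugacy class is a nontrivial $\Phi$-periodic conjugacy class in $F$, contradicting atoroidality.

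The main obstacle is the stabilization argument just sketched. When $V^{i-1}$ contains its own exponential strata, $|\tight{\phi^n(Q)}|$ grows exponentially in $n$, and the splittings of successive iterates interact in complicated ways; the delicate point is to verify that no matter how $Q$ evolves, the presence of the rigid $S^i$-portion of $P$ between $v$ and $w$ eventually forces a nontrivial closed periodic Nielsen factor to appear in $\tight{\phi^n(\alpha)}$ rather than being absorbed into the evolving $V^{i-1}$-portion. If a direct manipulation of splittings proves unwieldy, a fallback is to exploit the Nielsen strip in $\widetilde X$ swept out by flowing $\widetilde P$ under $\psi$: this strip is invariant under a deck transformation acting as a unit vertical translation and, by Lemma~\ref{lem:black_injective} applied to the $S^i$-edge of $P$, has positive width in $\mathcal Y^{{\bar h} 1}$; combining this strip with a $V^{i-1}$-connection between its two boundary rays produces a $\integers^2$-subgroup of $G$ directly, again contradicting hyperbolicity.
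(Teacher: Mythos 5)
The paper does not prove this statement at all: it is quoted verbatim from the literature, with the preceding line reading ``Lemma~\cite[Lem.~3.4]{Brinkmann} states:''. There is no in-paper argument to compare yours against, so the only question is whether your blind proof stands on its own. It does not.

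The decisive gap is the step you yourself flag as ``the main obstacle''. You need $\Phi^N$ to fix the conjugacy class of $[\alpha]=[PQ]$ for some $N$, but the stabilization argument you sketch for $\tight{\phi^n(Q)}$ is false whenever $V^{i-1}$ contains an exponential stratum: if $Q$ crosses such a stratum, $|\tight{\phi^n(Q)}|$ grows exponentially and the number of factors in its splitting grows without bound, so there is no fixed ``combinatorial type''. Nothing in eg-aperiodicity or Definition~\ref{defn:improved_rel}(5) prevents this growth, and you give no reason why a ``nontrivial closed periodic Nielsen factor'' must ever be an isolated factor of $\tight{\phi^n(\alpha)}$. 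Without this, no $\Phi$-periodic conjugacy class is produced, and atoroidality is never contradicted. The fallback ``Nielsen strip'' argument has the same defect in disguise: sweeping $\widetilde P$ forward gives a leaf-saturated set, but no element of $G$ obviously preserves both it and a transverse $V^{i-1}$-connection, and you only assert (rather than exhibit) the resulting $\integers^2$. Moreover, the ``positive width in $\mathcal Y^{\bar h 1}$'' you cite from Lemma~\ref{lem:black_injective} does not mean what you need: the two endpoints $v,w$ of the Nielsen path $P$ have the \emph{same} $\rho_{ij}$-image, because the $S^i$-weight of $P$ at level $n$ is a fixed number of edges rescaled by $\lambda_i^{-n}$, which tends to $0$; the image $\rho_{ij}\circ P$ is a back-and-forth arc, not a strip of positive width.

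Two smaller problems. First, the preliminary reduction to $v,w$ lying in the same component of $V^{i-1}$ cannot be achieved by passing to a further power of $\phi$: once $v,w$ are $\phi$-fixed, their components are permuted trivially, so raising to a power changes nothing; if they sit in different components your construction of $Q\subset V^{i-1}$ simply fails. Second, the justification that $[\alpha]\neq 1$ via $\reals$-trees is more complicated than needed (and risks the same misreading of Lemma~\ref{lem:black_injective} noted above); the correct and elementary reason is that if $PQ$ were nullhomotopic then the immersed path $P$ would equal $\tight{Q^{-1}}\subset V^{i-1}$, contradicting that $P$ traverses an $S^i$-edge.

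In short, the statement is an imported result, and your proposed replacement proof stops exactly where the real work starts.
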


The proof of Lemma~\ref{lem:fellow_travel_orflow_lower_or_different_in_r_tree} requires the following reshaped consequence of Lemma~\ref{lem:brinkmann3.4}:

\begin{lem}\label{lem:modified_brinkmann}
Let $G$ be word-hyperbolic.  Let $S^i$ be an exponential stratum and let $Q\rightarrow V$ be an indivisible periodic Nielsen path with initial point in $V^{i-1}\cap S^i$ and terminal point in $S^i$.  Then $Q$ cannot traverse an edge of $S^i$.
\end{lem}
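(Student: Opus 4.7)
\textbf{Proof plan}. I reduce to Lemma~\ref{lem:brinkmann3.4}. Let $u,y$ be the initial and terminal points of $Q$; suppose, for contradiction, that $Q$ traverses an edge of $S^i$. If $y\in V^{i-1}\cap S^i$ then Lemma~\ref{lem:brinkmann3.4} applies to $Q$ directly, so assume $y\in S^i\setminus V^{i-1}$. My goal is to produce an indivisible Nielsen path $Q'$ with both endpoints in $V^{i-1}\cap S^i$ that still traverses an $S^i$-edge, which will contradict Lemma~\ref{lem:brinkmann3.4}.

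By Definition~\ref{defn:improved_rel}(5) I may assume $\tight{\phi(Q)}=Q$, which forces $\phi(u)=u$ and $\phi(y)=y$. Using the Bestvina--Feighn--Handel normal form for indivisible Nielsen paths of exponential type, write $Q=\rho_1\rho_2$, where $\rho_1,\rho_2$ are $i$-legal paths meeting at the unique $S^i$-illegal turn $z$. Since $y\notin V^{i-1}$, every edge of $V$ incident to $y$ lies in $S^i$, so in particular the terminal germ of $\rho_2$ at $y$ is an $S^i$-direction. I plan to construct an $i$-legal path $\tau$ from $y$ to a vertex $v\in V^{i-1}\cap S^i$, chosen so that $Q'=Q\tau$ is itself an indivisible Nielsen path. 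For $Q'$ to be Nielsen one computes $\tight{\phi(Q')}=Q\cdot\tight{\phi(\tau)}$, so it is necessary that $\tight{\phi(\tau)}=\tau$. To produce such $\tau$, I iterate $\phi$ on a candidate $i$-legal continuation of $\rho_2$ past $y$ and apply the splitting lemma (Lemma~\ref{lem:splitting_lemma}). The resulting decomposition into Nielsen paths, $S^i$-edges, and $V^{i-1}$-paths, combined with the absence of nontrivial closed Nielsen paths (a consequence of atoroidality of $\Phi$, which holds since $G$ is word-hyperbolic), selects the sought $\tau$; moreover the terminus of a maximal Nielsen initial segment of such a continuation can be arranged to lie in $V^{i-1}\cap S^i$ using the improved RTT hypothesis that $S^i$-edges meet $V^{i-1}$ only at the attaching vertices. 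Ensuring that $\rho_2\tau$ has no new illegal turn at $y$ preserves $z$ as the unique illegal turn of $Q'$ and secures indivisibility.

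The main obstacle is the construction of $\tau$ with all the required properties simultaneously---namely that $\tau$ is a Nielsen path, that $\rho_2\tau$ is $i$-legal at $y$, and that $\tau$ terminates at a vertex of $V^{i-1}\cap S^i$. This is where the improved relative train track machinery (the splitting lemma together with eg-aperiodicity) and atoroidality of $\Phi$ are essential: the splitting lemma converts the question of existence of $\tau$ into an extraction problem from the splittings of large $\phi$-iterates, and atoroidality prevents the only way the extraction could fail, namely $\tau$ closing up into a nontrivial closed Nielsen loop before reaching $V^{i-1}\cap S^i$. Once $Q'$ is in hand, Lemma~\ref{lem:brinkmann3.4} applied to $Q'$ delivers the desired contradiction.
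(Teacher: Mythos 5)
The proposal's central step---producing a nontrivial $i$-legal path $\tau$ from $y$ to a point of $V^{i-1}\cap S^i$ with $\tight{\phi(\tau)}=\tau$---cannot succeed, and this is a fundamental obstruction rather than an extraction problem that the splitting lemma or atoroidality can resolve. Since $y\notin V^{i-1}$, any nontrivial path $\tau$ issuing from $y$ begins with a nontrivial subinterval of an $S^i$-edge, and so has strictly positive $S^i$-length when measured with the Perron--Frobenius edge-weights $\omega_e$ of Definition~\ref{defn:length}. If $\tau$ is $i$-legal, then by definition tightening $\phi^n(\tau)$ never cancels $S^i$-edges against one another, so the $S^i$-length of $\tight{\phi^n(\tau)}$ equals $\lambda_i^n$ times the $S^i$-length of $\tau$; this grows without bound since $\lambda_i>1$. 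Hence $\tight{\phi^n(\tau)}\neq\tau$ for every $n\geq 1$: the two properties you need simultaneously, ``$i$-legal'' and ``(periodic) Nielsen,'' are jointly incompatible for a nontrivial path starting in $S^i\setminus V^{i-1}$. Atoroidality plays no role here---the obstruction is the eigenvalue $\lambda_i>1$, not a closed Nielsen loop---so the planned path $Q'=Q\tau$ with a single illegal turn does not exist.

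Your instinct that the lemma should reduce to Lemma~\ref{lem:brinkmann3.4} is correct, but the paper's reduction changes the \emph{filtration} rather than the \emph{path}. One subdivides $V$ by adding the $\phi$-orbit $p,\phi(p),\ldots,\phi^{n-1}(p)$ of the terminal point to the $0$-skeleton and sets $(V')^{i-1}=V^{i-1}\cup\{\phi^k(p)\}_{k=0}^{n-1}$, $(V')^j=V^j$ otherwise. With respect to this refined filtration the original path $Q$ already has both endpoints in $(V')^{i-1}\cap(S')^i$, and $(S')^i=S^i$, so Lemma~\ref{lem:brinkmann3.4} applies directly to $Q$; the substantive work is verifying that $\phi$ remains a tight, eg-aperiodic relative train track map for the new filtration and that zero strata remain exactly the unions of contractible components. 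This avoids having to manufacture any auxiliary Nielsen path at all.
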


\begin{proof}
Let $q,p$ be the initial and terminal points of $Q$, with $q\in V^{i-1}\cap S^i$, so that $q$ is a vertex and $p$ is a periodic point in $S^i$ with period $n\geq 1$.  Add to the 0-skeleton of $V$ each of the points $p,\phi(p),\ldots,\phi^{n-1}(p)$, and define a new $\phi$-invariant filtration of $V$ as follows: for $j<i-1$, let $(V')^j=V^j$, so that $(S')^j=S^j$.  Let $(V')^{i-1}=V^{i-1}\cup\{\phi^k(p)\}_{k=0}^{n-1}$, and let $(V')^j=V^j$ for $j>i-1$.  Either $p$ (and hence each $\phi^k(p)$) was already in $V^{i-1}$, or $p$ is an isolated point of $(V')^{i-1}$ and $(S')^i=S^i$.  We also note that $p$ is a component of $(S')^{i-1}$ in the latter case.

For each $k$, we have $\phi^k(p)\in(V')^{i-1}\cap (S')^i$ and $q\in(V')^{i-1}\cap (S')^i$.  Indeed, since $V^i$ is $\phi$-invariant and $p\in S^i$, we have $(S')^j=S^j$ for all $j$.  The claim now follows from Lemma~\ref{lem:brinkmann3.4}, once we verify the following:
\begin{enumerate}
 \item \label{item:tight_rtt}$\phi$ is a tight relative train track map with respect to $(V')^0\subseteq (V')^1\subseteq\cdots\subseteq(V')^{\bar h}$.
 \item \label{item:contractible_components}For each $j$, the subgraph $(S')^j$ is a zero stratum if and only if it is the union of all of the contractible components of $(V')^j$.
\end{enumerate}
Property~\eqref{item:contractible_components} follows from the corresponding fact about the original strata $S^j$, since each $(S')^j$ is obtained from $S^j$ by adding a (possibly empty) discrete set.  To verify Property~\eqref{item:tight_rtt}, first note that each $(V')^j$ is $\phi$-invariant, since $V^j$ is $\phi$-invariant and $\{\phi^k(p)\}$ is $\phi$-invariant.  Since each $(V')^j$ is a subdivision of $V^j$, the new filtration is \emph{maximal} in the sense of~\cite[Section~5]{BestvinaHandel}, i.e. each transition matrix is either the zero matrix or irreducible, and we can thus refer to each $(S')^j$ as being exponential, polynomial, or zero as usual.  Each $(S')^j$ is exponential [polynomial, zero] if and only if $S^j$ is exponential [polynomial, zero].  From this it is easily verified that $\phi$, together with the new filtration, is a tight relative train track map.  Finally, $\phi$ remains e.g.-aperiodic.  Indeed, let $f$ be an edge of $S^i$.  Since $\mathfrak M_i$ is aperiodic, for all edges $f'$ of $S^i$, for all periodic points $p\in f$, all $\epsilon>0$, and all sufficiently large $k$, the $\phi^k$-images of a length-$\epsilon$ subpath of $f$ starting or ending at $p$ traverses $f'$.  It follows that if $f$ is the concatenation of edges $f''$ of $(S')^i$, then each $\phi^k(f'')$ traverses $f'$ for all sufficiently large $k$, so $\phi$ is eg-aperiodic with respect to the new filtration.
\end{proof}

\begin{lem}\label{lem:non_nielsen_traverses_exponential}
Let $P\rightarrow V$ be a non-Nielsen path.  There exists $n_o\geq 0$ such that $\tight{\phi^{n_o+n}(P)}=\tight{\phi^n(Q_1)}\cdots\tight{\phi^n(Q_k)}$ for all $n\geq0$, where some $Q_i$ is an exponential edge.
\end{lem}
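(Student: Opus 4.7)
The plan is to induct on the stratum $i$ such that, for $N$ large, $\tight{\phi^N(P)}$ lies in $V^i$ and traverses an edge of $S^i$; this top stratum is nonincreasing under iteration, hence stabilizes. A zero stratum cannot be the stable top $i$, since $\mathfrak M^i=0$ sends every $S^i$-edge into $V^{i-1}$. The base case where no such $i$ exists would force $\tight{\phi^N(P)}$ to be trivial, contradicting non-Nielsen-ness.

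Apply Lemma~\ref{lem:splitting_lemma} at stratum $i$: there is an $n_0$ with $\tight{\phi^{n_0}(P)}=Q_1\cdots Q_k$, each $Q_s$ being a Nielsen path, an $S^i$-edge, a partial $S^i$-edge at an end $s\in\{1,k\}$, or a $V^{i-1}$-path. If $S^i$ is exponential and some $Q_s$ is a full $S^i$-edge, we are done. Otherwise each $S^i$-edge of $\tight{\phi^{n_0}(P)}$ lies either within a partial-edge boundary piece $Q_1$ or $Q_k$ or hidden inside an interior Nielsen piece. For the boundary case, Definition~\ref{defn:rtt}.(2) ensures that $\phi$ sends $S^i$-edges to paths beginning and ending with $S^i$-edges, so exponential growth forces $\tight{\phi^m(Q_1)}$ to contain a full $S^i$-edge strictly interior for $m$ large; reapplying Lemma~\ref{lem:splitting_lemma} to $Q_1$ refines the overall splitting of $\tight{\phi^{n_0+m}(P)}$ to include a type-(2) piece. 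For the interior-Nielsen case, I decompose each offending Nielsen piece into its indivisible periodic Nielsen components and invoke Lemma~\ref{lem:modified_brinkmann}, which under hyperbolicity precludes indivisible periodic Nielsen paths with endpoints in $V^{i-1}\cap S^i$ and $S^i$ from traversing $S^i$-edges; this lets us surface the hidden $S^i$-edges as type-(2) pieces after refinement.

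If $S^i$ is polynomial, then $S^i$ consists of a single edge $e$ with $\phi(e)=eP'$ for a closed path $P'\subset V^{i-1}$ that is non-trivial (by atoroidality, lest $e$ be a periodic edge giving a fixed conjugacy class) and non-Nielsen (hyperbolicity excludes non-trivial closed Nielsen paths). No splitting piece at this level is an exponential edge, so I descend: the iterates $\tight{\phi^m(e)}$ concatenate $e$ with iterates of $P'$, so for $m$ large a $V^{i-1}$-subpath related to iterates of $P'$ inherits non-Nielsen-ness, and the inductive hypothesis applied to this subpath (top stratum strictly less than $i$) yields a splitting containing an exponential edge. Combining via the moreover clause of Lemma~\ref{lem:splitting_lemma} produces the desired overall splitting of $\tight{\phi^{n_o+n}(P)}$. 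The principal obstacle is the exponential case with no type-(2) piece produced directly: isolating $S^i$-edges hidden inside interior Nielsen pieces requires careful use of the indivisible-Nielsen decomposition together with Lemma~\ref{lem:modified_brinkmann}, and the partial-edge refinement must preserve the splitting lemma's constraint that partial $S^i$-edge pieces appear only at the ends.
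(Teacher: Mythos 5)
Your proposal has a genuine gap in the ``interior-Nielsen'' branch, and it also chooses a more complicated route than necessary.

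The critical error is the appeal to Lemma~\ref{lem:modified_brinkmann} to ``surface'' $S^i$-edges from an interior Nielsen piece $Q_s$. That lemma only excludes an indivisible periodic Nielsen path from traversing an $S^i$-edge when one of its endpoints lies in $V^{i-1}\cap S^i$. Indivisible Nielsen paths of height $i$ whose endpoints are interior periodic points of $S^i$-edges are \emph{not} excluded; these are precisely the usual INPs of the Bestvina--Handel theory, and they genuinely traverse $S^i$-edges. Moreover, a Nielsen piece $Q_s$ is fixed up to tightening by $\phi$, so there is no mechanism by which further iteration could ``refine'' the splitting of $\tight{\phi^{n_0+m}(P)}$ and promote an edge inside $Q_s$ to a type-(2) piece. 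The whole strategy of forcing an $S^i$-edge to appear as a piece is the wrong lever: the conclusion asks only for \emph{some} $Q_i$ to be an exponential edge, not an $S^i$-edge, and it may well be that one must descend to a lower stratum to find it.

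You also omit the case that drives the induction in the paper's argument: since $P$ is not a Nielsen path, the ``moreover'' clause of Lemma~\ref{lem:splitting_lemma} forces some piece $Q_s$ of the splitting to fail to be a Nielsen path. If that $Q_s$ is an exponential edge, one is done; if it is a path in $V^{i-1}$, one inducts on the filtration length directly; if $S^i$ is polynomial with $Q_s$ the single edge $f$, one applies the splitting lemma again to $\phi(f)$, extracts a non-Nielsen $V^{i-1}$-piece, and inducts. Your polynomial branch has a further inaccuracy: you claim atoroidality forces $P'$ nontrivial in $\phi(e)=eP'$, but a non-loop fixed edge does not produce a fixed conjugacy class, so $P'$ can be trivial; the correct reason $P'$ is nontrivial in the case of interest is that you have already selected $Q_s=e$ as the \emph{non-Nielsen} piece, whereas a periodic edge is a Nielsen path. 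In short, the right engine is ``some piece is non-Nielsen, branch on its type,'' not ``surface the hidden $S^i$-edges.''
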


\begin{proof}
This is proven by induction on ${\bar h}$.  By Lemma~\ref{lem:splitting_lemma}, there exists $n_o\geq0$ such that $\tight{\phi^{n_o}(P)}=Q_1\cdots Q_k$ and for all $n\geq n_o$, we have $\tight{\phi^n(P)}=\tight{\phi^{n-n_o}(Q_1)}\cdots\tight{\phi^{n-n_o}(Q_k)}$ and each $Q_s$ is either a Nielsen path, an edge of $S^i$, or a path in $V^{i-1}$.  Since $P$ is not a Nielsen path, some $Q_s$ is not a Nielsen path. If $Q_s$ is an exponential edge, we are done.  If $Q_s$ is a path in $V^{i-1}$, the claim follows by induction.  It remains to consider the case in which $S^i$ is a polynomial stratum consisting of a single edge $f$, and $Q_s=f$.  Applying Lemma~\ref{lem:splitting_lemma} to $\phi(f)$ yields $n_1\geq 0$ such that $\tight{\phi^{n_1}(f)}$ splits as a concatenation $U_1\cdots U_\ell$.  Since $f$ is not a Nielsen path, $\ell>1$, so there exists some $U_s$ with $U_s\neq f$ and $U_s$ not a Nielsen path.  Hence $U_s$ is a path in $V^{i-1}$.  Thus, by induction, $\tight{\phi^n(P)}$ traverses an exponential edge for all sufficiently large $n$.
\end{proof}

The following is related to Theorem~\ref{thm:quasiconvexity} and will be used in Section~\ref{sec:cutting}.

\begin{lem}\label{lem:polynomial_not_map_into_self}
Let $\phi:V\rightarrow V$ be an improved train track map with $V$ finite and $\pi_1X$ word-hyperbolic.  Let $e$ be an edge of a polynomial stratum so that $\phi(e)=eP$ as in Definition~\ref{defn:improved_rel}.  Then either $P$ is trivial or $\tight{\phi^n(P)}$ traverses an edge in an exponential stratum for all sufficiently large $n$.
\end{lem}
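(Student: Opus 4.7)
The plan is to reduce to showing that if $P$ is nontrivial, then $P$ cannot be a periodic Nielsen path, and then to invoke Lemma~\ref{lem:non_nielsen_traverses_exponential} directly. The reason to expect this to work is that $P\subset V^{i-1}$ and $\phi(e)=eP$ is a canonical form, so $P$ is automatically tight and closed at a fixed vertex; the only way its iterates could avoid exponential edges forever is for $P$ to have immutable tightened image, i.e.\ to be a periodic Nielsen path.

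Assume $P$ is nontrivial. By Definition~\ref{defn:improved_rel}(3), $P$ is a closed path in $V^{i-1}$ based at a $\phi$-fixed vertex, and because $eP=\phi(e)$ is the image of a single edge under a relative train track map, $P$ is immersed (tight) --- otherwise $eP$ itself would fail to be tight. Since $V$ is a graph and $P$ is a tight nontrivial closed path, $P$ represents a nontrivial element of $\pi_1(V)$, so the conjugacy class $[P]\in F$ is nontrivial. Suppose toward contradiction that $P$ is a periodic Nielsen path, i.e.\ $\tight{\phi^r(P)}=P$ for some $r\geq 1$. Then $\Phi^r$ stabilizes $[P]$, contradicting the atoroidality of $\Phi$, which follows from the word-hyperbolicity of $G\cong F\rtimes_\Phi\integers$ as recorded in the introduction and used in the proof of Lemma~\ref{lem:finite_intersection_leaf_edge_2}. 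Hence $P$ is not a periodic Nielsen path; in particular, $P$ is not a Nielsen path.

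Now Lemma~\ref{lem:non_nielsen_traverses_exponential} applies to $P$, providing $n_o\geq 0$ and a splitting $\tight{\phi^{n_o}(P)}=Q_1\cdots Q_k$ with some $Q_s$ an edge of an exponential stratum $S^j$, such that $\tight{\phi^{n_o+n}(P)}=\tight{\phi^n(Q_1)}\cdots\tight{\phi^n(Q_k)}$ for every $n\geq 0$. Since $Q_s$ is a single edge in $S^j$, it is legal, so by Definition~\ref{defn:rtt}(4) each $\phi^n(Q_s)$ is $j$-legal, hence tight, and by Definition~\ref{defn:rtt}(2) it begins with an $S^j$-edge. Thus $\tight{\phi^n(Q_s)}$ traverses an edge of $S^j$ for every $n\geq 0$, and hence so does $\tight{\phi^m(P)}$ for every $m\geq n_o$, as required.

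The only subtlety is the reduction to the non-Nielsen case; this is where hyperbolicity enters essentially, since without atoroidality nothing would prevent $P$ from being a nontrivial closed Nielsen path all of whose tightened iterates coincide with $P\subset V^{i-1}$ and avoid exponential edges. Once $P$ is known not to be a Nielsen path, the conclusion is an immediate application of the splitting-lemma bookkeeping already packaged in Lemma~\ref{lem:non_nielsen_traverses_exponential}.
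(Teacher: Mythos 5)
Your proof takes essentially the same route as the paper's: observe $P$ is a closed essential path, use word-hyperbolicity (atoroidality) to rule out $P$ being a Nielsen path, and then invoke Lemma~\ref{lem:non_nielsen_traverses_exponential}. The paper's own proof is exactly this two-line reduction.

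One small imprecision in your final paragraph: a $j$-legal path is \emph{not} in general tight, since Definition~\ref{defn:rtt} permits backtracking inside $V^{j-1}$ in a $j$-legal path. The step ``$j$-legal, hence tight'' is therefore an overclaim. It does not affect the conclusion, though: since $\phi^n(Q_s)$ begins and ends with $S^j$-edges (Definition~\ref{defn:rtt}(2)) and no two consecutive $S^j$-edges can fold under tightening (this is the argument appearing in the proof of Lemma~\ref{lem:black_injective}, using Definition~\ref{defn:rtt}(3)), every $S^j$-edge of $\phi^n(Q_s)$ survives in $\tight{\phi^n(Q_s)}$, which is all you need.
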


\begin{proof}
Since $P$ is closed, hyperbolicity implies that $P$ is not a Nielsen path, and so the claim follows from Lemma~\ref{lem:non_nielsen_traverses_exponential}.
\end{proof}

\subsection{Metric on $\widetilde X$}\label{subsec:metric_on_universal_cover}

\begin{prop}\label{prop:metric}
There exists a geodesic metric $\dist$ on $\widetilde X$ such that $G$ acts properly and cocompactly on $(\widetilde X,\dist)$ and the map $\mathfrak q:\widetilde X\rightarrow\mathbf R$ is $1$-Lipschitz.  Moreover, forward paths are convex and the intersection of a geodesic and a cell has finitely many components.
\end{prop}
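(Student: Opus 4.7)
The plan is to put an explicit piecewise Euclidean metric on $X$, pull it back to $\widetilde X$, and verify each of the stated properties by local calculation inside a single $2$-cell. I would declare every vertical and every horizontal $1$-cell of $X$ to have unit length, and give each $2$-cell $R_e$, with boundary cycle $t_a^{-1} e t_b \phi(e)^{-1}$, the metric of the Euclidean unit square $[0,1]_x \times [0,1]_y$, attached by $(x,0) \mapsto x\in e$, $(0,y)\mapsto y\in t_a$, $(1,y)\mapsto y\in t_b$, and $(x,1)$ mapped by the piecewise affine parametrization of $\phi(e)$ that sends $[(i-1)/k,\,i/k]$ by an affine homeomorphism onto the $i^{\text{th}}$ edge of $\phi(e)$, where $k$ denotes the combinatorial length of $\phi(e)$. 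This presents $X$ as a compact $M_0$-polyhedral complex with only finitely many isometry types of cells, so by the standard theory of such complexes (Bridson--Haefliger, Ch.~I.7) the induced length metric $\dist$ pulls back to a complete, proper, geodesic metric on $\widetilde X$; the deck transformation action of $G$ is by isometries, is cocompact since $X$ is compact, and is properly discontinuous on the locally compact space $\widetilde X$, hence metrically proper.

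The $1$-Lipschitz assertion for $\mathfrak q$ follows from the pasting lemma: on each square $R_e \cong [0,1]^2$ the restriction of $\mathfrak q$ is projection onto the second factor, hence $1$-Lipschitz; on each horizontal $1$-cell it is an orientation-preserving isometry onto an edge of $\subdline$; on each vertical $1$-cell and each $0$-cell it is constant. Convexity of forward paths is then a short corollary: within each $R_e$ the midsegment $t_p$ is the Euclidean straight segment $\{p\}\times[0,1]$, of length $1$, so a forward path of length $r$ has $\dist$-length exactly $r$ while its $\mathfrak q$-image is an interval of length $r$. Any other $\dist$-path between its endpoints has length at least $r$ by the $1$-Lipschitz property, so forward paths are geodesics, and the same argument applied to subpaths gives convexity.

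For the final clause, that a geodesic meets each cell in finitely many components, I would invoke the general fact that geodesics in an $M_0$-polyhedral complex with finitely many isometric shapes of cells are piecewise linear, with a locally finite set of breakpoints at cell boundaries (Bridson--Haefliger, Ch.~I.7); hence inside any given cell the geodesic breaks into finitely many maximal straight pieces on any bounded subinterval, and these pieces form the components of the intersection. I expect this last point to be the main technical obstacle to writing cleanly: it relies on the general piecewise Euclidean machinery rather than on any special feature of the train-track structure, whereas the remainder of the argument is essentially bookkeeping within coordinates on a single unit square.
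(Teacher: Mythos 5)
The central step of your construction is not coherent: the piecewise Euclidean structure you describe is not an $M_0$-polyhedral complex in the sense of Bridson--Haefliger, so the machinery you invoke does not apply. You declare every vertical $1$-cell to have unit length, and simultaneously model each $2$-cell $R_e$ on the Euclidean unit square with its top edge subdivided into $k$ equal pieces mapping to the $k$ edges of $\phi(e)$. But then a vertical edge $f$ traversed by $\phi(e)$ receives two incompatible lengths: it has length $1$ as the base of $R_f$, and length $1/k$ as a sub-edge of the top of $R_e$. The gluing maps are therefore not isometries, the complex is not an $M_\kappa$-complex, and BH Ch.~I.7 gives you nothing -- not the geodesic metric, not the properness, not the piecewise-linearity of geodesics. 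This is precisely the difficulty the paper's proof is engineered around: it introduces a temporary subdivision of each $2$-cell along preimages of vertices, and then puts on each temporary $2$-cell a metric (not Euclidean) in which the scale of a vertical segment at horizontal coordinate $x$ interpolates linearly between the length $D_e$ of the left vertical side and the length $\omega_e$ of the right vertical side. This makes the induced lengths on $1$-cells consistent and forces the argument to be done by hand (eligible paths, completeness, Hopf--Rinow, etc.) rather than by citation.

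There is a second gap in the convexity step. You correctly observe that a forward path has $\dist$-length equal to its $\mathfrak q$-length and hence is a geodesic, but being a geodesic is weaker than being convex: you need that any geodesic between two points of a forward path coincides with the forward path between them, i.e.\ a uniqueness statement. The paper's proof devotes most of the "Forward paths are convex" paragraph to exactly this: given a competing geodesic $\sigma'$ with the same endpoints, it shows $\mathfrak q|_{\sigma'}$ is injective, analyzes the first cell $\sigma'$ enters, and derives a strict length inequality unless $\sigma'$ is the forward path. Your one-sentence reduction to "the same argument applied to subpaths" skips this entirely. The last clause, about finitely many components of intersection with a cell, is also carried out by hand in the paper (bounding endpoints of arcs via the "thickness" $\mathfrak t$ of $\widetilde X$), but that is the least serious issue; the inconsistent metric is the one that would actually sink the argument.
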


\begin{proof}
Fix an assignment of a positive length $\omega_e$ to each edge $e$ of $V$.

\textbf{Temporary subdivision:}  Each 2-cell $R_e$ has boundary path $et\tilde\phi(e)^{-1}t^{-1}$, where $e$ is a vertical edge in some $\widetilde V_n$ and $t$ is horizontal.  For each \emph{temporary 0-cell} $x\in\psi_1^{-1}(\widetilde V_{n+1}^0)\cap e$, we add a \emph{temporary 1-cell}, namely the midsegment joining $x$ to $\psi_1(x)$.  The \emph{temporary 2-cells} are components of the complement in $R_e$ of the closed temporary 1-cells and vertical 1-cells.  By performing this construction on each 2-cell, we obtain a $G$-invariant subdivision of $\widetilde X$.

\textbf{Weights on 1-cells:}  We assign to each horizontal (temporary or non-temporary) 1-cell a length $$\eta\geq\max\left(\{\sum_{f\in\mathrm{Edges}(\phi(e))}\omega_f\,\co\,e\in\mathrm{Edges}(V)\}\cup\{\omega_e\,:\,e\in\mathrm{Edges}(V)\}\right).$$

\textbf{Defining a metric:}  Regard the temporary 2-cell $R$, with edge $e$ at right, as a copy of $[0,\eta]\times[0,\omega_e]$.  Let $D_e$ denote the length of the left vertical boundary subpath of $R$.  For $(x_1,y_1),(x_2,y_2)\in R$, we let $$\dist_R((x_1,y_1),(x_2,y_2))=|x_1-x_2|+\eta^{-1}\omega_e^{-1}\left[\min(x_1,x_2)(\omega_e-D_e)+D_e\right]|y_1-y_2|.$$

A horizontal path in $R$ joining $(x_1,y)$ to $(x_2,y)$ has \emph{length} $\dist_R((x_1,y),(x_2,y))=|x_1-x_2|$ and a vertical path in $R$ joining $(x,y_1)$ to $(x,y_2)$ has \emph{length} $$\dist_R((x,y_1),(x,y_2))=\eta^{-1}\omega_e^{-1}\left[x(\omega_e-D_e)+D_e\right]|y_1-y_2|.$$  The \emph{length} of a concatenation of finitely many horizontal and vertical paths in $R$ is the sum of the lengths of these paths.  Note that $\dist_R$ is a geodesic metric, since for any $(x_1,y_1),(x_2,y_2)\in R$, the distance $\dist_R((x_1,y_1),(x_2,y_2))$ is equal to the length of a path in $R$ joining $(x_1,y_1)$ to $(x_2,y_2)$ that is the concatenation of a vertical path and a horizontal path or vice versa.

An \emph{eligible path} is a path $\gamma\rightarrow\widetilde X$ that decomposes as a concatenation $\gamma=A_0A_1\cdots A_k$, where each $A_i$ is a geodesic of a temporary 2-cell $R_i$.  Let $a_i,b_i$ be the endpoints of $A_i$.  Then the length of $\gamma$ is $|\gamma|=\sum_i\dist_{R_i}(a_i,b_i)$.  For $x,y\in\widetilde X$, we let $\dist(x,y)=\inf\{|\gamma|\}$, where $\gamma$ varies over all eligible paths joining $x,y$.  This is obviously symmetric, and the triangle inequality holds since the concatenation of eligible paths is eligible.  Let $x,y\in\widetilde X$.  If $x,y$ do not lie in a common closed temporary cell, then any eligible path from $x$ to $y$ intersects a closed (possibly temporary) 1-cell not containing $x$.  Hence $\dist(x,y)$ is bounded below by the minimum distance in the finitely many temporary 2-cells containing $x$ from $x$ to the set of closed 1-cells not containing $x$.  If $x,y$ lie in a common temporary cell $R$, then any eligible path from $x$ to $y$ either leaves $R$, and hence has positive length as above, or has length at least $\dist_R(x,y)$.

\textbf{Completeness:}  The length space $(\widetilde X,\dist)$ is complete.  Indeed, if $(x_n)$ is a Cauchy sequence in $\widetilde X$, then by local finiteness of $\widetilde X$, the sequence $(x_n)$ is partitioned into finitely many subsequences, each of which lies in a single closed temporary 2-cell.  The metric on each temporary 2-cell is complete, so each of these subsequences converges to a point in $\widetilde X$, and these points coincide since $(x_n)$ is Cauchy.

\textbf{$\widetilde X$ is a geodesic space:}  This will follow from the Hopf-Rinow theorem once we establish that $(\widetilde X, \dist)$ is locally compact.  To this end, we claim that the identity $\widetilde X\rightarrow(\widetilde X,\dist)$ is a homeomorphism.  Regarding each closed 2-cell $R$ of $\widetilde X$ as a Euclidean unit square, the identity $R\hookrightarrow(R,\dist)$ is bi-Lipschitz and thus continuous, and hence a homeomorphism since $R$ is compact and $(R,\dist)$ is Hausdorff.  Since $\widetilde X$ is covered by the locally finite collection of closed 2-cells, it follows that $\widetilde X$ and $(\widetilde X,\dist)$ are homeomorphic.

\textbf{Forward paths are convex:}  A forward path $\sigma$ is isometrically embedded since $\mathfrak q:\widetilde X\rightarrow\widetilde{\mathbb S}$ is distance non-increasing and $|\mathfrak q(\sigma)|=|\sigma|$.  To show that $\sigma$ is convex, let $\sigma'$ be a geodesic intersecting $\sigma$ in its endpoints $u,v$.  Since $|\sigma|=|\sigma'|$, we have $\mathfrak q(\sigma)=\mathfrak q(\sigma')$, and that the restriction of $\mathfrak q$ to $\sigma'$ is injective.  Indeed, for any $x,y\in\sigma'$, we have that $\dist_{\sigma'}(x,y)\geq|\mathfrak q(x)-\mathfrak q(y)|$.  If $\mathfrak q(x)=\mathfrak q(y)$, then $|\sigma'|\geq|\sigma|+\dist_{\widetilde X}(x,y)$.  Moreover, no nontrivial initial subpath of $\sigma'$ lies in a horizontal 1-cell, for we would then either have $\mathfrak q(\sigma')\neq\mathfrak q(\sigma)$, or $\sigma'\cap\sigma$ would contain points other than $u$ and $v$.

For each open 2-cell or open horizontal 1-cell $c$, we have that $c\cap\sigma'$ is the $\mathfrak q$-preimage in $\sigma'$ of an open interval in $\widetilde{\mathbb S}$, and hence $c\cap\sigma'$ is open in $\sigma'$.  There exists a 2-cell or horizontal 1-cell $R$ whose interior contains the (nonempty) interior of an initial subpath $\sigma''$ of $\sigma'$.  Indeed, the finitely many cells whose closures contain $u$ have interiors that intersect $\sigma'$ in open sets for 2-cells or horizontal 1-cells and singletons for vertical 1-cells, since $\mathfrak q|_{\sigma'}$ is injective.  There are finitely many singletons by local finiteness at $u$, and hence the initial open subinterval of $\sigma'$ consists of a single intersection of the first type, since $\sigma'$ is connected.  This must be an open path in a 2-cell as noted earlier.  Since $\sigma''$ is not horizontal, $|\sigma''|>\dist_R(u,p)$, where $p$ is the terminal point of $\sigma''$.  This implies that $|\sigma'|>|\sigma|$, a contradiction.

\textbf{Intersections of 1-cells with geodesics:}  Let $e$ be a vertical 1-cell and suppose that $\gamma\cap e$ has at least $\mathfrak t+1$ components, where $\mathfrak t$ is the ``thickness'' of $\widetilde X$, i.e. the maximum number of temporary 2-cells intersecting a vertical 1-cell.  Then there exists a temporary 2-cell $R$ containing $e$ and non-vertical subpaths $\sigma,\sigma'$ of $\gamma$ emanating from distinct components of $\gamma\cap e$.  The choice of $\eta$ ensures that this contradicts the fact that $\gamma$ is a geodesic.  Hence there are at most $\mathfrak t$ components of $\gamma\cap e$.

\textbf{Intersections of 2-cells with geodesics:}  A geodesic $\gamma$ has connected intersection with each midsegment, by convexity of forward paths.  Let $R$ be a temporary 2-cell.  First observe that $\gamma$ intersects each midsegment of $R$ in a possibly empty interval, by convexity of forward paths.  By considering the total number of possible endpoints of arcs of $\gamma\cap R$, we see that $\gamma\cap R$ has at most $4\mathfrak t+2$ endpoints and hence finitely many components.
\end{proof}

When $G$ is word-hyperbolic, we always denote by $\delta$ a constant such that $(\widetilde X,\dist)$ is $\delta$-hyperbolic.  For simplicity, we will rescale the metric $\dist$ by $\eta^{-1}$, so that horizontal edges have unit length.  We will refer to this new metric by $\dist$ and to the (rescaled) length of each vertical edge $e$ as $\omega_e$.

\begin{rem}\label{rem:edge_length}
We assume that $\dist$ has been defined using the edge-lengths $\omega_e$ assigned to vertical edges in Construction~\ref{cons:rtree}; recall that each vertical edge was assigned exactly one \emph{positive} length.  However, any other metric satisfying the conclusions of Proposition~\ref{prop:metric} could be used in the remainder of the paper.  In fact, one can relax the requirement that forward paths be convex, requiring only that they are uniformly quasiconvex.\begin{com}Do vertical edges actually map by local isometries?\end{com}
\end{rem}

\section{Quasiconvexity of polynomial subtrees}\label{sec:polynomial_quasiconvexity}

\begin{thm}\label{thm:polynomial_subgraph_quasiconvexity}
Let $\phi:V\rightarrow V$ be an improved relative train track map with $\pi_1X$ hyperbolic and let $C\subset V$ be a connected subgraph none of whose edges belong to exponential strata.  Then $\widetilde C\subset\widetilde V_0$ is quasi-isometrically embedded in $\widetilde X$.
\end{thm}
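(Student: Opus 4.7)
The plan is to run a disc-diagram argument bounding reduced paths in $\widetilde C$ in terms of $\widetilde X$-geodesics with the same endpoints. Fix $a,b\in\widetilde C$; since $\widetilde V_0$ is a forest and $\widetilde C\subseteq\widetilde V_0$, there is a unique reduced edge-path $\gamma$ in $\widetilde C$ from $a$ to $b$, and I want to bound $|\gamma|$ linearly in $\dist(a,b)$. Let $\sigma$ be a $\widetilde X$-geodesic from $a$ to $b$ and let $D\to\widetilde X$ be a minimal-area disc diagram bounded by $\gamma\sigma^{-1}$.

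First I decompose $D$ into horizontal bands in the standard way: each 2-cell $R_e$ has two horizontal boundary edges (the $t$-edges on the left and right of $\partial R_e = t_a^{-1}et_b\phi(e)^{-1}$), and a band is a maximal sequence of 2-cells glued along horizontal edges. Every band has exactly two horizontal endpoint edges on $\partial D$; since $\gamma\subseteq\widetilde V_0$ contains no horizontal edges, both must lie on $\sigma$. Consequently the number of bands is at most $|\sigma|/2$. The bottom of each band is a vertical path $e_1\cdots e_k\subseteq\widetilde V_n$ for some $n$, and the top is $\phi(e_1)\cdots\phi(e_k)\subseteq\widetilde V_{n+1}$; edges of $\gamma$ itself appear as bottoms of bands incident to $\gamma$ in $\widetilde V_0$.

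The heart of the argument uses the constraint that no edge of $\gamma$ belongs to an exponential stratum. By Lemma~\ref{lem:polynomial_not_map_into_self}, each polynomial edge $e$ on $\gamma$ satisfies $\phi(e)=eP$ where either $P$ is trivial or the iterates $\tight{\phi^n(P)}$ eventually traverse edges of exponential strata; zero-stratum edges map entirely into lower strata. I argue that an exponential edge that arises in the interior of $D$ by iterating $\phi$ on edges of $\gamma$ must eventually appear on the boundary of $D$, and since it cannot lie on $\gamma$, it must lie on $\sigma$. Each such appearance contributes at least one unit to $|\sigma|$. Combining this with the Perron--Frobenius growth rate of each exponential stratum (to convert exponential-edge counts back into the original polynomial length along $\gamma$), and with the hyperbolicity-driven absence of nontrivial closed Nielsen paths (which prevents polynomial bands from closing up into $\integers^2$-like tiles), should yield the desired bound $|\gamma|\le K\,\dist(a,b)+L$.

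The main obstacle I expect is the bookkeeping required to track how many generations of bands must be stacked over a given edge of $\gamma$ before an exponential edge appears, together with the need to handle zero-stratum edges and the width of polynomial bands simultaneously. Lemma~\ref{lem:polynomial_not_map_into_self} provides the right qualitative statement (existence of an exponential edge in some $\tight{\phi^n(P)}$), but extracting a uniform-in-edge bound on $n$ relies on finiteness of $V$ and on the finite alphabet from which all relevant paths are built. Once that uniform bound is in place, charging each appearance of an exponential edge on $\sigma$ against a bounded block of $\gamma$ should complete the proof.
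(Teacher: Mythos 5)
The paper's proof of Theorem~\ref{thm:polynomial_subgraph_quasiconvexity} is not a disc-diagram argument against a vertical tree path. It is an induction on the filtration length $\bar h$, branching on the type of the top stratum: when $S^{\bar h}$ is exponential, $C\subset V^{\bar h-1}$ and the claim follows from Proposition~\ref{prop:sub_mapping_torus_quasiconvexity} plus the inductive hypothesis; when $S^{\bar h}$ is polynomial, $X$ is exhibited as a graph of spaces over a cyclic malnormal edge group and the claim is assembled from the inductive pieces via the Bigdely--Wise quasiconvex amalgamation criterion; when $S^{\bar h}$ is a zero stratum, Lemma~\ref{lem:invariant_subgraph_isomorphism} shows $\widetilde C$ is coarsely equal to a tree in $V^{\bar h-1}$, and one recurses. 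The band/disc-diagram work is confined to the proof of Proposition~\ref{prop:sub_mapping_torus_quasiconvexity}, where both boundary arcs live in 2-complexes (so horizontal edges of $\gamma'$ can be matched to those of $\gamma$ via vertical dual curves), and where the decisive estimate --- a uniform bound $M$ on the length of $\gamma'$'s vertical intersection with each band carrier --- is won by pigeonhole together with the $\phi$-invariance of $V'$, again through Lemma~\ref{lem:invariant_subgraph_isomorphism}.

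Your version, running bands directly against a boundary arc $\gamma\subset\widetilde V_0$, is a genuinely different route, and it has gaps as written. Bounding the number of bands by $|\sigma|/2$ says nothing about the length of any individual band or about $|\gamma|$: a single level-$0$ band can have its entire bottom on $\gamma$. The paper's mechanism for controlling band width crucially uses that the bottom boundary lies in a $\phi$-invariant sub-mapping-torus; a connected subgraph $C$ with no exponential edges is in general \emph{not} $\phi$-invariant (a polynomial edge satisfies $\phi(e)=eP$ with $P$ potentially meeting exponential strata), so that diagram-reduction argument is unavailable to you and you offer no replacement. Your central charging scheme --- exponential edges produced by iterating $\phi$ on $\gamma$-edges must reach $\sigma$, then convert exponential-edge counts back to polynomial length via Perron--Frobenius --- is not established, and the required inequality runs the hard way: under $\phi$-iteration the exponential-edge count grows, so you would need an injectivity/no-double-counting argument to charge each unit of $\sigma$ to a bounded block of $\gamma$; you also give no mechanism for periodic polynomial edges (trivial $P$), which never produce an exponential edge under any iterate. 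You acknowledge the bookkeeping is the obstacle, but in this case the bookkeeping \emph{is} the proof, and the paper avoids it entirely by isolating the diagrammatic step to the $\phi$-invariant setting of Proposition~\ref{prop:sub_mapping_torus_quasiconvexity} and routing the polynomial-stratum step through Bigdely--Wise.
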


\begin{proof}
We argue by induction on the length of the filtration $V^0\subset V^1\subset\cdots\subset V^{{\bar h}}=V$.  If $S^{{\bar h}}$ is an exponential stratum, then $C\subset V^{{\bar h}-1}$.  By Proposition~\ref{prop:sub_mapping_torus_quasiconvexity}, each component $X^{{\bar h}-1}_o$ of the mapping torus $X^{{\bar h}-1}$ of $\phi|_{V^{{\bar h}-1}}$ has the property that $\widetilde X^{{\bar h}-1}_o$ quasi-isometrically embeds in $\widetilde X$.  The subgraph $C$ is contained in some $X^{{\bar h}-1}_o$.  By induction, $\widetilde C$ is quasi-isometrically embedded in $\widetilde X^{{\bar h}-1}_{\circ}$, and thus $\widetilde C$ quasi-isometrically embeds in $\widetilde X$.

Consider the case in which $S^{{\bar h}}$ is a polynomial stratum, which consists of a single edge $e$ since $\phi$ is an improved relative train track map.  Let $P\rightarrow V$ be the path such that $\phi(e)=eP$.

Let $R_e$ be the (closed) 2-cell based at $e$, with boundary path $t_1^{-1}et_2P^{-1}e^{-1}$, where $t_1,t_2$ are horizontal edges.  Then $X$ splits as a graph of spaces where the vertex spaces are the components of $X^{{\bar h}-1}$ and the unique edge space is the cylinder $\interior{R_e}\cup\interior{e}$.

Consider the corresponding splitting of $\pi_1X$ over a cyclic group $Z$; the vertex groups are isomorphic to the various $\pi_1X^{{\bar h}-1}_o$ and are hyperbolic since $\pi_1X$ is hyperbolic and the edge group is cyclic.  The two inclusions of $Z$ into the vertex groups do not have nontrivially-intersecting conjugates.  Moreover, $Z$ is maximal cyclic, and hence malnormal, since its generator has translation length 1 in $\mathbf R$.

By induction, for each $X^{{\bar h}-1}_o$, the intersection $C_o=C\cap X^{{\bar h}-1}_o$ has the property that $\widetilde C_o$ is quasi-isometrically embedded in $\widetilde X^{{\bar h}-1}_o$.  In the special case when $e\not\subset C$, there is a unique $C_o=C$, and $\widetilde C$ quasi-isometrically embeds in $\widetilde X$ as above.  In general, $C=C_1\cup e\cup C_2$, where $C_1,C_2$ are the (possibly equal) intersections of $C$ with the components of $X^{{\bar h}-1}$.  As above, the claim holds for $C_1,C_2$.  The result now holds for $C$ by~\cite[Thm.~4.13]{BigdelyWise}.

Let $S^{\bar h}$ be a 0-stratum and let $C'$ be the union of all components of $V^{{\bar h}-1}\cap C$ that are intersections of $C$ with non-contractible components of $V^{{\bar h}-1}$.  Let $J=\closure{C-C'}$.  By Lemma~\ref{lem:invariant_subgraph_isomorphism}, each component of $J$ is a tree whose intersection with $C$ is a single vertex.  We note that it follows that $C'$ is connected since $C$ is.  By induction, $\widetilde C'\subset\widetilde X$ is quasi-isometrically embedded, and $\widetilde C$ is contained in the $R$-neighborhood of $\widetilde C'$, where $R$ is the maximum diameter of a component of $J$.
\end{proof}

\begin{lem}\label{lem:invariant_subgraph_isomorphism}
Let $V$ be a finite connected graph and let $\phi:V\rightarrow V$ induce an isomorphism of $\pi_1V$.  Let $V'$ be a $\phi$-invariant subgraph whose components are not contractible.  Let $P\rightarrow V$ be an immersed path satisfying one of the following:
\begin{enumerate}
 \item $P$ starts and ends on $V'$ and $\phi(P)$ is path-homotopic into $V'$; or
 \item\label{part2}$P$ is a closed path and $\phi(P)$ is homotopic into $V'$.
\end{enumerate}
Then $P$ is homotopic into $V'$.
\end{lem}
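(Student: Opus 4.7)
The plan is to translate the lemma into a statement about the free product splitting $F := \pi_1(V) \cong G_1 * \cdots * G_c * F_k$ induced by the subgraph $V'$, where $G_i = \pi_1(V'_i)$ are the (nontrivial) free groups of the components of $V'$ and $F_k$ corresponds to the complementary edges (relative to a spanning tree of $V$ refining spanning trees of the $V'_i$). Correspondingly, the preimage $\tilde V'\subseteq\tilde V$ in the universal cover is a disjoint union of subtrees whose components form the $F$-set $\bigsqcup_i F/G_i$, with the stabilizer of the lift $\tilde V'_i$ through a fixed basepoint being $G_i$.

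First I would show that $\phi$ induces a bijection $\sigma\co\{1,\ldots,c\}\to\{1,\ldots,c\}$ with $\phi(V'_i)\subseteq V'_{\sigma(i)}$. With respect to the basis-adapted decomposition $H_1(V;\integers) = \bigoplus_i H_1(V'_i) \oplus H_1(V/V')$, the matrix of $\phi_*$ is block upper-triangular, and within the upper-left $m\times m$ sub-matrix (where $m = \sum_i \mathrm{rk}\,V'_i$) the $(j,i)$-block vanishes unless $j=\sigma(i)$, since $\phi(V'_i)\subseteq V'_{\sigma(i)}$ forces $\phi_*(H_1(V'_i))\subseteq H_1(V'_{\sigma(i)})$. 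The constraint $\det\phi_* = \pm 1$ then forces this upper-left sub-matrix to be non-singular, which requires $\sigma$ to be a bijection and each diagonal block $\alpha^{i,\sigma(i)}\co H_1(V'_i)\to H_1(V'_{\sigma(i)})$ to be a $\integers$-isomorphism. The main obstacle and the key further step is to upgrade the a priori inclusion $\phi_*(G_i) \leq h_i G_{\sigma(i)} h_i^{-1}$ (for some $h_i \in F$ recording the component of $\tilde V'$ containing $\tilde\phi(\tilde V'_i)$) to an exact equality: for $G_i\cong\integers$ this is immediate from the $H_1$-isomorphism, and for $\mathrm{rk}(G_i)\geq 2$ one combines injectivity of $\phi_*|_{G_i}$ with the Schreier index formula (ruling out proper finite-index image) and the surjectivity of $\phi_*$ onto $F$ (ruling out a proper infinite-index image of the same rank, which would contradict that $\phi_*(F)$ generates $F$ through the free-product structure).

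Given the equality $\phi_*(G_i) = h_i G_{\sigma(i)} h_i^{-1}$, both cases of the lemma follow. For case~(2), $\phi_*([P])$ being conjugate into some $G_j$ means $[P]$ lies in $\phi_*^{-1}$ of a conjugate of $G_j$, which by the equality and $\sigma$-bijectivity is itself a conjugate of $G_{\sigma^{-1}(j)}$; hence $P$ is freely homotopic into $V'_{\sigma^{-1}(j)}\subseteq V'$. For case~(1), the hypothesis on $\phi(P)$ forces the endpoints of $\phi(P)$ into a common component of $V'$, whence by injectivity of $\sigma$ the endpoints of $P$ already lie in a common component $V'_a$; lifting $P$ to $\tilde V$ starting at a point of $\tilde V'_a$, the other endpoint lies in $g\cdot\tilde V'_a$ for some $g\in F$, and the path-homotopy hypothesis on $\phi(P)$ translates to $\phi_*(g) \in h_a G_{\sigma(a)} h_a^{-1}$; the exact equality then gives $g\in G_a$, so the lift of $P$ remains in a single component of $\tilde V'$, and $P$ is path-homotopic rel endpoints into $V'$.
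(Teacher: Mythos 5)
Your approach is genuinely different from the paper's. The paper passes to a finite cover $\widehat V\to V$ (via separability) in which $P$ lifts to an embedded path $\widehat P$, takes $\widehat V'$ to be the full preimage of $V'$, and runs the long exact sequence of the pair $(\widehat V,\widehat V')$: the lifted map induces an isomorphism on $\homology_1(\widehat V,\widehat V')$, and for an embedded path in a graph with no 2-cells, $[\widehat P]=0$ in $\homology_1(\widehat V,\widehat V')$ iff $\widehat P\subseteq\widehat V'$. You argue instead directly with the free-product decomposition of $\pi_1V$ induced by $V'$; if completed, your argument would essentially also give the paper's Lemma~\ref{lem:vi_invariant}. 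Your Step 1 (the determinant argument giving bijectivity of $\sigma$ and matching ranks) and your Step 3 (translating cases~(1) and~(2) via the deck action on $\widetilde V$) are correct, granting the key equality $\phi_*(G_i)=h_iG_{\sigma(i)}h_i^{-1}$.

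It is that equality where there is a gap. Your Schreier computation correctly rules out proper \emph{finite} index, but the infinite-index case is not handled: the claim that it ``would contradict that $\phi_*(F)$ generates $F$ through the free-product structure'' is an assertion, not an argument, and the ingredients you cite are genuinely insufficient. For instance, $\langle a,\,bab^{-1}a^{-1}b\rangle\leq F_2=\langle a,b\rangle$ has rank $2$, infinite index, and includes by an $\homology_1$-isomorphism, so an injective endomorphism of a rank-$r$ free group that is an isomorphism on $\homology_1$ need not be onto. The fact you need, but do not use, is that $\phi_*(G_i)$ is itself a \emph{free factor} of $F$: apply $\phi_*$ to $F=G_1*\cdots*G_c*F_k$. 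It is then a nontrivial free factor of $F$ contained in the free factor $h_iG_{\sigma(i)}h_i^{-1}$. A nontrivial free factor $A$ of $F$ contained in a free factor $B$ of $F$ is a free factor of $B$ (let $B$ act on the Bass--Serre tree of a splitting $F=A*C$; since edge stabilizers are trivial and $A\neq 1$, the minimal $B$-subtree contains the vertex fixed by $A$, so $A$ appears as a Kurosh vertex group of $B$), and a free factor of $B$ of the same rank as $B$ is all of $B$. This one observation replaces both the Schreier computation and the infinite-index claim. Alternatively, you could close the gap by the paper's own route to Lemma~\ref{lem:vi_invariant}: pass to a power so that $\sigma=\mathrm{id}$ and invoke separability of finitely generated subgroups (Lemma~\ref{lem:separable}).
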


\begin{proof}
Let $\widehat V\rightarrow V$ be a finite cover such that $\phi$ lifts to a map $\widehat\phi:\widehat V\rightarrow\widehat V$ and such that, in case~(1), $P$ lifts to an embedded path $\widehat P$, and in case~(2), some power of $P$ lifts to an embedded closed path $\widehat P$.  Let $\widehat V'$ be the entire preimage of $V'$ in $\widehat V$ and note that $\widehat\phi(\widehat V')\subseteq\widehat V'$.  The map $\widehat\phi$ induces maps on homology, yielding the following commutative diagram:
\begin{center}
$
\begin{diagram}
\node{0}\arrow{e}\arrow{s}\node{0}\arrow{e}\arrow{s}\node{\homology_1(\widehat V')}\arrow{e}\arrow{s}\node{\homology_1(\widehat V)}\arrow{e}\arrow{s}\node{\homology_1(\widehat V,\widehat V')}\arrow{e}\arrow{s}\node{\homology_0(\widehat V')}\arrow{e}\arrow{s}\node{\homology_0(\widehat V)}\arrow{s}\\
\node{0}\arrow{e}\node{0}\arrow{e}\node{\homology_1(\widehat V')}\arrow{e}\node{\homology_1(\widehat V)}\arrow{e}\node{\homology_1(\widehat V,\widehat V')}\arrow{e}\node{\homology_0(\widehat V')}\arrow{e}\node{\homology_0(\widehat V)}\\
\end{diagram}
$
\vspace{-30pt}
\end{center}
For $1\leq i\leq 7$, the $i^{th}$ vertical map from the left will be called $f_i$.  Since $f_1$ is an epimorphism and $f_2,f_4$ are monomorphisms, the map $f_3$ is a monomorphism.  
Hence $f_6$ is a monomorphism: if two distinct components of $\widehat V'$ mapped to the same component, then since $\homology_1(\widehat V')\neq 0$, the map $f_3$ would fail to be injective.  Hence $f_6$ is an isomorphism.  Since $f_4$ and $f_6$ are epimorphisms and $f_7$ is a monomorphism, $f_5$ is an epimorphism and hence an isomorphism.  Observe that $\widehat P$ represents a nontrivial element of $\homology_1(\widehat V,\widehat V')$.    On the other hand, if $\phi(P)$ were homotopic into $V'$, then $[\widehat\phi(\widehat P)]=0$ in $\homology_1(\hat V,\hat V')$, so $[\widehat P]=0$.
\end{proof}

\begin{lem}\label{lem:no_nontrivial_immersed_path}
Let $S^i$ be a zero stratum.  Then no nontrivial immersed path in $S^i$ starts and ends on non-contractible components of $V^{i-1}$.
\end{lem}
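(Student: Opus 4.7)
The plan is to derive the lemma as an essentially immediate consequence of Definition~\ref{defn:improved_rel}(2), which asserts that a zero stratum $S^i$ is the union of the contractible components of $V^i$. The point is that if a path lies in a contractible component of $V^i$, then anything it connects is forced into that same tree, which constrains what its endpoints can be.

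First I would argue for contradiction: suppose $P$ is a nontrivial immersed path in $S^i$ with initial point $a$ on a non-contractible component $D_a$ of $V^{i-1}$ and terminal point $b$ on a non-contractible component $D_b$ of $V^{i-1}$. Since $P$ is connected and its image lies in $S^i$, the image lies in a single connected component $C$ of $S^i$. By Definition~\ref{defn:improved_rel}(2), $C$ is a contractible component of $V^i$, i.e.\ a tree.

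The next step is to locate $D_a$ inside $C$. The vertex $a$ belongs to $C$ (as an endpoint of $P$) and to $D_a \subseteq V^{i-1} \subseteq V^i$. Since $D_a$ is a connected subgraph of $V^i$ meeting the component $C$ of $V^i$ at the vertex $a$, we must have $D_a \subseteq C$. But then $D_a$ is a connected subgraph of the tree $C$, hence itself a tree, contradicting the non-contractibility of $D_a$. The same reasoning applies to $D_b$, though one contradiction suffices.

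There is no real obstacle here; the only subtlety is keeping straight that Definition~\ref{defn:improved_rel}(2) refers to components of $V^i$ (not of $S^i$ or of $V^i \setminus V^{i-1}$), so that a path in $S^i$ with an endpoint on $V^{i-1}$ forces the relevant component of $V^{i-1}$ to embed in a tree. Once this is unwound, the contradiction is immediate from the elementary fact that every connected subgraph of a tree is a tree.
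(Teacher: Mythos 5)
Your argument is correct, and it is genuinely different from the paper's. The paper's proof is dynamical: since $S^i$ is a zero stratum, $\phi(P)\subset V^{i-1}$, and then Lemma~\ref{lem:invariant_subgraph_isomorphism} (applied, implicitly, to the union of the non-contractible components of $V^{i-1}$) forces the immersed path $P$ to be path-homotopic into $V^{i-1}$, hence trivial. Your argument is static: it uses only the structural consequence of Definition~\ref{defn:improved_rel}(2) that $S^i$ decomposes as a disjoint union of contractible components of $V^i$, and then a one-line observation that a non-contractible connected subgraph cannot sit inside a tree. In fact your argument establishes something strictly stronger than the lemma: any point of a zero stratum $S^i$ lies in a contractible component $C$ of $V^i$, and any non-contractible component $D$ of $V^{i-1}$ containing that point would be a connected subgraph of $V^i$ through that point, hence contained in $C$, hence a tree --- impossible. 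So $S^i$ is actually \emph{disjoint} from all non-contractible components of $V^{i-1}$, and the lemma holds vacuously. The paper's route only yields that any such immersed path is trivial (not that it cannot exist), and it also invokes Lemma~\ref{lem:invariant_subgraph_isomorphism}, whose hypothesis that the invariant subgraph have no contractible components is not literally satisfied by $V^{i-1}$ as written, requiring the reader to silently pass to the union of non-contractible components. Your proof avoids both the dynamical machinery and that hypothesis-matching bookkeeping; what it gives up is the incidental conclusion of the paper's proof that $P$ is homotopic into $V^{i-1}$, but that extra information is not used.
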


\begin{proof}
Let $P\rightarrow S^i$ be a nontrivial immersed path with endpoints in $V^{i-1}$.  Since $S^i$ is a zero stratum, $\phi(P)\subset V^{i-1}$.  By Lemma~\ref{lem:invariant_subgraph_isomorphism}, applied to the graph $V^{{\bar h}}$ and the $\phi$-invariant subgraph $V^{i-1}$, we have that $P$ is path-homotopic into $V^{i-1}$.  Thus $P$ is trivial.
\end{proof}

The following lemma occasionally provides an alternative way of supporting some of our arguments, but we have not (yet) used it.  For example, in view of Lemma~\ref{lem:vi_invariant}, Lemma~\ref{lem:invariant_subgraph_isomorphism} could have been applied in the proof of Theorem~\ref{thm:quasiconvexity} without being mediated by Lemma~\ref{lem:no_nontrivial_immersed_path}.

\begin{lem}\label{lem:vi_invariant}
Let $\phi$ be a $\pi_1$-isomorphism.  For each $i$ and each non-contractible component $U$ of $V^i$, there exists a unique component $U'$ of $V^i$ such that the map $\phi|_U:U\rightarrow U'$ is a $\pi_1$-isomorphism, and $\phi^{-1}(U')\subseteq U$.
\end{lem}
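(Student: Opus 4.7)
My approach is to take $U'$ to be the unique component of $V^i$ containing the connected set $\phi(U)$ --- this is forced by the requirement that $\phi|_U$ land in $U'$ --- and then to verify the claimed properties by combining Lemma~\ref{lem:invariant_subgraph_isomorphism} with a permutation-of-components argument. First I will show that $U'$ is non-contractible: otherwise $\pi_1(U')=0$, and the composition $\pi_1(U)\hookrightarrow\pi_1(V)\xrightarrow{\phi_*}\pi_1(V)$ would factor through $0$, contradicting injectivity of $\phi_*$ combined with the $\pi_1$-injectivity of the subgraph inclusion $U\hookrightarrow V$ (which holds for any connected subgraph of a graph).

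Let $T$ denote the set of non-contractible components of $V^i$, and define $\sigma\co T\to T$ by sending $C$ to the unique component of $V^i$ containing $\phi(C)$; the first paragraph ensures $\sigma$ is well-defined into $T$. Consequently $V':=\bigsqcup_{C\in T}C$ is a $\phi$-invariant subgraph with only non-contractible components, placing us in the hypotheses of Lemma~\ref{lem:invariant_subgraph_isomorphism}. The crux of the argument is to show $\sigma$ is a bijection; by finiteness of $T$ it suffices to prove surjectivity. Assuming toward contradiction that some $U^*\in T$ is omitted, I pick a nontrivial loop $\gamma^*\subset U^*$, pull it back through the isomorphism $\phi_*\co\pi_1 V\to\pi_1 V$ to an immersed loop $\delta$ in $V$, and apply Lemma~\ref{lem:invariant_subgraph_isomorphism}(2) --- whose hypothesis is met because $\phi(\delta)$ is path-homotopic to $\gamma^*\subset V'$ --- to obtain a representative of $\delta$ lying in $V'$. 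Connectedness confines this representative to a single component $C\in T$, and then $\phi_*(\delta)=\gamma^*\in\pi_1(U^*)$ forces $\sigma(C)=U^*$, contradicting the assumption.

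With $\sigma$ known to be a bijection, both remaining assertions drop out. For the $\pi_1$-iso claim on $(\phi|_U)_*$: injectivity follows from injectivity of $\phi_*$ together with the $\pi_1$-injectivity of the subgraph inclusions $\pi_1(U)\hookrightarrow\pi_1(V)$ and $\pi_1(U')\hookrightarrow\pi_1(V)$, and surjectivity uses the same loop-lifting trick --- any $\gamma'\in\pi_1(U')$ lifts under $\phi_*$ to some $\delta\in\pi_1(V)$, a representative of which Lemma~\ref{lem:invariant_subgraph_isomorphism}(2) places in some $C\in T$, and $\sigma(C)=U'$ together with bijectivity forces $C=U$ and hence $\delta\in\pi_1(U)$. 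The containment $\phi^{-1}(U')\subseteq U$, read at the level of non-contractible components of $V^i$, is then immediate: any $C\in T$ with $\phi(C)\subseteq U'$ satisfies $\sigma(C)=U'$ and so equals $U$.

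The hard part will be the surjectivity of $\sigma$: one must carefully track basepoints while invoking Lemma~\ref{lem:invariant_subgraph_isomorphism}(2) on the loop $\delta$, and one must be mindful that Lemma~\ref{lem:invariant_subgraph_isomorphism} requires all components of $V'$ to be non-contractible, which was the point of restricting to $T$. Once the bijection of $\sigma$ is established, everything else is essentially formal.
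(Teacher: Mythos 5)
Your proof is a genuine variant of the paper's, and it is essentially sound modulo one fixable point. The paper establishes the permutation structure on non-contractible components by proving \emph{injectivity} of the induced component map $\sigma$ (via Lemma~\ref{lem:invariant_subgraph_isomorphism} applied to a loop in a "wrong'' preimage component after passing to a power), whereas you prove \emph{surjectivity}; both are of course equivalent for a self-map of a finite set, and your route has the mild aesthetic advantage of not needing to massage $\phi$ into a power with $U_1=U$. For the $\pi_1$-isomorphism claim, the paper passes to a further power so that $\phi$ fixes each non-contractible component, then invokes Lemma~\ref{lem:separable} (separability of finitely generated subgroups of a free group); you instead invoke Lemma~\ref{lem:invariant_subgraph_isomorphism} once more. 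Your route avoids the separability machinery entirely, which is a worthwhile simplification, at the cost of more careful basepoint bookkeeping.

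The one genuine gap is in the final surjectivity step for $(\phi|_U)_*$. Invoking part~(2) of Lemma~\ref{lem:invariant_subgraph_isomorphism} produces only a \emph{free} homotopy of $\delta$ into some $C\in T$, which places $[\delta]$ in a \emph{conjugate} of $\pi_1(C)$ inside $\pi_1(V)$, not in the subgroup $\pi_1(U,v_0)$ itself. The clause ``hence $\delta\in\pi_1(U)$'' does not follow: $\delta$ could be $w^{-1}\alpha w$ with $\alpha\in\pi_1(U,v_0)$ and $w\notin\pi_1(U,v_0)$, and then $\phi_*(\alpha)$ is only a conjugate of $\gamma'$. To close this, use part~(1) instead: choose the basepoint $v_0\in U$, so that $\delta:=\phi_*^{-1}(\gamma')$ is a loop based at $v_0\in V'$, and $\phi(\delta)$ is path-homotopic rel endpoints to $\gamma'\subset V'$; then part~(1) gives a path-homotopy rel $v_0$ of $\delta$ into $V'$, and since $v_0\in U$, the new representative lies in $U$, so $\delta\in\pi_1(U,v_0)$ literally. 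Notice this patch also makes that step independent of the bijectivity of $\sigma$, so the logical structure simplifies: bijectivity of $\sigma$ is then needed only for the ``uniqueness / $\phi^{-1}(U')\subseteq U$'' assertion, not for the $\pi_1$-isomorphism.
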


\begin{proof}
Suppose that there are distinct non-contractible components $U_1,U_2$ mapping by $\phi$ to $U$.  By passing to a power if necessary, we assume that $U_1=U$.  Let $P$ be an essential closed path in $U_2$, so that $\phi(P)$ is path-homotopic into $U$.  By Lemma~\ref{lem:invariant_subgraph_isomorphism}, $P$ is homotopic into $U$, a contradiction.  Hence $\phi$ permutes the non-contractible components of $V^i$, so by passing to a power if necessary, we can assume that $\phi$ preserves each noncontractible component and preserve the basepoint in each component $U$.  Suppose that $\phi:U\rightarrow U$ does not induce an isomorphism of fundamental groups.  Then there exists $g\in\pi_1U-\image\phi_\sharp$, contradicting Lemma~\ref{lem:separable}.
\end{proof}

\begin{lem}\label{lem:separable}
Let $\phi:F\rightarrow F$ be an automorphisms of a finitely generated group.  Let $H\leq F$ be separable $\phi$-invariant subgroup.  Then $\phi(H)=H$.
\end{lem}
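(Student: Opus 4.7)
The hypothesis ``$H$ is $\phi$-invariant'' should be read as $\phi(H)\subseteq H$, and the task is to upgrade this inclusion to an equality. Equivalently, I will show $\phi^{-1}(H)\subseteq H$: whenever $\phi(g)\in H$, one also has $g\in H$.

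I argue by contradiction. Suppose there exists $g\in F\setminus H$ with $\phi(g)\in H$. Since $H$ is separable in $F$, there is a finite-index subgroup $K\leq F$ with $H\subseteq K$ and $g\notin K$. The key additional input beyond separability is the finite generation of $F$, which guarantees that for each $n\in\naturals$ the set of subgroups of $F$ of index at most $n$ is finite. The automorphism $\phi$ acts on this finite set as a permutation, so some positive power of $\phi$ fixes $K$: there is $p\geq 1$ with $\phi^{p}(K)=K$.

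Now the $\phi$-invariance of $H$ closes the loop. From $\phi(g)\in H$ and $\phi(H)\subseteq H$, iterating gives $\phi^{k}(g)\in H\subseteq K$ for every $k\geq 1$, i.e.\ $g\in\phi^{-k}(K)$ for every such $k$. Taking $k=p$ yields $g\in\phi^{-p}(K)=K$, contradicting $g\notin K$. Therefore $\phi^{-1}(H)\subseteq H$, so $H\subseteq\phi(H)$, and combined with $\phi(H)\subseteq H$ this gives $\phi(H)=H$.

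The only conceptual obstacle is realizing that separability of $H$ alone is not sufficient: separability produces the witnessing finite-index subgroup $K\supseteq H$ excluding $g$, but one additionally needs that the $\phi$-orbit of $K$ among subgroups of $F$ of index $[F:K]$ is finite in order to return to $K$ after finitely many applications of $\phi$. This uses the finite generation hypothesis crucially; everything else is mechanical.
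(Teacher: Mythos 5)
Your proof is correct. It uses exactly the same two ingredients as the paper's argument — separability of $H$, and the fact that a finitely generated group has only finitely many subgroups of each bounded index — but the mechanics differ in a way worth noting. The paper assumes $g\in H\setminus\phi(H)$, uses separability to find a finite quotient $F\to\overline F$ in which the image of $g$ avoids the image of $\phi(H)$, strengthens this by intersecting $\phi$-translates of the kernel (finitely many, by finite generation) to arrange that $\phi$ descends to $\bar\phi$ on $\overline F$, and then invokes the pigeonhole observation that the injective map $\bar\phi|_{\overline H}$ of the finite group $\overline H$ into itself must be onto, forcing $\bar g\in\bar\phi(\overline H)=\overline{\phi(H)}$ and yielding the contradiction. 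You instead witness the failure by some $g\notin H$ with $\phi(g)\in H$, take a finite-index $K\supseteq H$ excluding $g$, and rather than forcing full $\phi$-invariance you only observe that the $\phi$-orbit of $K$ is finite so $\phi^{p}(K)=K$ for some $p\ge1$; iterating $\phi$ on $g$ keeps it inside $H\subseteq K$, so pulling back by $\phi^{p}$ lands $g$ in $K$, contradiction. The two constructions are morally dual: the paper intersects to build a genuinely $\phi$-invariant normal subgroup and reasons in the quotient, while you exploit periodicity of a single finite-index subgroup and never leave $F$. Yours is slightly lighter on bookkeeping (no need for the kernel to be $\phi$-invariant, nor to pass to a quotient), at the cost of the mildly less standard-looking move of applying $\phi^{-p}$. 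Both arguments stand, and your remark pinpointing finite generation — not separability alone — as the crucial extra hypothesis matches what the paper's proof needs but does not say explicitly.
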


\begin{proof}
Let $g\in H-\phi(H)$ and let $F\rightarrow\overline F$ be a finite quotient in which $\bar g\not\in\overline H$ and such that $\kernel(F\rightarrow\overline F)$ is $\phi$-invariant.  Then $\phi$ descends to an automorphism $\bar\phi:\overline F\rightarrow\overline F$, and $\bar\phi(\overline H)\subseteq\overline H$.  By finiteness, $\bar\phi|_{\overline H}$ is an isomorphism, contradicting that $\bar\phi(\bar g)\in\bar\phi(\overline H)$.
\end{proof}

\subsection{Top stratum exponential}\label{subsec:top_stratum_exponential}
In this subsection, it will be convenient to work with the usual graph metric on $\widetilde X^1$ rather than with the metric $\dist$.  A \emph{combinatorial geodesic} in $\widetilde X$ is a shortest path in $\widetilde X^1$ joining a given pair of vertices and traversing edges at unit speed.

\begin{defn}[Dual curves]\label{defn:dual_curves}
Let $D\rightarrow X$ be a disk diagram and consider a  lift $D\rightarrow \widetilde X$.
A \emph{horizontal dual curve} in $D$ is a component of the preimage of a leaf of $\widetilde X$.
A \emph{vertical dual curve} in $D$ is a component of the preimage in $D$ of some $\widetilde E_n$.
Suppose that $D$ has minimal area for its boundary path.  Then each dual curve is an arc. Indeed, horizontal and vertical dual curves map to trees, and so if a dual curve contains a cycle then there is a cancellable pair of 2-cells and so the area of $D$ can be reduced without affecting its boundary path.  A dual curve $K$ is \emph{dual} to each 1-cell that it intersects; note that a horizontal dual curve is dual to vertical 1-cells and a vertical dual curve is dual to horizontal 1-cells.  Each horizontal 1-cell in $D$ is dual to a dual curve, and each horizontal 1-cell is dual to continuously many dual curves.  Observe that if $K$ is a dual curve, then each of its endpoints lies on $\boundary D$.  If $K$ is a horizontal dual curve, then $K$ inherits a directed graph structure from the leaf to which it maps.  In this case, since the area of $D$ is minimal and each vertex of a leaf has one outgoing edge, there is at most one degree-1 vertex of $K$ with an incoming midsegment.  We say that $K$ \emph{ends} at the 1-cell intersecting $K$ in such a vertex of $K$, and that $K$ \emph{starts} at the other endpoint of $K$.  Finally, the union $N(K)$ of all closed 2-cells of $D$ intersecting $K$ is the \emph{carrier} of $K$.
\end{defn}

\begin{prop}\label{prop:sub_mapping_torus_quasiconvexity}
Let $S=\closure{V-V'}$ be an exponential stratum, and let $X'\subset X$ be the mapping torus of $\phi|_{V'}$.  Then for each component $X'_o$ of $X'$, each lift $\widetilde X'_o\subset\widetilde X$ is quasi-isometrically embedded.
\end{prop}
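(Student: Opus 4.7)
The plan is to establish quasi-isometric embedding by a combinatorial disc-diagram argument that exploits the exponential growth of the stratum $S$.  Fix vertices $x,y\in\widetilde X'_o$, a combinatorial geodesic $\gamma$ in $\widetilde X$ from $x$ to $y$, and a combinatorial path $\alpha\to\widetilde X'_o$ from $x$ to $y$; let $D\to\widetilde X$ be a minimal-area disc diagram with boundary path $\gamma\alpha^{-1}$.  Since $\pi_1X'_o$ acts properly and cocompactly on $\widetilde X'_o$ inside the hyperbolic $\widetilde X$, it will suffice to produce a linear bound of the form $|\alpha|\leq K|\gamma|+C$, from which the Milnor-Schwarz lemma gives quasi-isometric embedding of $\widetilde X'_o$ into $\widetilde X$.

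The central observation is that $\alpha$ contains no $S$-edges, so each $S$-edge of $D$ lies on $\gamma$ or in the interior of $D$; each such edge is dual to a horizontal dual curve whose image in $\widetilde X$ traces a leaf segment.  By Definition~\ref{defn:rtt}(2), $\phi(e)$ starts and ends with an $S$-edge for every $S$-edge $e$, so the horizontal dual curve through an $S$-edge encounters $S$-edges at every integer height it reaches, and minimality of $D$ then forces both endpoints of such a dual curve to lie on $\gamma$.  This yields a pairing of the $S$-edges of $\gamma$ by horizontal dual curves in $D$.

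The quantitative estimate uses the Perron-Frobenius eigenvalue $\lambda>1$ of $\mathfrak{M}^S$.  If a paired pair of $S$-edges of $\gamma$ lies at heights $n$ and $n'$ with $|n-n'|=k$, then the $\omega$-length of the associated dual-curve carrier grows like $\lambda^k$; combined with $\delta$-hyperbolicity of $(\widetilde X,\dist)$ and the fact that both endpoint-edges are on a geodesic, this bounds $k$ and the total number of $S$-edges of $\gamma$ linearly in $|\alpha|$.  The complementary $V'$-subpaths of $\gamma$ then have endpoints in $\widetilde X'_o$ (the pairing partitions $\gamma$ into $S$-edges and $V'$-subpaths), and by replacing each such subpath with a $V'$-path of comparable length (by induction on filtration length using that $\widetilde X'_o$ is a subcomplex of $\widetilde X$), we assemble $\alpha$ of length linear in $|\gamma|$.

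The main obstacle is ensuring the pairing of $S$-edges is well-defined and that the dual curves through $S$-edges cannot ``leak'' into $\alpha$ via $V'$-edges of intermediate carriers.  This is where the improved relative train track structure is essential: $\phi$ sends $S$-edges to paths beginning and ending with $S$-edges, and any escape of the horizontal dual curve into $V'$ must eventually re-emerge into $S$ along the leaf, thereby keeping both endpoints on $\gamma$.  Modulo this pairing result, the argument mirrors that for the irreducible case in~\cite{HagenWise:irreducible}, where no $V'$-edges complicate the analysis.
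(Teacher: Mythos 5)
Your disc-diagram framing, minimality assumption, and overall goal of a bound $|\alpha|\leq K|\gamma|+C$ are all correct and match the paper's setup (where $\alpha$ is called $\gamma'$).  However, the argument you sketch after that has real gaps and does not match the paper's mechanism.

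First, the intermediate fact you want — every horizontal dual curve through an $S$-edge has both endpoints on $\gamma$ — is true, but your reason for it is not.  You invoke Definition~\ref{defn:rtt}(2) to claim the leaf through an $S$-edge ``encounters $S$-edges at every integer height,'' but this fails: $\phi(e)$ begins and ends with $S$-edges yet may traverse $V'$-edges in its interior, and a point of $e$ of grade $<i$ flows into $V'$ forever.  What actually rules out the horizontal dual curve ending on $\alpha$ is the minimality argument the paper records at the outset of its proof: if it did, the final 2-cell of its carrier would be a 2-cell mapping to $X'$ with a boundary 1-cell on $\gamma'$, so one could excise it and shorten $D$, contradiction.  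This also rests on $V'$ being $\phi$-invariant, not on property (2).

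Second, and more seriously, this pairing does not lead anywhere: it constrains $\gamma$, not $\alpha$.  Your Perron--Frobenius step bounds ``the total number of $S$-edges of $\gamma$ linearly in $|\alpha|$'' — the inequality you want runs the other way, and the number of $S$-edges of $\gamma$ is trivially at most $|\gamma|/\min_e\omega_e$ anyway.  Your final step, replacing ``complementary $V'$-subpaths of $\gamma$'' by paths in $\widetilde X'_o$, is not well-posed: the subpaths of $\gamma$ between $S$-edges contain horizontal edges, their endpoints are not in $\widetilde X'_o$, and distinct $V'$-subpaths of $\gamma$ may lie in distinct $G$-translates of $\widetilde X'_o$, so there is no single $\alpha$ assembled this way.

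What the paper actually does, and which your proposal is missing, is to bound $|\gamma'|$ directly, splitting it into horizontal and vertical edges.  Horizontal edges of $\gamma'$ are controlled by the \emph{vertical} dual curves (the fibers of $\mathfrak q$): Lemma~\ref{lem:disc_diagram_structure}, which relies on the algebraic Lemma~\ref{lem:invariant_subgraph_isomorphism} about $\phi$-invariant subgraphs of a graph with $\phi$ a $\pi_1$-isomorphism, shows no vertical dual curve has both ends on $\gamma'$, hence $|\gamma'|_{\mathcal H}\le|\gamma|_{\mathcal H}$.  Vertical edges of $\gamma'$ lie inside carriers of vertical dual curves, which meet $\gamma'$ in connected subpaths $\nu$ mapping to $V'$; a pigeonhole argument together with Lemma~\ref{lem:invariant_subgraph_isomorphism} bounds $|\nu|$ by a constant $M$, giving $|\gamma'|_{\mathcal V}\le M|\gamma|_{\mathcal H}$.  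Your proposal never engages with vertical dual curves, has no analogue of Lemma~\ref{lem:invariant_subgraph_isomorphism}, and has no step that actually produces the linear bound on $|\alpha|$.
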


\begin{proof}
We may assume, without loss of generality, that no component of $V'$ is contractible.  This will enable us to invoke Lemma~\ref{lem:disc_diagram_structure}, which relies on Lemma~\ref{lem:invariant_subgraph_isomorphism}.  Indeed, let $W$ be the subgraph of $V$ consisting of its non-contractible components and let $Y$ be the mapping torus of $\phi|_W$.  It suffices to prove that for each component $Y_o$ of $Y$, the inclusion $\widetilde Y_o\hookrightarrow\widetilde X$ is a quasi-isometric embedding.

\textbf{A minimal-area disc diagram:}  Let $\gamma\rightarrow\widetilde X$ be a combinatorial geodesic of $\widetilde X$ with endpoints on $\widetilde X'_o$ and let $\gamma'\rightarrow\widetilde X'_o$ be a combinatorial path joining the endpoints of $\gamma$, so that $\gamma\gamma'$ is the boundary path of a disc diagram $D\rightarrow\widetilde X$.  Assume that $D$ has minimal area among all disc diagrams constructed in this way from $\gamma$ (in particular, $\gamma'$ is allowed to vary among paths of $\widetilde X'_o$ joining the endpoints of $\gamma$).  It is sufficient to prove the result in the case in which $\gamma$ and $\gamma'$ do not have any common edges.  Denote by $|\gamma|_{\mathcal V},|\gamma'|_{\mathcal V}$ the number of vertical edges in $\gamma,\gamma'$ and by $|\gamma|_{\mathcal H},|\gamma'|_{\mathcal H}$ the number of horizontal edges in $\gamma,\gamma'$.

We shall repeatedly use the following consequence of the minimality of the area of $D$: let $R$ be a 2-cell of $D$ whose image under $D\rightarrow\widetilde X\rightarrow X$ is in $X'$.  Then no 1-cell of the boundary path of $R$ can lie on $\gamma'$, for otherwise we could modify $\gamma'$ and replace $D$ by a proper subdiagram not including $R$, contradicting minimality of area.  See Figure~\ref{fig:impossible_things}.

\begin{figure}
\begin{overpic}[width=0.6\textwidth]{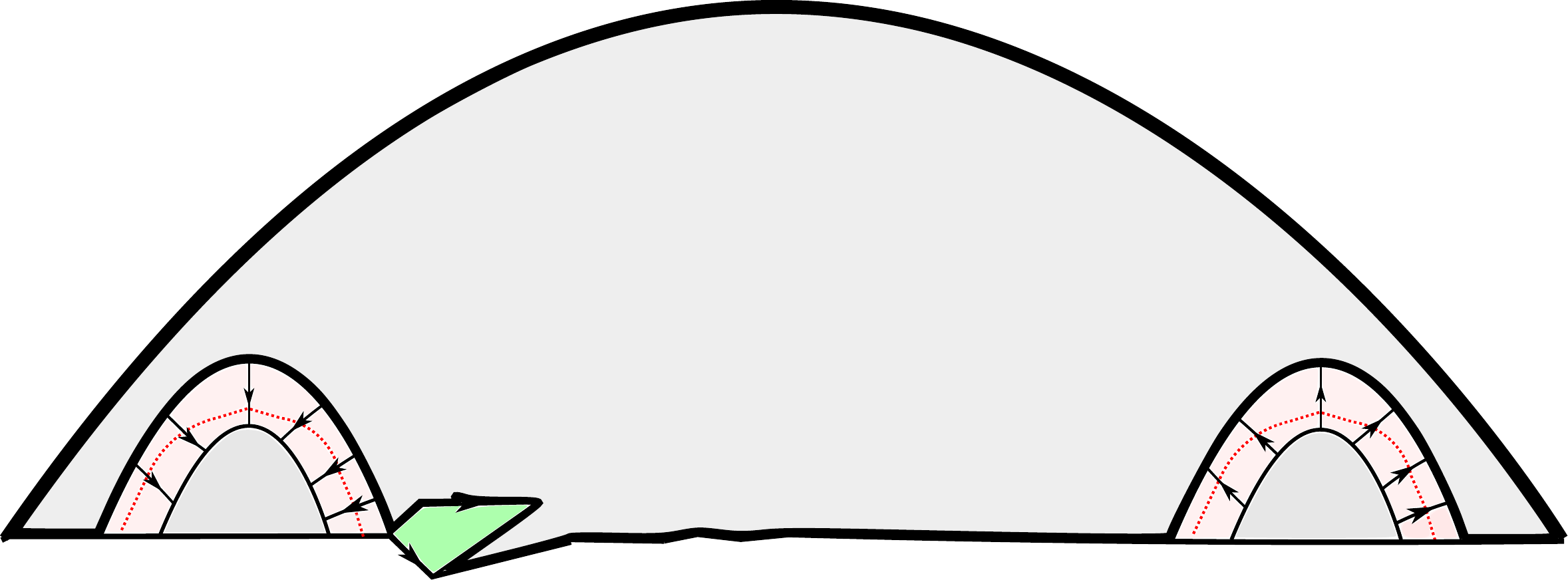}
\put(25,32){$\gamma$}
\put(55,-1){$\gamma'$}
\put(50,25){$D$}
\end{overpic}
\caption{Each of the dual curves shown is impossible in a minimal-area diagram.  The bold 2-cell at left must not map to $X'$.
}\label{fig:impossible_things}
\end{figure}

\textbf{Counting horizontal edges in $\gamma'$:}  By Lemma~\ref{lem:disc_diagram_structure} below, no dual curve has both ends on $\gamma'$.  Hence every vertical dual curve $L$ with an end on $\gamma'$ has an end on $\gamma$, whence $|\gamma'|_{\mathcal H}\leq|\gamma|_{\mathcal H}$.

\textbf{Connected intersection of $\gamma'$ with dual curve-carriers:}  For each vertical dual curve $K$ in $D$, the intersection $N(K)\cap\gamma'$ is connected since otherwise there would be a vertical dual curve starting and ending on $\gamma'$, contradicting Lemma~\ref{lem:disc_diagram_structure}.  Indeed, if the path $\sigma$ between successive components of $N(K)\cap\gamma'$ were entirely vertical, then we could obtain a smaller diagram by removing the subdiagram between $\sigma$ and $N(K)$.  It follows that for each vertical dual curve $K$ such that $N(K)\cap\gamma'\neq\emptyset$, the intersection $N(K)\cap\gamma'$ is a subpath of $\gamma'$ of the form $\nu,\nu t^{-1},$ or $t\nu$, where $\nu$ is a path mapping to $V'$ under the map $D\rightarrow\widetilde X\rightarrow X$ and $t$ is a horizontal edge dual to $K$.  
See Figure~\ref{fig:dual_curves_hit_boundary}.

\begin{figure}[h]
\includegraphics[width=0.5\textwidth]{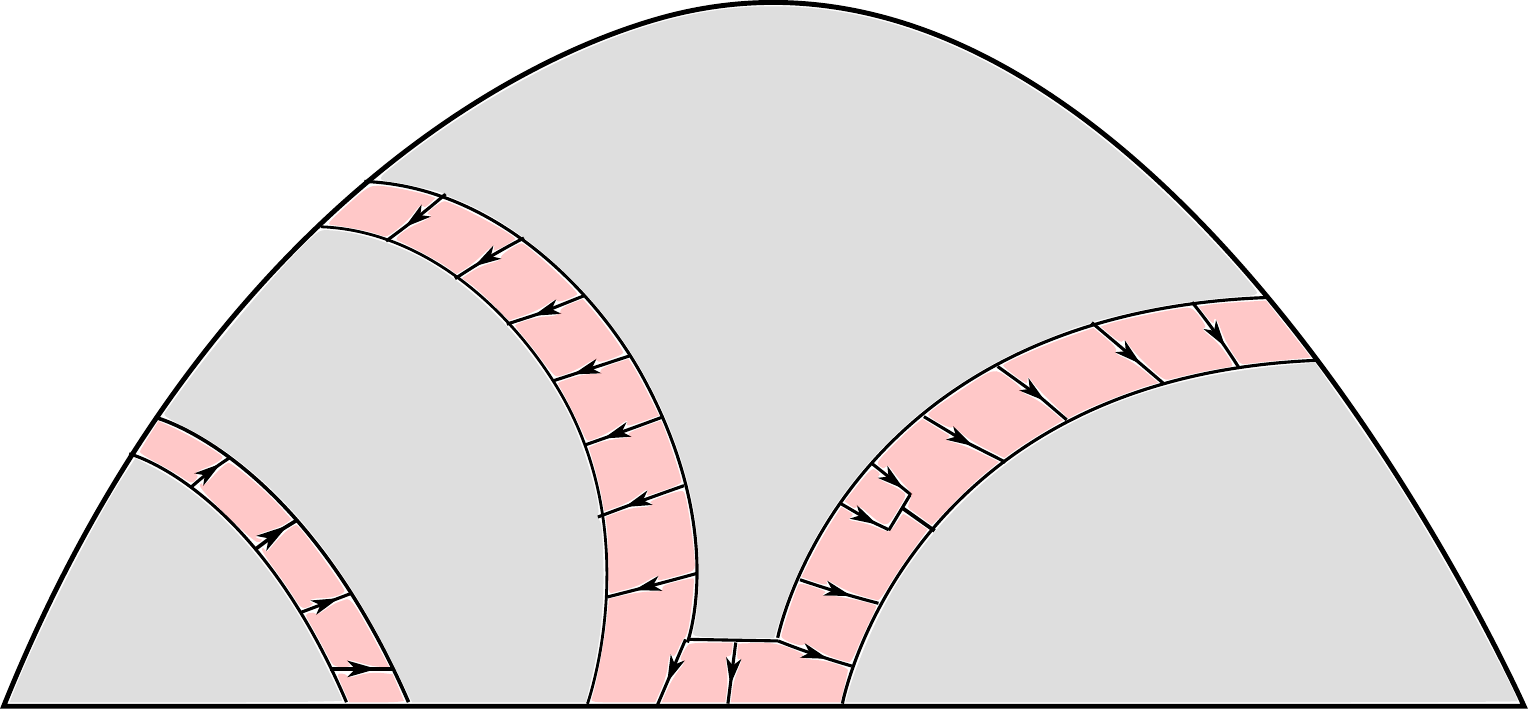}\\
\caption{Carriers of vertical dual curves in a minimal-area disc diagram have connected intersection with $\gamma'$.  The base edge of a 2-cell cannot lie on $\gamma'$, by minimality of the area of $D$, since $V'$ is $\phi$-invariant.}\label{fig:dual_curves_hit_boundary}
\end{figure}

\textbf{Uniformly bounding $|\nu|$:}  There exists $M$, depending only on $\phi$, so that $|\nu|\leq M$ for each vertical path $\nu$ above.  Indeed, by the pigeonhole principle, if $|\nu|$ is sufficiently large, there must be two 2-cells $R_1,R_2$ of $N(K)$ whose base vertical 1-cells $e_1,e_2$ are oriented in the same direction and respectively map to edges $e,ge$ in $\widetilde X$, where $g\in\pi_1V$.  Moreover, $R_1,R_2$ can be chosen so that $R_1\cap\nu$ and $R_2\cap\nu$ contain 1-cells $f,gf$ respectively.  Let $P\rightarrow N(K)$ be an immersed vertical path in $D$ joining the initial vertex of $e_1$ to that of $e_2$.  The closed path $P\rightarrow D\rightarrow\widetilde X\rightarrow X$ is a closed path in $V$.  Since $D$ has minimal area and hence $N(K)$ has no cancellable pair, there is a decomposition $P=ABA'$, where $A\rightarrow V$ and $A'\rightarrow V$ are inverse, and where $B\rightarrow V$ is shortest with this property.  The image of the path $P\rightarrow V\stackrel{\phi}{\rightarrow}V$ is homotopic into $V'$ since it is homotopic to the image of a subpath of $\nu$.  By Lemma~\ref{lem:invariant_subgraph_isomorphism}.\eqref{part2}, $P\rightarrow V$ is homotopic into $V'$, and hence $B$ is homotopic into $V'$.  Since $B$ is immersed, the nontrivial path $B$ must be contained in $V'$.  Thus there is a 2-cell mapping to $X'$ and sharing a 1-cell with $\nu$.  Thus minimality of $D$ is contradicted if $|\nu|$ is too large.
\textbf{Conclusion:}  We conclude that $$|\gamma'|\,=\,|\gamma'|_{\mathcal H}+|\gamma'|_{\mathcal V}\,\leq\,|\gamma|_{\mathcal H}+M|\gamma|_{\mathcal H}\,\leq\,(M+1)|\gamma|.$$
The proposition follows from the above inequality, since each of $\widetilde X$ and $\widetilde X_o'$, with the metric $\dist$ of Proposition~\ref{prop:metric}, is quasi-isometric to its 1-skeleton.\end{proof}

\begin{lem}\label{lem:disc_diagram_structure}
Suppose that no component of $V'$ is contractible.  Let $D\rightarrow\widetilde X$ be a disc diagram with boundary path $\gamma\gamma'$, where $\gamma$ is a geodesic starting and ending on $\widetilde X_o'$ and $\gamma'$ is a path in $\widetilde X'_o$.  Suppose that $D$ has minimal area among all such diagrams, with $\gamma$ fixed and $\gamma'$ allowed to vary.  Then each vertical dual curve has at most one end on $\gamma'$.
\end{lem}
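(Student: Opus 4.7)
The plan is to argue by contradiction. Suppose some vertical dual curve $K$ has both endpoints on $\gamma'$, lying on horizontal $1$-cells $t, t'$ of $\gamma'$. The carrier $N(K)$ is a strip of $2$-cells whose boundary path is $t \cdot B \cdot (t')^{-1} \cdot \phi(B)^{-1}$, where $B \subset \widetilde V_n$ and $\phi(B) \subset \widetilde V_{n+1}$ are its bottom and top vertical paths. The four endpoints of $B$ and $\phi(B)$ are endpoints of $t$ and $t'$; these lie on $\gamma' \subset \widetilde X'_o$, hence project to vertices of $V'$. Minimality of $D$ guarantees that $N(K)$ has no cancellable pair, so $B$ and $\phi(B)$ are immersed in $V$.

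The curve $K$ separates $D$ into two subdiagrams; let $E$ be the union of $N(K)$ with the one whose remaining boundary lies on $\gamma'$, so that $\partial E = \sigma^+ \cdot \phi(B)^{-1}$, where $\sigma^+$ is a subpath of $\gamma'$ from one endpoint of $\phi(B)$ to the other (the case $\partial E = \sigma^+ \cdot B^{-1}$ is symmetric). The endpoints of $\sigma^+$ both lie at integer level $n+1$, so $\sigma^+$ has zero net horizontal displacement. Projecting to $X'_o$: since $X'_o$ is a mapping torus over $\mathbb S$, any path in $X'_o$ between vertices of $V'$ with zero horizontal displacement is path-homotopic rel endpoints, inside $X'_o$, to a path $\tau$ in $V'$. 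The subdiagram $E$ provides a rel-endpoint homotopy $\phi(B) \simeq \sigma^+$ in $\widetilde X$, and hence in $X$, so $\phi(B) \simeq \tau$ rel endpoints in $X$. As $\phi(B)$ and $\tau$ both lie in the integer-level slice of $X$, a copy of $V$ that is included $\pi_1$-injectively into $X$, we deduce $\phi(B) \simeq \tau$ rel endpoints inside $V$; in particular, $\phi(B)$ is path-homotopic into $V'$.

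I now invoke Lemma~\ref{lem:invariant_subgraph_isomorphism}(1) with $P = B$: the endpoints of $B$ lie in $V'$ and $\phi(B)$ is path-homotopic into $V'$, hence $B$ is path-homotopic into $V'$. Since $B$ is immersed in $V$ and $V' \hookrightarrow V$ is $\pi_1$-injective, $B \subset V'$, and by $\phi$-invariance of $V'$ also $\phi(B) \subset V'$. Consequently $\phi(B) \subset \widetilde V_{n+1} \cap \widetilde X'_o$.

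Finally, replace the subpath $\sigma^+$ of $\gamma'$ by $\phi(B)$ to obtain a path $\gamma'_{\mathrm{new}} \subset \widetilde X'_o$ sharing its endpoints with $\gamma'$. Then $\gamma \cdot \gamma'_{\mathrm{new}}$ bounds $D \setminus E$, whose area is strictly less than that of $D$ since $E$ contains the nonempty strip $N(K)$, contradicting minimality. The main difficulty lies in the second paragraph --- verifying that $\phi(B)$ is path-homotopic into $V'$; once that step is secured, Lemma~\ref{lem:invariant_subgraph_isomorphism} and the area-reduction argument are essentially formal.
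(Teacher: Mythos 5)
Your proof is correct and reaches the same conclusion, but the route through the key steps is genuinely different from the paper's. The paper splits into two cases according to whether the end-rungs $t_1,t_2$ are oriented toward or away from $Q$: in one case it traces a horizontal dual curve from a putative edge of $P$ outside $V'$ back to $Q$ and derives a contradiction from $\phi$-invariance; in the other it first passes to an innermost $K$ (so $Q$ is vertical, hence already a path in $V'$) and only then applies Lemma~\ref{lem:invariant_subgraph_isomorphism}. You instead prove directly --- using the disk $E$ and the fact that a path in $X'_o$ with zero net $\mathfrak q$-displacement is path-homotopic into the slice $V'$ (equivalently, that $\widetilde X'_o$ is simply connected and $\widetilde V_{n+1}\cap\widetilde X'_o$ is a connected subtree) --- that the far side of $N(K)$ is homotopic rel endpoints into $V'$, and then apply the lemma once to pull this back to $B$. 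This handles both orientation cases without any innermost reduction, which is a genuine simplification. What each approach buys: the paper's horizontal-dual-curve argument is more self-contained combinatorially; yours leans harder on the $\pi_1$-injectivity of $V'\hookrightarrow V\hookrightarrow X$ and the telescope structure of $X'_o$, and in exchange is more uniform.

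One small inaccuracy: the assertion that $\phi(B)$ is immersed does not follow from the absence of cancellable pairs in $N(K)$. Adjacent $2$-cells $R_i,R_{i+1}$ of $N(K)$ share a rung; ruling out cancellable pairs forces $e_ie_{i+1}$ to be reduced, but $\phi(e_i)\phi(e_{i+1})$ can still backtrack (a fold of $\phi$) without $R_i,R_{i+1}$ mapping to the same $2$-cell. Fortunately this is harmless: you don't actually need $\phi(B)$ immersed. Once $\phi(B)\subset V'$, every edge of $\phi(B)$ lies in the preimage of $V'$ in the tree $\widetilde V_{n+1}$, which is a disjoint union of subtrees of the form $\widetilde V_{n+1}\cap g\widetilde X'_o$; since $\phi(B)$ is a path with endpoints in the component $\widetilde V_{n+1}\cap\widetilde X'_o$, it lies entirely there, immersed or not. (Alternatively, replace $\phi(B)$ by its reduction in $\widetilde V_{n+1}$, which costs no area.) With that adjustment the argument is sound.
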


\begin{proof}
Let $K$ be a vertical dual curve starting and ending on $\gamma'$, dual to horizontal 1-cells $t_1,t_2$.  Let $Q$ be the subpath of $\gamma'$ between $t_1,t_2$, so that $t_1^{-1}Qt_2$ is a subpath of $\gamma'$.  Let $P\rightarrow N(K)\rightarrow D$ be the vertical immersed path joining the initial points of $t_1,t_2$.  Consider the case in which $t_1,t_2$ are oriented away from $Q$.  If $P$ maps to $X'$, then $D$ does not have minimal area since we can replace $t_1^{-1}Qt_2$ by the path $P'\rightarrow N(K)\rightarrow D$ mapping to $\tilde\phi(P)$ in $\gamma'$; see Figure~\ref{fig:vertical}.  Hence suppose that some edge $e$ in $P$ does not map by $D\rightarrow\widetilde X\rightarrow X$ to an edge in $V'$.  As shown below, any horizontal dual curve $L$ ending on $\interior{e}$ emanates from the interior of an edge $f$ of $Q$.  This leads to a contradiction since $f$ does not map to $V'$, for otherwise the image of $f\cap L$ in $V'$ would map by $\phi^{|L|}$ into the image of $\interior{e}$, which is not in the $\phi$-invariant subgraph $V'$.  Finally, $L$ cannot intersect $P$ in some other point.  Indeed, if it does, then since the initial and terminal midsegments of $L$ are directed toward $K$, there are consecutive midsegments of $L$ that are both directed away from their common vertex.  The 2-cells containing these midsegments fold together in the map $D\rightarrow X$, contradicting minimality of the area of $D$.

\begin{figure}[h]
\begin{overpic}[width=0.35\textwidth]{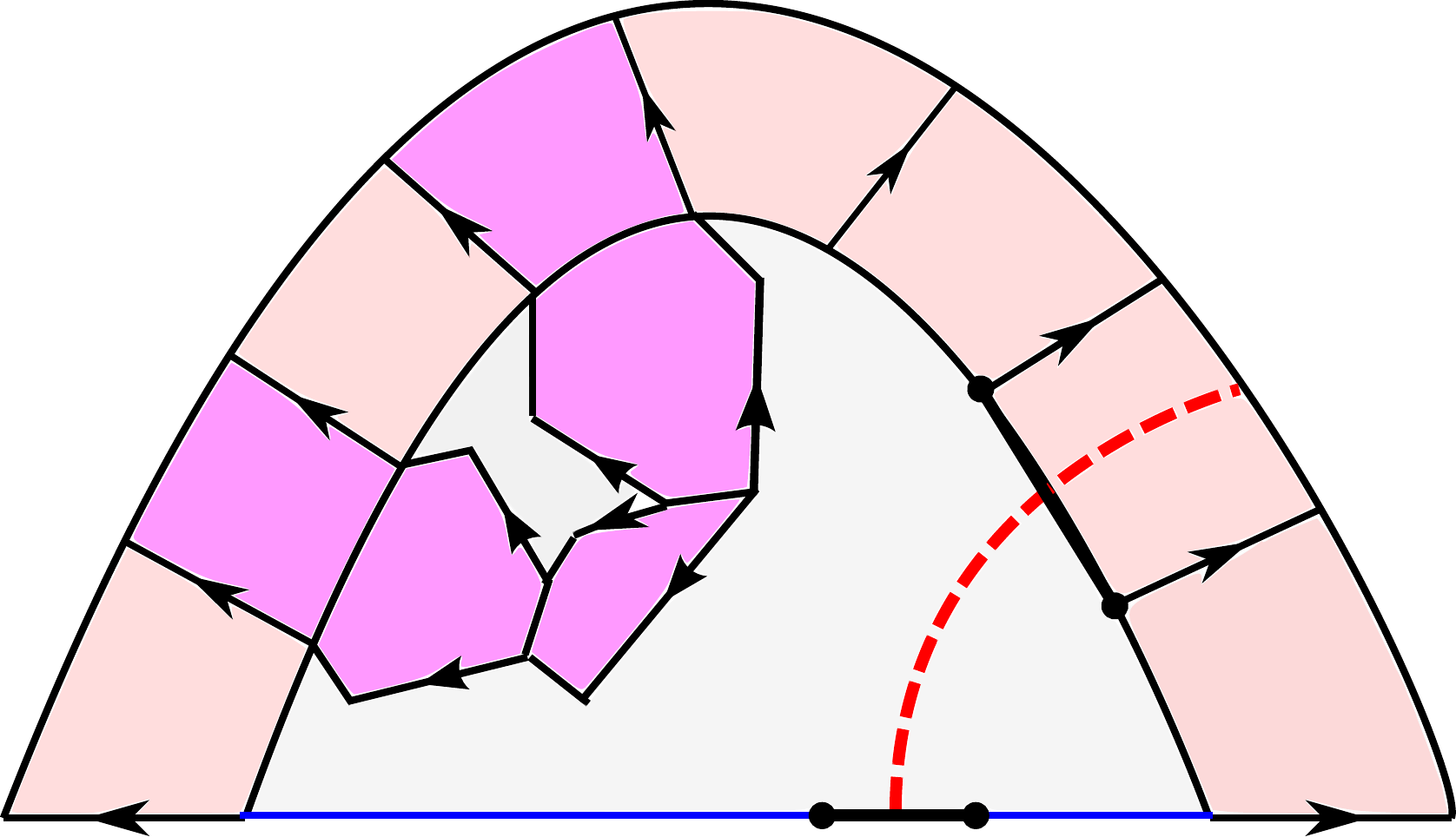}
\put(5,4){$t_1$}
\put(90,4){$t_2$}
\put(45,4){$Q$}
\put(65,4){$f$}
\put(62,15){$L$}
\put(58,29){$P$}
\end{overpic}
\caption{No vertical dual curve starts and ends on $\gamma'$.}\label{fig:vertical}
\end{figure}

Now suppose that $t_1,t_2$ are oriented toward $Q$.  Assume that $K$ is innermost in the sense that $Q$ does not contain horizontal 1-cells.  The path $P\rightarrow\widetilde X\rightarrow X$ is a path in $V$.  Suppose that $P\rightarrow X$ is not path-homotopic within $V$ into $V'$.  Then $\phi(P)$ is not path-homotopic into $V'$ by Lemma~\ref{lem:invariant_subgraph_isomorphism}.  But $\phi(P)$ is path-homotopic to $Q$, which maps to $V'$.  Thus $P$ maps to $V'$ and we can reduce the area of $D$ by replacing $t_1Qt_2^{-1}$ by $P$.
\end{proof}

\section{Immersed walls, walls, approximations}\label{sec:building_immersed walls}

\subsection{Immersed walls}\label{subsec:immersed_wall}

\begin{defn}[Tunnel]\label{defn:tunnel}
Each leaf is a directed tree by Remark~\ref{rem:leaf_is_tree} in which each vertex has a unique outgoing edge.  Let $n\in\integers, L\in\naturals$, and $\tilde x\in\widetilde E_n$.  The \emph{tunnel} $T_L(\tilde x)$ of \emph{length $L$} \emph{rooted} at $\tilde x$ is the union of all forward paths of length $L$ that terminate at $\tilde x$.  Equivalently, $T_L(\tilde x)=\cup_{r\in[0,L]}\psi_r^{-1}(\tilde x)$.  An (immersed) \emph{tunnel in $X$} is a map $T_L(\tilde x)\hookrightarrow\widetilde X\rightarrow X$, where $T_L(\tilde x)$ is a tunnel, and the \emph{root} of this tunnel is the image $x\in X$ of $\tilde x$.  See Figure~\ref{fig:tunnel}.
\end{defn}

\begin{figure}[h]
\begin{overpic}[width=0.4\textwidth]{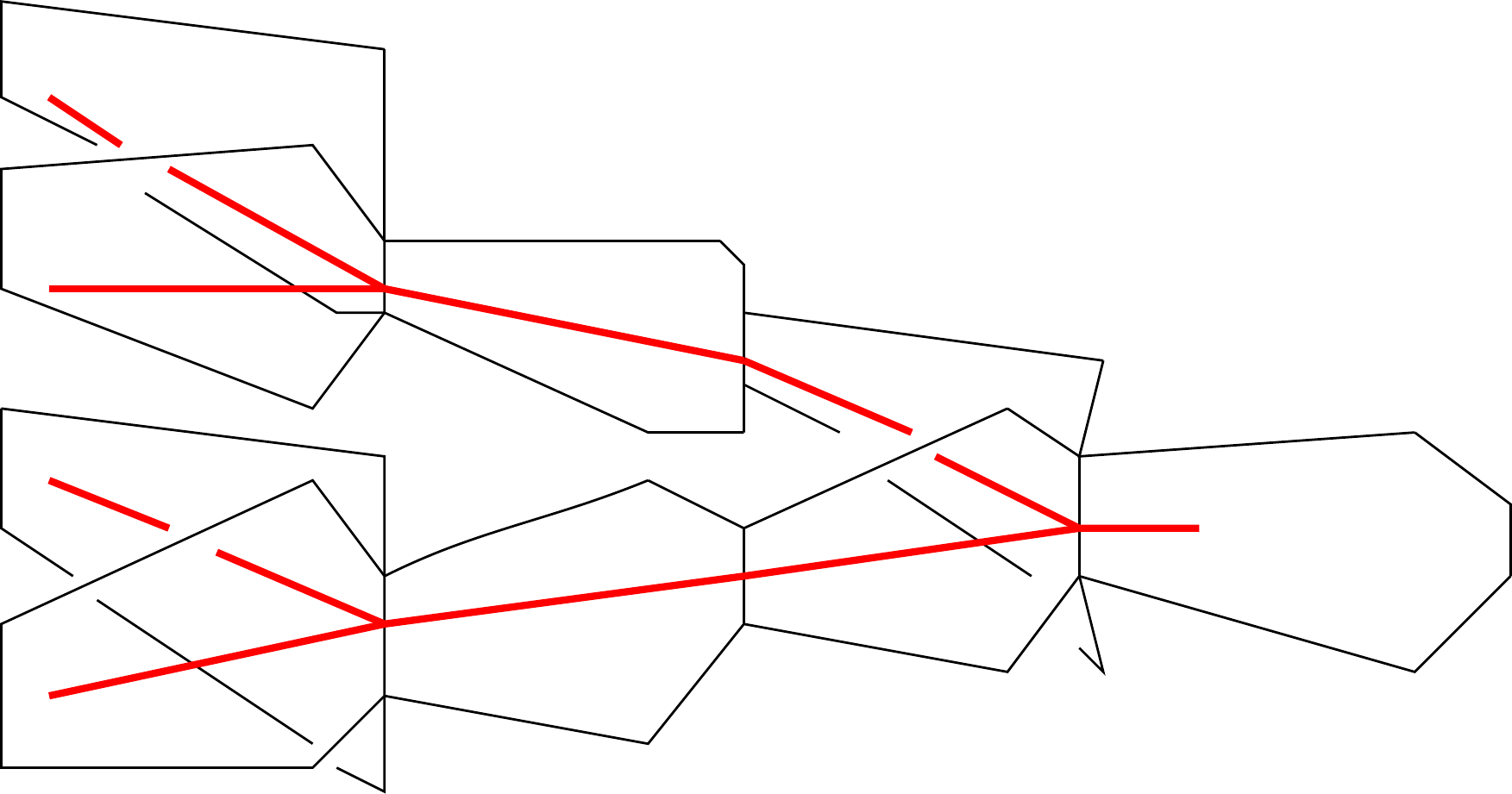}
 \put(80,16){$\tilde x$}
\end{overpic}
\caption{A tunnel $T_3(\tilde x)$ and the 2-cells intersecting it.}\label{fig:tunnel}
\end{figure}

Let $\{d_1,\ldots,d_r\}$ be regular points of $E$ and let $L\geq 0$.  For simplicity, we shall assume that for all $i\neq j$, we have $\phi^L(d_i)\neq d_j$.  Let $\dot\phi$ be the map $E\rightarrow V\stackrel{\phi}{\rightarrow} V\rightarrow E$, where the first and last maps are the obvious isomorphisms.  For each $i$, let $\{d_{ij}\}=(\dot\phi^L)^{-1}(\{d_i\})$.   Let $E_{\flat}$ be the graph containing $E-\{d_i, d_{ij}\}$, where $i,j$ vary, such that $E_{\flat}-\left(E-\{d_i, d_{ij}\}\right)$  consists of \emph{end vertices}, namely vertices $\OL{d_i},\OR{d_i}$ associated to $d_i$ and $\OL d_{ij},\OR d_{ij}$ associated to $d_{ij}$ and isolated \emph{extra vertices} $\ddot d_i$ for each $d_i$ that is $L$-periodic.  There is a map $E_\flat\rightarrow E$ that is an inclusion on $\interior{E_\flat}$ and sends $\OL d_i,\OR d_i,\ddot d_i$ to $d_i$ and $\OL d_{ij},\OR d_{ij}$ to $d_{ij}$.  The vertices $\OL d_{ij},\OR d_{ij}$ are ``named'' so that $\phi$ maps the half-open interval bounded at $\OL d_{ij}$ [resp. $\OR d_{ij}$] to the half-open interval bounded at $\OL d_i$ [resp. $\OR d_{i}$].  Observe that there is a map $E_{\flat}\rightarrow X$ which is the inclusion on $E-\{d_i,d_{ij}\}$.  Each $d_i$ is a \emph{primary bust} and each $d_{ij}$ is a \emph{secondary bust}.

\begin{rem}\label{rem:ijk}
Beginning in Section~\ref{subsec:quasiconvexity}, the set $\{d_1,\ldots,d_r\}$ is always chosen so that each $d_i$ is periodic, and $L$ is a multiple of the period of each $d_i$.  Hence $d_i\neq d_{jk}$ when $i\neq j$ but such that for all $i$, there exists $k$ such that $d_i=d_{ik}$.  
However, the method of constructing immersed walls described presently does not require periodicity of primary busts.  In practice, we will only use collections of primary busts that allow us to build a wall for any choice of $L$, i.e. we choose $\{d_i\}$ so that for all $n\geq 0$ and all $i\neq j$, we have $\phi^n(d_i)\neq\phi^n(d_j)$.
\end{rem}

For each $i$, let $T_i$ be the length-$L$ tunnel rooted at $d_i$ and let $\overleftarrow T_i,\overrightarrow T_i$ be copies of $T_i$.  We define $W^{\bullet}$ to be the following quotient of $E_{\flat}\sqcup\bigsqcup_i(\overleftarrow T_i\sqcup\overrightarrow T_i)$.  If $d_i$ is not $L$-periodic, attach the root of $\overleftarrow T_i$ to $\overleftarrow d_i$ and that of $\overrightarrow T_i$ to $\overrightarrow d_i$.  Likewise, attach the $d_{ij}$ leaf of $\overleftarrow T_i$ [resp. $\overrightarrow T_i$] to the end vertex $\overrightarrow d_{ij}$ [resp. $\overleftarrow d_{ij}$].  For each $i$ such that $d_i$ is $L$-periodic, attach $\OL T_i$ as above.  The tunnel $\OR T_i$ is attached by identifying its root with $\ddot d_i$, identifying each $\OL d_{ik}\neq \OL d_i$ with the $d_{ik}$ leaf of $\OR T_i$, and identifying $\ddot d_i$ with the $d_{ij}$-leaf of $\OR T_i$, where $d_{ij}$ is the unique secondary bust such that $d_{ij}=d_i$.

The map $W^{\bullet}\rightarrow X$ is induced by the maps $T_i\rightarrow X$ and the map $E_{\flat}\rightarrow E\subset X$.  An \emph{immersed wall} is a component $W$ of $W^{\bullet}$.  Regarding $E_\flat$ as a subspace of $W$, each component of $E_{\flat}$ is a \emph{nucleus} of $W$.

\begin{prop}\label{prop:immersed_wall}
The map $W^{\bullet}\rightarrow X$ extends to a local homeomorphism $[-1,1]\rtimes W^{\bullet}\rightarrow X$ of a $[-1,1]$-bundle, where we identify $W^{\bullet}$ with $\{0\}\times W^{\bullet}$.
\end{prop}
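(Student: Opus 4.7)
The plan is to build the bundle using a covering of $W^\bullet$ by open sets adapted to its combinatorial structure, defining the bundle and a local homeomorphism to $X$ on each, and then verifying that on overlaps the pieces glue consistently.

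I would partition $W^\bullet$ into neighborhoods according to the type of point being covered: (i) interior of a nucleus edge of $E_{\flat}$, mapping into a single $2$-cell $R_e$ as a horizontal time-slice; (ii) vertex of $E_{\flat}$, mapping to the midpoint of a horizontal $1$-cell $t_a$; (iii) interior of a tunnel midsegment, mapping into a single $2$-cell transversely to horizontal edges; (iv) interior vertex of a tunnel tree, mapping to a point in the interior of a vertical edge of some $\widetilde V_n$; and (v) a bust endpoint $\OL d_i$, $\OR d_i$, $\OL d_{ij}$, $\OR d_{ij}$, or extra vertex $\ddot d_i$, mapping to a regular point of $E$ in the interior of a $2$-cell. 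Over type-(i) neighborhoods the bundle is taken to be trivial with fiber a short subinterval in the time direction of $R_e$; over type-(iii) neighborhoods the fiber is a short subinterval in the vertical-edge direction of $R_e$. Over type-(ii) or type-(iv) neighborhoods, the wall is locally a star whose edges come from distinct $2$-cells incident to a common $1$-cell $c$ of $X$ ($c=t_a$ in case (ii), $c$ a vertical edge in case (iv)); I would take the fiber through the central vertex to be a short subinterval of $c$, and observe that in each incident ``page'' the transverse direction of the adjacent edge of $W^\bullet$ aligns with $c$, so the pagewise charts assemble into a book-like neighborhood that embeds in the closed star of the central vertex in $X$.

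The main obstacle lies in the type-(v) bust charts, since the wall acquires a $90^{\circ}$ corner in $R_e$ at each bust: at $\OL d_i$, for instance, a nucleus edge and the initial midsegment of $\OL T_i$ meet at $d_i$ in an L-shape inside $R_e$, and the transverse directions on the two sides are perpendicular. I would interpolate between them along a short collar of the bust in $W^\bullet$, taking the fiber at the bust itself to be a short diagonal arc in $R_e$ opposite the concave corner of the L. Because $R_e$ is a topological disc and the L is an embedded arc, a small embedded disc neighborhood of the L serves as the image of the chart, and the interpolation can be chosen so that the chart is a homeomorphism onto this disc and matches the type-(i) and type-(iii) fiber directions on either side. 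The same local model handles $\OR d_i$, $\OL d_{ij}$, $\OR d_{ij}$, and the extra vertex $\ddot d_i$ (where the root and the appropriate leaf of a single tunnel are identified), using regularity of the bust to confine the entire local picture to the single $2$-cell $R_e$.

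Adjacent neighborhoods then overlap on simple interval-by-interval products with matching fiber directions, so they assemble into a $[-1,1]$-bundle over $W^\bullet$ (possibly non-orientable, if the transverse direction reverses around some closed loop of the wall) together with a map to $X$ that extends $W^\bullet \rightarrow X$ and is a local homeomorphism in each chart, hence globally. I expect no difficulty beyond the bust interpolation.
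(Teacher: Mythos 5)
Your proof takes essentially the same approach as the paper's: use the natural transverse directions (the time/flow direction near $E_\flat$, the along-the-vertical-edge direction along tunnel midsegments), and glue at the bust end-vertices by interpolating across the $90^\circ$ corner inside the single $2$-cell $R_e$; the paper's terse $d\times[-1,1]^2$ chart is precisely this corner-rounding. One minor imprecision: at $\ddot d_i$ the root-end and leaf-end of $\OR T_i$ meet to form a straight midsegment through $d_i$ (no corner, both transverse directions already horizontal), so no interpolation is needed there — but since a straight arc is a degenerate $L$, your local model still covers it.
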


\begin{proof}
Since $E$ has a neighborhood homeomorphic to $[-1,1]\times E$, there is a local homeomorphism $(E-\{d_i,d_{ij}\})\times[-1,1]\rightarrow X$ with $E-\{d_i,d_{ij}\}$ identified with $(E-\{d_i,d_{ij}\})\times\{0\}$.  The same is true for each $\OL T_i,\OR T_i$, as discussed above.  These neighborhoods can be chosen so that the images of the various $\OL T_i\times [-1,1]\rightarrow X$ and $ \OR T_i\times[-1,1]\rightarrow X$ are pairwise disjoint, except where some $d_i=d_{jk}$.  For each $d\in\{\OL d_i,\OR d_i,\OL d_{ij},\OR d_{ij}\}$, we have a local homeomorphism $d\times [-1,1]^2\rightarrow X$ with $d$ identified with $d\times (0,0)$.  The square whose image is centered at $d$ intersects $T_i\times [-1,1]$ in $[-1,0]\times[-1,1]$ and intersects $[-1,1]\times(E-\{d_\ell,d_{\ell k}\})$ in $[-1,1]\times([-1,0)\sqcup(0,1])$.  At $\ddot d_i$, the neighborhood $[-1,1]\times\OR T_i$ intersects itself at $[-1,1]\times\{\ddot d_i\}$, and we combine to obtain a product neighborhood.  Hence these neighborhoods can be chosen so that their union is a $[-1,1]$-bundle over $W^{\bullet}$.  See Figure~\ref{fig:immwall}.
\end{proof}

\begin{figure}[h]
\begin{overpic}[width=0.6\textwidth]{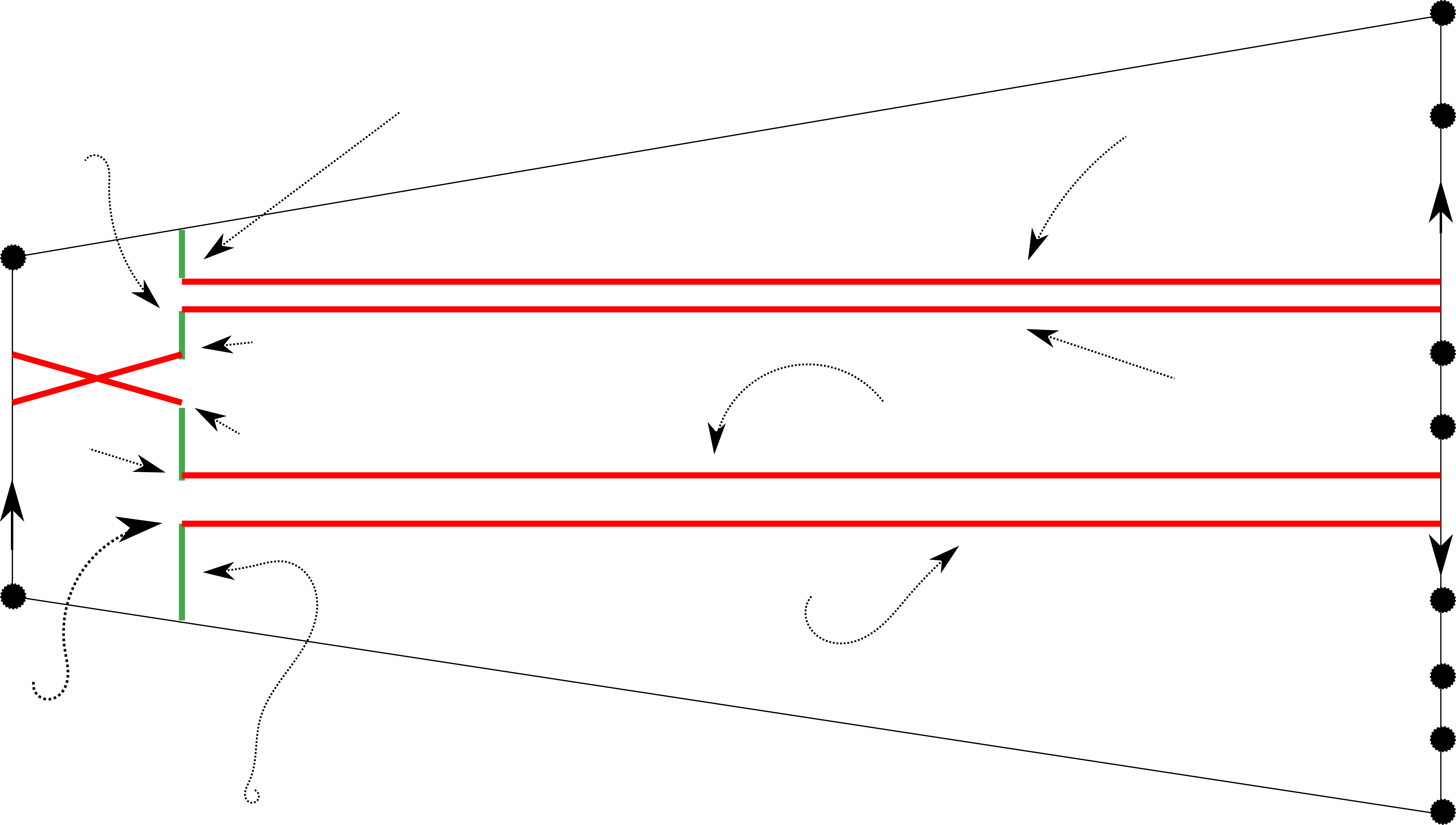}
\put(17,31.5){$\OR{d}_i$}
\put(16,22.5){$\OL{d}_i$}
\put(0,25){$\OL{d}_{ij'}$}
\put(-3,6){$\OR{d}_{ij'}$}
\put(15,-2){$E_{\flat}$}
\put(3.7,44){$\OL{d}_{ij}$}
\put(27,50){$\OR{d}_{ij}$}

\put(78,48){$\OL T_i$}
\put(82,28){$\OR T_i$}

\put(55,15){$\OL T_i$}
\put(61,27){$\OR T_i$}
\end{overpic}
\caption{A schematic picture of an immersed wall when busts are not $L$-periodic.}\label{fig:immwall}
\end{figure}

\begin{figure}[h]
\begin{overpic}[width=0.5\textwidth]{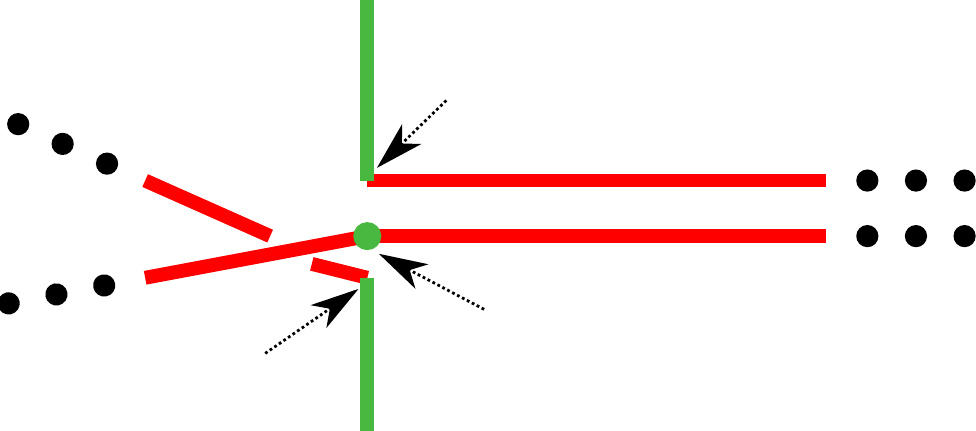}
\put(45,34){$\OR d_{ij}=\OR d_i$}
\put(50,10.5){$\ddot d_i$}
\put(9,5){$\OL d_{ij}=\OL d_i$}
\put(80,28){$\OL T_i$}
\put(80,13){$\OR T_i$}
\put(0,34){$\OL T_i$}
\put(0,17){$\OR T_i$}
\end{overpic}
\caption{Part of the immersed wall $W\rightarrow X$ near an $L$-periodic primary bust.}\label{fig:local_periodic_primary}
\end{figure}

\begin{lem}\label{lem:no_two_incoming}
Let $x_1,\ldots,x_k\in V$ be periodic regular points, with one in each exponential edge.  Then for all sufficiently large $L$, each $x_i$ is separated from each vertex by a point in $(\phi^L)^{-1}(\{x_j\})$.  Moreover, suppose $a\in V$ has the following property: for each $i$ and each embedded path $P\rightarrow V$ joining $a$ to $x_i$, and any lift $\widetilde P\rightarrow\widetilde X$, the forward rays emanating from the endpoints of $\widetilde P$ have bounded coarse intersection.  Then for all sufficiently large $L$, each $x_i$ is separated from $a$ by a point in $(\phi^L)^{-1}(\{x_j\})$.
\end{lem}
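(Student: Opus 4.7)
The strategy is to exploit the Perron-Frobenius expansion of $\phi^L$ at each periodic regular point $x_i$ in order to show that the preimages $(\phi^L)^{-1}(\{x_j\})$ accumulate densely in each exponential edge as $L\to\infty$, providing the required separations.

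First, take $L$ divisible by the periods of $x_1,\ldots,x_k$, so $\phi^L$ fixes each $x_i$.  Fix $x_i$ in its exponential edge $e_i=[u,w]\subset S^{p_i}$ and consider the immersion $\phi^L|_{e_i}$ onto the combinatorial path $f_1\cdots f_N$, inducing a subdivision $e_i=I_1\cup\cdots\cup I_N$ with $\phi^L(I_s)=f_s$.  The piece $I_m$ containing $x_i$ satisfies $f_m=e_i$, and since $\phi^L$ has local expansion factor $\lambda_{p_i}^L>1$ at $x_i$, the length $|I_m|$ is $O(\lambda_{p_i}^{-L})$.  By Definition~\ref{defn:rtt}.(2), the terminal edge $f_N$ lies in $S^{p_i}$, so $f_N=e_{j_0}$ for some $j_0$, and the unique point of $I_N$ mapping to $x_{j_0}$ is a preimage contained, for $L$ large, strictly in the open half-edge $(x_i,w)\subset e_i$; the symmetric argument using $f_1$ produces a preimage in $(u,x_i)$.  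The first part then follows: for any vertex $v$ of $V$, either $v$ is an endpoint of $e_i$ and the preimage in the adjacent half-edge separates $x_i$ from $v$ within $e_i$, or any embedded path from $x_i$ to $v$ must exit $e_i$ through some endpoint, so the corresponding half-edge preimage lies on such a path.

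For the moreover part, the forward-ray hypothesis on $a$ excludes $a$ from being a preimage of any $x_j$ under any power of $\phi$: if $\phi^{L'}(a)=x_j$ then the forward ray of a suitable lift $\widetilde a$ would coincide, after time $L'$, with the forward ray of an appropriate lift of $x_j$, giving unbounded coarse intersection in contradiction to the hypothesis.  Given this, if $a\notin e_i$ then any embedded path from $x_i$ to $a$ traverses one of the half-edges of $e_i$ adjacent to $x_i$ and hence meets a preimage constructed above.  If $a\in e_i$, applying the preceding argument at finer scales (by further increasing $L$) shows that $(\phi^L)^{-1}(\{x_j\})$ is dense in $e_i$, and since $a$ is not itself in this preimage set, the open interval strictly between $x_i$ and $a$ must contain a preimage.

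The main technical obstacle is the moreover part: carefully translating the dynamical coarse-intersection hypothesis into the combinatorial statement that $a$ avoids the backward $\phi$-orbits of each $x_j$, and then ensuring that the density argument in $e_i$ actually places a preimage in the prescribed subinterval.  A secondary point is verifying that the expansion factor $\lambda_{p_i}^L$ truly controls $|I_m|$ and forces $I_N$ (resp.\ $I_1$) into the open half-edge; this uses the train-track property and the fact that $\phi^L$ is locally a homeomorphism at the regular point $x_i$.
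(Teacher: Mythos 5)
Your approach is genuinely different from the paper's: you work directly with the combinatorial subdivision of $e_i$ induced by $\phi^L$ and try to exploit Perron--Frobenius expansion, whereas the paper applies the splitting lemma (Lemma~\ref{lem:splitting_lemma}) and Lemma~\ref{lem:non_nielsen_traverses_exponential} to the path $P$ from $x_i$ to $b$ and concludes that $\tight{\phi^L(P)}$ eventually traverses a complete exponential edge (hence passes through some $x_j$, giving a preimage in the interior of $P$). For the first part your argument is plausible but not airtight: the claim that $|I_m|=O(\lambda_{p_i}^{-L})$, and hence that $I_1$ and $I_N$ land strictly in the two open half-edges for large $L$, depends on the parametrization of $\phi$ (a relative train track map is only required to send edges to combinatorial paths, so $\phi$ need not be piecewise-linear with respect to the eigenlength), and you have not excluded the possibility of a $\phi$-invariant subinterval of $e_i$ containing $x_i$ and an endpoint. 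The splitting lemma route avoids this issue because it is a purely combinatorial statement about tightened images.

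The more serious gap is in the ``moreover'' part, where your reading of the hypothesis is incorrect. You claim that bounded coarse intersection of forward rays forces $a\notin\bigcup_{L',j}(\phi^{L'})^{-1}(x_j)$; but $\phi^{L'}(a)=x_j$ only tells you that the forward ray from $\widetilde a$ passes through \emph{some} lift of $x_j$ at time $L'$, and that lift is generically a different lift than $\psi_{L'}(\widetilde x_j)$, so the two rays still diverge and no contradiction with the hypothesis arises. Moreover, even if this claim were true it would not suffice: your density assertion ``$(\phi^L)^{-1}(\{x_j\})$ is dense in $e_i$'' is neither proved nor true in general. If the subinterval $[a,x_i]\subset e_i$ were a Nielsen path, then $\tight{\phi^L([a,x_i])}=[a,x_i]$ for all $L$ and no preimage of any $x_j$ ever appears in $(a,x_i)$; density fails precisely on such intervals. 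What the hypothesis actually buys you is that $P$ is not a Nielsen path (if it were, the forward rays from the two lifted endpoints would stay at bounded $\widetilde V_n$-distance for all $n$ and hence have unbounded coarse intersection), and it is exactly this non-Nielsen input, fed into Lemma~\ref{lem:non_nielsen_traverses_exponential}, that produces the needed traversal of a complete exponential edge. Your argument never makes this deduction, so the ``moreover'' part is not established.
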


\begin{proof}
Let $P$ be an essential path from $x_i$ to some $b\in V$.  Suppose first that $b\in\vertices(V)$.  By Lemma~\ref{lem:splitting_lemma}, there exists $L_P$ such that for all $L\geq L_P$, we have $\tight{\phi^{L}(P)}=\tight{\phi^L(Q_1)}\cdots\tight{\phi^L(Q_n)}$, where $Q_1$ must be a subinterval of an exponential edge joining $\phi^L(x_i)$ to a vertex, since periodicity ensures that $\grade(x_i)=k$, where $S^k$ is the exponential stratum containing $x_i$.  For sufficiently large $L$, we see that $\tight{\phi^L(Q_1)}$ traverses an exponential edge containing some $x_j$.  Thus $(\phi^L)^{-1}(x_j)$ contains a point separating $x_i$ from $b$.

Now suppose that $b=a$.  The bounded overlap hypothesis ensures that $P$ is not a Nielsen path, so Lemma~\ref{lem:non_nielsen_traverses_exponential} implies that for some $L_P\geq0$ and all $L\geq L_P$, the path $\tight{\phi^L(P)}$ traverses an exponential edge, and the claim follows as above from the fact that each exponential edge contains some $x_j$.  Note that the first assertion used periodicity of the $x_i$, but the second did not.
\end{proof}

\begin{rem}[Zoology of nuclei]\label{rem:zoology}
We actually use the following consequence of Lemma~\ref{lem:no_two_incoming}.  Let $W\rightarrow X$ be an immersed wall constructed by positioning exactly one primary bust in each exponential edge, and no primary busts elsewhere.  When the tunnel length is sufficiently large, each primary bust is separated from each vertex by at least one secondary bust.  It follows that each nucleus $C$ of $W$ is of one of the following three types: first, $C$ could be a proper subinterval of an exponentially-growing edge of $E$, bounded by a primary bust and a secondary bust, so that there is one incoming and one outgoing tunnel incident to $C$.  Second, $C$ could consist of a proper subinterval of an arbitrary edge, bounded by two secondary busts.  Finally, $C$ could be homeomorphic to a graph that does not contain an entire exponential edge of $E$, but which contains at least one vertex (and might contain one or more polynomial or zero edges).  In this case, all busts bounding $C$ are secondary.  The number of nuclei of the second type grows as we vary $W$ by letting the tunnel length grow while fixing the primary busts. However, the number of nuclei of the other types is eventually constant.  We also note that when the periods of the various $x_i$ divide $L$, then among the nuclei of the first type are \emph{trivial} nuclei that are lifts of the extra vertices $\ddot x_i$ used in constructing the wall.
\end{rem}

\subsection{Walls in $\widetilde X$}\label{subsec:walls_in_universal_cover}
Let $W\rightarrow X$ be an immersed wall with tunnel-length $L$.  For each lift $\widetilde W\rightarrow\widetilde X$ of the universal cover $\widetilde W\rightarrow W$, let $\overline W=\image(\widetilde W\rightarrow\widetilde X)$ and let $H_W=\stabilizer(\overline W)$, which acts cocompactly on $\overline W$.  A \emph{nucleus} of $\overline W$ is a component of $\overline W\cap\widetilde E_n$ for some $n\in\integers$.  Since $\widetilde W\rightarrow\widetilde X$ is an embedding on vertical subtrees, each nucleus of $\overline W$ is isomorphic to the universal cover of a nucleus of $W$.  Each tunnel of $W$ lifts to $\widetilde W$ and embeds in $\widetilde X$.  A \emph{knockout} $K$ is a component of $\overline W\cap \mathfrak q^{-1}([nL+\frac{1}{2}, (n+1)L])$ for some $n\in\integers$.  Equivalently, $K$ is a maximal connected subspace $K\subset\overline W$ with the property that for each tunnel $T_L(x)$ of $\overline W$, we have $\psi^{-1}_r(x)\not\in K$ for $r<\frac{1}{2}$.

\begin{defn}[Approximation]\label{defn:approximation}
The \emph{approximation} $\comapp(\overline W)$ of $\overline W$ is defined to be $\psi_{L-\frac{1}{2}}(\overline W)$.  Similarly, for each tunnel $T_L(x)$ of $\overline W$, the \emph{approximation} of $T_L(x)$ is $\comapp(T_L(x))=\psi_{L-\frac{1}{2}}(T_L(x))$.  Note that $\comapp(T_L(x))$ is the forward path of length $L$ with initial point $x$.  For each knockout $K\subset \mathfrak q^{-1}([nL+\frac{1}{2}, (n+1)L])$, the \emph{approximation} $\comapp(K)=\psi_{L-\frac{1}{2}}(K)$ is the closure of a component of the complement in $\widetilde V_n$ of the preimage of $\{d_i'\}$ under $\widetilde X\rightarrow X$, where $d_i'\in V$ is the image of $d_i$ under the obvious isomorphism $E\rightarrow V$.  See Figure~\ref{fig:approximation}.

\begin{figure}[h]
\includegraphics[width=0.4\textwidth]{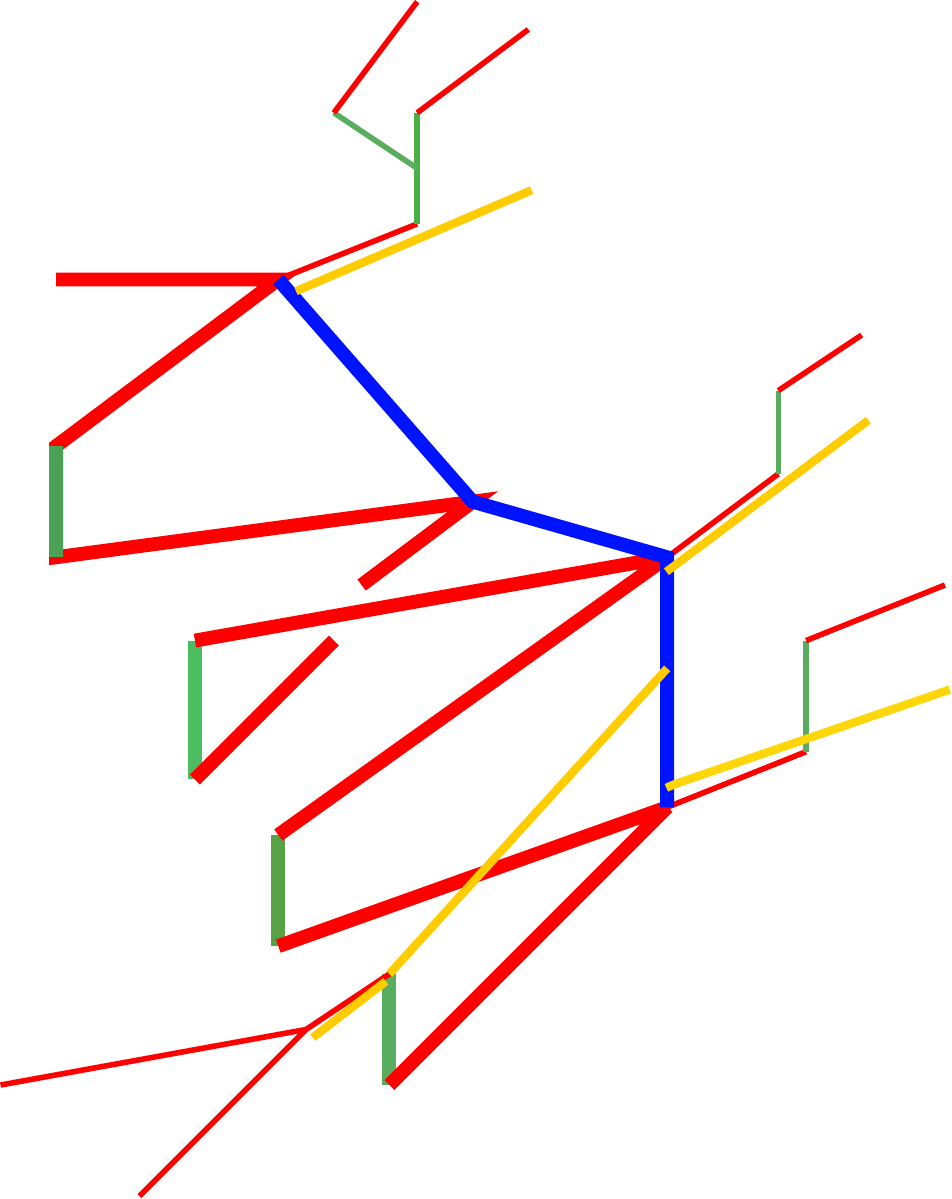}
\caption{Part of a wall and its approximation.  A knockout and its approximation are bold.  When the primary busts are $L$-periodic, the wall contains some periodic lines, and hence the approximation does also.}\label{fig:approximation}
\end{figure}

Let $\dist_{\comapp}$ be the graph metric on $\comapp(\overline W)$, where each vertical edge $e$ has length $\omega_e$ and midsegments have unit length, so that Proposition~\ref{prop:metric} implies that $\comapp(T)$ is isometrically embedded in $\widetilde X$ for each tunnel $T$ of $\overline W$.
\end{defn}

\subsection{Quasiconvex walls when tunnels are long}\label{subsec:quasiconvexity}
By Theorem~\ref{thm:polynomial_subgraph_quasiconvexity}, there exist constants $\mu_1\geq 1,\mu_2\geq 0$ such that for each connected subspace $C$ of $V$ that does not contain a complete exponential edge, any lift $\widetilde C\rightarrow\widetilde X$ of the inclusion $C\rightarrow X$ is a $(\mu_1,\mu_2)$-quasi-isometric embedding, where $\widetilde C$ has the graph metric with vertical edges assigned lengths as above.  There exists a constant $R=R(\mu_1,\mu_2)$ such that if $\ell,\ell'$ are bi-infinite forward paths intersecting $\widetilde C$, then either $\dist_{\widetilde C}(\widetilde C\cap\ell,\widetilde C\cap\ell')\leq R$, or $\neb_{3\delta}(\ell)\cap\neb_{3\delta}(\ell')=\emptyset$.  This follows from a thin quadrilateral argument using the uniform bound on coarse intersection between $\widetilde V_0$ and any forward path.  A \emph{slow subtree} $\mathbf S$ is a connected subspace of $\widetilde V_0$ that does not contain any complete exponential edges.

\begin{thm}\label{thm:quasiconvexity}
Let $B\geq 0$ and let $\{x_1,\ldots,x_k\}$ be a set of periodic regular points such that:
\begin{enumerate}
 \item $\{x_1,\ldots,x_k\}$ consists of exactly one point in the interior of each exponential edge.
 \item Let $\tilde x_{ip},\tilde x_{jq}$ be any distinct lifts of $x_i,x_j$ to a slow subtree $\mathbf S$, and let $\ell_{ip},\ell_{jq}$ be the periodic lines containing $\tilde x_{ip},\tilde x_{jq}$.  Then $\diam(\neb_{3\delta}(\ell_{ip})\cap\neb_{3\delta}(\ell_{jq}))\leq B.$
\end{enumerate}
Then there exist $L_0\geq 0,\kappa_1\geq 1,\kappa_2\geq 0$, depending only on $B$, such that if $W\rightarrow X$ is an immersed wall with primary busts $x_1,\ldots,x_k$ and tunnel-length $L\geq L_0$, then $\comapp(\overline W)\hookrightarrow\widetilde X$ is a $(\kappa_1,\kappa_2)$-quasi-isometric embedding.
\end{thm}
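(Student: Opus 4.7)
The plan is to decompose any $\dist_{\comapp}$-geodesic in $\comapp(\overline W)$ into alternating pieces that are each quasi-isometrically embedded in $\widetilde X$, and then to assemble a global estimate by a $\delta$-hyperbolic local-to-global quasi-geodesic argument, with hypothesis~(2) of the theorem controlling the junction geometry.

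By Definition~\ref{defn:approximation} and Remark~\ref{rem:zoology}, for $L$ sufficiently large $\comapp(\overline W)$ decomposes into alternating \emph{slow-subtree pieces} (each knockout approximation is a slow subtree in some $\widetilde V_n$, since it avoids the preimages of the primary busts $x_i$, which lie in every exponential edge) and \emph{forward-path pieces} (each tunnel approximation is a length-$L$ forward path along a periodic line through some lift of $x_i$). Consecutive pieces share only a single primary-bust lift. Theorem~\ref{thm:polynomial_subgraph_quasiconvexity} then makes each slow-subtree piece $(\mu_1,\mu_2)$-quasi-isometrically embedded in $\widetilde X$, while Proposition~\ref{prop:metric} makes each forward-path piece isometrically embedded. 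Given $p,q\in\comapp(\overline W)$ and a $\dist_{\comapp}$-geodesic $\sigma$ from $p$ to $q$, I would decompose $\sigma=\sigma_1\cdots\sigma_m$ with junction points $z_0=p, z_1,\ldots, z_m=q$, each intermediate $z_j$ being a primary-bust lift. The piecewise quasi-isometric embeddings give
\[
\dist_{\comapp}(p,q)\ \leq\ \mu_1\sum_{j=1}^{m}\dist(z_{j-1},z_j) + m\mu_2,
\]
so it suffices to show (i) the broken path $p=z_0\to z_1\to\cdots\to z_m=q$ has total $\widetilde X$-length $D\leq C_1\,\dist(p,q)+C_2$, and (ii) the number $m$ of pieces is linearly controlled by $\dist(p,q)$. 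Point~(ii) follows from (i), since each intermediate forward-path segment contributes exactly $L$ to $D$.

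The heart of the argument, and the main obstacle, is verifying~(i) by showing the broken path is a uniform quasi-geodesic in $\widetilde X$. After replacing each slow-subtree segment by a $\widetilde X$-geodesic between its endpoints (tunnel segments already being $\widetilde X$-geodesics), I would invoke the standard local-to-global principle for quasi-geodesics in $\delta$-hyperbolic spaces: it suffices to bound the Gromov product at each junction by a uniform constant $G$ and to ensure each segment has length at least $2G+O(\delta)$. At a junction between a tunnel segment along a periodic line $\ell$ and a slow-subtree segment in some $\mathbf{S}\subset\widetilde V_n$, the Gromov product is bounded by the coarse transversality of forward paths and slow subtrees: the tunnel segment moves monotonically in $\mathfrak{q}$ at unit rate, while any $\widetilde X$-geodesic ending in $\mathbf{S}$ stays within a bounded neighborhood of $\mathbf{S}$, hence within bounded $\mathfrak{q}$-distance of $\widetilde V_n$, with the bound determined by $\mu_1,\mu_2$ and $\delta$. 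When a slow-subtree segment is short enough that the flanking tunnel segments risk fellow-travelling directly, their periodic lines $\ell,\ell'$ pass through distinct primary-bust lifts in a common slow subtree, so hypothesis~(2) gives $\diam(\neb_{3\delta}(\ell)\cap\neb_{3\delta}(\ell'))\leq B$; this bounds the fellow-travel of the tunnel segments and hence the associated Gromov products. Choosing $L_0$ large enough that $L$ exceeds twice the resulting uniform Gromov-product bound, the local-to-global principle yields $D\leq C_1\,\dist(p,q)+C_2$, and combining with the displayed inequality produces $\kappa_1,\kappa_2$ depending only on $B$ together with the absolute data $\mu_1,\mu_2,R,\delta$.
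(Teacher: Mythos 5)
Your proposal is correct and follows essentially the same route as the paper: both decompose a geodesic of $\comapp(\overline W)$ into alternating knockout-approximation (slow subtree) and tunnel-approximation (forward path) pieces, invoke Theorem~\ref{thm:polynomial_subgraph_quasiconvexity} and the convexity of forward paths for the local estimates, use hypothesis~(2) to control coarse intersections of flanking tunnel approximations when a knockout piece is short, and conclude by a local-to-global concatenation argument with $L$ large. The one place where the paper is more precise than your sketch is that, rather than appealing to the ``standard'' local-to-global principle (which presupposes all segments are long, and so does not directly handle arbitrarily short knockout pieces), it invokes a pre-packaged concatenation lemma (Lemma~\ref{lem:RBRquasi}, quoted from~\cite{HagenWise:irreducible}) whose hypotheses are exactly the coarse-intersection bounds $\neb_{3\delta+r}(\beta_i)\cap\beta_{i+1}$, $\neb_{3\delta+r}(\beta_i)\cap\alpha_i$, $\neb_{3\delta+r}(\beta_i)\cap\alpha_{i+1}$ you identify, with only the $\beta_i$ required to be long.
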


\begin{proof}
Let $\gamma$ be a geodesic of $\comapp(\overline W)$ such that $\gamma=\alpha_0\beta_0\cdots\alpha_k\beta_k\alpha_{k+1}$, where each $\alpha_i$ is a (possibly trivial) geodesic of $\comapp(K)$ for some knockout $K$ and each $\beta_i$ is a path in $\comapp(T)$ for some tunnel $T$, with $|\beta_i|=L$ for $i\geq 1$ (there are cases where $\gamma$ starts with $\beta_0$ and/or ends with $\beta_k$, in which case these paths may have length less than $L$).  Regarding $\comapp(\overline W)$ as a subspace of $\widetilde X$, each $\beta_i$ is a geodesic of $\widetilde X$ since it is a forward path.  As explained above, Theorem~\ref{thm:quasiconvexity} implies that each $\alpha_i$ is a $(\mu_1,\mu_2)$-quasigeodesic of $\widetilde X$ since $W$ was constructed using a primary bust in each exponential edge and therefore each $\comapp(K)$ is contained in a slow subtree.

Without loss of generality, $B$ exceeds $\diam(\neb_{3\delta}(\widetilde V_n)\cap\neb_{3\delta}(\ell))$, where $\ell $ is any forward path and $n\in\integers$.  It follows by analyzing a thin quasigeodesic quadrilateral that for each $r\geq 0$, there exists $B_r\geq 0$, depending only on $\mu_1,\mu_2$, such that $$\diam(\neb_{3\delta+r}(\beta_i)\cap\alpha_i)\leq B_r,\,\,\,\,\,\,\,\diam(\neb_{3\delta+r}(\beta_i)\cap\alpha_{i-1})\leq B_r$$
for all $i$, since each $\alpha_i$ lies in some slow subtree $\mathbf S$ and is therefore a uniform quasigeodesic.  Our other hypothesis on $B$ shows that $B_r$ can be chosen so that $$\diam(\neb_{3\delta+r}(\beta_i)\cap\beta_{i+1})\leq B_r$$ for all $i$.  By Lemma~\ref{lem:RBRquasi}, there exists $L_0\geq 0$ such that if $L\geq L_0$, then $$\|\gamma\|\geq \frac{|\gamma|}{4\mu_1}-\frac{\mu_2}{2},$$ where $\|\gamma\|$ denotes the distance in $\widetilde X$ between the endpoints of $\gamma$.

It remains to consider geodesics of $\comapp(\overline W)$ of the form $\gamma=\beta_0\alpha_1\cdots\alpha_k\beta_k\alpha_{k+1}$, where $|\beta_0|<L$, and $\gamma=\beta_0\alpha_1\cdots\alpha_k\beta_k$, where $|\beta_0|,|\beta_k|<L$.  In either case, applying the above argument shows that $\gamma$ is a $(\kappa_1,\kappa_2)$-quasigeodesic with $\kappa_1=4\mu_1$ and $\kappa_2=\frac{\mu_2}{2}+2L_0(1+\frac{1}{4\mu_1})$.
\end{proof}

The following is a special case of~\cite[Lem.~4.3]{HagenWise:irreducible}:

\begin{lem}\label{lem:RBRquasi}
Let $Z$ be $\delta$-hyperbolic and let $P=\alpha_0\beta_1\alpha_1\cdots\beta_k\alpha_k$ be a path in $Z$ such that each $\beta_i$ is a geodesic and each $\alpha_i$ is a $(\mu_1,\mu_2)$-quasigeodesic.  Suppose that for each $r\geq 0$ there exists $B_r\geq 0$ such that for all $i$, each intersection below has diameter $\leq B_r$:
$$\neb_{3\delta+r}(\beta_i)\cap\beta_{i+1},\,\,\,\,\,\,\,\neb_{3\delta+r}(\beta_i)\cap\alpha_i,\,\,\,\,\,\,\,\neb_{3\delta+r}(\beta_i)\cap\alpha_{i+1}.$$

\noindent Then there exists $L_0=L_0(B_0)$ such that, if $|\beta_i|\geq L_0$ for each $i$, then $\|P\|\geq\frac{1}{4\mu_1}|P|-\frac{\mu_2}{2}$.
\end{lem}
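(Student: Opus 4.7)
The plan is to show that a geodesic $\gamma$ from the initial point $p$ of $P$ to its terminal point $q$ must fellow-travel each $\beta_i$ for most of the length of $\beta_i$, so that $|\gamma|$ acquires a lower bound in terms of $\sum_i|\beta_i|$, which we then combine with the quasi-geodesic hypothesis on each $\alpha_j$ to obtain the desired inequality.

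The first step is to locate, for each $i$, points $p_i^-, p_i^+\in\gamma$ lying within a uniform constant $D = D(\delta, \mu_1, \mu_2, B_0)$ of the endpoints $u_i$ and $v_i$ of $\beta_i$ respectively, with the $p_i^\pm$ appearing in the expected order $p, p_1^-, p_1^+, p_2^-, \ldots, p_k^-, p_k^+, q$ along $\gamma$. To achieve this I would consider the $\delta$-thin quadrilateral whose four sides are the auxiliary geodesics $\sigma_i := [p,u_i]$ and $\sigma_i' := [v_i,q]$, the piece $\beta_i$, and $\gamma$: every point of $\beta_i$ lies within $2\delta$ of one of the three remaining sides. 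The bounded-intersection hypothesis, applied piecewise using the Morse Lemma for each individual $\alpha_j$ (with Morse constant $C_0 = C_0(\delta,\mu_1,\mu_2)$), then shows that $\sigma_i \cap \neb_{3\delta}(\beta_i)$ and $\sigma_i' \cap \neb_{3\delta}(\beta_i)$ each have diameter at most a constant depending only on $\delta, \mu_1, \mu_2$ and $B_{C_0}$: each auxiliary geodesic stays within $C_0$ of the corresponding piecewise sub-path of $P$, and the intersection hypothesis uniformly bounds the diameter of each piece of that sub-path inside $\neb_{3\delta+C_0}(\beta_i)$. Hence once $L_0$ exceeds the sum of these bounds (plus $2\delta$), a long central portion of $\beta_i$ is forced to lie within $2\delta$ of $\gamma$, yielding the points $p_i^\pm$.

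With the $p_i^\pm$ in place, I would parametrize $\gamma$ by arclength and let $s_i^{\pm}$ be the coordinate of $p_i^\pm$. The ``$\beta$-arcs'' satisfy $s_i^+ - s_i^- \geq d(p_i^-,p_i^+) \geq |\beta_i| - 2D$, and the ``$\alpha$-arcs'' satisfy $s_{i+1}^- - s_i^+ \geq d(v_i,u_{i+1}) - 2D \geq \mu_1^{-1}|\alpha_i| - \mu_2 - 2D$ by the $(\mu_1,\mu_2)$-quasi-geodesic hypothesis (with analogous estimates at the two endpoints for $\alpha_0$ and $\alpha_k$). Summing and using $\mu_1\geq 1$, this yields
\[
|\gamma| \;\geq\; \mu_1^{-1}|P| \;-\; 4kD \;-\; (k+1)\mu_2.
\]
Since $|P|\geq \sum_i|\beta_i| \geq kL_0$, choosing $L_0$ larger than a suitable multiple of $\mu_1(D+\mu_2)$ forces the $k$-linear error to be at most $\frac{3}{4\mu_1}|P|$, which gives $|\gamma| \geq \frac{1}{4\mu_1}|P| - \frac{\mu_2}{2}$, as required.

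The main obstacle is the chaining argument of the first step: the hypothesis directly controls only intersections of $\neb_{3\delta+r}(\beta_i)$ with neighboring pieces of $P$, yet we must bound $\sigma_i\cap\neb_{3\delta}(\beta_i)$ for an auxiliary geodesic $\sigma_i$ shadowing an arbitrarily long sub-path of $P$. This forces us to iterate the Morse Lemma piece by piece---not to apply it to $P$ as a whole, which would beg the question---while keeping track of how the Morse constant accumulates without ever exceeding the regime in which $B_{C_0}$ applies uniformly. This inductive bookkeeping (organized either by induction on $i$, using earlier $p_j^\pm$ as waypoints, or by a Behrstock-style projection argument onto each $\beta_i$) is the essential technical content of the lemma.
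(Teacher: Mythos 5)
Your overall plan---locate points $p_i^\pm$ on a geodesic $\gamma=[p,q]$ near the endpoints of each $\beta_i$ and then estimate $|\gamma|$ by monotonicity of the arclength parametrization---is natural, and the final arithmetic works out. But the crucial first step contains a genuine gap that you flag but do not close. You need a bound, uniform over $i$, on $\diam\bigl(\sigma_i\cap\neb_{3\delta}(\beta_i)\bigr)$ where $\sigma_i=[p,u_i]$. Your stated justification---that $\sigma_i$ ``stays within $C_0$ of the corresponding piecewise sub-path of $P$''---is simply not true in general. The Morse lemma controls each $\alpha_j$ against the geodesic $[x_{2j},x_{2j+1}]$, but it does \emph{not} say the single geodesic $\sigma_i$ fellow-travels the concatenation $\alpha_0\beta_1\cdots\alpha_{i-1}$; that would require already knowing that this concatenation is a quasigeodesic, which is essentially what the lemma asserts. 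And even granting that $\sigma_i$ shadows the sub-path, the intersection hypothesis only bounds $\neb_{3\delta+r}(\beta_i)\cap\alpha_j$ and $\neb_{3\delta+r}(\beta_i)\cap\beta_j$ for $j$ adjacent to $i$; it gives no control over $\alpha_{i-3}$ or $\beta_{i-2}$, so the phrase ``uniformly bounds the diameter of each piece of that sub-path'' overstates what is assumed. A second, unaddressed issue: the ordering $p,p_1^-,p_1^+,\ldots,p_k^-,p_k^+,q$ along $\gamma$ is asserted without argument, and it does not come for free.

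The missing ingredient is exactly the local-to-global mechanism for piecewise quasigeodesics in a $\delta$-hyperbolic space. One can proceed, for example, by replacing each $\alpha_i$ with the geodesic $[x_{2i},x_{2i+1}]$ (losing only Morse-controlled constants), observing that the bounded-intersection hypotheses give uniform bounds on the Gromov products $(x_{2i-1}\mid x_{2i+1})_{x_{2i}}$ and $(x_{2i}\mid x_{2i+2})_{x_{2i+1}}$, and running the chain argument for concatenations with long pieces and bounded products; equivalently, one can argue by induction on $k$, applying the inductive hypothesis (available for every sub-concatenation, which satisfies the same hypotheses with the same constants) to justify using the Morse lemma on each $\sigma_i$. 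That inductive scaffolding is precisely what your proposal gestures at in the last paragraph but never supplies, and without it the proof is incomplete. Note also that the constant $D$ (and hence $L_0$) will genuinely depend on $B_{C_0}$ for a Morse constant $C_0=C_0(\delta,\mu_1,\mu_2)$, not merely on $B_0$ as written.
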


A \emph{wall} in $\widetilde X$ is a connected subspace $Y\subset\widetilde X$ such that $\widetilde X-Y$ has exactly two components.

\begin{thm}\label{thm:approximation_is_tree_and_wall_is_wall}
Let $B,\{d_1,\ldots,d_k\}$ satisfy the hypotheses of Theorem~\ref{thm:quasiconvexity}.  Then there exists $L_1\geq L_0$, depending only on $B$, such that if $W\rightarrow X$ is an immersed wall with primary busts $d_1,\ldots,d_k$ and tunnel-length $L\geq L_1$, then
\begin{enumerate}
 \item\label{item:comapp_is_tree} $\comapp(\overline W)$ is a tree.
 \item\label{item:wall_is_wall} $\overline W$ is a wall in $\widetilde X$.
\end{enumerate}
\end{thm}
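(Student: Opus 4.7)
I would prove part~(1) first and use its tree structure to deduce part~(2).

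For part~(1), view $\comapp(\overline W)$ as a graph in $\widetilde X$ assembled from knockout approximations $\comapp(K)$ (each a subtree of some $\widetilde V_n$, and, because primary busts cut every exponential edge, contained in a slow subtree) and tunnel approximations $\comapp(T)$ (each a forward path of length $L$, hence a geodesic of $\widetilde X$ by Proposition~\ref{prop:metric}). The first observation is that distinct tunnel approximations are disjoint: they are forward rays from distinct lifts of primary busts, so any shared point would force their forward futures to coincide, giving infinite coarse intersection of the containing periodic lines and violating hypothesis~(2) of Theorem~\ref{thm:quasiconvexity}. A simple cycle in $\comapp(\overline W)$ therefore traverses at least two distinct tunnels joined by knockout-arcs, forming a quasi-geodesic quadrilateral in $\widetilde X$ whose two long sides are tunnel-geodesics of length $L$ and whose short sides are $(\mu_1,\mu_2)$-quasigeodesic arcs in slow subtrees (by Theorem~\ref{thm:polynomial_subgraph_quasiconvexity}). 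By $\delta$-hyperbolicity, the two long sides must $3\delta$-fellow-travel over a subsegment whose length grows linearly with $L$; this subsegment lies in the $3\delta$-neighborhoods of both periodic lines, contradicting hypothesis~(2) once $L \geq L_1$ for a suitable $L_1$. Cycles traversing more than two tunnels reduce to the two-tunnel case by selecting an outermost pair along the cycle.

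For part~(2), use Proposition~\ref{prop:immersed_wall}: the local homeomorphism $[-1,1] \rtimes W^\bullet \to X$ pulls back along $\widetilde X \to X$ to a local homeomorphism from a $[-1,1]$-bundle over $\widetilde W$. Since $\widetilde W$ is simply connected (being the universal cover of the graph $W$), this bundle is trivial, yielding a local homeomorphism $\iota\colon [-1,1] \times \widetilde W \to \widetilde X$ extending $\widetilde W \to \widetilde X$. I would verify that $\iota$ is an embedding: any self-identification, after pushing forward via $\psi_{L-\frac{1}{2}}$, would produce a non-trivial closed curve in $\comapp(\overline W)$, contradicting part~(1) (perhaps after enlarging $L_1$ to absorb a bounded collar error). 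The image of $\iota$ is then an open bicollared neighborhood of $\overline W$, so $\overline W$ is locally two-sided in $\widetilde X$. Finally, because $\widetilde X$ is simply connected and $\overline W$ is an embedded tree (inherited from $\widetilde W$) equipped with a bicollar, a standard transverse-intersection parity argument promotes local two-sidedness to the statement that $\widetilde X \setminus \overline W$ has exactly two components, establishing that $\overline W$ is a wall.

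The main obstacle is the quadrilateral/fellow-traveling argument in part~(1): controlling the fellow-traveling quantitatively in terms of $B$, $\delta$, and the constants from Theorem~\ref{thm:polynomial_subgraph_quasiconvexity} so as to pin down $L_1$ precisely and to rule out cycles involving many tunnels. A secondary obstacle is the injectivity of $\iota$ in part~(2), which requires careful bookkeeping to ensure that the local combinatorial structure of $\widetilde W$ near a putative self-intersection is incompatible with the tree structure of $\comapp(\overline W)$.
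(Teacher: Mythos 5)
Your proof of part~(2) follows the paper's argument essentially exactly: pull back the $[-1,1]$-bundle from Proposition~\ref{prop:immersed_wall} to a product neighborhood of $\widetilde W$, show that a failure of injectivity would produce a cycle in $\comapp(\overline W)$ contradicting part~(1), and then use simple connectivity of $\widetilde X$ (the paper invokes $\homology^1(\widetilde X)=0$) to conclude that a bicollared subset separates into two pieces.

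Your proof of part~(1), however, diverges significantly from the paper's and has a real gap. The paper's argument is very short: any immersed closed path $P$ in $\comapp(\overline W)$ either lies in a single $\comapp(K)$ (impossible, since knockout approximations are subtrees of $\widetilde V_{nL}$) or traverses a full $\comapp(T)$ and therefore has length at least $L$; but Theorem~\ref{thm:quasiconvexity} says the inclusion $\comapp(\overline W)\hookrightarrow\widetilde X$ is a $(\kappa_1,\kappa_2)$-quasi-isometric embedding, which forces a strictly positive ambient distance between the endpoints of any such $P$ once $L>\kappa_1\kappa_2$, so $P$ cannot close up. That is the entire argument, and it handles cycles of arbitrary combinatorial complexity at once because it simply applies the already-proved quasi-isometric embedding. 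What you do instead is redo the coarse geometry from scratch: you argue that a cycle is a thin quadrilateral with two tunnel-approximation sides and two slow-subtree sides, and that fellow-travelling of the tunnel sides contradicts hypothesis~(2) of Theorem~\ref{thm:quasiconvexity}. This is essentially a hand re-derivation of the case $k=1$ of Lemma~\ref{lem:RBRquasi}, the tool that powers the proof of Theorem~\ref{thm:quasiconvexity}, and the work you put in is unnecessary given that Theorem~\ref{thm:quasiconvexity} is already available.

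The gap is your reduction ``Cycles traversing more than two tunnels reduce to the two-tunnel case by selecting an outermost pair along the cycle.'' This is not justified and I do not believe it is true as stated. A simple cycle in $\comapp(\overline W)$ alternates between arcs in knockout approximations and complete tunnel approximations, and the $\mathfrak q$-level can rise and fall repeatedly along the cycle; there is no reason for a pair of tunnel approximations on such a cycle to bound a quadrilateral whose remaining two sides are single knockout arcs, and a nearby ``outermost'' pair can have the intervening zig-zag on one or both sides. To handle the general cycle you would need something equivalent to Lemma~\ref{lem:RBRquasi} applied to a concatenation $\alpha_0\beta_1\alpha_1\cdots\beta_k\alpha_k$ with arbitrary $k$ — at which point you have reconstructed the proof of Theorem~\ref{thm:quasiconvexity} and might as well cite it, as the paper does. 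Your own final paragraph correctly flags the quadrilateral/fellow-travelling step as the main obstacle; the point is that the obstacle is resolved in the paper by never entering that route at all. (A minor further remark: your claim that distinct tunnel approximations are disjoint is not quite right — consecutive tunnel approximations can share an endpoint at the boundary of a knockout approximation — but this does not affect the main issue.)
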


\begin{proof}
Any immersed path $P$ in $\comapp(\overline W)$ either lies in $\comapp(K)$ for some knockout $K$, or traverses $\comapp(T)$ for some tunnel $T$.  In the former case, $P$ cannot be closed since $\comapp(K)$ is a tree.  In the latter case, $|P|\geq L$, where $L$ is the tunnel-length.  It follows from Theorem~\ref{thm:quasiconvexity} that if $L\geq L_1=\max\{L_0,\kappa_1\kappa_2+\kappa_1\}$ then a path $Q$ of the latter type cannot be closed.  This establishes assertion~\eqref{item:comapp_is_tree}.

To prove assertion~\eqref{item:wall_is_wall}, since $\homology^1(\widetilde X)=0$, it suffices to show that $\overline W$ has a neighborhood homeomorphic to $\overline W\times[-1,1]$, with $\overline W$ identified with $\overline W\times\{0\}$.  By Proposition~\ref{prop:immersed_wall}, there is a local homeomorphism $[-1,1]\rtimes W\rightarrow X$.  Each local homeomorphism $[-\epsilon,\epsilon]\rtimes W\rightarrow X$ lifts to a local homeomorphism $[-\epsilon,\epsilon]\times\widetilde W\rightarrow\widetilde X$.  Choosing $\epsilon>0$ sufficiently small would make $[-\epsilon,\epsilon]\times\widetilde W\rightarrow\widetilde X$ a covering map onto its image unless there are tunnels $T_0,T_k$ of $\widetilde W$, mapping to a tunnel $T$ of $\overline W$ such that the nuclei $\widetilde C_0,\widetilde C_k$ containing the roots of $T_0,T_k$ have distinct images in $\widetilde X$.

Let $\widetilde P\rightarrow\widetilde W$ be a path joining $T_0$ to $T_k$ and suppose that $T_0,T_k$ are chosen among all lifts of $T$ so that $\widetilde P$ does not pass through any other lift of $T$.  Let $P\rightarrow\overline W$ be the composition $\widetilde P\rightarrow\widetilde W\rightarrow\overline W$.  Depending on the positions of the endpoints of $\widetilde P$, assertion~\eqref{item:comapp_is_tree} because $\comapp(P)$ contains a cycle in $\comapp(\overline W)$.  Indeed, let $x$ be the root of $T$.  Then $\comapp(P)$ starts and ends on opposite sides of $\comapp(x)\in\comapp(\overline W)$.
\end{proof}

Theorem~\ref{thm:approximation_is_tree_and_wall_is_wall} implies that the codimension-1 subgroups we will use to cubulate $G$ are free.

\section{Cutting deviating geodesics with leaves}\label{sec:cutting_with_leaves}
The goal of this section is to prove Corollary~\ref{cor:leaf_separation}.  In this section, we will work with geodesics and geodesic rays of the graph $\widetilde X^1$ with its graph-metric, i.e. \emph{combinatorial} geodesics.  Let $\mathbf R$ be the graph homeomorphic to $\reals$ with $\mathbf R^0=\integers$, and let $\mathbf R^+\subset\mathbf R$ be the subgraph corresponding to $[0,\infty)$.

\begin{defn}[$\kappa$-quasigeodesic]\label{defn:ft_qg}
The paths $\gamma,\gamma':[0,T]\rightarrow\widetilde X$ are said to \emph{$\kappa$-fellow-travel} if for all $t\in[0,T]$, we have $\dist(\gamma(t),\gamma'(t))\leq\kappa$.  Let $I\subseteq\reals$ be a (possibly unbounded) subinterval.  A $\kappa$-\emph{quasigeodesic} is a path $\gamma:I\rightarrow\widetilde X$ such that, for all $a,b\in I$ with $a\leq b$, the path $\gamma|_{[a,b]}$ $\kappa$-fellow-travels with a geodesic from $\gamma(a)$ to $\gamma(b)$ after affine reparameterization to identify their domains.  Thus each bounded subpath of $\gamma$ has image at Hausdorff distance $\leq\kappa$ from a geodesic joining its endpoints.

As usual, a $(\mu_1,\mu_2)$-\emph{quasigeodesic} is a $(\mu_1,\mu_2)$-quasi-isometric embedding of an interval in $\widetilde X$.  Any $\kappa$-quasigeodesic is a $(\mu_1',\mu_2')$-quasigeodesic for some $\mu_1'\geq1,\mu_2'\geq0$ depending on $\kappa$.  Conversely, if $\gamma$ is a $(\mu_1,\mu_2)$-quasigeodesic then $\gamma$ is a $\kappa$-quasigeodesic for some $\kappa$, after reparameterizing by precomposing with a nondecreasing proper continuous map from some interval to $I$.
\end{defn}

\begin{defn}[Leaflike, deviating]\label{defn:M_sigma_deviating}
The $\xi$-quasigeodesic $\gamma:\mathbf R\rightarrow\widetilde X$ is \emph{$(M,\sigma)$-like} if $\gamma$ contains a subpath that $(2\delta+\xi)$-fellow-travels with a subpath of the forward path $\sigma$ that is the concatenation of $M$ midsegments.  If $\gamma$ is not $(M,\sigma)$-like, then $\gamma$ is $(M,\sigma)$-\emph{deviating}.  If $\gamma$ is $(M,\sigma)$-deviating for some fixed $M$ and all $\sigma$, then $\gamma$ is \emph{$M$-deviating}.  If $\gamma$ is $M$-deviating for some $M$, then $\gamma$ is \emph{deviating}  If $\gamma$ is $(M,\sigma)$-like for some $\sigma$, then $\gamma$ is \emph{leaflike}.
\end{defn}

Note that since geodesics of $\widetilde X^1$ are uniform quasigeodesics of $(\widetilde X,\dist)$, the property of being leaflike or deviating is independent of the metric, although the constants change.

\begin{defn}[Push-crop]\label{defn:push_and_crop}
Let $\gamma:\mathbf R\rightarrow\widetilde X$ be a quasigeodesic.  A bi-infinite embedded quasigeodesic in $\psi_p(\gamma)$ is a \emph{push-crop} of $\gamma$ and is denoted $\pushcrop_p$.
\end{defn}

Note that if $\gamma$ is a $\kappa$-quasigeodesic, then $\pushcrop_p$ is a $(\kappa+p)$-quasigeodesic.

\begin{lem}\label{lem:pushing_deviating}
Let $\gamma\rightarrow\widetilde X$ be an $M$-deviating $\kappa$-quasigeodesic.  For each $p\geq 0$, there exists a constant $M'=M'(M,\delta,\kappa,p)$ such that $\pushcrop_p$ is $(M',\sigma)$-deviating for any forward path $\sigma$ intersecting $\gamma$ at a point $a$ and $\pushcrop_p$ at a point $d=\psi_p(a)$.
\end{lem}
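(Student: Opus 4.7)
The plan is to argue by contrapositive: assume $\pushcrop_p$ is $(M',\sigma)$-like for some very large $M'$, lift the resulting fellow-traveling back along forward paths of length $p$ to produce a fellow-traveling of a subpath of $\gamma$ with a long concatenation of midsegments of $\sigma$, and use hyperbolicity to sharpen the fellow-traveling constant enough to contradict $M$-deviation of $\gamma$.

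First I would set up the lifting. By assumption there is a subpath $\rho\subset\pushcrop_p$, parameterized on $[0,T]$, that $(2\delta+p+\kappa)$-fellow-travels a concatenation $\tau$ of $M'$ midsegments of $\sigma$. Since $\pushcrop_p\subset\psi_p(\gamma)$ is embedded and $\psi_p$ is continuous, for each $s\in[0,T]$ I pick $\tilde\rho(s)\in\gamma$ with $\psi_p(\tilde\rho(s))=\rho(s)$ in a way that traces a sub-quasigeodesic of $\gamma$. The forward path $\sigma$ passes through both $a$ and $d=\psi_p(a)$, so its backward shift by $p$ along $\sigma$ is again a subinterval of $\sigma$; define $\tilde\tau(s)$ to be the unique point of $\sigma$ with $\psi_p(\tilde\tau(s))=\tau(s)$. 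Up to at most two boundary midsegments, $\tilde\tau$ is a concatenation of $M'$ midsegments of $\sigma$.

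Next I would bound $\dist(\tilde\rho(s),\tilde\tau(s))$. Because forward paths are convex and unit-speed (Proposition~\ref{prop:metric}), the forward path from $\tilde\rho(s)$ to $\rho(s)$ is a geodesic of length $p$, and likewise for $\tilde\tau(s)$ to $\tau(s)$. The triangle inequality yields
\[
\dist(\tilde\rho(s),\tilde\tau(s))\,\leq\, p+(2\delta+p+\kappa)+p\,=\,2\delta+\kappa+3p.
\]
Thus $\tilde\rho$, a subpath of the $\kappa$-quasigeodesic $\gamma$, $C$-fellow-travels the geodesic $\tilde\tau$ on the common domain $[0,T]$, where $C=2\delta+\kappa+3p$.

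The main obstacle is the last step: upgrading this $C$-fellow-traveling to the $(2\delta+\kappa)$-fellow-traveling required by Definition~\ref{defn:M_sigma_deviating} for $\gamma$. I would invoke a standard stability argument in the $\delta$-hyperbolic space $\widetilde X$: by Definition~\ref{defn:ft_qg}, $\tilde\rho$ $\kappa$-fellow-travels the geodesic $g$ between $\tilde\rho(0)$ and $\tilde\rho(T)$; the endpoints of $g$ are within $C$ of the endpoints of $\tilde\tau$, and so by thinness of geodesic quadrilaterals, after trimming initial and terminal portions of $\tilde\tau$ of length at most $C_0=C_0(\delta,\kappa,p)$, the trimmed $\tilde\tau$ lies in the $2\delta$-neighborhood of $g$ with matching parameterizations. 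Composing the two fellow-travelings, a corresponding subpath $\tilde\rho'\subset\tilde\rho\subset\gamma$ $(2\delta+\kappa)$-fellow-travels the trimmed $\tilde\tau'\subset\tilde\tau$, which is still a concatenation of at least $M'-C_1$ midsegments of $\sigma$ for some $C_1=C_1(\delta,\kappa,p)$. Taking $M':=M+C_1$ therefore makes $\gamma$ a $(M,\sigma)$-like path, contradicting $M$-deviation, and hence $\pushcrop_p$ is $(M',\sigma)$-deviating with $M'=M'(M,\delta,\kappa,p)$ as claimed.
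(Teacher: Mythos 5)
Your contrapositive plus lifting strategy is in the same spirit as the paper's argument — both reduce the claim to an estimate on $\gamma$ obtained by pulling $\pushcrop_p$ back along length-$p$ forward paths, and your triangle-inequality bound $\dist(\tilde\rho(s),\tilde\tau(s))\leq 2\delta+\kappa+3p$ is correct.  However, the final ``upgrading'' step has a real gap.  You need a subpath of $\gamma$ to $(2\delta+\kappa)$-fellow-travel a subpath of $\sigma$, with the pointwise, matched-parametrization notion of fellow-traveling from Definition~\ref{defn:ft_qg}.  What hyperbolicity actually gives you after trimming is that the trimmed $\tilde\tau$ lies in the $2\delta$-\emph{neighborhood} of $g$ as a set; converting this Hausdorff-type statement to fellow-traveling with matching affine parametrizations costs an extra additive constant of order $4\delta$ plus a multiple of the endpoint-distance between the trimmed segments (a short computation along a thin quadrilateral gives roughly $3D+4\delta$ when the trimmed endpoints are at distance $D$).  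There is no trimming that brings this constant down to $2\delta$, so after composing with the $\kappa$-fellow-traveling of $\tilde\rho$ with $g$ you obtain a fellow-traveling constant strictly larger than $2\delta+\kappa$, which does not contradict $(M,\sigma)$-deviation of $\gamma$.  (A secondary, fixable, issue: the family of preimages $\tilde\rho(s)$ need not trace a subpath of $\gamma$, since $\psi_p$ is not injective on $\gamma$; one should instead take the subpath of $\gamma$ between preimages of the two endpoints $\rho(0)$, $\rho(T)$.)

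The paper sidesteps the parametrization problem entirely.  Rather than exhibiting an $(M,\sigma)$-likeness of $\gamma$, it fixes a point $c\in\pushcrop_p\cap\neb_{2\delta+\kappa'}(\sigma)$ and shows directly that $\dist(c,d)$ is bounded by a constant $M''(M,\delta,\kappa,p)$, by concatenating the forward paths $cb$, $ad$, $ae$ with the subpath $ba$ of $\gamma$ and using $M$-deviation only to certify that these concatenations are uniform quasigeodesics.  Bounding the diameter of $\pushcrop_p\cap\neb_{2\delta+\kappa'}(\sigma)$ in this way bounds $M'$, because any $(M',\sigma)$-like subpath of $\pushcrop_p$ lies in this coarse intersection and has diameter at least $M'-2(2\delta+\kappa')$.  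If you want to pursue your contrapositive route, you would need to adapt it so that, like the paper, it only uses $M$-deviation through a coarse-intersection bound rather than attempting to reproduce the sharp $2\delta+\kappa$ fellow-traveling constant on $\gamma$.
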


\begin{proof}
Let $\kappa'=\kappa+p$.  Let $c\in\pushcrop_p$ lie in the $(2\delta+\kappa')$-neighborhood of some $e\in\sigma$.  We shall find $M''=M''(M,p)$ so that $\dist(c,d)\leq M''$. The existence of $M'$ follows since we will have bounded the diameter of $\pushcrop_p\cap\neb_{2\delta+\kappa'}(\sigma)$, and hence the diameter of the projection of this intersection to $\sigma$.  Let $b\in\gamma$ be chosen so that $\psi_p(b)=c$.  Let $\sigma_b$ be the forward path joining $b,c$.

Since $\gamma$ is $M$-deviating, there exists $\kappa''=\kappa''(\kappa,M)$ such that $cbad$ and $cbae$ are $\kappa''$-quasigeodesics.  If $cbad$ is a subpath of $cbae$, we have $\dist(c,d)\leq 2\delta+\kappa'+\kappa''$.  Otherwise, $\dist(c,d)\leq|cba|+p$ since $\dist(a,d)\leq p$.  In this case, $|cba|\leq\dist(c,e)+\kappa''+p$, and we obtain $\dist(c,d)\leq 2\delta+\kappa'+\kappa''+2p$.  Since $\kappa'=(\kappa,p)$, we have the desired $M'$.
\end{proof}

\begin{cor}[Strong level separation]\label{cor:leaf_separation}
Let $\gamma:\reals\rightarrow \widetilde X$ be a combinatorial geodesic that is $M$-deviating for some $M$.  Then there exists $N\geq 0$ and a periodic regular point $y\in\widetilde X^1$, mapping into an edge of an exponential stratum, such that for all sufficiently large $L\geq 0$, the forward path $\sigma_y$ of length $L$ emanating from $y$ has the following properties:
\begin{enumerate}
 \item $\pushcrop_N$ is $(M',\sigma_y)$-deviating for some $M'$.
 \item $|\pushcrop_N\cap\sigma_y|$ is finite and odd.
 \item $\dist_{\subdline}(\mathfrak q(y),\mathfrak q(\pushcrop_N\cap\sigma_y))>12(M'+\delta)$.
\end{enumerate}
\end{cor}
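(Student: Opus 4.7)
The plan is to exploit the deviating behavior of $\gamma$ to produce a periodic regular point $y$ in an exponential edge whose forward path $\sigma_y$ meets $\pushcrop_N$ transversely, far above $y$ in the $\subdline$-direction. I would first push $\gamma$ forward using Lemma~\ref{lem:pushing_deviating}, then construct $y$ by tracing a leaf backward, and finally verify the three conditions.

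For the push-forward, fix a large $N$ (depending on the quasigeodesic constant $\kappa$ of $\gamma$) and apply Lemma~\ref{lem:pushing_deviating} to obtain $M'=M'(M,\delta,\kappa,N)$ such that $\pushcrop_N$ is $(M',\sigma)$-deviating for every forward path $\sigma$ meeting $\gamma$ at some point $a$ and meeting $\pushcrop_N$ at $\psi_N(a)$. Condition~(1) then reduces to arranging that the eventual $\sigma_y$ satisfies this intersection hypothesis. Fix a threshold $D>12(M'+\delta)$ that will deliver condition~(3).

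To choose $y$, select a point $a\in\gamma$ whose forward image $\psi_N(a)$ lies on $\pushcrop_N$ and which lies in the interior of an exponential edge of $\widetilde X$. Such $a$ is available by cocompactness of the $G$-action and density of periodic regular points in each exponential edge (a standard consequence of irreducibility of the transition matrices together with atoroidality of $\Phi$), possibly after a small adjustment of $N$. I would then trace the leaf $\mathcal L_a$ backward through its tree structure (Remark~\ref{rem:leaf_is_tree}) by $D$ units in $\mathfrak q$. Because backward directions in an exponential stratum branch freely and periodic regular exponential points are dense, we locate a periodic regular point $y$ in an exponential edge at level $\mathfrak q(a)-D$ lying on the chosen backward trace of $\mathcal L_a$. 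For $L$ sufficiently large, the forward path $\sigma_y$ then contains both $a$ and $\psi_N(a)$, so it meets $\gamma$ and $\pushcrop_N$; Lemma~\ref{lem:pushing_deviating} yields condition~(1), and condition~(3) is immediate.

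For condition~(2), finiteness of $|\pushcrop_N\cap\sigma_y|$ follows from Lemma~\ref{lem:finite_intersection_leaf_edge_2} together with compactness of $\sigma_y$: a bounded portion of $\pushcrop_N$ meets only finitely many $1$-cells, each of which meets the leaf containing $\sigma_y$ in finitely many points. Oddness is the main technical obstacle. The natural argument is topological: $\sigma_y$ locally separates a strip neighborhood into two sides, and the two ends of $\pushcrop_N$ in $\visual\widetilde X$ should lie on opposite sides. The $(M',\sigma_y)$-deviation of $\pushcrop_N$ prevents hesitation near $\sigma_y$, so excursions of diameter exceeding $2\delta+\kappa$ remain on a definite side, while the level gap $D>12(M'+\delta)$ is intended to rule out ``extra'' paired crossings near the endpoints of $\sigma_y$. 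I would formalize parity by projecting via $\rho_{ij}$ to the $\reals$-tree $\mathcal Y^{ij}$ associated to the exponential stratum containing $y$: under $\rho_{ij}$, the arc $\sigma_y$ collapses to a single point (cf.\ Lemmas~\ref{lem:black_injective} and~\ref{lem:preimage_of_point}), so the claimed parity becomes a separation of the two endpoint images of $\pushcrop_N$ in $\mathcal Y^{ij}$. Excluding the quasigeodesic oscillations of $\pushcrop_N$ that could produce paired crossings — by combining the deviation bound with the level gap and the finer leaf-finiteness given by Lemma~\ref{lem:finite_intersection_leaf_edge_2} — is where I expect the main difficulty to lie.
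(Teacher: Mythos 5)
There is a genuine gap, and you yourself flag it: condition~(2), the oddness, is ``the main technical obstacle'' and you do not close it. But in the paper this oddness is not something to be verified after the fact about an essentially arbitrary $y$; it is the \emph{input}, supplied by Lemma~\ref{lem:odd_intersection_rel}, which is the main lemma of Section~\ref{subsec:separating_endpoints} and the culmination of the whole $\reals$-tree machinery of Section~\ref{sec:cutting_with_leaves} (Lemmas~\ref{lem:transverse_to_regular_points}, \ref{lem:trichotomy_tt}, \ref{lem:preimage_of_regular}, \ref{lem:different_endpoints_in_rtree}, \ref{lem:gamma_with_distinct_endpoints}, \ref{lem:fellow_travel_orflow_lower_or_different_in_r_tree}). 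That lemma produces a periodic regular point $y^\ast$ in an exponential edge such that $|\mathcal L_{y^\ast}\cap\gamma|$ is finite and odd; one then observes that the same parity holds for any quasigeodesic fellow-traveling $\gamma$, picks $N$ large enough that $\mathcal L_{y^\ast}\cap\pushcrop_N = \ell\cap\pushcrop_N$ where $\ell$ is the periodic line through $y^\ast$, arranges $y^\ast\in\pushcrop_N$, and finally takes $y$ far back along $\ell$ (by more than $12(M'+\delta)$) so that $\sigma_y$ contains the relevant stretch of $\ell$. You gesture at projecting to $\mathcal Y^{ij}$ via $\rho_{ij}$, which is indeed the mechanism inside the proof of Lemma~\ref{lem:odd_intersection_rel}, but you would need the full trichotomy argument (Lemma~\ref{lem:trichotomy_tt} together with Lemmas~\ref{lem:gamma_with_distinct_endpoints} and~\ref{lem:different_endpoints_in_rtree}) to make it go; what you have written is not a proof of the parity.

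Your construction of $y$ also has a structural problem independent of the parity question. You pick an arbitrary $a\in\gamma$ lying in an exponential edge and then trace $\mathcal L_a$ backward, hoping to land on a periodic regular point. But a periodic regular point $y$ lies on a periodic line $\ell$, and for the forward path $\sigma_y$ to pass through $a$ you need $a\in\ell$, i.e.\ $a$ must itself lie on a periodic line. A generic point of $\gamma$ does not, and you cannot perturb $a$ off $\gamma$ without destroying the hypothesis of Lemma~\ref{lem:pushing_deviating} that $\sigma_y$ meets $\gamma$ at $a$ and $\pushcrop_N$ at $\psi_N(a)$. Even if you could, there is no reason the leaf through such an $a$ meets $\gamma$ in an odd number of points -- that is precisely the content that Lemma~\ref{lem:odd_intersection_rel} is designed to furnish, and it requires the minimality-of-$i$ and $\reals$-tree arguments rather than a local topological separation argument near $\sigma_y$.
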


\begin{proof}
By Lemma~\ref{lem:odd_intersection_rel} below, there exists a periodic regular point $y^\ast\in\widetilde X$, mapping to an interior point of an exponential edge of $V$, such that $\mathcal L_{y^\ast}$ has odd-cardinality intersection with $\gamma$.  Let $\ell$ be the periodic line containing $y^\ast$.  Choose $N\geq 0$ sufficiently large so that $\mathcal L_{y^\ast}\cap\pushcrop_N=\ell\cap\pushcrop_N$.  This set has odd cardinality since $\mathcal L_{y^\ast}$ has odd-cardinality intersection with any combinatorial quasigeodesic fellowtraveling with $\gamma$.  Without loss of generality, $y^\ast\in\pushcrop_N$.

The quasigeodesic $\pushcrop_N$ is $(M',\sigma_y)$-deviating by Lemma~\ref{lem:pushing_deviating}.  There is a periodic point $y\in\ell\cap\bigcup_{n\leq \mathfrak q(y^\ast)}\widetilde V_n$ such that $d_{\subdline}(\mathfrak q(y^\ast),\mathfrak q(y))>12(M'+\delta)$.  Choosing $\sigma_y$ to be the forward path of length $L>d_{\subdline}(\mathfrak q(y^\ast),\mathfrak q(y))$ with initial point $y$ completes the proof.
\end{proof}

\subsection{Cutting lines in $\reals$-trees}\label{subsec:610}

\begin{lem}\label{lem:transverse_to_regular_points}
Let $a\in\widetilde X^{ij}$ and let $a\in\interior{e}$ for some edge $e$.  If $\grade(a)=i$, then $\rho_{ij}(e)-\rho_{ij}(a)$ has exactly two components.
\end{lem}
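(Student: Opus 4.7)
The plan is to split into two cases according to whether the stratum $S^i$ is polynomial or exponential, and in each case identify $\rho_{ij}(e)$ as an arc in $\mathcal Y^{ij}$ whose interior contains $\rho_{ij}(a)$.

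First, observe that the hypothesis $\grade(a)=i$ together with $\phi$-invariance of $V^{i-1}$ forces the image of $e$ in $V^i$ to be an $S^i$-edge (otherwise $a\in V^{i-1}$, giving $\grade(a)\leq i-1$).

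In the polynomial case, where $S^i$ consists of a single edge and $\mathcal Y^{ij}$ is the Bass-Serre tree of the splitting of $X^{ij}$ obtained from $X^{ij}\to\mathbf C$, the construction sends $\interior{e}$ homeomorphically onto the open edge of $\mathcal Y^{ij}$ over $a$ and sends the two endpoints of $e$ to the two distinct adjacent vertices. Thus $\rho_{ij}$ restricts to an embedding of $e$ onto a closed edge of $\mathcal Y^{ij}$, and removing the interior point $\rho_{ij}(a)$ from this closed edge leaves exactly two components.

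In the exponential case, I would invoke Lemma~\ref{lem:preimage_of_point} to classify the fibers of $\rho_{ij}|_e$: each nonempty fiber is either a single point (when the image has grade $i$) or a closed subinterval of $e$ (when the image has grade $<i$). Since fibers are connected and closed in $e\cong[0,1]$, the relation ``$x\sim x'$ iff $\rho_{ij}(x)=\rho_{ij}(x')$'' partitions $e$ into closed intervals and so the quotient map $e\to e/{\sim}$ is the collapse of a monotone family of intervals; therefore $e/{\sim}$ inherits a linear order from $e$, is compact, connected, and Hausdorff, and is consequently homeomorphic to a closed interval. The induced factorization $e/{\sim}\hookrightarrow\mathcal Y^{ij}$ is a continuous injection from a compact space into a Hausdorff space, hence a topological embedding, so $\rho_{ij}(e)$ is a closed arc in $\mathcal Y^{ij}$ (it is non-degenerate because the fiber over $\rho_{ij}(a)$ is the single point $\{a\}$, so $\rho_{ij}$ is not constant on $e$).

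Finally, I would verify that $\rho_{ij}(a)$ is interior to the arc $\rho_{ij}(e)$. The two endpoints of this arc are the images of the endpoints $v_1,v_2$ of $e$; if one of these equalled $\rho_{ij}(a)$, then $v_k$ would lie in the fiber of $\rho_{ij}(a)$, which has grade $i$ and hence equals the singleton $\{a\}$, forcing $v_k=a$ and contradicting $a\in\interior{e}$. Thus $\rho_{ij}(a)$ is an interior point of the arc and $\rho_{ij}(e)-\rho_{ij}(a)$ has exactly two components. The only step requiring care is verifying that $e/{\sim}$ is genuinely a closed interval, which reduces to the fact — supplied by Lemma~\ref{lem:preimage_of_point} — that fibers of $\rho_{ij}|_e$ are connected.
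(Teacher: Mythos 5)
Your approach in the exponential case is circular. You invoke Lemma~\ref{lem:preimage_of_point} to conclude that fibers of $\rho_{ij}|_e$ are connected (empty, a point, or a closed subinterval), and then build the rest of the argument on that monotone-decomposition structure. But the paper's proof of Lemma~\ref{lem:preimage_of_point}, in the exponential case, proves connectedness of $\rho_{ij}^{-1}(y)\cap e$ (for $\grade(y)<i$) precisely by \emph{citing Lemma~\ref{lem:transverse_to_regular_points}}: it assumes $\phi^n(a),\phi^n(b)$ are separated by some $S^i$-subpath $P_j$, picks a grade-$i$ point $q$ in $P_j$, and uses Lemma~\ref{lem:transverse_to_regular_points} to say $\rho_{ij}(q)$ separates $\rho_{ij}(a)$ from $\rho_{ij}(b)$, a contradiction. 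So the fact you are relying on is established downstream of the statement you are trying to prove. Unless you supply an independent proof that fibers of $\rho_{ij}|_e$ are connected, the argument does not close.

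There is also a secondary gap: you assert that the fiber over $\rho_{ij}(a)$ is the singleton $\{a\}$ by appealing to Lemma~\ref{lem:preimage_of_point}(1), but that clause applies only when $\grade(\rho_{ij}(a))=i$, where the grade of a point of $\mathcal Y^{ij}$ is the \emph{minimum} grade over its preimage. From $\grade(a)=i$ one only gets $\grade(\rho_{ij}(a))\leq i$; ruling out $\grade(\rho_{ij}(a))<i$ requires an argument (e.g. via Lemma~\ref{lem:preimage_of_regular} when $a$ is regular, but the statement does not assume regularity). The paper sidesteps both of these issues entirely: it works directly with the function $f(t)=\dist_\infty^{ij}(\rho_{ij}(e(0)),\rho_{ij}(e(t)))$, shows from the structure of the limiting pseudometric (weight-zero edges off the arc between the endpoint images, ordered $S^i$-subedges, and density from Lemma~\ref{lem:singular_almost_dense}) that $f$ is nondecreasing and strictly increases across any grade-$i$ point, whence $f(\alpha-\epsilon)<f(\alpha+\epsilon)$ and the two sides of $a$ land in distinct components of $\rho_{ij}(e)-\rho_{ij}(a)$. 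Your polynomial-case argument matches the paper and is fine.
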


Note that the hypotheses of the lemma imply that $S^i$ is exponential, or $S^i$ is polynomial and consists of the single edge $e$, and $\phi(e)=e$.

\begin{proof}[Proof of Lemma~\ref{lem:transverse_to_regular_points}]
The conclusion follows from the definition of $\mathcal Y^{ij}$ when $S^i$ is a polynomial stratum.  Hence suppose that $S^i$ is exponential.  We regard $e$ as a copy of $[0,1]$ denote by $\alpha\in(0,1)$ the point corresponding to $a$.  Consider the function $f(t)=\dist_{\infty}^{ij}(\rho_{ij}(0),\rho_{ij}(t))$.  We will show that $f$ is non-decreasing and $f(\alpha+\epsilon)>f(\alpha-\epsilon)$ for all sufficiently small $\epsilon>0$.  

Observe that for all $n\geq 0$, all $S^i$-edges of $\psi_n(e)$ lie on the arc from $\psi_n(e(0))$ to $\psi_n(e(1))$ in the order that they occur in the path $\psi_n(e)$.  Moreover, all edges in the image of $\psi_n(e)$ that do not lie on this arc have weight~0.  It follows that $f$ is strictly increasing on grade-$i$ points of $e$ as well as points of $e$ whose images under $\tilde\phi$ are endpoints of $S^i$-edges.  By Lemma~\ref{lem:singular_almost_dense}, distinct grade-$i$ points have $\phi^n$-images in distinct $S^i$-edges for sufficiently large $n$.  To see that $f$ is nondecreasing on all points, note that $f(t_1)-f(t_2)=f(t'_1)-f(t'_2)$ for appropriate grade-$i$ points $t'_1,t'_2$ mapping to grade-$i$ points.
Suppose that $f(\alpha-\epsilon)=f(\alpha+\epsilon)$.  As above, we can assume that $e(\alpha\pm\epsilon)$ map to grade-$i$ points of $S^i$.  The conclusion follows since $f$ is strictly monotonic on such points.
\end{proof}

\begin{defn}[Transverse]\label{defn:transverse}
Let $\mathcal T$ be an $\reals$-tree.  The map $\theta:\reals\rightarrow\mathcal T$ is \emph{transverse} to $y\in\mathcal T$ if for each $p\in\theta^{-1}(y)$, there exists $\epsilon>0$ such that $\theta((p-\epsilon,p))$ and $\theta((p,p+\epsilon))$ lie in distinct components of $\mathcal T-\{y\}$.  Note that if $\theta$ is transverse to $y$, then $\theta^{-1}(y)$ is a discrete set.\end{defn}

The following is~\cite[Lem.~6.9]{HagenWise:irreducible}; we repeat the proof verbatim:

\begin{lem}\label{lem:trichotomy_tt}
Let $\mathcal T$ be an $\reals$-tree. Let $\mathcal T_0\subseteq\mathcal T$ have the property that $\mathcal T-\{y\}$ has two components for each $y\in\mathcal T_0$ and each open arc of $\mathcal T$ contains a point of $\mathcal T_0$.  Let $\theta:\mathbf R\rightarrow\mathcal T$ or $\theta:\mathbf R^+\rightarrow\mathcal T$ be a continuous map.  Suppose $\theta$ is transverse to every point in $\mathcal T_0$.  Moreover, suppose that each edge $e$ of the domain of $\theta$ has connected intersection with the preimage of each point in $\mathcal T$.  Then one of the following holds: 
\begin{enumerate}
 \item \label{item:odd}There exists a nontrivial arc $\alpha\subset\mathcal T$ such that $|\theta^{-1}(y)|$ is odd for all $y\in\alpha\cap\mathcal T_0$.
 \item \label{item:infinite_preimage}There exists a point $y\in\mathcal T$ with $|\theta^{-1}(y)|$ infinite.
 \item \label{item:bigpreimage_tt}For each $r\geq0$, there exists $y_r\in\mathcal T$ such that $\diam(\theta^{-1}(y_r))\geq r$.
\end{enumerate}
\end{lem}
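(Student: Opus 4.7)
The plan is to assume both (2) and (3) fail and deduce (1). Under this assumption, every preimage $\theta^{-1}(y)$ is finite, and there is a uniform bound $R \geq 0$ on $\diam(\theta^{-1}(y))$. For $y \in \mathcal{T}_0$, the transversality of $\theta$ at $y$ together with the connected-intersection hypothesis forces $\theta^{-1}(y)$ to be a discrete set with at most one point per edge of the domain, so its cardinality is uniformly bounded by a function of $R$. Each element of $\theta^{-1}(y)$ is a transverse sign-flip between the two components of $\mathcal{T} \setminus \{y\}$. Since $\theta^{-1}(y)$ is bounded in the domain, for $|t|$ sufficiently large $\theta(t)$ lies in a single component of $\mathcal{T} \setminus \{y\}$; label these ``end-components'' $A^+(y)$ and $A^-(y)$. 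Then $|\theta^{-1}(y)|$ is odd exactly when $A^+(y) \neq A^-(y)$, and the goal reduces to producing an arc on which this inequality holds pointwise.

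First I would show that the image of $\theta$ is unbounded in $\mathcal{T}$ (in the $\mathbf{R}^+$ case, unbounded at the sole end). If the image sat in a bounded subtree, I would use the density of $\mathcal{T}_0$ in open arcs together with the uniform diameter and cardinality bounds on preimages to run a pigeonhole/compactness argument producing a point with either infinite preimage or preimage of arbitrarily large diameter, contradicting the standing assumption. With unboundedness in hand, choose $T$ much larger than $R$ and set $p = \theta(-T)$, $q = \theta(T)$, arranging $d_{\mathcal{T}}(p, q) \gg R$. Let $\beta \subset \mathcal{T}$ be the arc from $p$ to $q$, and let $\alpha \subset \beta$ be the open subarc obtained by deleting closed $\mathcal{T}$-neighborhoods of $p$ and $q$ of diameter exceeding $R$. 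For $y \in \alpha$, the diameter bound on $\theta^{-1}(y)$ together with the fact that any subpath $\theta|_{[T, t_0]}$ has image containing the arc from $q$ to $\theta(t_0)$ (by tree path-connectedness) rules out preimage points of $y$ outside $(-T, T)$: such a point would force $\theta$ to traverse a long arc in a controlled amount of time, violating the cardinality control on preimages of $\mathcal{T}_0$-points along that arc. Thus $\theta^{-1}(y) \subset (-T, T)$, and since $p = \theta(-T) \in A^-(y)$ while $q = \theta(T) \in A^+(y)$ with $A^+(y) \neq A^-(y)$, the count $|\theta^{-1}(y)|$ is odd, giving (1) on $\alpha \cap \mathcal{T}_0$.

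The main obstacle is the apparent circularity in defining the target arc $\alpha$ in terms of $T$ while simultaneously requiring $T$ to dominate preimage diameters of points on $\alpha$. The uniformity of $R$ breaks the circularity: once $T$ exceeds $R$ by a definite additive constant, preimages of points far (in $\mathcal{T}$) from $p$ and $q$ are automatically interior to $(-T, T)$, independently of the particular $y$ chosen. A secondary obstacle is verifying unboundedness of the image in the first place, which is where the transversality and edge-connectedness hypotheses do crucial work, together with the density of $\mathcal{T}_0$; a bounded continuous map from $\mathbf{R}$ into an $\reals$-tree with uniformly finite and bounded-diameter preimages is highly constrained, and ruling it out requires covering the bounded image by finitely many arcs and applying the pigeonhole argument mentioned above.
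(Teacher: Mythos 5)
The proposal's architecture (assume $\neg(2)\wedge\neg(3)$, then manufacture an arc realizing $(1)$ by looking at the geodesic between two far-out image points) is a plausible strategy and superficially resembles the paper's proof, which also produces the desired arc from the large-scale behavior of $\theta$ at its two ends. However, your write-up has two genuine gaps, and these are exactly where the real content of the lemma lives.

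\textbf{Gap 1: arranging $d_{\mathcal T}(\theta(-T),\theta(T))\gg R$.} You reduce this to ruling out a bounded image, but that is not the only obstruction. The image can be unbounded from both ends and yet $\theta(T)$ and $\theta(-T)$ can remain close in $\mathcal T$ for all $T$: this happens precisely when both ends converge to the same point of $\image(\theta)\cup\partial\image(\theta)$. The paper handles this by first proving $\image(\theta)$ is locally compact unless $(3)$ already holds, so that $\image(\theta)\cup\partial\image(\theta)$ is compact; one then extracts limits $\bar a_\infty,\bar b_\infty$ of $\theta(a_i),\theta(b_i)$ with $a_i\to\infty$, $b_i\to-\infty$, and the \emph{coincident-limits} case $\bar a_\infty=\bar b_\infty$ is shown to force $(3)$ (one finds points far out on $\bar o\bar p_\infty$ hit by both ends at times tending to $\pm\infty$). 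Your proposal silently assumes this case away; your ``pigeonhole/compactness'' remark only addresses the genuinely bounded case, which is not the hard one.

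\textbf{Gap 2: showing $\theta^{-1}(y)\subset(-T,T)$ for $y$ in the middle of the arc.} Your justification --- that a preimage $t_0>T$ ``would force $\theta$ to traverse a long arc in a controlled amount of time, violating the cardinality control'' --- does not work: $\theta$ has no speed constraint, so there is no a priori relation between time elapsed and arc-length traversed. What the diameter bound $R$ actually gives, if $\theta(t_0)=y$ with $t_0>T$, is $\theta^{-1}(y)\subset[t_0-R,t_0+R]$; this places all preimages \emph{near $T$}, not inside $(-T,T)$, and it also does not control the side of $y$ on which $\theta$ eventually lies for $t\gg t_0$. The parity of $|\theta^{-1}(y)|$ is determined by whether the two ends of $\theta$ eventually lie on opposite sides of $y$, and that is again the information extracted in the paper from the distinct limit points $\bar a_\infty\neq\bar b_\infty$: once a separating arc between them is chosen, any $\bar c$ on it separates $\theta(a_i)$ from $\theta(b_i)$ for all but finitely many $i$, and the odd-or-infinite dichotomy drops out. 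Without some substitute for this end-behavior argument, the step from ``$d(p,q)$ is large'' to ``$|\theta^{-1}(y)|$ is odd for $y$ in the middle'' is unsubstantiated.

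In short, your reduction to a large separation $d(\theta(-T),\theta(T))$ and your confinement of preimages to $(-T,T)$ are both asserted rather than proved, and both require precisely the boundary/compactness analysis that the paper carries out. If you want to pursue a more ``direct'' route, the right way to close Gap 2 is to show the two ends of $\theta$ accumulate on a fixed pair of distinct points of $\image(\theta)\cup\partial\image(\theta)$ (and that this fails only if $(3)$ holds); but that is essentially the paper's proof.
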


\begin{proof}
For each $p\in\mathbf R$, we denote by $\bar p$ its image in $\mathcal T$ and by $|\theta^{-1}(x)|$ the number of components of the preimage of $x\in\mathcal T$ in $\mathbf R$.

We now show that either $(3)$ holds or $\image(\theta)$ is locally compact.  We first claim that either $(3)$ holds, or for each edge $e$ of $\mathbf R$, there are (uniformly) finitely many edges $f$ such that $\theta(f)\cap\theta(e)\neq\emptyset$.  Indeed, if there are points in $\image(\theta)$ with arbitrarily many complementary components, then there are fibers in $\mathbf R$ consisting of arbitrarily many closed subintervals of distinct edges and so conclusion~(3) holds.  Second, choose a point $p\in\mathcal T$.  Our first claim shows that the set $\{e_i\}$ of edges with $p\in\theta(e_i)$ is finite, and so for each $i$ we can choose $\epsilon_i>0$ such that the $\epsilon_i$-neighborhood of $p$ in $\theta(e_i)$ is disjoint from the image of each edge not in $\{e_i\}$.  Let $\epsilon=\min_i\epsilon_i$.  Then the $\epsilon$-neighborhood of $p$ in $\image(\theta)$ lies in $\cup_i\theta(e_i)$ and thus has compact closure.

There exist sequences $\{a_i\}$ and $\{b_i\}$ in $\mathbf R=(-\infty,\infty)$ converging to $\infty$ and $-\infty$ respectively, whose images are sequences $\{\bar a_i\}$ and $\{\bar b_i\}$ that converge to points $\bar a_\infty$ and $\bar b_\infty$ in $\image(\theta)\cup\partial\image(\theta)$.  Indeed, since $\image(\theta)$ is a locally compact $\reals$-tree, $\image(\theta)\cup\partial\image(\theta)$ is compact by~\cite[Exmp.~II.8.11.(5)]{BridsonHaefliger}.

Suppose $\bar a_\infty \neq \bar b_\infty$.  Let $\alpha$ be a nontrivial arc separating $\bar a_{\infty}$ and $\bar b_{\infty}$.  Note that $\theta^{-1}(\bar c)$ has either odd or infinite cardinality for each $\bar c\in\alpha\cap\mathcal T_0$, since it must separate $a_i$ from $b_i$ for all but finitely many $i$.  Hence either conclusion~$(1)$ or $(2)$ holds.

Suppose $\bar a_\infty$ and $\bar b_\infty$ are equal to the same point $\bar p_\infty$.  Let $\bar o$ denote the image of the basepoint $o$ of $\mathbf R$.  The intersections $\bar o\bar a_i \cap \bar o \bar p_\infty$ converge
to the segment $\bar o \bar p_\infty$.  The same holds for $\bar o \bar b_i$.  We use this to choose a new pair of sequences $\{a_i'\}$ and $\{b_i'\}$
that still converge to $\pm\infty$, and with the additional property that
$\bar a_i'=\bar b_i'$. We do this by choosing the image points far out in
$\bar o \bar p_\infty$.  We have thus found  arbitrarily distant points in $\mathbf R$ with the same images, verifying conclusion~(3).

In the case of a ray $\theta:\mathbf R^+\rightarrow\mathcal T$, we produce $\{a_i\}$ and $\bar a_{\infty}$ as above, and then argue in the same way, replacing $\bar b_{\infty}$ by $\bar o$.
\end{proof}

\subsection{Separating endpoints of geodesics with leaves}\label{subsec:separating_endpoints}
\begin{lem}\label{lem:preimage_of_regular}
Let $x\in\widetilde X^{ij}$ be a grade-$i$ regular point, and let $y\in\widetilde X^{ij}$ satisfy $\rho_{ij}(x)=\rho_{ij}(y)$.  Then either $\mathcal L_x=\mathcal L_y$ or $y$ is a grade-$i$ regular periodic point and $x$ is periodic.
\end{lem}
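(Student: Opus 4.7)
The argument naturally splits according to whether $S^i$ is polynomial or exponential.

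First I would dispose of the polynomial case. When $S^i$ is a polynomial stratum, it consists of a single edge $e$ with $\phi(e)=eP$, where $P\subset V^{i-1}$. Iterating $\phi$ shows that any point in $\interior e$ has forward orbit that leaves $e$ whenever $P$ is nontrivial, hence has grade strictly less than $i$. So $\grade(x)=i$ forces $P$ to be trivial, making $e$ a periodic edge and $x$ itself $\phi$-fixed. Then $\rho_{ij}$ is a homeomorphism from $\interior e$ onto the interior of the unique edge of the Bass--Serre tree $\mathcal Y^{ij}$, and the identity $\rho_{ij}(y)=\rho_{ij}(x)$ forces $y=x$, giving $\mathcal L_x=\mathcal L_y$ immediately.

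For the exponential case, suppose $\mathcal L_x\neq\mathcal L_y$. After replacing $x,y$ by forward-flow images (which preserves $\rho_{ij}(\cdot)$), I may assume $x,y\in\widetilde V_0^{ij}$. Let $\alpha$ be the embedded tree-path from $x$ to $y$ in $\widetilde V_0^{ij}$ and $\bar\alpha$ its projection to $V^i$. Since $\bar x$ lies in the interior of an $S^i$-edge, $\bar\alpha$ traverses an $S^i$-edge, so Lemma~\ref{lem:splitting_lemma} applies and yields a splitting $\tight{\phi^{n_0}(\bar\alpha)}=Q_1\cdots Q_k$ whose factors are of the four allowed types. By Lemma~\ref{lem:split_band} combined with the hypothesis $\rho_{ij}(x)=\rho_{ij}(y)$, no factor $Q_s$ can be an $S^i$-edge or an initial/terminal subinterval of one, because such a factor would separate the endpoints in $\mathcal Y^{ij}$. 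Hence every $Q_s$ is a Nielsen path or a $V^{i-1}$-path.

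Because $\phi^{n_0}(\bar x)$ has grade $i$ and therefore lies in the interior of an $S^i$-edge, the initial factor $Q_1$ cannot be a $V^{i-1}$-path; it is a Nielsen path. Since $\tight{\phi(Q_1)}=Q_1$, its initial endpoint $\phi^{n_0}(\bar x)$ is $\phi$-fixed. The moreover clause of Lemma~\ref{lem:splitting_lemma} keeps $Q_1$ as the initial factor after all further $\phi$-iteration, and combined with the injectivity of $\rho_{ij}$ on grade-$i$ regular points of a single $S^i$-edge (Lemma~\ref{lem:black_injective}) this is meant to upgrade the pre-periodicity to genuine periodicity, so that $\bar x$ itself is periodic. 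Symmetrically, I analyze $Q_k$: if it were a $V^{i-1}$-path, then $\grade(\bar y)<i$, but Lemma~\ref{lem:preimage_of_point}(2) together with the finite leaf-intersection of Lemma~\ref{lem:finite_intersection_leaf_edge_2} should force $y\in\mathcal L_x$, contradicting our assumption. Thus $Q_k$ is Nielsen, identifying $\bar y$ as grade-$i$ regular and periodic.

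The hard part is this last paragraph. The two subtleties are intertwined: ruling out $Q_k\subset V^{i-1}$ (to obtain the grade and regularity of $\bar y$) and upgrading "eventually $\phi$-fixed" to "periodic" (in the sense $\phi^m(\bar x)=\bar x$). Both require combining the splitting data with the fine structure of $\rho_{ij}$-fibers inside individual $S^i$-edges provided by Lemma~\ref{lem:preimage_of_point}, and I expect this is where the careful work lives.
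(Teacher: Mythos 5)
Your polynomial case is essentially fine (modulo the small imprecision that "$y=x$" should really be "$y\in\mathcal L_x$" since the fiber of $\rho_{ij}$ over an interior point of an edge of $\mathcal Y^{ij}$ is an entire leaf-segment, not a single point). But in the exponential case you have a genuine gap, and you have located it correctly yourself: the argument as written does not rule out intermediate or terminal $V^{i-1}$-pieces in the splitting, nor does it upgrade eventual fixedness to periodicity.

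The tool you are missing is Lemma~\ref{lem:modified_brinkmann} (built on Brinkmann's Lemma~\ref{lem:brinkmann3.4}), which is where word-hyperbolicity enters. That lemma says an indivisible periodic Nielsen path with one endpoint in $V^{i-1}\cap S^i$ and the other in $S^i$ cannot traverse an $S^i$-edge. This is precisely what forbids a Nielsen-path factor from having an endpoint landing in $V^{i-1}$, and hence it is what controls the endpoints of the $Q_s$ and rules out interstitial $V^{i-1}$-segments. The tools you propose — Lemma~\ref{lem:preimage_of_point} and Lemma~\ref{lem:finite_intersection_leaf_edge_2} — control fibers of $\rho_{ij}$ within a single edge but say nothing about whether a Nielsen path endpoint is a vertex in $V^{i-1}$, so they cannot do this job.

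The paper's proof also takes a structurally different route. Rather than re-deriving a splitting of the path from $x$ to $y$ directly, it invokes Lemma~\ref{lem:fellow_travel_orflow_lower_or_different_in_r_tree}, whose case~(1) hands you exactly the needed conclusion: after flowing forward and tightening, the path joining $\sigma_x$ to $\sigma_y$ is a concatenation $Q_1\cdots Q_r$ of indivisible Nielsen paths with no interstitial $\rho_{ij}^{-1}(z)$-components (i.e.\ no surviving $V^{i-1}$-pieces), and each $Q_s$ begins and ends with a nontrivial $S^i$-edge-part. Cases~(2) and~(3) of that lemma are excluded by $\grade(x)=i$ and $\rho_{ij}(x)=\rho_{ij}(y)$ respectively. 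Once you have the Nielsen chain, applying Lemma~\ref{lem:modified_brinkmann} (and induction on $r$) shows each $Q_s$-endpoint is a grade-$i$ regular periodic point, and periodicity of (forward images of) $x$ and $y$ follows from the Nielsen-path endpoints being $\phi$-fixed. Your partial re-derivation via Lemmas~\ref{lem:splitting_lemma} and~\ref{lem:split_band} is on the right track, since these are ingredients inside the proof of Lemma~\ref{lem:fellow_travel_orflow_lower_or_different_in_r_tree}, but without the Brinkmann input the argument cannot close.
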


\begin{proof}
Let $\sigma_x,\sigma_y$ be the forward rays emanating from $x,y$.  Either $\mathcal L_x=\mathcal L_y$ or, by Lemma~\ref{lem:fellow_travel_orflow_lower_or_different_in_r_tree}, $\sigma_x,\sigma_y$ are joined by a lift of the concatenation $Q_1\cdots Q_r$ of indivisible Nielsen paths, with each $Q_s$ having an initial and terminal subpath which is a nontrivial $S^i$-edge-part and starting and ending in $S^i$.  The endpoint of $Q_1$ is a grade-$i$ regular periodic point by Lemma~\ref{lem:modified_brinkmann}.  The claim thus follows by induction on $r$.
\end{proof}

\begin{lem}\label{lem:different_endpoints_in_rtree}
Let $S^i$ be a nonzero stratum.  Let $I$ be an unbounded connected subgraph of $\mathbf R$ and let $\eta:I\rightarrow\widetilde X^{ij}$ be an $M$-deviating combinatorial quasigeodesic satisfying conclusion~(1) of Lemma~\ref{lem:trichotomy_tt}, where $\mathcal T_0$ is the set of images of regular points when $S^i$ is polynomial and the set of images of grade-$i$ regular points when $S^i$ is exponential.  Then there exists an exponential stratum $S^{i'}$, with $i'\leq i$, and a periodic point $y\in S^{i'}$ such that for some lift $\tilde y$ of $y$, the leaf $\mathcal L_{\tilde y}$ has the property that $|\mathcal L_{\tilde y}\cap\eta|$ is finite and odd.
\end{lem}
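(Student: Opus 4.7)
Plan. The hypothesis yields, via conclusion~(1) of Lemma~\ref{lem:trichotomy_tt}, a nontrivial arc $\alpha \subset \mathcal Y^{ij}$ such that $|\eta^{-1}(y)|$ is odd for every $y \in \alpha \cap \mathcal T_0$. The aim is to upgrade this to an odd-cardinality intersection of $\eta$ with a single leaf $\mathcal L_{\tilde y}$ through a periodic point in an exponential stratum $S^{i'}$ with $i' \leq i$. The proof inducts on $i$ and splits on the type of $S^i$.

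\emph{Exponential case ($i' = i$).} By Lemma~\ref{lem:black_injective}, $\rho_{ij}$ is injective on grade-$i$ regular points of any $S^i$-edge, so an open subinterval $J$ of such an edge maps homeomorphically onto a subarc of $\alpha$. Periodic grade-$i$ regular points are dense in $J$ by the expanding Perron--Frobenius dynamics on $S^i$ (where $\lambda_i > 1$), so choose such a periodic $\tilde y \in J$ with $y = \rho_{ij}(\tilde y) \in \alpha \cap \mathcal T_0$. By Lemma~\ref{lem:preimage_of_regular}, every grade-$i$ regular preimage of $y$ under $\rho_{ij}$ is either in $\mathcal L_{\tilde y}$ itself or is a periodic grade-$i$ regular point in some $S^i$-edge; by Lemma~\ref{lem:preimage_of_point}, any remaining preimage material in $\rho_{ij}^{-1}(y)$ consists of entire $V^{i-1}$-edges collapsed by $\rho_{ij}$. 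Transversality of $\eta$ at $y$ forbids $\eta$ from traversing any such collapsed edge, so $\eta^{-1}(y)$ decomposes as a finite disjoint union $\sqcup_j \eta^{-1}(\mathcal L_{\tilde y_j})$ over the periodic leaves $\mathcal L_{\tilde y_j}$ appearing in $\rho_{ij}^{-1}(y)$. Since the total is odd, some $|\mathcal L_{\tilde y_j} \cap \eta| = |\eta^{-1}(\mathcal L_{\tilde y_j})|$ is odd and finite, giving the conclusion.

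\emph{Polynomial case.} Here $S^i$ is a single polynomial edge $e$ with $\phi(e) = eP$, and $\mathcal Y^{ij}$ is the Bass--Serre tree of a cyclic splitting of $X^{ij}$. The arc $\alpha$ lies in the interior of a single Bass--Serre edge, corresponding to a specific lift $\widetilde e$; the odd-preimage hypothesis means $\eta$ crosses $\widetilde e$ transversely an odd number of times. We descend to a vertex space $\widetilde X^{i-1,j'}$ of the splitting by extracting an unbounded subray $\eta'$ of $\eta$ that lies in it, transferring the $M$-deviating property of $\eta$ to an $M'$-deviating property for $\eta'$, and using Lemma~\ref{lem:polynomial_not_map_into_self} (together with the fact that $P$ is nontrivial, the periodic-edge subcase being polynomial and hence requiring further descent) to produce an odd-preimage arc for $\eta'$ in $\mathcal Y^{i-1,j'}$. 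The inductive hypothesis applied to $\eta'$ then yields the desired $i' < i$, periodic point $y \in S^{i'}$, and leaf $\mathcal L_{\tilde y}$.

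\emph{Main obstacle.} The polynomial reduction is the delicate step: one must isolate an unbounded subray of $\eta$ lying in a lower vertex space while simultaneously preserving the deviating character of the quasigeodesic and an odd-preimage arc in the lower $\reals$-tree. The other technical subtlety, pervading both cases, is controlling how transversality at $y \in \mathcal Y^{ij}$ interacts with the $V^{i-1}$-edges that $\rho_{ij}$ collapses, so that $\eta^{-1}(y)$ can be partitioned cleanly across individual leaves and an odd summand extracted.
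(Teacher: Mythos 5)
Your exponential case follows the paper's line closely: you use Lemma~\ref{lem:black_injective} to get injectivity on grade-$i$ regular points, density of periodic points in an $S^i$-edge to locate a periodic $\tilde y$ over the arc, and Lemma~\ref{lem:preimage_of_regular} to show every point of $\eta\cap\rho_{ij}^{-1}(y)$ lies on a leaf through a periodic grade-$i$ point, so one such leaf accounts for an odd count. (Your appeal to transversality to exclude collapsed $V^{i-1}$-edges is a slight red herring --- the real reason a collapsed edge cannot lie entirely over $y$ is Lemma~\ref{lem:preimage_of_regular} combined with Lemma~\ref{lem:finite_intersection_leaf_edge_2} --- but the conclusion you draw is the right one.)

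The polynomial case, however, has a genuine gap. You assert that the arc produced by Lemma~\ref{lem:trichotomy_tt} lies in the interior of a single Bass-Serre edge and then reduce directly to an inductive step inside a vertex space $\widetilde X^{(i-1)j'}$, citing Lemma~\ref{lem:polynomial_not_map_into_self}. But the paper must, and does, treat the arc as a combinatorial path of length $1$ \emph{or} $2$: the arc may contain a Bass-Serre vertex $v$ in its interior, in which case $\eta$ passes through a vertex space $\widetilde X^{(i-1)j'}$ via a finite path $\mu$ sandwiched between two lifts $\tilde e,\tilde e'$ of the polynomial edge, and no unbounded subray of $\eta$ lying in a lower vertex space is available. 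In those subcases the argument does not descend by induction at all: it applies the Splitting Lemma (Lemma~\ref{lem:splitting_lemma}) to $\tilde e\mu\tilde e'$, $\mu$, $\tilde e\mu$, or $\mu\tilde e'$ (depending on the orientations of $e_1,e_2$ at $v$, and on whether $e$ is periodic, which is where hyperbolicity enters) to produce an exponential edge $f$ directly, and concludes that the regular leaves dual to lifts of $f$ cut $\eta$ oddly. This Splitting-Lemma machinery is the main content of the polynomial case and is entirely absent from your proposal; Lemma~\ref{lem:polynomial_not_map_into_self} plays no role in the paper's proof here. Even in your length-$1$ scenario, the claim that you can ``extract an unbounded subray $\eta'$ of $\eta$ lying in a vertex space'' and ``transfer the deviating property'' is stated without justification, whereas the paper supplies a deviating subray of the explicit form $\tilde e^{-1}\alpha$ and then applies the inductive hypothesis together with Lemma~\ref{lem:gamma_with_distinct_endpoints} (not just the induction on $i$).
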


\begin{proof}
We first treat the case in which $S^i$ is an exponential stratum.  We then prove the lemma by induction on ${\bar h}$ in the case where $S^i$ is polynomial.

\textbf{Exponential case:}  By hypothesis, there exists an arc $\alpha$ in $\image(\rho_{ij}\circ\eta)$ such that for all grade-$i$ regular points $a\in\alpha$, the set $\rho_{ij}^{-1}(a)\cap\eta$ has finite, odd cardinality.  The claim follows from Lemma~\ref{lem:preimage_of_regular} since $\alpha$ contains images of grade-$i$ periodic points.

\textbf{Polynomial case:}  Observe that $\mathcal Y^{ij}$ is the Bass-Serre tree associated to a splitting of $\widetilde X^{ij}$ as a tree of spaces, where the vertex spaces have the form $\widetilde X^{(i-1)j'}$ and the open edge spaces are homeomorphic to $\interior{\tilde e\times\mathcal T}$, where $\tilde e$ is a lift of the edge $e$ of $S^i$ and $\mathcal T$ is a tree.  The edge $e$ has the property that $\phi(e)=eP$ for some path $P$ of $V^{i-1}$, and we orient $e$ accordingly.  Observe that the arc $\beta$ provided by Lemma~\ref{lem:trichotomy_tt} can be chosen to be a combinatorial path of length 1 or 2.

For each vertex space $\widetilde X^{(i-1)j'}$, an incident edge space is \emph{incoming} if the terminal vertex of one of the associated lifts of $e$ is on $\widetilde X^{(i-1)j'}$.  Observe that the union of $\widetilde X^{(i-1)j'}$ and its incoming open edge spaces is the union of leaves of $\widetilde X^{ij}$.  Moreover, if $|P|=0$, then each vertex space and edge space is the union of leaves of $\widetilde X^{ij}$.

Suppose that $\beta$ consists of a single edge $e_1$ of $\mathcal Y^{ij}$.  Then $\eta$ contains a deviating sub-ray $\tilde e^{-1}\alpha$, where $\alpha$ belongs to some $\widetilde X^{(i-1)j'}$ and $\tilde e$ is an edge mapping to $e_1$.  Suppose that $\mathcal L$ is a regular leaf of $\widetilde X^{ij}$ having odd intersection with $\alpha$.  Then $\mathcal L$ has odd intersection with $\eta$.  The existence of such a periodic grade-$i$ regular leaf $\mathcal L$ follows by induction and Lemma~\ref{lem:gamma_with_distinct_endpoints}, since $\alpha\subset\widetilde X^{(i-1)j'}$.

It remains to consider the case where $\beta$ is the union of edges $e_1,e_2$ meeting at a vertex $v$.  Then $e_1,e_2$ lift to edges $\tilde e,\tilde e'$ of $\eta$ that are joined by a path $\mu$ in the vertex space $\widetilde X^{(i-1)j'}$ mapping to $v$.  Without loss of generality, $\tilde e\mu\tilde e'$ is a vertical path.

Suppose that $v$ is terminal in both $e_1,e_2$.  Then $\tight{\psi_n(\tilde e)\psi_n(\mu)\psi_n(\tilde e')}$ is unbounded as $n\rightarrow\infty$, since $\tilde e\mu\tilde e'$ maps to an essential closed path and $G$ is word-hyperbolic.  Applying Lemma~\ref{lem:splitting_lemma} to $\tilde e\mu\tilde e'$, we see that there is an exponential stratum $S^{i'}$ containing an edge $f$ such that every regular leaf dual to some lift $\tilde f$ of $f$ has odd intersection with $\tilde e\mu\tilde e'$ and hence with $\eta$.

Suppose that $v$ is initial in both $e_1,e_2$.  As above, applying Lemma~\ref{lem:splitting_lemma} to $\mu$ yields an exponential edge $\tilde f$ whose dual regular leaves have odd intersection with $\mu$ and thus with $\eta$.

Finally, consider the case in which $v$ is terminal in $e_1$ and initial in $e_2$ and let $\tilde e\mu\tilde e'$ be as above, except that $\mu$ may now be trivial.  If the image $e$ of $\tilde e$ in $V$ is periodic, then $\mu$ must be nontrivial since $e$ cannot be a closed path by hyperbolicity of $G$.  We can therefore argue exactly as above, applying Lemma~\ref{lem:splitting_lemma} to $\mu\tilde e'$.  Hence suppose that $\phi(e)=eP$ with $P$ nontrivial.  Applying Lemma~\ref{lem:splitting_lemma} to $\tilde e\mu$ and arguing as above completes the proof.
\end{proof}

By Proposition~\ref{prop:sub_mapping_torus_quasiconvexity}, there exists $K''=K''(\widetilde X)$ such that for all $i,j$, each subspace $\widetilde X^{ij}\subset\widetilde X$ has the property that each combinatorial geodesic of $\widetilde X^{ij}$ is a combinatorial $K''$-quasigeodesic of $\widetilde X$.

\begin{lem}\label{lem:gamma_with_distinct_endpoints}
Let $S^i$ be a nonzero stratum.  Let $\mathcal T_0$ denote the set of images of regular points of $\mathcal Y^{ij}$ when $S^i$ is a polynomial stratum and the set of images of regular grade-$i$ points when $S^i$ is an exponential stratum.  Let $\gamma:I\rightarrow\widetilde X^{ij}$ be an $M$-deviating combinatorial geodesic or geodesic ray of $\widetilde X^{ij}$.  Suppose that conclusion~(1) of Lemma~\ref{lem:trichotomy_tt}, with the appropriate set $\mathcal T_0$, does not hold for $\rho_{ij}\circ\gamma:I\rightarrow\mathcal Y^{ij}$.  Then there exists an embedded combinatorial $K''$-quasigeodesic $\eta$ such that $\eta$ fellowtravels with $\gamma$ and $\eta\subset\widetilde X^{i'j'}$ for some $i'<i$.
\end{lem}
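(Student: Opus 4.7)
The plan is to apply the tree-trichotomy of Lemma~\ref{lem:trichotomy_tt} to $\theta := \rho_{ij}\circ\gamma : I\to\mathcal Y^{ij}$, use the failure of conclusion~(1) to extract arbitrarily distant pairs of points on $\gamma$ sharing a $\rho_{ij}$-image, localize these pairs to a common subspace $\widetilde X^{(i-1)j'}$, and finally take $\eta$ to be a combinatorial geodesic of that subspace whose quasigeodesic behavior in $\widetilde X$ is controlled by Proposition~\ref{prop:sub_mapping_torus_quasiconvexity}.

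First I would verify the hypotheses of Lemma~\ref{lem:trichotomy_tt} for $\theta$. Continuity is immediate. Transversality of $\theta$ at each point of $\mathcal T_0$ is Lemma~\ref{lem:transverse_to_regular_points} in the exponential case; in the polynomial case it follows from the Bass-Serre description of $\mathcal Y^{ij}$, since a regular point of $\widetilde X^{ij}$ whose image is interior to the unique edge of $\mathcal Y^{ij}$ meets a neighborhood on which $\rho_{ij}$ is locally a homeomorphism. The connectedness hypothesis (that $\theta^{-1}(y)\cap e$ is connected for each edge $e$ of the domain) is Lemma~\ref{lem:preimage_of_point}. Since conclusion~(1) of Lemma~\ref{lem:trichotomy_tt} fails by assumption, case~(2) or case~(3) holds, and in either case I would extract a sequence $(a_n,b_n)\in\gamma\times\gamma$ with $d_{\widetilde X}(a_n,b_n)\to\infty$ and $\rho_{ij}(a_n)=\rho_{ij}(b_n)=y_n$; in case~(2) the components of $\theta^{-1}(y)$ cannot accumulate at an interior point without violating Lemma~\ref{lem:preimage_of_point} or Lemma~\ref{lem:finite_intersection_leaf_edge_2}, so they must escape to infinity in $I$.

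Next I would show that each pair $(a_n,b_n)$ lies in a common subspace $\widetilde X^{(i-1)j'_n}$. In the polynomial case, the preimages of vertices of the Bass-Serre tree $\mathcal Y^{ij}$ are precisely the subspaces $\widetilde X^{(i-1)j'}$, so by subdividing using transversality we may arrange each $y_n$ to be a vertex of $\mathcal Y^{ij}$ and the conclusion is immediate. In the exponential case, Lemma~\ref{lem:preimage_of_regular} yields a dichotomy: either $\mathcal L_{a_n}=\mathcal L_{b_n}$ or $a_n,b_n$ are both grade-$i$ regular periodic. The periodic alternative is ruled out for large $n$ because $\gamma$ is $M$-deviating, so a Morse-lemma argument combined with the quasi-isometric embeddedness of a periodic forward line forces coarse intersection of $\gamma$ with that line to be uniformly bounded, contradicting that $a_n,b_n$ both lie on the line with $d(a_n,b_n)\to\infty$. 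Thus $\mathcal L_{a_n}=\mathcal L_{b_n}$, and applying Lemma~\ref{lem:splitting_lemma} (together with Lemma~\ref{lem:split_band}) to the leaf path from $a_n$ to $b_n$ forces it to split into Nielsen paths and subpaths lying in $V^{i-1}$, so $a_n$ and $b_n$ lie in a common $\widetilde X^{(i-1)j'_n}$.

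To finish, there are finitely many components of $X^{i-1}$, so pigeonhole extracts a subsequence in which the $\widetilde X^{(i-1)j'_n}$ are all $G$-translates of a fixed $\widetilde X^{(i-1)j'}$; quasiconvexity of each translate in $\widetilde X^{ij}$ (Proposition~\ref{prop:sub_mapping_torus_quasiconvexity}) together with the $M$-deviation of $\gamma$ prevents $\gamma$ from escaping a single such translate between far-apart preimage pairs, allowing us to collapse the pairs into one lift $\widetilde X^{(i-1)j'}$. Take $\eta$ to be an embedded combinatorial geodesic of $\widetilde X^{(i-1)j'}$ through cofinally many $a_n,b_n$; it is a combinatorial $K''$-quasigeodesic of $\widetilde X$ by Proposition~\ref{prop:sub_mapping_torus_quasiconvexity}, and $\delta$-hyperbolicity applied to the shared points $a_n,b_n\in\gamma\cap\eta$ gives the required fellow-traveling. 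The principal obstacle is the exponential-case localization step: excluding the periodic alternative of Lemma~\ref{lem:preimage_of_regular} using deviation, and converting the common-leaf conclusion into common-component membership via the splitting lemma.
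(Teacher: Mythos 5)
Your high-level plan matches the paper's (apply the trichotomy, use failure of conclusion~(1) to extract distant $\rho_{ij}$-preimage pairs, localize to a lower-level $\widetilde X^{i'j'}$ using deviation, and take a geodesic of that subspace), but the central localization step in the exponential case is not correct as written.

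The pairs $(a_n,b_n)$ produced by conclusion~(3) of Lemma~\ref{lem:trichotomy_tt} lie over points $y_r\in\mathcal Y^{ij}$ that need not be images of grade-$i$ regular points, so the hypothesis of Lemma~\ref{lem:preimage_of_regular} (that one of the two points is grade-$i$ regular) is not available; you cannot invoke that lemma here. Even granting it, your exclusion of the ``periodic'' alternative is flawed: the conclusion of Lemma~\ref{lem:preimage_of_regular} is that $a_n$ and $b_n$ are each periodic, not that they lie on a \emph{common} periodic line, so there is no single line whose coarse intersection with $\gamma$ you can bound. Finally, the inference from ``$\mathcal L_{a_n}=\mathcal L_{b_n}$'' to ``$a_n,b_n$ lie in a common $\widetilde X^{(i-1)j'_n}$'' does not follow: two points of an $S^i$-edge can lie on the same leaf without either lying in a lower-level sub-mapping-torus, so the geodesic $\eta$ ``through cofinally many $a_n,b_n$'' in $\widetilde X^{(i-1)j'}$ is not well-defined. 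The paper avoids all of this by applying Lemma~\ref{lem:fellow_travel_orflow_lower_or_different_in_r_tree} to the \emph{forward rays} emanating from $a_n,b_n$ (which have well-defined images in $\mathcal Y^{ij}$ regardless of the grades of the initial points); the Nielsen-path alternative (1) of that lemma is excluded by $M$-deviation, and alternative (2) supplies subrays of $\sigma_1,\sigma_2$ in a common $\widetilde X^{i'j'}$, after which a nesting argument for the intervals $\gamma_r$, a K\"onig's-lemma limiting step, and a separate ``rays of different types'' case (where the unboundedly nested preimage intervals accumulate only on one side of $\gamma$) complete the proof. Your proposal omits the nesting argument and the one-sided case entirely, and the pigeonhole-on-$G$-translates step does not by itself collapse the sequence into a single lift; the paper's fellow-traveling estimate with the geodesics $Q$ in $\widetilde X^{i_rj_r}$ is what makes the K\"onig argument work.
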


\begin{proof}
\textbf{The exponential case:}  Suppose that $S^i$ is an exponential stratum.  Consider the composition of $\rho_{ij}:\widetilde X^{ij}\rightarrow\mathcal Y^{ij}$ with $\gamma$.  By Lemma~\ref{lem:transverse_to_regular_points}, $\rho_{ij}\circ\gamma$ is transverse to regular points.  By Lemma~\ref{lem:preimage_of_point}, fibers of $\rho_{ij}$ intersect edges of $\gamma$ in connected sets.  Therefore, by Lemma~\ref{lem:trichotomy_tt} with $\mathcal T_0$ the set of images of grade-$i$ regular points, one of the following holds:

\begin{itemize}
 \item Conclusion~(1) of Lemma~\ref{lem:trichotomy_tt} holds.
 \item Conclusion~(3) of Lemma~\ref{lem:trichotomy_tt} holds, i.e. for all $r\geq 0$ and each subray $\gamma^+\subset\gamma$, there exists $y_r\in\mathcal Y^{ij}$ such that $\diam(\rho_{ij}^{-1}(y_r)\cap\gamma^+)>r$.
\end{itemize}

\textbf{Choosing pairs of forward rays:}  In the second case, since $\gamma$ is a $K''$-quasigeodesic, for each $r\geq 0$, there exist forward rays $\sigma_1,\sigma_2$, originating on $\gamma$, with $\rho_{ij}(\sigma_1)=\rho_{ij}(\sigma_2)$ and $\dist_{\widetilde X}(\sigma_1\cap\gamma,\sigma_2\cap\gamma)>r$.%

\textbf{Distant forward rays flow lower:}  For each such $\sigma_1,\sigma_2$, one of the following holds by Lemma~\ref{lem:fellow_travel_orflow_lower_or_different_in_r_tree}:
\begin{enumerate}
 \item \label{item:fellowtravel}$\sigma_1,\sigma_2$ fellowtravel;
 \item \label{item:flows_lower}$\sigma_1,\sigma_2$ contain subrays lying in $\widetilde X^{i'j'}$ with $i'<i$.
\end{enumerate}

There exists $r_0$ such that situation~\eqref{item:fellowtravel} is impossible when $r\geq r_0$.  Indeed, the quadrilateral determined by long initial segments of $\sigma_1,\sigma_2$, a geodesic joining their endpoints, and the subtended part of $\gamma$ is uniformly thin, which forces $\gamma$ to fellowtravel with $\sigma_1$ or $\sigma_2$ for a long interval when $r$ is sufficiently large, contradicting that $\gamma$ is $M$-deviating.

Hence situation~\eqref{item:flows_lower} holds for all $r\geq r_0$, so that we have forward rays $\sigma_1(r),\sigma_2(r)$ that both contain subrays lying in $\widetilde X^{i_rj_r}$ with $i_r<i$ and intersect $\gamma$ in points at distance at least $r-M$.  Let $\gamma_r$ be the smallest closed interval in $\gamma$ containing $\rho_{ij}^{-1}(y_r)$.  When $\gamma_r$ is bounded, we choose $\sigma_1(r),\sigma_2(r)$ to be the forward rays emanating from its initial and terminal points.  When $\gamma_r$ is a ray, we choose $\sigma_1(r)$ to be the forward ray emanating from its initial point, and $\sigma_2(r)$ to be the forward ray emanating from an arbitrary point at distance at least $r$ from the initial point of $\gamma_r$ in $\gamma$.  When $\gamma_r=\gamma$, we choose $\sigma_1(r),\sigma_2(r)$ arbitrarily so that their distance in $\gamma$ is at least $r$.

\textbf{Preimage intervals are nested:}  Assume for some $r,r'\geq r_0$ that $y_r\neq y_{r'}$.  Let $\gamma'''$ be the smallest closed interval containing $\gamma_r\cup\gamma_{r'}$.  Without loss of generality, $\gamma'''$ starts at $\sigma_1(r)\cap\gamma$ and ends at $\sigma_2(r')\cap\gamma$.  Then $y_r,y_{r'}$ are separated by $\rho_{ij}(\mathcal L)$ for all grade-$i$ regular leaves whose images are in some arc.  Clearly $|\mathcal L\cap\gamma'''|$ is odd.  The leaf $\mathcal L$ cannot intersect $\gamma-\gamma'''$ when $\min(r,r')\geq M'$, where $M'$ is large compared to the deviation constant $M$.  Indeed, when $\gamma_r\cap\gamma_{r'}=\emptyset$, deviation prevents $\mathcal L$ from intersecting $\gamma-\gamma'''$, and when $\gamma_r\cap\gamma_{r'}\neq\emptyset$, since $\mathcal L$ separates each endpoint of $\gamma_r$ from each endpoint of $\gamma_{r'}$, each of the three intervals having exactly one endpoint from each of $\gamma_r,\gamma_{r'}$ contains a point of $\mathcal L$.  Another application of $M$-deviation now shows that $\mathcal L$ cannot intersect $\gamma-\gamma'''$.  Hence $|\gamma\cap\mathcal L|$ is odd, whence the endpoints of $\rho_{ij}\circ\gamma$ are distinct, contradicting our hypothesis.  Hence if $y_r\neq y_{r'}$, then $\gamma_{r'}\subseteq\gamma_r$ or vice versa.

\textbf{The bi-infinite case:}  Suppose that $\gamma=\cup_r\gamma_r$.  For any $r$, let $\sigma'_1(r),\sigma'_2(r)$ be finite subpaths of $\sigma_1(r),\sigma_2(r)$ that start on $\gamma$ and end in $\widetilde X^{i_rj_r}$.  Since $\gamma$ is $M$-deviating, there exists $K'=K'(M)$ such that $P=\sigma'_1(r)^{-1}\gamma_r\sigma_2'(r)$ is a $K'$-quasigeodesic.  Let $Q$ be a geodesic of $\widetilde X^{i_rj_r}$ joining the endpoints of $P$.  Since it lies in $\widetilde X^{i_rj_r}$, the path $Q$ is a $K''$-quasigeodesic.  Hence, for any $s\geq 0$, choosing $r$ sufficiently large ensures that $\gamma$ has a subpath of length at least $s$ that $K=(2\delta+2K'')$-fellowtravels with a length-$(s-2\delta-2K'')$ subpath of $Q$.  Since $K$ is independent of $s$, and since $\{\widetilde X^{pq}\}$ is a locally finite collection, K\"{o}nig's lemma now yields $i'<i$, and $j'$, and a $K$-quasigeodesic $\eta$ that fellow-travels with $\gamma$ and lies in $\widetilde X^{i'j'}$.

\textbf{Rays of different types:}  The remaining possibility is that $\gamma=\gamma^+\cup\gamma^-$, where $\gamma^{\pm}$ are rays such that conclusion~(3) of Lemma~\ref{lem:trichotomy_tt} applies to $\gamma^-$ but conclusion~(3) of Lemma~\ref{lem:trichotomy_tt} does not hold apply to $\gamma^+$.  Without loss of generality, $\gamma^-\cap\gamma^+\in\rho_{ij}^{-1}(y_r)$.  Let $J$ be the length-$M'$ subinterval of $\gamma^+$ beginning at $\gamma^-\cap\gamma^+$.  Choose an arc $\alpha\subset\rho_{ij}(\closure{\gamma^+-J})$ such that for all $a\in\alpha$ that are images of grade-$i$ regular points, $\rho_{ij}^{-1}(a)\cap\closure{\gamma-J}$ has odd cardinality.  Then $\rho_{ij}^{-1}(a)\cap\gamma$ has odd cardinality.  Indeed, no point of $\gamma^-$ maps to $a$, by the fact that intervals of length at least $M'$ containing preimages of distinct points must nest.  Our choice of $\alpha$ guarantees that no point of $J$ maps to $a$.

\textbf{The polynomial case:}  Let $\gamma$ and $i$ be as above, but suppose that $S^i$ is polynomial.  If conclusion~(1) of Lemma~\ref{lem:trichotomy_tt} does not hold, then, arguing as in ``rays of different types'' above, it remains to consider the case where for each $p\geq 0$, there exists a vertex $v_p$ of $\mathcal Y^{ij}$ such that $\rho_{ij}(\gamma(n_p))=v_p=\rho_{ij}(\gamma(-m_p))$ for some $m_p,n_p\geq p$.  Arguing as in the above bi-infinite case, using Proposition~\ref{prop:sub_mapping_torus_quasiconvexity}, shows that $\gamma$ fellow-travels with a uniform quasigeodesic of $\widetilde X$ that lies in some $\widetilde X^{i'j'}$ with $i'<i$.
\end{proof}

\begin{lem}\label{lem:fellow_travel_orflow_lower_or_different_in_r_tree}
Let $S^i$ be a nonzero stratum.  Let $\sigma_1,\sigma_2$ be forward rays in $\widetilde X^{ij}$ that do not lie in the same leaf.  Then one of the following holds:
\begin{enumerate}
 \item $\sigma_1,\sigma_2$ are joined by a lift of the concatenation $Q_1\cdots Q_r$ of indivisible Nielsen paths, with each $Q_s$ having an initial and terminal subpath which is a nontrivial $S^i$-edge-part and starting and ending in $S^i$.  Hence $\sigma_1,\sigma_2$ are at finite Hausdorff distance in $(\widetilde X,\dist)$;
 \item there exists $i'<i$ such that $\sigma_1,\sigma_2$ have subrays lying in a common $\widetilde X^{i'j'}$;
 \item $\rho_{ij}(\sigma_1)\neq\rho_{ij}(\sigma_2)$.
\end{enumerate}
\end{lem}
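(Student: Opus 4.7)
The plan is to assume conclusion~(3) fails, so $\rho_{ij}(\sigma_1)=\rho_{ij}(\sigma_2)$, and to deduce~(1) or~(2). After arranging $x_0\in\sigma_1$ and $y_0\in\sigma_2$ to start at the same horizontal level (with $x_n=\psi_n(x_0)$ and $y_n=\psi_n(y_0)$), the argument will center on the tight path $\bar P$ in $V^i$ from $\bar x_0$ to $\bar y_0$ obtained by projecting the unique tight path in the tree $\widetilde V_0^{ij}$.

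For $S^i$ exponential, if $\bar P\subseteq V^{i-1}$ then $\phi$-invariance of the lower-stratum subcomplex forces both rays into a common $\widetilde X^{(i-1)j'}$, yielding~(2). Otherwise $\bar P$ traverses an $S^i$-edge, and Lemma~\ref{lem:splitting_lemma} provides a splitting $\tight{\phi^{n_0}(\bar P)}=Q_1\cdots Q_k$ into pieces of the four types. A standalone $S^i$-edge piece or a nontrivial initial/terminal subinterval of one would contribute $\lambda_i^n$-scale length to $\dist_n^{ij}(x_n,y_n)$, contradicting $\dist_\infty^{ij}(\mathcal L_{x_0},\mathcal L_{y_0})=0$ (the content of Lemma~\ref{lem:split_band}), so every $Q_s$ must be a Nielsen path or a $V^{i-1}$-path. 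The central step is then to argue a \emph{homogeneity dichotomy}: consecutive pieces of the splitting must be of the same kind (where Nielsen paths contained in $V^{i-1}$ are grouped with the $V^{i-1}$-pieces). Granted that, ``all pieces in $V^{i-1}$'' yields $\tight{\phi^{n_0}(\bar P)}\subseteq V^{i-1}$ and hence~(2) by taking subrays from time $n_0$; while ``all pieces indivisible Nielsen traversing $S^i$-edges with endpoints in $S^i-(V^{i-1}\cap S^i)$'' yields~(1): lifting $Q_1\cdots Q_k$ to $\widetilde V_{n_0}^{ij}$ joins $\sigma_1$ to $\sigma_2$ as required, and finite Hausdorff distance in $(\widetilde X,\dist)$ follows because Nielsen paths are $\phi$-invariant up to tightening, so the graph-length of the joining path stays bounded along the flow.

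The polynomial case will follow the same outline using the Bass-Serre description of $\mathcal Y^{ij}$: if neither ray crosses a lift of the unique $S^i$-edge $e$, both lie in vertex spaces and $\rho_{ij}$-equality places them in a common $\widetilde X^{(i-1)j'}$, giving~(2); otherwise Lemma~\ref{lem:polynomial_not_map_into_self} (which invokes hyperbolicity to rule out $P$ being Nielsen) together with the splitting lemma applied to $P$ and to basic paths in the sense of~\cite{BestvinaFeighnHandelTits} (as in the proof of Lemma~\ref{lem:finite_intersection_leaf_edge_2}) drives the analogous dichotomy between being joined by Nielsen paths and falling into a common lower stratum, the only periodic point on $e$ being the fixed vertex $e(0)$ supplied by Definition~\ref{defn:improved_rel}.(3).

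I expect the main obstacle to be the homogeneity dichotomy in the exponential case: ruling out a mixed splitting in which a Nielsen piece crossing $S^i$-edges sits adjacent to a $V^{i-1}$-piece. The shared transition vertex of such adjacent pieces is forced to lie in $V^{i-1}\cap S^i$, and Lemma~\ref{lem:modified_brinkmann} --- which relies essentially on hyperbolicity of $G$ --- then forbids the Nielsen piece from traversing an $S^i$-edge at all, delivering the required contradiction. Identifying precisely which transition vertices between consecutive pieces of the splitting are forced into $V^{i-1}\cap S^i$ is where the improved relative train track machinery of~\cite{BestvinaFeighnHandelTits} has to be invoked most carefully.
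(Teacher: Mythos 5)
Your overall strategy is close to the paper's, and you have correctly identified the key technical obstacle (ruling out a Nielsen piece traversing $S^i$-edges sitting next to a $V^{i-1}$-piece, via Lemma~\ref{lem:modified_brinkmann}), but your sketch of how to surmount it has a genuine gap, and the gap is precisely the thing the paper's proof is built to avoid.

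The paper does not apply the splitting lemma directly to the joining path $P$. Instead it first decomposes $P$ (after flowing forward via $\psi_a$ and tightening) as $P=Z_0Q_1Z_1Q_2\cdots Z_r$, where the $Z_k$ are the connected components of $P\cap\rho_{ij}^{-1}(z)$ with $z$ the common $\rho_{ij}$-image of $\sigma_1,\sigma_2$, and $a$ is chosen to minimize the number of these components. Only then is the splitting lemma applied to each complementary piece $Q_s$. This intermediate fiber decomposition earns three things simultaneously, and all three are load-bearing. First, maximality of the $Z_k$ forces every nontrivial subpath of $Q_s$ starting (or ending) at its initial (or terminal) point to contain a grade-$i$ point, so $Q_s$ begins and ends inside $S^i$; this is exactly the location control that your ``homogeneity dichotomy'' needs but which the splitting lemma by itself does not supply. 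Second, because the interior of $Q_s$ avoids $\rho_{ij}^{-1}(z)$, the Nielsen piece $U_1$ from the splitting of $Q_s$ must actually equal $Q_s$ (its endpoints have the same $\rho_{ij}$-image, forcing them to lie in $\rho_{ij}^{-1}(z)$), and $Q_s$ is moreover \emph{indivisible}; you invoke indivisibility in conclusion~(1) but the splitting lemma only outputs ``Nielsen,'' not ``indivisible Nielsen.'' Third, when some $Z_{s-1}$ is nontrivial, the paper flows it forward (via Lemmas~\ref{lem:splitting_lemma} and~\ref{lem:black_injective}) to conclude that the endpoint of $Z_{s-1}$, hence the initial point of $Q_s$, lies in $V^{i-1}$ — combined with the first point this places it in $V^{i-1}\cap S^i$ and triggers Lemma~\ref{lem:modified_brinkmann}.

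In your version, you apply the splitting lemma once to $\bar P$, discard $S^i$-edge pieces via Lemma~\ref{lem:split_band} (that step is fine), and then assert that a Nielsen piece adjacent to a $V^{i-1}$-piece has its transition vertex in $V^{i-1}\cap S^i$. But the splitting lemma gives no control over where a Nielsen piece begins relative to $S^i$: it could start with a nontrivial $V^{i-1}$-segment and only later enter $S^i$, in which case the shared vertex is in $V^{i-1}$ but not on $\partial S^i$, and Lemma~\ref{lem:modified_brinkmann} does not apply. You would need either the fiber decomposition (as in the paper) or an independent result about the form of indivisible Nielsen paths of exponential height (that they begin and end with $S^i$-edge-parts), and your sketch supplies neither. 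The case $r=0$ (all of $P$ in one fiber) yielding conclusion~(2) is also handled more delicately in the paper than ``$\bar P\subseteq V^{i-1}$,'' since such a $P$ may contain a single grade-$i$ point without being entirely in a lower stratum. Your polynomial case is not wrong, but it is more roundabout than necessary: since $\rho_{ij}$ is a graph-of-spaces projection onto a Bass--Serre tree and is injective on the polynomial edge $e$, one reads off~(2) or~(3) directly from whether the two rays eventually lie in the same or different vertex- (or edge-) spaces, without invoking the splitting lemma.
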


\begin{proof}
\textbf{The polynomial case:}  Suppose that $S^i$ is a polynomial stratum consisting of a single edge $e$.  The paths $\sigma_1,\sigma_2$ eventually lie in vertex-spaces of $\widetilde X^{ij}$ when $e$ is expanding.  In this case, if these vertex spaces are equal then~(2) holds; otherwise~(3) holds.  This argument also works when $e$ is non-expanding, unless both $\sigma_1,\sigma_2$ have subrays that lie in a common edge-space, in which case~(3) holds.  The remainder of the proof therefore assumes that $S^i$ is an exponential stratum.

\textbf{A minimally-decomposed path:}  Suppose that $\rho_{ij}(\sigma_1)=\rho_{ij}(\sigma_2)$, and we denote this point by $z$.  Let $P$ be a vertical geodesic joining $\sigma_1$ to $\sigma_2$.  By replacing $P$ by some $\tight{\psi_a(P)}$, we can assume that $P\cap \rho_{ij}^{-1}(z)$ has a minimal number of components.  Hence $P=Z_0Q_1Z_1Q_2\cdots Z_r$ where each $Z_k$ is such a component.  Minimality ensures that leaves intersecting $Q_s$ cannot intersect $Q_t$ for $s\neq t$.  For later convenience, we can assume that $a\geq n_0$, where $n_0$ is the maximum exponent from Lemma~\ref{lem:splitting_lemma} applied to the various $Q_s$.

Observe that when $r=0$ and $P=Z_0$, conclusion~$(2)$ holds, by Lemma~\ref{lem:black_injective} and Lemma~\ref{lem:splitting_lemma}.  We therefore assume that $r\geq 1$.

By maximality of $Z_{s-1}$, the subpath of $Q_s$ from its initial point to its first internal vertex is a nontrivial interval of an edge of $S^i$ and more specifically, any nontrivial subpath of $Q_s$ beginning at its initial point contains a grade-$i$ point.  The same holds for a terminal subpath of $Q_s$.  

\textbf{Each $Q_s$ is an indivisible Nielsen path:}  Our choice of $a$ ensures that $Q_s\rightarrow V$ splits as a concatenation $U_1\cdots U_q$ of paths, each of which is either a subpath of an edge of $S^i$, a path in $V^{i-1}$, or a Nielsen path.  If $U_1$ is a Nielsen path then $U_1=Q_s$, since the endpoints of the Nielsen path $U_1$ have the same image in $\mathcal Y^{ij}$, while the interior of $Q_s$ contains no point of $\rho_{ij}^{-1}(z)$.  By the definition of $Z_{s-1}$, the path $U_1$ cannot be a path in $V^{s-1}$.  Finally, suppose $U_1$ is an initial or terminal subpath of an $S_i$-edge.  As shown above, every subpath of $U_1$ beginning at its initial point contains a grade-$i$ point; Lemma~\ref{lem:split_band} implies that the endpoints of $Q_s$ have distinct images in $\mathcal Y^{ij}$, which is impossible.  Hence $Q_s$ is a Nielsen path, and is indivisible since $Q_s$ has no interior point mapping to $z$.  Hence if each $Z_s$ is trivial, then $P$ is the concatenation of Nielsen paths, whence $\sigma_1,\sigma_2$ fellowtravel.

\textbf{Applying Lemma~\ref{lem:modified_brinkmann}:}  Suppose that $\rho_{ij}^{-1}(z)$ has a nontrivial component in $P$.  Without loss of generality, there exists $s\in\{1,\ldots,r\}$ so that $Z_{s-1}$ is nontrivial.  Hence, as shown above, the path $Q_s\rightarrow V$ is an indivisible Nielsen path joining a point $q_s\in Z_{s-1}$ to a periodic point $p_s\in S_i$.  Lemma~\ref{lem:splitting_lemma} and Lemma~\ref{lem:black_injective} imply that there exists $n>0$ such that $\psi_n(Z_{s-1})$ lies in some $\widetilde X^{(i-1)j'}$. Hence $Q_s$ is an indivisible Nielsen path with initial point in $V^{i-1}\cap S^i$ and terminal point in $S^i$.  But then $Q_s$ traverses an edge of $S^i$, contradicting Lemma~\ref{lem:modified_brinkmann}.
\end{proof}

The main lemma of this subsection is:

\begin{lem}\label{lem:odd_intersection_rel}
Let $\gamma:\reals\rightarrow \widetilde X$ be a combinatorial geodesic that is $M$-deviating for some $M$.  Then there exists a periodic regular point $y^\ast\in\widetilde X$, mapping to an interior point of an exponential edge of $V$, such that $\mathcal L_{y^\ast}$ has odd-cardinality intersection with $\gamma$.
\end{lem}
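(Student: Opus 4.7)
The plan is to induct on the length ${\bar h}$ of the filtration of $V$, descending to a quasi-isometrically embedded sub-mapping-torus via Lemma~\ref{lem:gamma_with_distinct_endpoints} whenever conclusion~(1) of Lemma~\ref{lem:trichotomy_tt} fails, and applying Lemma~\ref{lem:different_endpoints_in_rtree} as soon as it succeeds. For the base case ${\bar h}=1$, Definition~\ref{defn:improved_rel}(1) forbids zero strata at the top, while a single-edge polynomial stratum would force a $\integers^2$ subgroup and contradict the hyperbolicity of $G=\pi_1X$; hence $S^1$ is exponential. Applying Lemma~\ref{lem:gamma_with_distinct_endpoints} to $\gamma$ inside $\widetilde X=\widetilde X^{11}$, the alternative producing a fellow-traveling quasigeodesic in $\widetilde X^{i'j'}$ with $i'<1$ is vacuous, so conclusion~(1) of Lemma~\ref{lem:trichotomy_tt} must hold for $\rho_{11}\circ\gamma$; then Lemma~\ref{lem:different_endpoints_in_rtree} yields the desired periodic regular point in $S^1$.

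For the inductive step ${\bar h}>1$, I would apply Lemma~\ref{lem:gamma_with_distinct_endpoints} to $\gamma$ in $\widetilde X=\widetilde X^{{\bar h}1}$. If conclusion~(1) of Lemma~\ref{lem:trichotomy_tt} holds for $\rho_{{\bar h}1}\circ\gamma$, then Lemma~\ref{lem:different_endpoints_in_rtree} applies directly to $\gamma$ and we are done. Otherwise, Lemma~\ref{lem:gamma_with_distinct_endpoints} produces a combinatorial $K''$-quasigeodesic $\eta$ fellow-traveling $\gamma$ and lying in $\widetilde X^{i'j'}$ for some $i'<{\bar h}$. By Proposition~\ref{prop:sub_mapping_torus_quasiconvexity}, $\widetilde X^{i'j'}$ is quasi-isometrically embedded in $\widetilde X$; in particular $\pi_1X^{i'j'}$ is hyperbolic and $\phi|_{V^{i'j'}}$ is an improved relative train track map with strictly shorter filtration. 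Replacing $\eta$ by a combinatorial geodesic $\gamma'$ of $\widetilde X^{i'j'}$ sharing its endpoints at infinity, and noting that fellow-traveling with a deviating path preserves deviation up to modified constants, the inductive hypothesis produces a periodic regular point $y^\ast$ in an exponential edge of $V^{i'j'}$ such that $|\mathcal L_{y^\ast}\cap\gamma'|$ is finite and odd.

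It remains to transfer this oddness from $\gamma'$ to the original $\gamma$. The leaf $\mathcal L_{y^\ast}$ through a periodic regular point of an exponential edge is locally transverse (Lemma~\ref{lem:transverse_to_regular_points}) and so locally separates $\widetilde X$ at each of its regular points; since $\gamma$ and $\gamma'$ are quasigeodesics sharing endpoints in $\partial\widetilde X$ and fellow-travel at bounded distance, the parity of transverse crossings with such a separating leaf agrees, and finiteness of $|\mathcal L_{y^\ast}\cap\gamma|$ follows from Lemma~\ref{lem:finite_intersection_leaf_edge_2} together with $M$-deviation (which prevents accumulation of crossings near infinity). I expect the hardest step to be precisely this transfer: the leaf $\mathcal L_{y^\ast}$, while a tree inside $\widetilde X^{i'j'}$, can acquire additional branches outside this subspace through flow-preimages in higher strata, and one must verify that these extra branches contribute an even, and in fact zero, parity to the intersection with $\gamma$, so that the count in $\widetilde X$ inherits the odd parity obtained in $\widetilde X^{i'j'}$.
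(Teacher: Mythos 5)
Your proposal is correct in substance and relies on the same two key lemmas as the paper — Lemma~\ref{lem:gamma_with_distinct_endpoints} and Lemma~\ref{lem:different_endpoints_in_rtree} — but it organizes the descent as an explicit induction on $\bar h$, whereas the paper takes $i$ minimal such that $\gamma$ lies in a regular neighborhood of some $\widetilde X^{ij}$ and observes that minimality forces conclusion~(1) of Lemma~\ref{lem:trichotomy_tt} (otherwise Lemma~\ref{lem:gamma_with_distinct_endpoints} would hand back a quasigeodesic in a lower $\widetilde X^{i'j'}$, contradicting minimality). The minimality formulation lets the paper apply Lemma~\ref{lem:different_endpoints_in_rtree} exactly once and avoids rechecking at each inductive level that $\gamma'$ is still deviating and that the restriction of $\phi$ is still an improved relative train track map on a hyperbolic mapping torus; your inductive version has to carry those invariants along, which you gesture at but do not fully verify. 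Your final paragraph also slightly misidentifies the mechanism of the last step: the transfer from $\eta$ (or $\gamma'$) to $\gamma$ doesn't require analyzing the "extra branches'' of the leaf outside $\widetilde X^{i'j'}$ — rather, since $\mathcal L_{y^\ast}\cap\widetilde X^{i'j'}$ is exactly the leaf of $\widetilde X^{i'j'}$ through $y^\ast$ (as $V^{i'}$ is $\phi$-invariant, so forward flows of points in $\widetilde X^{i'j'}$ stay there), the odd intersection with $\gamma'$ is already odd intersection with the ambient leaf, and the parity then passes to $\gamma$ because the leaf is a separating codimension-1 tree, the two paths fellow-travel and have the same pair of ideal endpoints, and deviation keeps the intersections finite; this is precisely the paper's closing sentence. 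Neither of these is a gap per se, but the paper's route is tighter.
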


\begin{proof}
Let $K$ be the constant from Lemma~\ref{lem:gamma_with_distinct_endpoints}.  Let $i$ be minimal such that $\gamma$ lies in a regular neighborhood of $\widetilde X^{ij}$ for some $j$.  Minimality implies that $S^i$ is a nonzero stratum since otherwise either $\widetilde X^{ij}$ is coarsely equal to some $\widetilde X^{(i-1)j'}$ or is a horizontal line.  Since $\widetilde X^{ij}$ is uniformly quasi-isometrically embedded by Proposition~\ref{prop:sub_mapping_torus_quasiconvexity}, and since $\widetilde X$ is locally finite, $\gamma$ fellow-travels with an embedded $K''$-quasigeodesic $\eta$ in $\widetilde X$ that is a geodesic of $\widetilde X^{ij}$.

By Lemma~\ref{lem:gamma_with_distinct_endpoints} and minimality of $i$, conclusion~(1) of Lemma~\ref{lem:trichotomy_tt} holds.  Applying Lemma~\ref{lem:different_endpoints_in_rtree} to $\eta$, we see that there exists an exponential stratum $S^{i'}$ and a periodic regular point $y^\ast\in\widetilde X$ mapping into $S^{i'}$ such that $\mathcal L_{y^\ast}$ has odd-cardinality intersection with $\eta$.  Since $\gamma$ and $\eta$ are bi-infinite, deviating, and fellowtravel, $\mathcal L_{y^\ast}$ must also have odd-cardinality intersection with $\gamma$.
\end{proof}

\section{Cutting geodesics}\label{sec:cutting}
Let $\overline W\subset\widetilde X$ be a wall arising from an immersed wall $W\rightarrow X$, and suppose that $\comapp(\overline W)$ is quasiconvex in $\widetilde X$.  Let $\leftw,\rightw$ be the closures of the two components of $\widetilde X-\overline W$.  Then $\visual\overline W=\visual\comapp(\overline W)$ is a closed subspace of $\visual\widetilde X$ and $\visual\widetilde X\cong\visual\leftw\cup_{\visual\overline W}\visual\rightw$.  The wall $\overline W$ \emph{cuts} the quasigeodesic $\gamma:\reals\rightarrow\widetilde X$ if the endpoints $\gamma_{\pm\infty}$ of $\gamma$ in $\visual\widetilde X$ do not both lie in $\leftw$ or $\rightw$.  The following is~\cite[Thm.~1.4]{BergeronWise}:

\begin{thm}\label{thm:boundary_cubulation}
Let $G$ be a one-ended word-hyperbolic group.  Suppose that for all distinct points $p,q\in\visual G$, there exists a quasiconvex codimension-1 subgroup $H\leq G$ such that $p$ and $q$ lie in distinct components of $\visual G-\visual H$.  Then $G$ acts properly and cocompactly on a CAT(0) cube complex.
\end{thm}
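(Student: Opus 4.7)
The plan is to apply Sageev's cubulation construction \cite{Sageev:cubes_95} to a carefully chosen $G$-finite collection of quasiconvex codimension-1 subgroups, then verify that the dual CAT(0) cube complex $C$ admits a proper and cocompact $G$-action. The hypothesis that every pair of distinct boundary points is separated by some quasiconvex codimension-1 subgroup provides an a priori infinite collection of walls; the main work lies in reducing to finitely many $G$-orbits while preserving enough separation to obtain both properness and cocompactness.

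First, I would promote the boundary-separation hypothesis to a finite $G$-equivariant statement. For each quasiconvex codimension-1 subgroup $H \leq G$, quasiconvexity ensures that $\visual H$ is a proper closed subset of $\visual G$ and that the two ``sides'' of $H$ in a Cayley graph of $G$ determine two disjoint open subsets $U_H^+, U_H^- \subseteq \visual G - \visual H$ whose union is $\visual G - \visual H$. Now $\visual G$ is compact, and since $G$ is one-ended hyperbolic, $G$ acts cocompactly on the space $\visual^2 G$ of distinct pairs of boundary points. Covering $\visual^2 G$ by the $G$-translates of product sets $U_{H_{p,q}}^+ \times U_{H_{p,q}}^-$, where $H_{p,q}$ is a subgroup separating a given pair $(p,q)$, and using the cocompactness of $\visual^2 G / G$, I extract a finite collection $\mathcal H = \{H_1, \dots, H_n\}$ of quasiconvex codimension-1 subgroups whose $G$-translates still separate every pair of distinct boundary points.

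Second, I would apply Sageev's construction to the $G$-equivariant wallspace $(G, \{gH_i\})$, obtaining a CAT(0) cube complex $C$ with a cellular $G$-action. In a word-hyperbolic group, any pair of quasiconvex subgroups has finite coarse intersection, so any two walls cross in a bounded region and the resulting wallspace is locally finite; Sageev's theorem then guarantees that $C$ is finite-dimensional. Cocompactness of $G \curvearrowright C$ follows from finiteness of the number of $G$-orbits of walls combined with the hyperbolic-setting refinement of Sageev's criterion: quasiconvexity of the hyperplane stabilizers ensures that each hyperplane carrier has finite $\stabilizer$-quotient, and a diagonal compactness argument propagates this to cocompactness of the whole action on $C$.

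Third, for properness, I would verify that vertex stabilizers are finite. Given $g \in G$ of infinite order, the distinct points $g^{\pm\infty} \in \visual G$ are separated by $\visual(gH_ig^{-1})$ for some translate of some $H_i \in \mathcal H$, so $\langle g \rangle$ crosses the associated hyperplane infinitely often, forcing positive translation length on $C$. An analogous argument with pairs of conical limit points (or the observation that point stabilizers in $\visual G$ are at most virtually cyclic in a hyperbolic group) bounds the orders of finite vertex stabilizers uniformly, giving properness. I expect the main obstacle to be the finiteness reduction in the first step: the combination of $\visual^2 G$ being non-compact and the need for $G$-equivariance requires the cocompactness of $G \curvearrowright \visual^2 G$ and a careful Lebesgue-number-type argument on the compact quotient, together with the verification that the finite family thus obtained is genuinely sufficient, not merely for the separation of boundary points but for the Sageev construction to yield an essential cubulation.
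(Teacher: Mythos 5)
The paper does not actually prove this theorem; it is quoted verbatim as \cite[Thm.~1.4]{BergeronWise} and used as a black box, so there is no ``paper's proof'' to compare against. Your proposal is essentially a reconstruction of the Bergeron--Wise argument, and the broad outline is right: extract a $G$-finite family of separating walls by exploiting cocompactness of the $G$-action on the space of distinct boundary pairs, feed the resulting wallspace into Sageev's construction, and then verify cocompactness and properness.

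Two places in your sketch are weaker than the actual argument. First, cocompactness of $G$ on the dual cube complex is a genuine theorem (a $G$-finite family of quasiconvex codimension-$1$ subgroups of a hyperbolic group always yields a cocompact dual cube complex, via bounded packing and a classification of maximal cubes as pairwise-crossing collections of walls all passing near a common point); the phrase ``a diagonal compactness argument propagates this to cocompactness'' papers over the content of that theorem rather than indicating it. Second, your properness argument is not the one Bergeron--Wise use and is incomplete as stated. Showing that each infinite-order $g$ has positive translation length rules out elliptic infinite-order elements, hence gives finite vertex stabilizers, but properness of a cellular action also requires local finiteness of the cube complex, i.e.\ that only finitely many walls pass through any ball of $G$; you assert local finiteness of the wallspace only from bounded pairwise intersection, which is not the same thing. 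The cleaner route, and the one actually used, is a linear separation estimate: the boundary-separation hypothesis together with $\delta$-hyperbolicity implies that the number of walls separating $x$ from $y$ grows linearly in $\dist(x,y)$, which simultaneously yields local finiteness and properness of the orbit map without a separate case analysis over element types or stabilizers of boundary points.
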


Since any two distinct points in $\visual\widetilde X$ are the endpoints of a bi-infinite geodesic $\gamma$, the main result of this section is the following, which will allow us to apply Theorem~\ref{thm:boundary_cubulation}.

\begin{prop}\label{prop:everything_cut}
Let $\phi:V\rightarrow V$ be an improved relative train track map and suppose that $G$ is word-hyperbolic, where $X$ is the mapping torus of $\phi$.  Let $\gamma:\reals\rightarrow\widetilde X$ be a geodesic.  Then there exists a quasiconvex wall $\overline W\subset\widetilde X$ that cuts $\gamma$.
\end{prop}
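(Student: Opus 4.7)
The plan is to split on whether $\gamma$ is deviating or leaflike, in each case constructing an immersed wall $W \to X$ whose primary busts are periodic regular points in the exponential edges, with tunnel length $L$ large, and to verify that some lift $\overline W \subset \widetilde X$ cuts $\gamma$. Quasiconvexity will be fed through Theorem~\ref{thm:quasiconvexity} and Theorem~\ref{thm:approximation_is_tree_and_wall_is_wall}; both require a prior fixed collection $\{x_1,\ldots,x_k\}$ of periodic regular points, one in each exponential edge, whose associated periodic lines satisfy the coarse-intersection bound (2) of Theorem~\ref{thm:quasiconvexity}. Such a collection exists by a density/finiteness argument using that periodic points are dense in each exponential edge and that slow subtrees come in finitely many $G$-orbits; this fixes constants $B, L_1, \kappa_1, \kappa_2$ controlling all subsequent walls.

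\textbf{Deviating case.} If $\gamma$ is $M$-deviating, Corollary~\ref{cor:leaf_separation} supplies an integer $N$, a periodic regular point $y$ in an exponential edge, and a forward path $\sigma_y$ of length $L$ (for every sufficiently large $L$) such that $|\pushcrop_N \cap \sigma_y|$ is finite and odd, with $\dist_{\subdline}(\mathfrak q(y), \mathfrak q(\pushcrop_N \cap \sigma_y)) > 12(M' + \delta)$. I would replace the chosen primary bust lying in the same exponential edge as $y$ by $y$ itself (staying within a single $G$-orbit, using Lemma~\ref{lem:no_two_incoming} to preserve the Theorem~\ref{thm:quasiconvexity} hypotheses after this modification), build the immersed wall with these primary busts and tunnel length $L \geq L_1$, and select a lift $\overline W$ so that the approximation $\comapp(T)$ of one of its tunnels $T$ is exactly $\sigma_y$. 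The odd parity of $|\pushcrop_N \cap \sigma_y|$, combined with the distance bound in $\subdline$ guaranteeing the crossings lie strictly interior to the tunnel (so they correspond to genuine transverse crossings of $\pushcrop_N$ with $\overline W$ rather than artifacts near the nucleus or endpoints), implies $\pushcrop_N$ crosses $\overline W$ an odd number of times. Since $\gamma$ and $\pushcrop_N$ share their endpoints in $\visual\widetilde X$ and $\overline W$ is a wall quasi-isometrically embedded by Theorem~\ref{thm:quasiconvexity}, this means $\overline W$ cuts $\gamma$.

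\textbf{Leaflike case.} If $\gamma$ is $(M, \sigma)$-like for some forward path $\sigma$, then $\gamma$ fellow-travels $\sigma$ on an arbitrarily long subpath, so $\mathfrak q \circ \gamma$ is unbounded above and below in $\subdline$ and the two endpoints of $\gamma$ project to the two ends of $\subdline$. I would choose a wall $\overline W$ (with the same primary bust collection and tunnel length $L \geq L_1$, translated by a suitable element of $G$) whose nucleus lies in a time-slice $\widetilde V_n$ strictly interior to the fellow-traveling interval. Transversality of $\sigma$ to the nucleus, together with the fact that the nucleus is a proper subinterval of an exponential edge (Remark~\ref{rem:zoology}) crossed once by $\sigma$ in that slice, forces an odd number of crossings of $\gamma$ with $\overline W$, which cuts $\gamma$. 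The main obstacle throughout is the deviating case, specifically the conversion of the combinatorial intersection-parity output of Corollary~\ref{cor:leaf_separation} into a halfspace-separation statement in $\visual\widetilde X$; the distance estimate in $\subdline$ and the quasi-isometric embedding of $\comapp(\overline W)$ are exactly the ingredients that make this conversion sound.
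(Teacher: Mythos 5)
Your high-level structure is correct and matches the paper: the paper's proof of Proposition~\ref{prop:everything_cut} simply cites Proposition~\ref{prop:deviating_cut} for the deviating case and Proposition~\ref{prop:horizontal_cut} for the non-deviating (leaflike) case, and both of those proceed roughly as you outline --- invoke Corollary~\ref{cor:leaf_separation} in the deviating case, use a wall built from periodic primary busts, and appeal to Theorems~\ref{thm:quasiconvexity} and~\ref{thm:approximation_is_tree_and_wall_is_wall}. But the argument you give for passing from ``odd parity of $\pushcrop_N\cap\sigma_y$'' to ``$\overline W$ cuts $\gamma$'' has a genuine gap, and it is the central technical one.

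The issue is that you conflate $\overline W$ with its approximation. The forward path $\sigma_y$ is $\comapp(T)$ for a tunnel $T$, and it is \emph{not} a subset of $\overline W$: the tunnel $T$ itself is the tree of forward paths \emph{terminating} at the primary bust, whereas $\comapp(T)=\psi_{L-\frac{1}{2}}(T)$ is the forward path \emph{emanating} from it, living in an entirely different region of $\widetilde X$. Consequently, counting transverse crossings of $\pushcrop_N$ with $\sigma_y$ does not by itself count crossings with the wall, and $\comapp(\overline W)$ is simply not a wall in $\widetilde X$ --- it does not separate $\widetilde X$ at all. The whole point of Section~\ref{subsec:lift_and_cut} is that $\comapp(\overline W)$ becomes a wall only after passing to $\widetilde X_L$ and replacing the pulled-back halfspaces by $\leftc=\leftw_L\cup\mathfrak Z$ and $\rightc=\closure{\rightw_L-\mathfrak Z}$, using the discrepancy zones $\mathfrak Z$. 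The paper's proof of Proposition~\ref{prop:deviating_cut} then lifts the push-crop to an augmented path $\lpc$ in $\widetilde X_L$, flows it to $\hat\eta$, uses Lemma~\ref{lem:empty_exceptional_zone} (narrow exceptional zones) to ensure that the crossings of $\hat\eta$ with the level of $T'$ are honest alternations between $\leftc$ and $\rightc$ (Claim~\ref{claim:hat_eta_cut}), and separately proves that $\hat\eta$ meets $\comapp(\overline W)$ in \emph{exactly} the expected finite set $\widehat{\mathcal Q}$ and nowhere else (Claim~\ref{claim:never_comes_back}). None of this machinery appears in your proposal, and the phrase ``the distance estimate in $\subdline$ and the quasi-isometric embedding of $\comapp(\overline W)$ are exactly the ingredients'' does not supply it --- those two ingredients bound coarse intersections and give Claim~\ref{claim:wedge_qi_embeds}, but they do not by themselves convert crossing-parity with a non-separating subspace into a halfspace-separation statement for $\overline W$.

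A secondary gap: you fix the primary busts $\{x_1,\ldots,x_k\}$ once and for all at the outset, but in the paper's proof they must be chosen \emph{after} Corollary~\ref{cor:leaf_separation} (resp.\ the limit point $x_o$ in the leaflike case) is in hand, so that the specific cutting point $y$ is included as a primary bust; Propositions~\ref{prop:periodic_diverge} and~\ref{prop:horizontal_tunnel_overlap} are stated precisely to allow this dependent choice while still controlling the coarse-intersection constant $B$. Your appeal to Lemma~\ref{lem:no_two_incoming} to ``swap in'' $y$ afterwards does not address re-establishing hypothesis~(2) of Theorem~\ref{thm:quasiconvexity} for the new collection. Finally, in the leaflike case the claim that ``the two endpoints of $\gamma$ project to the two ends of $\subdline$'' is unjustified (and unneeded in the paper's argument): not being $M$-deviating only guarantees arbitrarily long forward subpaths somewhere in $\gamma$, not that $\mathfrak q\circ\gamma$ is proper in both directions.
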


\begin{proof}
This follows immediately from Proposition~\ref{prop:deviating_cut} and Proposition~\ref{prop:horizontal_cut}.
\end{proof}

We can now complete the proof of the main theorem:

\begin{proof}[Proof of Theorem~\ref{thmi:main}]
By~\cite[Lem.~12.5]{Wise:quasiconvex_hierarchy}, it suffices to prove the claim for $G'=F\rtimes_{\Phi^k}\integers$ for some $k\geq 1$.  Letting $k$ be chosen, using~\cite[Thm.~5.1.5]{BestvinaFeighnHandelTits}, so that $\Phi^k$ admits an improved tight relative train track representative, the claim follows immediately from Proposition~\ref{prop:everything_cut} and Theorem~\ref{thm:boundary_cubulation}.
\end{proof}

\subsection{$\comapp(\overline W)$ as a wall in $\widetilde X_L$}\label{subsec:lift_and_cut}
We refer the reader to Remark~\ref{rem:X_L_properties}, and the discussion preceding it, describing $\widetilde X_L$.  Let $W\rightarrow X$ be an immersed wall with tunnel-length $L$, and let $\widetilde W\rightarrow\widetilde X$ be a lift with the property that each nucleus of $\widetilde X$ maps to $\mathfrak q^{-1}(L\integers+\frac{1}{2})$.  Then $\comapp(\overline W)\hookrightarrow\widetilde X$ lifts to an embedding $\comapp(\overline W)\rightarrow\widetilde X_L$ and we also use the notation $\comapp(\overline W)$ for the image of this embedding.  Moreover, if $\comapp(\overline W)$ is quasi-isometrically embedded in $\widetilde X$, then it is quasi-isometrically embedded in $\widetilde X_L$.  Finally, each nucleus of $\overline W$ lifts to $\widetilde X_L$ since $\widetilde X_L\rightarrow\widetilde X$ restricts to an isomorphism of trees on $\mathfrak q_L^{-1}(L\integers)$ and $\mathfrak q_L^{-1}(L\integers+\frac{1}{2})$.  Note, however, that $\overline W$ does not in general lift to $\widetilde X_L$.

The reason for considering $\comapp(\overline W)$ as a subspace of $\widetilde X_L$ is that although $\comapp(\overline W)$ is not a wall in $\widetilde X$, its lift $\comapp(\overline W)\subset\widetilde X_L$ does provide a wall that we now describe.

\begin{defn}[Discrepancy zone]\label{defn:discrepancy_zone}
Let $\widetilde C$ be a nucleus of $\overline W$ and let $\comapp(\widetilde C)$ be its approximation.  Then there is an embedding $\widetilde C\times[\frac{1}{2},L)\rightarrow\widetilde X_L$ whose closure is called a \emph{discrepancy zone}.

Each discrepancy zone $Z$ has a boundary consisting of $\widetilde C\cup\comapp(\widetilde C)$ together with a collection of forward paths of length $L-\frac{1}{2}$ beginning at the primary and secondary bust points bounding $\widetilde C$.  These forward paths lie in tunnels or tunnel-approximations according to whether their initial points are secondary or primary busts.  (Note that if $L$ is a multiple of the periods of the primary bust points, then each tunnel-approximation also lies in a tunnel.)  See Figure~\ref{fig:discrepancy_zone_picture}.
\end{defn}

\begin{figure}[h]
\includegraphics[width=0.75\textwidth]{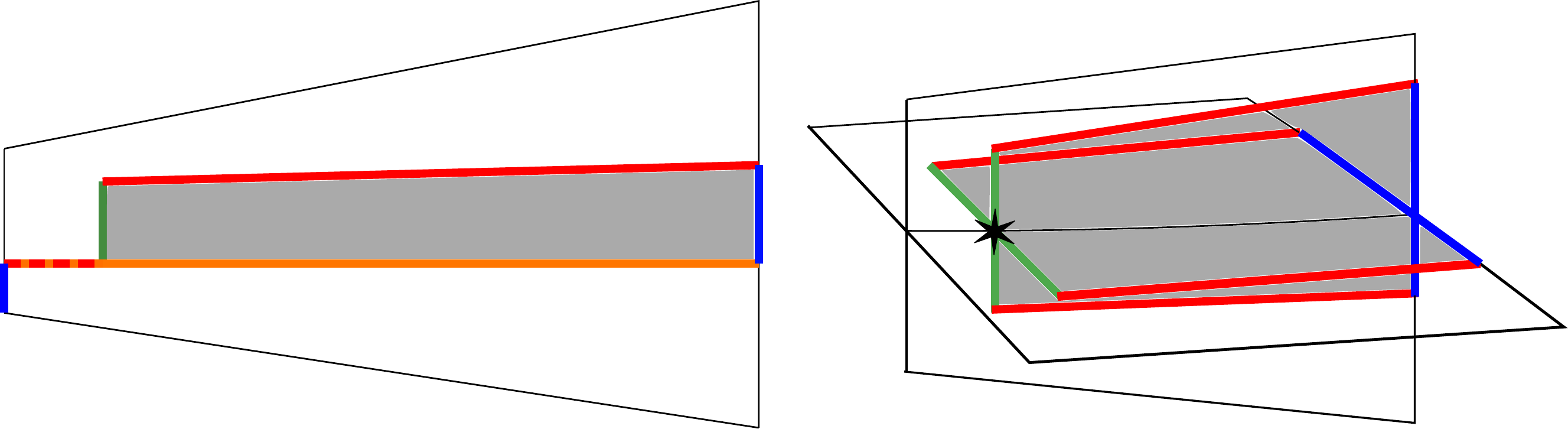}\\
\caption{Two discrepancy zones are shaded.  In the discrepancy zone at right, the starred vertex, and each point of the horizontal path in the discrepancy zone to its right, is in general a tree whose edges belong to polynomial or zero strata.}\label{fig:discrepancy_zone_picture}
\end{figure}

\newcommand{\leftc}{\overleftarrow{\comapp}}
\newcommand{\rightc}{\overrightarrow{\comapp}}

Let $\leftw_L,\rightw_L$ be the preimages of the halfspaces $\leftw,\rightw$ under the map $\widetilde X_L\rightarrow\widetilde X$ and let $\mathfrak Z$ be the union of all discrepancy zones in $\widetilde X_L$.  Let $\leftc=\leftw_L\cup\mathfrak Z$ and let $\rightc=\closure{\rightw_L-\mathfrak Z}$.  It is straightforward to verify that $\leftc\cup\rightc=\widetilde X_L$ and $\leftc\cap\rightc=\comapp(\overline W)$.

\begin{defn}[Exceptional zone, narrow exceptional zones]\label{defn:exceptional_zone}
A discrepancy zone $Z$ is \emph{exceptional} if the boundary of $Z$ has nonempty intersection with the interior of $\comapp(T)$ for some tunnel $T$, as at left in~Figure~\ref{fig:discrepancy_zone_picture}.  The exceptional zone $Z$, projecting to $[\frac{1}{2}+n,L+n]\subset\subdline$, is \emph{narrow} if $\mathfrak q_L^{-1}([\frac{1}{2}+n,\frac{3L}{4}+n])\cap Z$ contains no vertex.  The wall $\overline W$ has \emph{narrow exceptional zones} if each of the exceptional zones in $\widetilde X_L$ determined by $\comapp(\overline W)\subset\widetilde X_L$ is narrow.
\end{defn}

\begin{rem}[Degenerate exceptional zones]\label{rem:degenerate_exceptional_zones}
When $T$ is a tunnel of $\overline W$ that terminates at a trivial nucleus, then $\comapp(T)$ has a length-$(L-\frac{1}{2})$ subpath that coincides with a path in the tunnel $T'$ emanating from the same trivial nucleus; in this case, the exceptional zone bounded by $\comapp(T)$, $T'$, the trivial nucleus and its (trivial) approximation has empty interior and is \emph{degenerate}.  This should be viewed as a limiting case of the scenario shown at the left in Figure~\ref{fig:discrepancy_zone_picture}.  Degenerate and non-degenerate exceptional zones are treated in the same way in all arguments.
\end{rem}

\begin{defn}[Long 2-cell]\label{defn:long_2_cell}
For each vertical edge $e$ of $X_L$, let $R_{e,L}^o$ be the image of $\interior{e}\times[0,L)$ in $X_L$.  The closure of a lift of $R_{e,L}^o$ to $\widetilde X_L$ is a subcomplex called a \emph{long 2-cell}.
\end{defn}

\begin{lem}\label{lem:empty_exceptional_zone}
Let $\phi:V\rightarrow V$ be an improved relative train track map with $\pi_1X$ word-hyperbolic.  Let $y_1,\ldots,y_k$ be periodic regular points with exactly one in each exponential edge.  Let $p$ be the least common multiple of the periods of the $y_i$.  Then for all sufficiently large $n\geq 1$, there exists an immersed wall $W\rightarrow X$ with tunnel-length $L=np$ and primary busts $y_1,\ldots,y_k$ such that $\overline W$ is a wall in $\widetilde X$ with narrow exceptional zones.
\end{lem}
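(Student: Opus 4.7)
The plan is to apply Theorem~\ref{thm:approximation_is_tree_and_wall_is_wall} to produce the wall and then control $\omega$-lengths of cross-sections to establish narrowness.

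To apply Theorem~\ref{thm:approximation_is_tree_and_wall_is_wall}, I first verify the hypotheses of Theorem~\ref{thm:quasiconvexity}. Condition~(1) is given. For condition~(2), since $G=\pi_1X$ is word-hyperbolic and acts geometrically on $\widetilde X$, distinct periodic lines have uniformly bounded coarse intersection; by cocompactness the bound $B$ may be taken uniform over all $G$-orbits of pairs of periodic lines through lifts of $\{y_1,\ldots,y_k\}$ in slow subtrees. Theorem~\ref{thm:approximation_is_tree_and_wall_is_wall} then provides $L_1$ such that, for $L=np\geq L_1$, the immersed wall $W\to X$ with primary busts $y_1,\ldots,y_k$ and tunnel-length $L$, built as in Section~\ref{subsec:immersed_wall}, has $\overline W$ a wall in $\widetilde X$. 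It remains to verify, for $n$ sufficiently large, that the exceptional zones of this wall are narrow.

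Because $L=np$ is a multiple of each $y_i$'s period, the wall has trivial nuclei $\{\ddot y_i\}$ whose associated discrepancy zones are degenerate (Remark~\ref{rem:degenerate_exceptional_zones}) and hence trivially narrow. Every other exceptional zone $Z$ sits over a type-1 nucleus $\widetilde C$ (Remark~\ref{rem:zoology}, applicable for $L$ large): a subinterval of an exponential edge $\tilde e_i$ bounded by a primary bust $\OR y_i$ (or $\OL y_i$) and the nearest secondary bust $d$, with $\phi^L(d)\in\{y_1,\ldots,y_k\}$. Write $\lambda>1$ for the Perron--Frobenius eigenvalue of the stratum containing $y_i$. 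The key estimate: $\phi^L$ acts locally near the regular periodic point $y_i$ as expansion by $\lambda^L$ on the exponential edge, so $\phi^L$-preimages of the finite set $\{y_1,\ldots,y_k\}$ are $\omega$-dense in $e_i$ on scale $\lambda^{-L}$. Hence the $\omega$-length of $\widetilde C_0$, the copy of $\widetilde C$ in $\widetilde V_n$ under the identification $\widetilde V_n\cong\widetilde E_n$, is $O(\lambda^{-L})$. The cross-section $Z\cap\widetilde V_{n+j}=\tilde\phi^j(\widetilde C_0)$ then has $\omega$-length $O(\lambda^{j-L})$, which for $j\leq\lfloor 3L/4\rfloor$ is $O(\lambda^{-L/4})$.

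Let $\epsilon>0$ be a uniform lower bound on the $\omega$-distance from each element of the finite set $\{\phi^j(y_i)\}_{i,\,0\leq j<L}$ (all regular, hence not vertices) to the vertex set of $V$. For $n$ large enough that $\lambda^{-L/4}<\epsilon$ for every stratum eigenvalue $\lambda$, the path $\tilde\phi^j(\widetilde C_0)$, which by the train-track property is a legal subpath of an embedded path in $\widetilde V_{n+j}$, is contained in the open edge of $\widetilde V_{n+j}$ containing $\tilde\phi^j(y_i)$. Hence $\tilde\phi^j(\widetilde C_0)$ contains no vertex, and $Z$ is narrow. The main technical step is justifying the local-expansion estimate, which follows from the Perron--Frobenius scaling of $\omega$-lengths under $\phi$ on exponential edges together with the train-track property ensuring that legal subpaths remain embedded under iteration; care is also needed to confirm that ``nearest secondary bust'' really attains the asymptotic density $\lambda^{-L}$, which will use that the orbit $\phi^L(e_i)$ visits the finite set $\{y_j\}$ at least $\Omega(\lambda^L)$ times.
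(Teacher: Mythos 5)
Your proof is a genuine alternative to the paper's. For narrowness you argue quantitatively: the nucleus interval between $y_i$ and the nearest secondary bust has $\omega$-length $O(\lambda^{-L})$, and the cross-section at depth $j$ therefore has $\omega$-length $O(\lambda^{j-L})$, which is less than the minimal $\omega$-distance $\epsilon$ from the orbit $\{\phi^j(y_i)\}$ to the vertex set once $j\leq 3L/4$ and $L$ is large. The paper's proof is combinatorial: it chooses $K$ so that $\phi^K(P)$ traverses a complete exponential edge for every $P$ in the finite set $\mathcal P$ of embedded paths from an orbit point of some $y_i$ to a vertex, and takes $L\geq 4K$. If $Z$ contained a vertex at depth $\leq 3L/4$, that vertex would persist to depth $L-K$, where the slice of $Z$ begins with some $P\in\mathcal P$ (its inner endpoint being an orbit point of $y_i$); but then $\phi^K(P)$ lies in $\comapp(\widetilde C)$ and traverses a complete exponential edge, which is impossible since the knockout approximation is a slow subtree.

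The weak point of your version is the ``key estimate.'' You assert that $\phi^L$ expands locally by $\lambda^L$ near $y_i$ in the $\omega$-metric, so the nearest $\phi^L$-preimage of $\{y_j\}$ lies at $\omega$-distance $O(\lambda^{-L})$, and that $\phi^j$ then scales the nucleus's $\omega$-length by $\lambda^j$. The Perron--Frobenius relation $\sum_{f\subset\phi(e)}\omega_f=\lambda\omega_e$ governs whole exponential edges; pushing it down to subintervals requires $\phi$ to distribute $\omega$-length proportionally across the preimages in $e$ of the edges of $\phi(e)$, i.e.\ that $\phi$ has constant expansion $\lambda_a$ on $S^a$ in the $\omega$-metric. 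That is a common normalization, but the paper never imposes it, and the metric of Proposition~\ref{prop:metric} is built without any constraint on how $\phi$ parametrizes its edges. Your own closing caveat, that the orbit $\phi^L(e_i)$ visits $\{y_j\}$ at least $\Omega(\lambda^L)$ times, controls only the \emph{average} spacing of secondary busts along $e_i$, not the spacing adjacent to $y_i$; if the local derivative of $\phi^p$ at $y_i$ were close to $1$, the nucleus could be much longer than $\lambda^{-L}$. The paper's argument sidesteps lengths entirely: it tracks only the combinatorial event that $\phi^K(P)$ contains a full exponential edge, a soft consequence of the train-track property together with finiteness of the $\phi$-orbit of $\{y_i\}$, and is therefore robust against this parametrization issue.

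Finally, your verification of hypothesis~(2) of Theorem~\ref{thm:quasiconvexity} is not part of the paper's proof of this lemma: the lemma is invoked (in Propositions~\ref{prop:deviating_cut} and~\ref{prop:horizontal_cut}) only after the $y_i$ have been selected via Propositions~\ref{prop:periodic_diverge} and~\ref{prop:horizontal_tunnel_overlap}, which supply condition~(2) together with finer control that your finiteness-of-width appeal alone would not.
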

\begin{proof}
If $L$ is sufficiently large, Lemma~\ref{lem:no_two_incoming} implies that for all vertices $v\in V$ and all primary busts $y_i$, there exists a secondary bust separating $v$ from $y_i$.  Hence a nucleus $C$ of $W$ is of one of the following two types:
\begin{enumerate}
 \item\label{item:edge_part} $C$ contains a primary bust.  In this case, $C$ is properly contained in the interior of an edge of $E$ corresponding to an edge of $V$ in an exponential stratum.  If $C$ is contained in an exceptional zone, then $C$ must be of this type.
 \item\label{item:small_tree} $C$ does not contain a primary bust.  In this case, $C$ is a finite tree, whose complete edges are periodic edges, and whose leaves are all interior points of exponential edges.  In this case, the discrepancy zone containing $C$ is non-exceptional.
\end{enumerate}
Let $C$ be a nucleus of type~\eqref{item:edge_part}, with $\widetilde C$ contained in an exceptional zone $Z$ inside the long 2-cell based at the exponential edge $e$.  Since each $y_i$ is periodic, the set $\{\phi^n(y_i):n\in\naturals, 1\leq i\leq k\}$ is finite.  Hence the set $\mathcal P$ of embedded paths in $V$ joining points of the form $\phi^n(y_i)$ to vertices is finite.  Let $K\in\naturals$ be chosen so that for any $P\in\mathcal P$, the path $\phi^K(P)$ contains a complete exponential edge.  The constant $K$ exists for the following reason: by definition, each $P\in\mathcal P$ has the form $\alpha Q$, where $\alpha$ is a nontrivial subinterval of an exponential edge, and there are finitely many such $\alpha$.  For each $\alpha$, there exists $K_{\alpha}$ such that $\phi^{K_\alpha}(\alpha)$ ends with a complete exponential edge, and we take $K=\max\{K_\alpha\}$.

Let $L\geq 4K$.  Suppose that there is a vertex $v\in \mathfrak q_L^{-1}([\frac{1}{2}+n,\frac{3L}{4}+n])\cap Z$.  Then the geodesic path joining the endpoints of $\mathfrak q_L^{-1}(L-K+n)\cap Z$ begins with some $P\in\mathcal P$, since the initial point of this path is a periodic point on the tunnel-approximation part of $\boundary Z$.  The path $\phi^K(P)$ lies in $\comapp(\widetilde C)$ and traverses a complete exponential edge, which is impossible.
\end{proof}

\subsection{``Lifting'' paths from $\widetilde X$ to $\widetilde X_L$}\label{subsec:augmentation}

\begin{cons}[Lifted augmentation]\label{cons:lifted_augmentation}
Let $\gamma\rightarrow\widetilde X$ be an embedded path.  Then $\gamma$ is covered by a set $\mathcal R$ of 2-cells.  We then have $\gamma=\cdots A_{-1}A_0A_1\cdots$, where each $A_i$ is a subpath of $\gamma$ that starts and ends on $\widetilde X^1$ and lies in a 2-cell $R_i\in\mathcal R$ and $R_i\neq R_{i+1}$ for all $i$.  Such a decomposition can be seen to exist by expressing $\gamma$ as the concatenation of its intersections with 2-cells. 

Each $R_i$ lifts to $\widetilde X_L$, yielding a lift $\widehat A_i$ of $A_i$.  The map $\widetilde X_L\rightarrow\widetilde X$ is one-to-one on $\cup_{i\in\integers}\interior{\widehat A_i}$ and at most two-to-one on the remaining points of $\cup_{i\in\integers}\widehat A_i$.  Let $\hat a$ be the initial point of $\widehat A_{j+1}$ and let $\hat a'$ be the terminal point of $\widehat A_j$.  If $\hat a\neq\hat a'$, then let $\widehat Q,\widehat Q'$ be the minimal forward paths emanating from $\hat a,\hat a'$ that end in $\mathfrak q_L^{-1}(L\integers)$, so $\widehat Q,\widehat Q'$ have a common endpoint called an \emph{apex}.   The corresponding \emph{lifted augmentation} $\widehat\gamma_{_{\succ}}$ of $\gamma$ is the union $\cup_i\widehat A_i$ concatenated with all such paths $\widehat Q'\widehat Q^{-1}$.  We refer to each $\widehat Q'\widehat Q^{-1}$ as a \emph{lifted backtrack}. Note that if $\gamma$ is a quasigeodesic then the image of $\widehat\gamma_{_{\succ}}$ under $\widetilde X_L\rightarrow\widetilde X$ is a quasigeodesic consisting of $\gamma$ together with some interpolated backtracks of length at most $L$.  There is a parametrization of $\widehat\gamma_{_{\succ}}$ making it a quasigeodesic of $\widetilde X_L$.  Moreover:
\begin{enumerate}
\item For any finite collection $\{f_1,\ldots,f_n\}$ of vertical edges of $\gamma$, and any lifts $\hat f_1,\ldots,\hat f_n$ of these edges to $\widetilde X_L$, there exists a lifted augmentation $\widehat\gamma_{_\succ}$ containing $\hat f_1,\ldots,\hat f_n$.
 \item For any forward subpath $\sigma'\subset\gamma$ whose endpoints lie in $q^{-1}(L\integers)$, the unique lift $\widehat\sigma'$ of $\sigma'$ to $\widetilde X_L$ extends to a lifted augmentation $\widehat\gamma_{_\succ}$ of $\gamma$, in the sense that $\widehat\sigma'$ is a subpath of $\widehat\gamma_{_\succ}$.
\end{enumerate}
Note that since $A_j,A_{j+1}$ start and end in $\widetilde X^1$, if $\gamma$ is combinatorial, then the paths $\widehat Q,\widehat Q'$ are in $\widetilde X^1_L$.
\end{cons}

\subsection{Deviating geodesics}\label{sec:cutting_deviating}
In this section, we prove a technical statement that helps us to construct quasiconvex walls.  Recall that a \emph{slow subtree} $\mathbf S$ is a connected subspace of $\widetilde V_0$ that does not contain any complete exponential edges.

\begin{prop}\label{prop:periodic_diverge}
Let $x_0\in V$ be a periodic point in the interior of an exponential edge. Then there exist periodic points $x_1,\ldots,x_k$, one in each exponential edge of $V$, and a constant $B=B(x_0,\ldots,x_k)<\infty$, such that for all $i\geq 0,j\geq 1$, and any distinct lifts $\tilde x_{ip},\tilde x_{jq}$ of $x_i,x_j$ to a slow subtree $\mathbf S$, the periodic lines $\ell_{ip}$ and $\ell_{jq}$ containing $\tilde x_{ip},\tilde x_{jq}$ satisfy $$\diam(\neb_{3\delta}(\ell_{ip})\cap\neb_{3\delta}(\ell_{jq}))\leq B.$$
\end{prop}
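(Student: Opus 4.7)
The plan is to leverage word-hyperbolicity of $G$ and atoroidality of $\Phi$ to convert the coarse-intersection bound into an algebraic non-conjugacy statement for maximal cyclic subgroups of $G$, which we then arrange by a counting argument.

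First I would record that each periodic line $\ell\subset\widetilde X$ has infinite cyclic stabilizer $\stabilizer(\ell)=\langle h\rangle$: non-trivial by definition, virtually cyclic by hyperbolicity, and cyclic since $G$ is torsion-free. The generator $h$ does not lie in $F$, since $\ell\cap\widetilde V_0$ is a single point $\tilde x$ and $h\in F$ would force $h\tilde x=\tilde x$, contradicting freeness of the action. Because $\stabilizer(\ell)$ acts cocompactly on $\ell$, all periodic lines are uniformly quasi-isometrically embedded in $(\widetilde X,\dist)$; so by $\delta$-hyperbolicity there is a constant $C_0$ such that $\diam(\neb_{3\delta}(\ell_{ip})\cap\neb_{3\delta}(\ell_{jq}))>C_0$ implies $\stabilizer(\ell_{ip})\cap\stabilizer(\ell_{jq})\neq\{1\}$; by torsion-freeness the two stabilizers then lie in a common maximal cyclic subgroup $E=\langle z\rangle\leq G$.

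The key algebraic lemma is $E\cap F=\{1\}$ for any such $E$. If $v\in E\cap F$ were non-trivial, write $v=z^a$ with $a\geq 1$. Since $h\in E$ and $h\notin F$ we have $z\notin F$, so $z=ut^m$ with $u\in F$ and $m\neq 0$. Then $zvz^{-1}=v$ (as $v$ is a power of $z$) while simultaneously $zvz^{-1}=u\Phi^m(v)u^{-1}$, so $\Phi^m(v)$ is $F$-conjugate to $v$, contradicting atoroidality of $\Phi$. Moreover, letting $\tau_\subdline(g)$ denote the shift in $\subdline$ induced by $g\in G$ (well-defined via the projection $G\rightarrow G/F=\integers$ that factors through $\mathfrak q$), inversion $gzg^{-1}=z^{-1}$ would give $\tau_\subdline(z)=-\tau_\subdline(z)$, hence $z\in F$, a contradiction; so $N_G(E)=E$. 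This lemma immediately handles the case $i=j$: the element $g\in G$ with $g\tilde x_{ip}=\tilde x_{iq}$ lies in $F\setminus\{1\}$ (both lifts lie in $\widetilde V_0$), and coarse intersection exceeding $C_0$ would make $g$ normalize the common $E$, forcing $g\in E\cap F=\{1\}$, a contradiction.

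For $i\neq j$ I would inductively choose $x_1,\dots,x_k$. Suppose $x_0,\dots,x_{j-1}$ have been selected, with stabilizer generators $h_0,\dots,h_{j-1}$ and maximal cyclics $E(h_i)=\langle z_i\rangle$. Pick $x_j$ in the $j$-th exponential edge $e_j$ to be periodic of prime period $m_j$ exceeding all previous $m_i$. Since $\tau_\subdline$ is a $G$-conjugacy invariant, $E(h_j)$ being $G$-conjugate to $E(h_i)$ would force $h_j$ to be a $G$-conjugate of $z_i^{\pm n}$ with $n\cdot\tau_\subdline(z_i)=m_j$; primality of $m_j$ together with $\tau_\subdline(z_i)\leq m_i<m_j$ then forces $\tau_\subdline(z_i)=1$ and $n=m_j$, yielding a single bad $G$-conjugacy class per $i$, hence at most $j$ in total. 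Meanwhile, Perron--Frobenius expansion ($\lambda_j>1$) and irreducibility of $\mathfrak M^j$ ensure that for all sufficiently large prime $m_j$ the interior of $e_j$ contains on the order of $\lambda_j^{m_j}$ period-$m_j$ points, falling into $\Theta(\lambda_j^{m_j}/m_j)$ distinct $\phi$-orbits. Distinct $\phi$-orbits yield non-$G$-conjugate stabilizer generators: the conjugacy $gh_{x_1}g^{-1}=h_{x_2}$ would carry $\tilde x_1$ to $\psi_k(\tilde x_2)\in\widetilde V_k$ where $k=\tau_\subdline(g)$, and projecting to $V$ via the deck-transformation invariance of $\widetilde X\rightarrow X$ gives $x_1=\phi^k(x_2)$. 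For $m_j$ large enough, $\lambda_j^{m_j}/m_j>j$, so we may choose $x_j$ with $E(h_j)$ in a new $G$-conjugacy class. Then coarse intersection $>C_0$ for $i\neq j$ would contradict non-conjugacy of the maximal cyclics, so $B=C_0$ works.

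The main obstacle is verifying the two interlocking counts: that distinct $\phi$-orbits of period-$m_j$ points in $e_j$ give distinct $G$-conjugacy classes of periodic-line stabilizer generators, and that Perron--Frobenius growth on $e_j$ outpaces the finitely many bad $G$-conjugacy classes inherited from $E(h_0),\dots,E(h_{j-1})$. A secondary subtlety is handling the non-primitive case where $h_j=z_j^N$ for some $N>1$, so $E(h_j)$ strictly contains $\langle h_j\rangle$; the $\tau_\subdline$ invariant controls these degree relationships. The slow-subtree hypothesis itself does not enter the argument, which bounds coarse intersections for all pairs of distinct lifts.
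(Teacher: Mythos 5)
Your proof takes a genuinely different route from the paper's. The paper's argument (via Lemma~\ref{lem:periodic_r_tree_far}) works entirely inside the $\reals$-trees $\mathcal Y^{ab}$ of Construction~\ref{cons:rtree}: it chooses the $x_i$ inductively so that all their lifts to a ball of $\mathbf S$ have pairwise-distinct $\rho_{ab}$-images in the relevant tree (where $a$ is the minimal stratum index into which both lifts flow), and then converts that $\reals$-tree separation into a divergence bound on forward rays (the ``forward divergence computation''). You instead translate the coarse-overlap condition into commensurability of the cyclic stabilizers, show $E\cap F=\{1\}$ and $N_G(E)=E$ via atoroidality, dispose of the $i=j$ case with one line, and run a Perron--Frobenius counting argument on conjugacy classes for $i\neq j$. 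What the algebraic route buys is a cleaner dichotomy (the $\tau_\subdline$ bookkeeping replaces the $\reals$-tree metric) and, as you note, it makes no use of the slow-subtree hypothesis, so it actually proves the stronger statement with $\mathbf S$ replaced by $\widetilde V_0$. What it costs is the cocompactness/properness appeal underlying the constant $C_0$, and a Nielsen-path analysis that the $\reals$-tree machinery hides.

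There is one step that is asserted rather than proved and that actually carries real content: the claim that distinct $\phi$-orbits of period-$m_j$ points in $e_j$ yield non-$G$-conjugate stabilizer generators, which you justify by saying the conjugacy $gh_{x_1}g^{-1}=h_{x_2}$ ``would carry $\tilde x_1$ to $\psi_k(\tilde x_2)$.'' This presupposes $g\ell_{\tilde x_1}=\ell_{\tilde x_2}$, i.e.\ that there is a \emph{unique} leaf line stabilized by $h_{x_2}$, which is not automatic: two $\langle h_{x_2}\rangle$-invariant leaf lines at bounded Hausdorff distance are a priori possible, and such a pair would project to a periodic Nielsen path in $V$ joining two points of exact period $m_j$. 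To close the gap one must invoke Definition~\ref{defn:improved_rel}(5) (every periodic Nielsen path is a Nielsen path, hence has $\phi$-\emph{fixed} endpoints) and the fact that your $m_j$ is a prime $>1$, so period-$m_j$ points are not $\phi$-fixed; this forces the Nielsen path to be trivial, whence the two leaves coincide. The paper encounters the same Nielsen-path issue inside Lemma~\ref{lem:fellow_travel_orflow_lower_or_different_in_r_tree} (alternative (1)) and kills it via Lemma~\ref{lem:modified_brinkmann} and hyperbolicity. So your argument is sound in outline, and the gap is closable using hypotheses you already have in play, but it should be spelled out because it is precisely where ``improved'' relative train track and the prime-period choice do their work; without it, the deduction $x_1=\phi^k(x_2)$ does not follow.
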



\begin{defn}[Flows]\label{defn:flows_forward}
For each $i,j$, let $\flowin{ij}$ be the set of points $x\in\widetilde X$ for which there exists $n\geq 0$ with $\psi_n(x)\in\widetilde X^{ij}$.  When $S^i$ is nonzero, $\rho_{ij}$ extends to $\flowin{ij}$ by defining $\rho_{ij}(x)=\rho_{ij}(\psi_n(x))$.

Let $\gamma\rightarrow\widetilde X^1$ be a path and consider $\widetilde X^{ij}\subset\widetilde X$ with $S^i$ nonzero. Then $\gamma$ \emph{flows into} $\widetilde X^{ij}$ if $\gamma\subset\flowin{ij}$.
\end{defn}

The proof of Proposition~\ref{prop:periodic_diverge} requires the following lemma.

\begin{lem}\label{lem:periodic_r_tree_far}
Let $\mathbf B$ be a ball in $\mathbf S$.  For all $x_0\in V$, there exist periodic regular points $x_1,\ldots,x_k$, one in each exponential edge of $V$, such that for all $i,j\geq 0$, and any distinct lifts $\tilde x_{ip},\tilde x_{jq}$ of $x_i,x_j$ to $\mathbf B$, we have $$\dist_{\mathcal Y^{ab}}(\rho_{ab}(\tilde x_{ip}),\rho_{ab}(\tilde x_{jq}))\geq\epsilon,$$ where $\epsilon=\epsilon(x_0,\ldots,x_k)>0$ and $a$ is the smallest value for which there exists $b$ with $\tilde x_{ip},\tilde x_{jq}\in\flowin{ab}$.
\end{lem}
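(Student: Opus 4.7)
The plan is a compactness-plus-genericity argument. Since $\mathbf{B}$ is bounded in the metric of Proposition~\ref{prop:metric} and $\widetilde X$ is locally finite, $\mathbf{B}$ meets only finitely many edges of $\widetilde V_0$; once $x_1, \ldots, x_k$ are chosen, the set of distinct-lift pairs $(\tilde x_{ip}, \tilde x_{jq})$ in $\mathbf{B}$ (with $i, j \geq 0$) is therefore finite. The required $\epsilon$ is then the minimum over this finite collection of pairwise distances $\dist_{\mathcal{Y}^{ab}}(\rho_{ab}(\tilde x_{ip}), \rho_{ab}(\tilde x_{jq}))$, which will be positive provided the $x_i$ have been chosen appropriately.

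I would treat the pair $(\tilde x_{ip}, \tilde x_{jq})$ with minimal $(a,b)$ by cases. If $S^a$ is polynomial, $\mathcal{Y}^{ab}$ is the Bass-Serre tree of the splitting of $\widetilde X^{ab}$, and the polynomial case of Lemma~\ref{lem:preimage_of_point} separates the lifts. If $S^a$ is exponential and $a > \max(\grade(x_i), \grade(x_j))$, equality of the $\rho_{ab}$-images would (by Lemma~\ref{lem:preimage_of_point}) force the two lifts to lie in a common fiber of $\rho_{ab}$, so their forward flows must eventually coincide in $\widetilde X^{ab}$ and hence land in a common component of $\widetilde X^{a-1}$, contradicting the minimality of $a$. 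If $S^a$ is exponential with $\grade(x_j) = a$, Lemma~\ref{lem:preimage_of_regular} reduces any equality of images to either $\mathcal{L}_{\tilde x_{ip}} = \mathcal{L}_{\tilde x_{jq}}$, or---only when $\grade(x_i) = a$ as well---a lifted concatenation $Q_1 \cdots Q_r$ of indivisible Nielsen paths whose $Q_s$ have $S^a$-edge-part initial and terminal segments (via Lemma~\ref{lem:fellow_travel_orflow_lower_or_different_in_r_tree}, constrained by Lemma~\ref{lem:modified_brinkmann}). In the subcase $\grade(x_i) < a$, a common leaf would force $\phi^n(x_i) = \phi^n(x_j)$ for all sufficiently large $n$, which is impossible since $\phi^n(x_j)$ eventually returns to $S^a$ by periodicity while $\phi^n(x_i)$ remains in $V^{\grade(x_i)}$ by $\phi$-invariance.

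The selection of the $x_i$ then reduces to finiteness of forbidden data. By~\cite[Thm.~5.1.5]{BestvinaFeighnHandelTits} together with eg-aperiodicity (Definition~\ref{defn:improved_rel}), there are finitely many indivisible periodic Nielsen paths, so their endpoints form a finite subset of $V$; in addition, the leaves through the finitely many lifts in $\mathbf{B}$ of $x_0$ and any previously-chosen $x_{i'}$ intersect each remaining exponential edge in at most a countable set of points. Periodic regular points in each exponential edge are dense (since $\lambda_i > 1$ makes iterated $\phi$-preimages dense), so we may inductively choose $x_1, \ldots, x_k$, one periodic regular point per exponential edge, avoiding at each step both the Nielsen endpoints and the traces of earlier-chosen leaves. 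With these $x_i$, every distinct-lift pair in $\mathbf{B}$ has distinct $\rho_{ab}$-images, and the finite minimum of positive pairwise distances gives $\epsilon$. The principal obstacle is the Nielsen-path identification in the exponential case with $\grade(x_i) = \grade(x_j) = a$, where Lemma~\ref{lem:modified_brinkmann}---and through it the hyperbolicity of $G$---is essential to rule out the corresponding concatenations.
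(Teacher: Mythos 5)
Your approach is correct in outline and closely parallels the paper's: prove the pairwise $\rho_{ab}$-images are distinct, observe the forbidden set is finite, and pick the $x_i$ by a density/avoidance argument. The paper, however, splits the problem more cleanly than you do. It first proves a stronger unconditional statement -- for any $x\in V$ and any two distinct lifts to $\mathbf S$, the $\rho_{ab}$-images differ -- by invoking Lemma~\ref{lem:fellow_travel_orflow_lower_or_different_in_r_tree} directly (case~(1) excluded by hyperbolicity via closed Nielsen paths, case~(2) by minimality of $a$). Then it handles the $i\ne j$ case inductively by a concrete pigeonhole: with $K$ the number of $S^a$-edges in $\mathbf B$ and $Q$ the number of previously-created $\rho_{ab}$-images, there are at most $KQ$ ``bad'' grade-$a$ points in $\mathbf B$, and Perron--Frobenius expansion produces $\geq KQ+1$ disjoint lifted $\phi$-orbits of $m$-periodic points in $e_{t+1}$, so at least one lifted orbit is clean. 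This orbit count is the precise replacement for your appeal to density.

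A few places in your case analysis are imprecise enough that I'd flag them. In the exponential case with $a>\max(\grade(x_i),\grade(x_j))$, the claim that equal $\rho_{ab}$-images force the forward flows to ``eventually coincide'' is not what happens: the correct statement is that Lemma~\ref{lem:fellow_travel_orflow_lower_or_different_in_r_tree} forces either a Nielsen identification (which pins both forward rays inside $S^a$ for all large times, contradicting grades $<a$) or flowing into a common lower stratum component (contradicting minimality of $a$). In the polynomial case, citing Lemma~\ref{lem:preimage_of_point} does not by itself ``separate the lifts''; one needs the two-step argument the paper gives (same vertex contradicts minimality; same interior edge-point forces an element of the edge stabilizer taking one lift to the other, which must be trivial when both lifts lie in $\mathbf S$). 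Finally, in the selection step the forbidden set is genuinely \emph{finite} -- not merely countable -- by Lemma~\ref{lem:finite_intersection_leaf_edge_2} and the finiteness of indivisible Nielsen paths; that finiteness matters, since periodic points are themselves countable. You should also note that the Nielsen identifications in Lemma~\ref{lem:fellow_travel_orflow_lower_or_different_in_r_tree} arise only after flowing forward to a deeper level, so the chosen $x_{t+1}$ must avoid the full $\phi$-orbits of Nielsen endpoints, not merely the endpoints themselves -- a point the paper's lifted-orbit construction handles automatically but your ``avoid Nielsen endpoints'' prescription does not.
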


\begin{proof}
We first establish that for any $x\in V$, and any distinct lifts $\tilde x,\tilde x'\in\mathbf S$ of $x$, we have $\rho_{ab}(\tilde x)\neq\rho_{ab}(\tilde x')$.  If $S^a$ is an exponential stratum, this follows from Lemma~\ref{lem:fellow_travel_orflow_lower_or_different_in_r_tree}; indeed, Lemma~\ref{lem:fellow_travel_orflow_lower_or_different_in_r_tree}.(1) is excluded since $G$ is hyperbolic, and hence $V$ contains not closed Nielsen path.  
Lemma~\ref{lem:fellow_travel_orflow_lower_or_different_in_r_tree}.(2) is excluded by minimality of $a$.  If $S^a$ is a polynomial stratum, then $\rho_{ab}(\tilde x),\rho_{ab}(\tilde x')$ are distinct.  Indeed, if $\rho_{ab}(\tilde x)=\rho_{ab}(\tilde x')$ is a vertex of $\mathcal Y^{ab}$, then the minimality of $a$ is contradicted.  If $\rho_{ab}(\tilde x)=\rho_{ab}(\tilde x')$ is an interior point of an edge $E$, then $\tilde x=g\tilde x'$, where $g\in\pi_1V\cap\stabilizer(E)$, since $\tilde x,\tilde x'$ are lifts of $x$ to $\mathbf S$.  Hence $g=1$.

We now argue by induction.  By the above discussion, the claim is true for $k=0$.  Suppose that $x_0,\ldots,x_{t}$ have been chosen, for some $t<k$, so that $\rho_{ab}(\tilde x_{ip})\neq\rho_{ab}(\tilde x_{jq})$ for all distinct lifts $\tilde x_{ip},\tilde x_{jq}$ of $x_i,x_j$ to $\mathbf B$, with $0\leq i,j\leq t$.  Let $e_0$ be the edge containing $x_0$ and for $1\leq i\leq t$, let $e_i$ be the exponential edge containing $x_i$.  Let $e_{t+1}\not\in\{e_i\co1\leq i\leq t\}$ be an exponential edge.  We will now choose $x_{t+1}\in\interior{e_{t+1}}$ with the desired properties.

Let $a$ be such that $e_{t+1}$ belongs to $S^a$.  Let $K$ be the number of $S^a$-edges whose interiors intersect $\mathbf B$.  Then for any $\widetilde X^{ab}$ and $y\in\mathcal Y^{ab}$, there are at most $K$ grade-$a$ points in $\rho_{ab}^{-1}(y)\cap\mathbf B$.  

Let $$Q=\left|\bigcup_{b\in\mathcal B}\left\{\rho_{ab}(\tilde x_{ip}):0\leq i\leq t,\,1\leq p\leq p_i\right\}\right|,$$ where $p_i$ is the number of lifts of $x_i$ to $\mathbf B$ and $\mathcal B$ is the finite set of $b$ such that $\widetilde X^{ab}\cap\mathbf B\neq\emptyset$.

Choose $m\in\naturals$ such that $e_{t+1}$ intersects at least $KQ+1$ $\phi$-orbits of $m$-periodic points (all such points necessarily have grade $a$).  This choice is possible because, for arbitrarily large $m$, the number of $m$-periodic points in $e_{t+1}$ is approximately $C\lambda_a^m$ for some $C>0$, while the claimed $\phi$-orbits exist as long as there are at least $(KQ+1)m$ periodic points in $e_{t+1}$ with period $m$.  For each such $m$-periodic point $u$, a \emph{lifted orbit} of $u$ is the set of all lifts to $\mathbf B$ of all points $\phi^k(u)$ with $0\leq k<m$.  Note that if $u,u'$ are $m$-periodic points with distinct $\phi$-orbits, then their lifted orbits are disjoint since their projections to $V$ are distinct $\phi$-orbits of the same cardinality and are hence disjoint.  By the pigeonhole principle, there exists an $m$-periodic point $x_{t+1}\in e_{t+1}$ with the desired property.  Indeed, the points $\rho_{ab}(\tilde x_{ip})$ with $i<t+1,b\in\mathcal B$ exclude at most $KQ$ grade-$a$ points from the $KQ+1$ disjoint lifted orbits of grade-$a$ periodic points.  Hence $\rho_{ab}(\tilde x_{(t+1)p})\neq\rho_{ab}(\tilde x_{jq})$ for all distinct lifts $\tilde x_{(t+1)p}$ and $\tilde x_{jq}$ of $x_{t+1}$ and $x_j$.
\end{proof}

\begin{proof}[Proof of Proposition~\ref{prop:periodic_diverge}]
Choose $x_1,\ldots,x_k$ to satisfy the conclusion of Lemma~\ref{lem:periodic_r_tree_far}.  Let $i,j\geq 0$ and let $\tilde x_{ip},\tilde x_{jq}$ be distinct lifts of $x_i,x_j$ to $\mathbf S$.  First suppose that $\tilde x_{ip}$ and $\tilde x_{jq}$ lie in some fixed ball $\mathbf B$ of $\mathbf S$.  Let $a$ be the smallest value for which there exists $b$ such that $\tilde x_{ip},\tilde x_{jq}\in\flowin{ab}$.  By hypothesis and by minimality of $a$, the stratum $S^a$ is exponential.

\textbf{Forward divergence computation:}  For $\epsilon>0$ from Lemma~\ref{lem:periodic_r_tree_far}, $$\dist_{\mathcal Y^{ab}}(\rho_{ab}(\tilde x_{ip}),\rho_{ab}(\tilde x_{jq}))\geq\epsilon.$$  Since $\tilde x_{ip},\tilde x_{jq}$ flow into $\widetilde X^{ab}$, there exists $k\geq 0$ such that $$\dist_{\mathcal Y^{ab}}(\rho_{ab}(\tilde x_{ip}),\rho_{ab}(\tilde x_{jq}))=\lim_{n\rightarrow\infty}\dist^{ab}_n(\psi_{n+k}(\tilde x_{ip}),\psi_{n+k}(\tilde x_{jq})).$$  Hence for all sufficiently large $n$, $$\dist_{\widetilde V_{n+k}}(\psi_{n+k}(\tilde x_{ip}),\psi_{n+k}(\tilde x_{jq}))\geq\frac{\lambda_a^n\epsilon}{2}.$$  Since $\lambda_a>1$, there exists $B'\geq 0$ such that $$\diam(\neb_{3\delta}(\{\psi_{n}(\tilde x_{ip})\co n\geq0\}))\cap\diam(\neb_{3\delta}(\{\psi_{n}(\tilde x_{jq})\co n\geq0\}))\leq B'.$$

\textbf{Exploiting periodicity:}  The existence of $B$ with the desired property follows because $\ell_{ip},\ell_{jq}$ are periodic.

\textbf{Lifts in different translates of $\mathbf B$:}  Suppose $\tilde x_{ip},\tilde x_{jq}$ are arbitrary distinct lifts of $x_i,x_j$ to $\mathbf S$.  Let $y_i,y_j\in\ell_{ip},\ell_{jq}$ respectively, and suppose that $\dist(y_i,y_j)\leq 3\delta$.  Then there is a uniform upper bound on $\dist(\tilde x_{ip},\tilde x_{jq})$ whence we can assume that $\tilde x_{ip},\tilde x_{jq}$ lie in some translate of a fixed ball $\mathbf B$ of $\mathbf S$, and we can argue as above.  Indeed, by Theorem~\ref{thm:polynomial_subgraph_quasiconvexity}, a geodesic $\alpha$ of $\mathbf S$ joining $\tilde x_{ip},\tilde x_{jq}$ is a uniform quasigeodesic, and, since it is vertical, $\alpha$ has bounded coarse intersection with $\ell_{ip}$ and $\ell_{jq}$.  Considering the quasigeodesic quadrilateral formed by $\alpha$ and the path $\ell'_{ip}\overline{y_iy_j}\ell'_{jq}$, where $\ell'_{ip},\ell'_{jq}$ are subpaths of $\ell_{ip},\ell_{jq}$, yields a contradiction unless $|\alpha|$ is bounded by some uniform constant, and we take $\mathbf B$ to be a ball sufficiently large that any such $\alpha$ in $\mathbf S$ lies in a translate of $\mathbf B$.
\end{proof}

\begin{prop}\label{prop:deviating_cut}
Let $\alpha:\reals\rightarrow\widetilde X$ be a deviating geodesic.  Then there exists a quasiconvex wall $\overline W\subset\widetilde X$ that cuts $\alpha$.
\end{prop}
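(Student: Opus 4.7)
The plan is to use Corollary~\ref{cor:leaf_separation} to produce a forward path $\sigma_y$ that oddly cuts a push-crop of $\alpha$, then build a quasiconvex wall whose approximation in $\widetilde X_L$ contains $\sigma_y$ as a tunnel-approximation, and finally verify the cutting condition by a parity argument in $\widetilde X_L$.

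First, since $\alpha$ is $M$-deviating for some $M$, Corollary~\ref{cor:leaf_separation} yields $N \geq 0$ and a periodic regular point $y \in \widetilde X$ mapping into an exponential edge of $V$ such that, for all sufficiently large $L$, the forward path $\sigma_y$ of length $L$ emanating from $y$ satisfies: $\pushcrop_N$ is $(M',\sigma_y)$-deviating for some $M'$; the set $\pushcrop_N \cap \sigma_y$ is finite and odd; and $\dist_{\subdline}(\mathfrak q(y),\mathfrak q(\pushcrop_N \cap \sigma_y)) > 12(M'+\delta)$. Let $x_0 \in V$ be the image of $y$, and apply Proposition~\ref{prop:periodic_diverge} to obtain periodic regular points $\{x_1,\ldots,x_k\}$, one in each exponential edge of $V$, with $x_\ell = x_0$ in the exponential edge containing $x_0$, satisfying the bounded coarse intersection hypothesis of Theorem~\ref{thm:quasiconvexity}. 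Choose $L$ to be a sufficiently large multiple of the least common multiple of the periods of the $x_i$ that Theorem~\ref{thm:approximation_is_tree_and_wall_is_wall}, Lemma~\ref{lem:empty_exceptional_zone}, and Corollary~\ref{cor:leaf_separation} all apply. Construct an immersed wall $W \to X$ with these primary busts and tunnel-length $L$, and choose a lift $\overline W \subset \widetilde X$ so that $\sigma_y$ coincides with a tunnel-approximation $\comapp(T) \subset \comapp(\overline W)$; this is possible because $y$ lifts the primary bust $x_\ell$ and $L$ is divisible by the period of $y$.

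Next, work in $\widetilde X_L$, where $\comapp(\overline W)$ is a genuine wall with halfspaces $\leftc$ and $\rightc$ (Section~\ref{subsec:lift_and_cut}). Form a lifted augmentation $\widehat\beta$ of $\beta := \pushcrop_N$ via Construction~\ref{cons:lifted_augmentation}, arranged so that the lift of $\sigma_y$ is contained in $\widehat\beta$ as a subpath. The goal is to show that $|\widehat\beta \cap \comapp(\overline W)|$ is finite and odd; this suffices because $\visual \widetilde X_L \to \visual \widetilde X$ is a homeomorphism (Remark~\ref{rem:X_L_properties}) and the endpoints of $\pushcrop_N$ agree with those of $\alpha$ in $\visual \widetilde X$. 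The lift of $\sigma_y$ contributes the promised odd count of intersections with $\comapp(T)$. Other tunnel-approximations in $\comapp(\overline W)$ are excluded from substantive interaction with $\pushcrop_N$ by combining the separation bound of Corollary~\ref{cor:leaf_separation}, the deviating hypothesis, and a thin-quadrilateral argument. Each nucleus-approximation lies in a slow subtree, quasi-isometrically embedded in $\widetilde X$ by Theorem~\ref{thm:polynomial_subgraph_quasiconvexity}, so the deviating quasigeodesic $\pushcrop_N$ enters and exits each such subtree an even number of times. Narrow exceptional zones (Lemma~\ref{lem:empty_exceptional_zone}) are then used to ensure that discrepancy zones in $\widetilde X_L$, together with the lifted backtracks appearing in $\widehat\beta$, do not introduce parasitic odd-parity intersections.

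The main obstacle is the parity bookkeeping of the previous paragraph. Verifying that every contribution to $\widehat\beta \cap \comapp(\overline W)$ other than the one across $\sigma_y$ pairs up requires the simultaneous use of the deviating hypothesis on $\pushcrop_N$, the quasi-isometric embeddedness of slow subtrees, narrow exceptional zones, and the large separation in $\subdline$ from Corollary~\ref{cor:leaf_separation}. Particularly delicate is analyzing how $\pushcrop_N$ interacts with discrepancy zones and with nucleus-approximations that sit transversely to $\sigma_y$; this is where the narrow exceptional zones property prevents the parity from being spoiled by lifted backtracks crossing a short portion of the wall.
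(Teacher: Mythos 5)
Your high-level strategy — invoke Corollary~\ref{cor:leaf_separation}, build a wall via Proposition~\ref{prop:periodic_diverge} and Theorem~\ref{thm:quasiconvexity}, and then pass to $\widetilde X_L$ to verify the cutting — matches the paper's. But there are two substantive problems. First, the instruction to ``form a lifted augmentation $\widehat\beta$ of $\pushcrop_N$ \ldots{} arranged so that the lift of $\sigma_y$ is contained in $\widehat\beta$ as a subpath'' does not parse: $\sigma_y$ is a forward path transverse to $\pushcrop_N$, meeting it in the finite odd set $\mathcal P$; it is not a subpath of $\pushcrop_N$, so Construction~\ref{cons:lifted_augmentation} cannot be used to force $\sigma_y$ to lie in the lift. (That construction lets you choose which lifts of specified vertical edges of $\pushcrop_N$, or of a forward \emph{subpath} of $\pushcrop_N$, to include; you appear to be importing a step from the leaflike case of Proposition~\ref{prop:horizontal_cut}, where the geodesic itself contains a long forward subpath.)

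Second, and more fundamentally, you are missing the forward-flow step, which is what makes the intersection analysis tractable. The paper does not work with a lift of $\pushcrop_N$ directly; it takes the lifted augmentation $\lpc$, flows it forward by $L$ inside $\widetilde X_L$, and removes backtracks to obtain $\hat\eta$. The point is that $\pushcrop_N$ meets the tunnel $T'$ (not its approximation) in a set $\mathcal P'$ in bijection with $\mathcal P$, and $\psi_L$ carries $\mathcal P'$ onto a subset of $\comapp(T')$; after this flow, $\hat\eta$ meets $\comapp(\overline W)$ in \emph{exactly} the odd set $\widehat{\mathcal Q}$ (Claim~\ref{claim:never_comes_back}), so no parity bookkeeping is needed. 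Your sketch instead tries to show that all other crossings of $\widehat\beta$ with $\comapp(\overline W)$ pair up, but you only assert this (``the main obstacle''), gesturing at slow subtrees, narrow exceptional zones, and lifted backtracks without carrying out the count. Without the forward flow, one must in particular control crossings of $\widehat\beta$ with nucleus-approximations and with the boundary of discrepancy zones near the lifted backtracks, and it is not clear that these pair up without essentially reproving the exclusion argument of Claim~\ref{claim:never_comes_back}. You also omit the verification (Claim~\ref{claim:wedge_qi_embeds}, via Lemma~\ref{lem:RBRquasi}) that $\pushcrop_N\vee_{\mathcal P}\comapp(\overline W)$ quasi-isometrically embeds, which is what lets one pass from a finite odd crossing count to the statement that the endpoints of $\gamma$ lie in different components of $\visual\widetilde X-\visual\overline W$.
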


\begin{proof}
Let $\gamma\rightarrow\widetilde X$ be a combinatorial geodesic that fellowtravels with $\alpha$, so that $\gamma$ is a uniform quasigeodesic of $(\widetilde X,\dist)$ and is $M$-deviating, where $M$ depends only on the deviation constant of $\alpha$.  By Corollary~\ref{cor:leaf_separation}, there exists a regular leaf $\mathcal L$, containing a periodic regular point $y\in\widetilde X^1$ in an exponential edge, such that for some fixed $N\geq 0$, the push-crop $\pushcrop_N$ in $\psi_N\circ\gamma$ is an $(M',\sigma)$-deviating embedded $\nu$-quasigeodesic, for some $\nu\geq 1$ and $M'=M'(M,N)$ and any forward path $\sigma$ intersecting $\pushcrop_N$.  Moreover, $\pushcrop_N$ fellow-travels with $\gamma$ and has the following properties:
\begin{enumerate}
 \item For any sufficiently long forward path $\sigma_y$ emanating from $y$, the intersection ${\mathcal P}=\pushcrop_N\cap\sigma_y$ consists of an odd number of points.  In particular, $\mathcal P$ does not change as $\sigma_y$ is further elongated.
 \item $\pushcrop_N\cap\mathcal L=\mathcal P$, and $\pushcrop_N$ contains two subrays lying in distinct components of $\widetilde X-\mathcal L$.
 \item No subpath of $\pushcrop_N$ of length more than $M''$ fellow-travels at distance $2\delta+\nu$ with a subpath of a forward path emanating from $y$, where $M''=M''(M',\delta,\nu)$.
 \item $\dist(y,\mathcal P)>12(\max\{M',M''\}+\delta)$.
\end{enumerate}

Let $e_1$ be the exponential edge of $V$ containing the image of $y$.  Choose periodic regular points $y_1,y_2,\ldots,y_k$, one in each exponential edge of $V$ and with $y$ mapping to $y_1$, satisfying the conclusion of Proposition~\ref{prop:periodic_diverge}.

Then by Proposition~\ref{thm:quasiconvexity} there exists $L_0\geq 0$ such that if $W\rightarrow X$ is an immersed wall with primary busts at the points $y_1,\ldots,y_k$ and tunnel length $L\geq L_0$, then $\comapp(\overline W)$ is $(\mu_1,\mu_2)$-quasi-isometrically embedded in $\widetilde X$, where the $\mu_i$ are independent of $L$.  Moreover, by Theorem~\ref{thm:approximation_is_tree_and_wall_is_wall}, there exists $L_1\geq L_0$ such that $\comapp(\overline W)$ is a tree and $\overline W$ is a wall when $L\geq L_1.$  Let $L_2=\max\{L_1,12(\max\{M',M''\}+\delta)+\dist(y,{\mathcal P})\}$.  (The specific choice of $L_2$ is an artifact of the proof of Lemma~\ref{lem:RBRquasi}; see e.g.~\cite{HagenWise:irreducible,HsuWiseCubulatingMalnormal}.)

\renewcommand{\qedsymbol}{$\blacksquare$}

\begin{claim}\label{claim:wedge_qi_embeds}
Let $W\rightarrow X$ be an immersed wall with primary busts $\{y_1,\ldots,y_k\}$ and tunnel length $L\geq L_2$.  Let $\overline W=\image(\widetilde W\rightarrow\widetilde X)$, where $\widetilde W\rightarrow\widetilde X$ is the lift containing the primary bust $y$.  Then the inclusions $\pushcrop_N\rightarrow\widetilde X$ and $\comapp(\overline W)\hookrightarrow\widetilde X$ induce an injective quasi-isometric embedding $\pushcrop_N\vee_{\mathcal P}\comapp(\overline W)\rightarrow\widetilde X$.
\end{claim}

\begin{proof}[Proof of Claim~\ref{claim:wedge_qi_embeds}]
This follows from Lemma~\ref{lem:RBRquasi} since $L\geq L_2$.  See Figure~\ref{fig:cut_deviating}.
\end{proof}

\begin{figure}[h]
\begin{overpic}[width=0.75\textwidth]{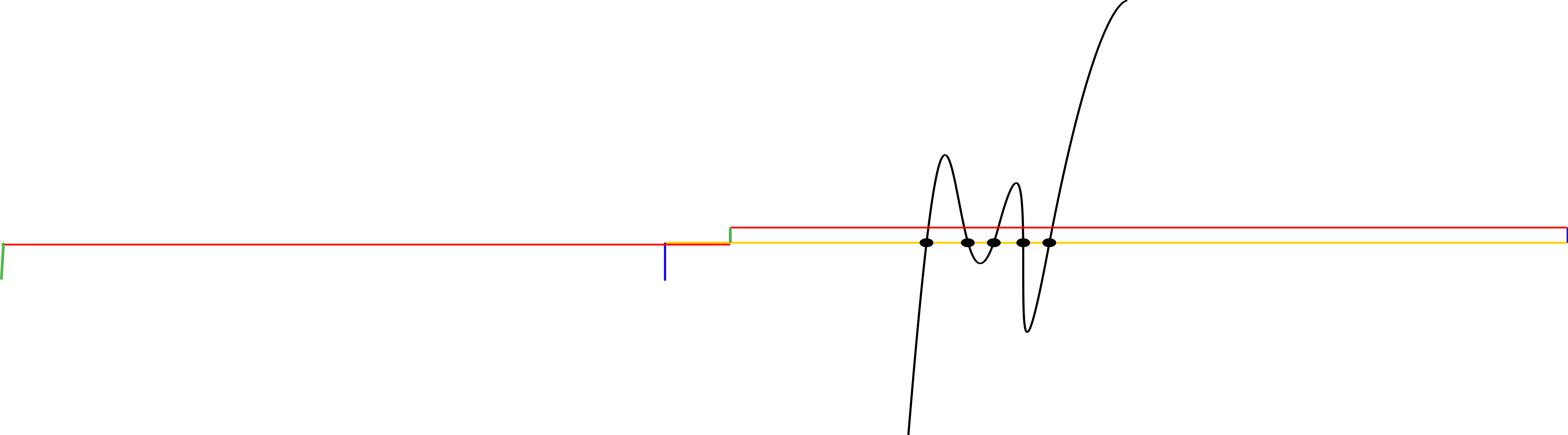}
\put(62,8){${\mathcal P}$}
\put(65,25){$\pushcrop_N$}
\put(44,8){$\comapp(\overline W)$}
\put(3,8){$\overline W$}
\end{overpic}
\caption{Cutting a push-crop with an approximation and hence with a wall.  The set $\mathcal P$ consists of five bold points.}\label{fig:cut_deviating}
\end{figure}

Let $\overline W$ be a wall satisfying the hypotheses of Claim~\ref{claim:wedge_qi_embeds}.  By translating, we can assume that $\mathfrak q(y)=0$.
We can assume that $L\geq L_2$ is divisible by the least common multiple of the periods of the points $y_i$, to support a later application of Lemma~\ref{lem:empty_exceptional_zone}.

Let $\lpc$ be a lifted augmentation of $\pushcrop_N$.  Note that there is a forward flow $\widetilde X_L\rightarrow\widetilde X_L$ defined exactly as in Definition~\ref{defn:forward_path}.  Let $\hat\eta$ be an embedded bi-infinite quasigeodesic in $\widetilde X_L$ obtained by flowing $\lpc$ forward a distance $L$ and then removing backtracks.  Let $\eta$ denote the composition of $\hat\eta$ with $\widetilde X_L\rightarrow\widetilde X$.

\begin{claim}\label{claim:hat_eta_cut}
There exist nontrivial intervals $I,I'\subset\hat\eta$ such that $I\subset\leftc$, and $I'\subset\rightc$, and each point in the preimage of $\psi_L(\mathcal P)$ under $\hat\eta\hookrightarrow\widetilde X_L\rightarrow\widetilde X$ separates $\interior{I}$ and $\interior{I'}$.
\end{claim}

\begin{proof}[Proof of Claim~\ref{claim:hat_eta_cut}]
Let $T$ be the tunnel of $\overline W$ such that $\comapp(T)\cap\pushcrop_N=\mathcal P$.  Let $\widetilde M$ be the nucleus intersecting $\comapp(T)$, so that $\comapp(T)$ terminates at $\comapp(\widetilde M)$.  (Observe that $\widetilde M$ may or may not be trivial, depending on which lift of $W\rightarrow X$ we have chosen to produce $\overline W$.)  Since it has an incoming tunnel, namely $T$, the nucleus $\widetilde M$ is necessarily a subinterval of the interior of an exponential edge, and hence there is a tunnel $T'$ of $\overline W$ such that the terminal length-$\frac{L}{2}$ subpath of $\comapp(T)$, the spaces $\widetilde M,\comapp(\widetilde M)$, and a forward path in $T'$ bound an exceptional zone.  By our choice of $L$ and Lemma~\ref{lem:empty_exceptional_zone}, $\mathcal P$ admits the following description: there is an odd-cardinality set of edges such that $\mathcal P$ consists of one interior point from each of these edges, and each of these edges also intersects $T'$ in a unique point, so that $T'\cap\pushcrop_N$ is a finite set $\mathcal P'$ in one-to-one correspondence with $\mathcal P$.  Let $\mathcal Q=\psi_L(\mathcal P')$.  Then $|\mathcal Q|=|\mathcal P|$ and $\mathcal Q$ is the image in $\widetilde X$ of $\eta^{-1}(\comapp(T'))$.  Hence $\comapp(T')\subset\widetilde X_L$ intersects $\hat\eta$ in a set $\widehat{\mathcal Q}$ mapping bijectively to $\mathcal Q$.  The claim now follows since all primary busts are regular and each tunnel-approximation forms part of the boundary of an exceptional zone by Remark~\ref{rem:zoology}.  See Figure~\ref{fig:deviating_push}.
\end{proof}

\begin{figure}[h]
\begin{overpic}[width=0.65\textwidth]{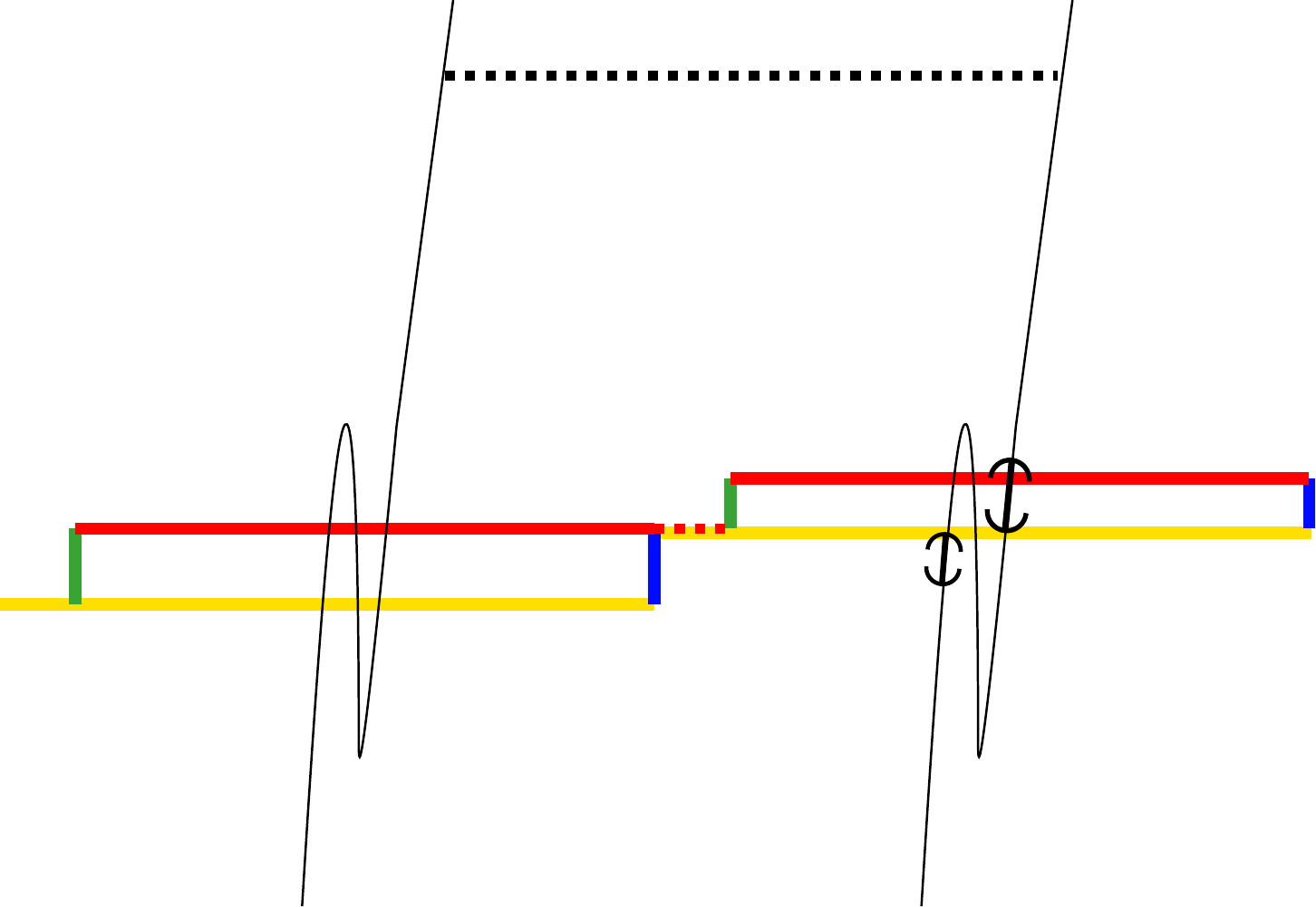}
\put(27,50){$\pushcrop_N$}
\put(76,50){$\eta$}
\put(55,65){$\upsilon$}
\put(30,62){$p'$}
\put(82,62){$p$}
\put(29,20){$a$}
\put(40,19){$\comapp(T)$}
\put(90,24){$\comapp(T')$}
\put(0,25){$\widetilde M$}
\put(15,30){$T'$}
\end{overpic}
\caption{The relationship between $\pushcrop_N,\eta,$ and $\comapp(\overline W)$ in $\widetilde X$.  The bracketed intervals are the images of $I,I'$ under $\widetilde X_L\rightarrow\widetilde X$.  The path $\theta$ from the proof of Claim~\ref{claim:never_comes_back} contains the terminal part of $\comapp(T)$; we reach a contradiction by showing that $\theta$ must also contain $\comapp(T')$.}\label{fig:deviating_push}
\end{figure}

\begin{claim}\label{claim:never_comes_back}
$\comapp(\overline W)\cap\hat\eta=\widehat{\mathcal Q}$.
\end{claim}

\begin{proof}[Proof of Claim~\ref{claim:never_comes_back}]
Suppose not.  Then since $\widetilde X_L\rightarrow\widetilde X$ restricts to a bijection on $\comapp(\overline W)$, there exists $p\in\comapp(\overline W)\cap\image(\eta)$ such that $p\not\in\mathcal Q$.  Hence there is a forward path $\upsilon$ of length $L$ emanating from some $p'\in\pushcrop_N$ and ending at $p$, as shown in Figure~\ref{fig:deviating_push}.  Let $\theta$ be a geodesic of $\comapp(\overline W)$ joining $p$ to a point $a\in\mathcal P$, and let $\varrho$ be the subpath of $\pushcrop_N$ joining $a$ to $p'$.  Then $\varrho\upsilon$ is a uniform quasigeodesic (i.e. the quasi-isometry constants depend only on the quasi-isometry and deviation constants of $\pushcrop_N$), and $\theta$ is a quasigeodesic with quasi-isometry constants independent of $L$.  Hence $\varrho\upsilon$ fellowtravels with $\theta$ at distance independent of $L$.  It follows that $|\varrho|$ is bounded above by a constant independent of $L$.  Hence, if $L$ is sufficiently large, we have that $p$ lies at horizontal distance at least $\frac{L}{4}$ from any nucleus approximation in $\comapp(\overline W)$, since $|\upsilon|=L$ and since the same is true of $p'$ (by the bound on $|\varrho|$).  Suppose that for some tunnel $T''\neq T$ attached to $\widetilde M$, we have that $\theta$ contains $\comapp(T'')$.  Then $\theta$ contains a point, namely the terminal point of $\comapp(T'')$, at distance at least $\frac{L}{4}$ from $\varrho\upsilon$, contradicting fellowtraveling when $L$ is sufficiently large.  Hence there must be a path in $\comapp(\widetilde M)$ joining the terminal point of $\comapp(T)$ to some point of $\upsilon$.  This contradicts the fact that $\varrho$ intersects the level containing $T'$ an odd number of times.  (Alternatively, we could have chosen $\overline W$ so that $\widetilde M$ is a trivial nucleus, and hence $\comapp(\widetilde M)$ is a single point, which also rules out such a vertical path.)\end{proof}

\renewcommand{\qedsymbol}{$\square$}

By Claim~\ref{claim:hat_eta_cut} and Claim~\ref{claim:never_comes_back}, $\hat\eta$ contains disjoint rays $\hat\eta_1,\hat\eta_2$ that lie in $\leftc,\rightc$ respectively.  By definition, $\hat\eta$ fellowtravels with $\lpc$, whence, by Claim~\ref{claim:wedge_qi_embeds} and the fact that $\widetilde X_L\rightarrow\widetilde X$ is a quasi-isometry, the points $[\hat\eta_1],[\hat\eta_2]\in\visual\widetilde X_L$ are separated by $\visual\comapp(\overline W)$, from which it follows that $\overline W$ cuts $\gamma$.
\end{proof}

\subsection{Leaflike geodesics}\label{sec:cutting_leaflike}
\begin{prop}\label{prop:horizontal_tunnel_overlap}
Let $x_o\in V$.  Let $\mathbf S\subset\widetilde V_0$ be a connected subspace 
that does not contain an entire exponential edge, and let $\tilde x_o\in\mathbf S$ be a lift of $x_o$.  Then there exist periodic regular points $x_1,\ldots,x_k$, one in each exponential edge of $V$, and $B=B(x_o,\ldots,x_k)\geq 0$, such that for all distinct lifts $\tilde x_{ip},\tilde x_{jq}$ of $x_i,x_j$ to $\mathbf S$, with $i,j\geq 1$, we have
\begin{equation}\label{eqn:B}
\diam(\neb_{3\delta}(\ell_{ip})\cap\ell_{jq})\leq B,
\end{equation}
where $\ell_{ip},\ell_{jq}$ are the periodic lines containing $\tilde x_{ip}$ and $\tilde x_{jq}$ respectively.  Moreover there exists $B'$, such that the $x_i$ can be chosen so that for all $\ell_{ip}$ as above, we have
\begin{equation}\label{eqn:gastrophus}
\diam(\neb_{3\delta}(\ell_{ip})\cap\sigma_o)\leq B',
\end{equation}
where $\sigma_o$ is any forward path beginning at $\tilde x_o$.
\end{prop}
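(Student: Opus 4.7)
The plan is to follow the proof of Proposition~\ref{prop:periodic_diverge} closely, with the additional wrinkle of controlling the coarse intersection of $\ell_{ip}$ with the non-periodic forward path $\sigma_o$. I would begin by applying Lemma~\ref{lem:periodic_r_tree_far} with $x_0:=x_o$ to produce periodic regular points $x_1,\ldots,x_k$, one in each exponential edge of $V$, so that for all $0\leq i,j\leq k$ and any distinct lifts $\tilde x_{ip},\tilde x_{jq}$ of $x_i,x_j$ to a fixed ball $\mathbf B$ of $\mathbf S$, one has $\dist_{\mathcal Y^{ab}}(\rho_{ab}(\tilde x_{ip}),\rho_{ab}(\tilde x_{jq}))\geq\epsilon$ for some fixed $\epsilon>0$, where $a$ is minimal such that there exists $b$ with $\tilde x_{ip},\tilde x_{jq}\in\flowin{ab}$.

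Inequality~\eqref{eqn:B} then follows by the argument of Proposition~\ref{prop:periodic_diverge}: the $\reals$-tree separation forces $\dist_{\widetilde V_n}(\psi_n(\tilde x_{ip}),\psi_n(\tilde x_{jq}))$ to grow like $\lambda_a^n$, and periodicity of $\ell_{ip}$ and $\ell_{jq}$ promotes this exponential forward divergence to a uniform bound on $\neb_{3\delta}(\ell_{ip})\cap\neb_{3\delta}(\ell_{jq})$, which contains $\neb_{3\delta}(\ell_{ip})\cap\ell_{jq}$. The reduction from arbitrary lifts in $\mathbf S$ to lifts in a common translate of $\mathbf B$ is exactly as in the final paragraph of that proof, using that $\mathbf S$ is quasi-isometrically embedded in $\widetilde X$ by Theorem~\ref{thm:polynomial_subgraph_quasiconvexity}.

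For inequality~\eqref{eqn:gastrophus}, the main new idea is to decompose $\ell_{ip}$ into its backward and forward rays $\sigma_{ip}^-,\sigma_{ip}^+$ at $\tilde x_{ip}$, so that $\mathfrak q(\sigma_{ip}^-)\subseteq(-\infty,0]$ while $\mathfrak q(\sigma_{ip}^+)\subseteq[0,\infty)$, and to treat the two rays separately. Since $\tilde x_o\in\widetilde V_0$ yields $\mathfrak q(\sigma_o)=[0,\infty)$ and $\mathfrak q$ is $1$-Lipschitz, every point of $\neb_{3\delta}(\sigma_{ip}^-)\cap\sigma_o$ has $\mathfrak q$-value in $[0,3\delta]$; since $\sigma_o$ is a forward path and therefore isometrically embedded by Proposition~\ref{prop:metric}, this set has diameter at most $3\delta$. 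The forward ray $\sigma_{ip}^+$ is handled by the same $\reals$-tree device applied to the pair $(\tilde x_o,\tilde x_{ip})$: Lemma~\ref{lem:periodic_r_tree_far} provides $\dist_{\mathcal Y^{ab}}(\rho_{ab}(\tilde x_o),\rho_{ab}(\tilde x_{ip}))\geq\epsilon$, which forces $\dist(\psi_n(\tilde x_o),\psi_n(\tilde x_{ip}))$ to diverge exponentially with $n$, and comparing $\mathfrak q$-heights then bounds $\neb_{3\delta}(\sigma_{ip}^+)\cap\sigma_o$ uniformly.

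Finally, the reduction from an arbitrary lift $\tilde x_{ip}\in\mathbf S$ to one in a fixed translate of $\mathbf B$ proceeds as in Proposition~\ref{prop:periodic_diverge}: if $\neb_{3\delta}(\ell_{ip})\cap\sigma_o$ contains a point $y$ with some $y'\in\ell_{ip}$ satisfying $\dist(y,y')\leq 3\delta$, form the quasigeodesic quadrilateral with corners $\tilde x_o,\tilde x_{ip},y',y$ whose sides are a geodesic $\alpha\subset\mathbf S$, a subpath of $\ell_{ip}$, a geodesic of length at most $3\delta$, and a subpath of $\sigma_o$. The coarse intersection of $\alpha\subset\widetilde V_0$ with $\sigma_o$ is bounded by the same $\mathfrak q$-projection argument used above for $\sigma_{ip}^-$, and its coarse intersection with $\ell_{ip}$ is bounded as in the original proposition; thin-quadrilateral reasoning then forces $|\alpha|$ to be uniformly bounded, placing $\tilde x_{ip}$ in a uniform neighborhood of $\tilde x_o$ in $\mathbf S$. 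I expect this last step to be the main obstacle: one side of the quadrilateral is now a non-periodic forward ray, so the standard appeal to periodicity from Proposition~\ref{prop:periodic_diverge} must be replaced by the $\mathfrak q$-projection device.
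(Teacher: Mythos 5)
Your proposal follows the paper's own strategy quite closely up until the treatment of inequality~\eqref{eqn:gastrophus}, where there is a genuine gap. You assert that the bound $\dist_{\mathcal Y^{ab}}(\rho_{ab}(\tilde x_o),\rho_{ab}(\tilde x_{ip}))\geq\epsilon$ ``forces $\dist(\psi_n(\tilde x_o),\psi_n(\tilde x_{ip}))$ to diverge exponentially with $n$.'' This inference is only valid when $S^a$ is an exponential stratum, since the pseudometric defining $\mathcal Y^{ab}$ in Construction~\ref{cons:rtree} rescales by $\lambda_a^{-n}$ and thus records exponential forward-divergence only in that case. When $S^a$ is a polynomial stratum, $\mathcal Y^{ab}$ is a Bass-Serre tree, and a positive distance in $\mathcal Y^{ab}$ carries no rate-of-divergence information: two forward rays can map to distinct points of the Bass-Serre tree and nevertheless fellow-travel at bounded distance.

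This polynomial case is not vacuous. If, say, $x_o$ lies in the interior of a periodic polynomial edge $e$ with $\phi(e)=e$, then $\sigma_o$ stays in the edge space over $e$ forever and never flows into the exponential stratum $S^c$ containing $x_i$; the minimal $a$ with $\tilde x_o,\tilde x_{ip}\in\flowin{ab}$ is then at least the index of the polynomial stratum and $S^a$ is polynomial. The paper handles this by a separate pigeonhole-style argument: using Lemma~\ref{lem:black_injective}, for each exponential edge of $\mathbf B$ there is at most one periodic regular point whose periodic line has a subray fellow-travelling with $\sigma_o$, and one then arranges for $x_1,\ldots,x_k$ to avoid these finitely many problematic points. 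Your write-up omits this case entirely, so without the extra dichotomy on $S^a$ (exponential versus polynomial) and the corresponding avoidance argument, the proof of~\eqref{eqn:gastrophus} is incomplete. The rest of your proof --- the split of $\ell_{ip}$ into forward and backward rays, the $\mathfrak q$-projection bound for the backward ray, and the thin-quadrilateral reduction to a fixed ball --- is sound and is a clean reorganization of what the paper does implicitly.
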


\begin{proof}
Choose $x_1,\ldots,x_k$, one in each exponential edge, satisfying the conclusion of Lemma~\ref{lem:periodic_r_tree_far} with respect to the point $x_o$ and a ball $\mathbf B$ of $\mathbf S$ containing $\tilde x_o$ whose translates cover $\mathbf S$.  Exactly as explained in the proof of Proposition~\ref{prop:periodic_diverge}, it suffices to consider lifts $\tilde x_{ip},\tilde x_{jq}$ in $\mathbf B$.

For $i,j\geq 1$, since $x_i,x_j$ lie in the interiors of exponential edges, using the forward divergence computation and exploiting periodicity as in the proof of Proposition~\ref{prop:periodic_diverge} establishes assertion~\eqref{eqn:B}.

We now compare $\ell_{ip}$ to $\sigma_o$, where $\tilde x_{ip},\tilde x_o\in\mathbf B$.  By Lemma~\ref{lem:periodic_r_tree_far}, $$\dist_{\mathcal Y^{ab}}(\rho_{ab}(\tilde x_o),\rho_{ab}(\tilde x_{ip}))\geq\epsilon,$$ where $a$ is minimal so that $\ell_{ip}\subset\widetilde X^{ab}$ and $\sigma_o$ contains a subray in $\widetilde X^{ab}$ for some $b$.  If $S^a$ is an exponential stratum, then applying the forward divergence computation from the proof of Proposition~\ref{prop:periodic_diverge} establishes assertion~(2).  It remains to consider the case in which $S^a$ is a polynomial stratum consisting of a single edge $e$ with $\phi^n(x_o)\in\interior{e}$ for some $n\geq0$.  By minimality of $a$, we have $\phi(e)=e$.  For each exponential edge $\tilde e_i$ of $\mathbf B$, there is at most one periodic regular point $\tilde x'$ such that the periodic line $\ell_{x'}$ containing $x'$ contains a subray fellowtraveling with $\sigma_o$.  Indeed, this follows from Lemma~\ref{lem:black_injective}.  Hence, as long as the points $x_1,\ldots,x_k\in V$ were chosen so that they are not images of any of these finitely many problematic points $\tilde x'\in\mathbf B$, we see that assertion~(2) holds.  (Note that there are at most $K$ problematic points, where $K$ is the constant from the proof of Lemma~\ref{lem:periodic_r_tree_far}, i.e. the number of exponential edges in $\mathbf B$.)
\end{proof}

\begin{prop}\label{prop:horizontal_cut}
Let $\gamma:\reals\rightarrow\widetilde X$ be a geodesic that is not $M$-deviating for any $M\geq 0$.  Then there exists a wall $\overline W$ that cuts $\gamma$.
\end{prop}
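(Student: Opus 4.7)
The hypothesis that $\gamma$ is not $M$-deviating for any $M$ means, via Definition~\ref{defn:M_sigma_deviating}, that for each $M\geq 0$ a subpath of $\gamma$ $(2\delta+\xi)$-fellow-travels a length-$M$ subpath of some forward path $\sigma_M$, where $\xi$ is the quasigeodesic constant of $\gamma$. The first step is to extract a ``limit'' forward path: using local finiteness of $\widetilde X$ and a K\"onig/diagonal argument, obtain a forward ray $\sigma^*$ (or a bi-infinite forward path, depending on how the subpaths of $\gamma$ accumulate) together with an unbounded subpath $\gamma'\subset\gamma$ that $(2\delta+\xi)$-fellow-travels $\sigma^*$. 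Two subcases arise: either $\gamma$ is leaflike on just one end, in which case $\sigma^*$ may be taken to be a forward ray and the other end of $\gamma$ must still escape to infinity since $\gamma$ is a geodesic; or $\gamma$ is leaflike on both ends, in which case $\sigma^*$ can be arranged bi-infinite.

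Next, fix a basepoint $\tilde x_o\in\sigma^*$ with image $x_o\in V$, and invoke Proposition~\ref{prop:horizontal_tunnel_overlap} to choose periodic regular points $x_1,\ldots,x_k$, one in each exponential edge of $V$, such that for any distinct lifts $\tilde x_{ip},\tilde x_{jq}$ to a slow subtree the periodic lines $\ell_{ip},\ell_{jq}$ have pairwise bounded $3\delta$-coarse intersection and also have bounded $3\delta$-coarse intersection with every forward path emanating from $\tilde x_o$. By Theorem~\ref{thm:quasiconvexity} and Theorem~\ref{thm:approximation_is_tree_and_wall_is_wall}, there exists $L_1\geq 0$ such that for any immersed wall $W\rightarrow X$ with primary busts $\{x_1,\dots,x_k\}$ and tunnel-length $L\geq L_1$, the wall $\overline W\subset\widetilde X$ is quasiconvex and its approximation is a tree. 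Choose such an $L$ divisible by the periods of the $x_i$, so that Lemma~\ref{lem:empty_exceptional_zone} provides narrow exceptional zones, and translate $\overline W$ so that some primary bust $y$ lies on (or very near) $\sigma^*$.

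The cutting argument now parallels the endgame of Proposition~\ref{prop:deviating_cut}. The forward path $\sigma^*$ meets $\comapp(\overline W)$ in an odd-cardinality finite set $\mathcal P$: the unique tunnel $T\subset\overline W$ whose primary bust is $y$ has an approximation $\comapp(T)$ that crosses $\sigma^*$ in a single point (up to the secondary-bust structure dictated by narrowness of the exceptional zones), while the coarse-overlap bounds from Proposition~\ref{prop:horizontal_tunnel_overlap} prevent any other tunnel-approximation of $\overline W$ from intersecting $\sigma^*$ nontrivially in the relevant window. Applying Lemma~\ref{lem:RBRquasi} as in the proof of Proposition~\ref{prop:deviating_cut}, the wedge $\gamma\vee_{\mathcal P}\comapp(\overline W)$ quasi-isometrically embeds in $\widetilde X$, so a lifted augmentation $\lpc$ of $\gamma$ to $\widetilde X_L$, together with the forward flow by $L$ and backtrack-removal, produces an embedded quasigeodesic $\hat\eta\subset\widetilde X_L$ whose ends lie in distinct halfspaces $\leftc,\rightc$ determined by $\comapp(\overline W)$. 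Since $\widetilde X_L\rightarrow\widetilde X$ is a quasi-isometry inducing a homeomorphism on boundaries, $\overline W$ cuts $\gamma$.

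The main obstacle is the compactness/K\"onig extraction of $\sigma^*$ and the subsequent odd-crossing count. Specifically, one must verify that the bounded coarse intersections controlled by Proposition~\ref{prop:horizontal_tunnel_overlap} with respect to the single basepoint $\tilde x_o$ persist for nearby forward paths used to build the wall, and one must handle the one-sided leaflike subcase separately, where the non-leaflike end of $\gamma$ escapes $\overline W$ for geometric reasons distinct from the leaflike end. These are technical but follow the same template as in the irreducible case~\cite{HagenWise:irreducible}.
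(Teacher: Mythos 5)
There is a genuine gap: you have transplanted the cutting mechanism from the deviating case (Proposition~\ref{prop:deviating_cut}) into the leaflike setting where it does not apply. You position the wall so that a primary bust $y$ lies on the forward path $\sigma^*$ that $\gamma$ fellow-travels, and then claim $\sigma^*\cap\comapp(\overline W)$ is a finite odd-cardinality set $\mathcal P$. But if $y\in\sigma^*$ then the forward ray from $y$ is a subray of $\sigma^*$, so the tunnel-approximation $\comapp(T)=\psi_{L-\frac12}(T_L(y))$, which is exactly the length-$L$ forward path starting at $y$, is contained in $\sigma^*$; the intersection is an interval, not a discrete set. Moreover Proposition~\ref{prop:horizontal_tunnel_overlap}.\eqref{eqn:gastrophus} bounds the overlap of periodic lines $\ell_{ip}$ with the forward path $\sigma_o$ from the auxiliary basepoint $\tilde x_o$ --- it cannot control the overlap of $\ell_{1p}$ with itself, which is what you would need if $\sigma^*$ were a subray of $\ell_{1p}$. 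The odd-cardinality statement in the deviating case is the conclusion of the delicate leaf-separation analysis (Lemma~\ref{lem:odd_intersection_rel} and Corollary~\ref{cor:leaf_separation}) applied to a \emph{deviating} push-crop; no analogous count is available for a leaflike geodesic, precisely because a leaflike geodesic fellow-travels forward paths for arbitrarily long stretches.

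The paper's argument uses a different cutting mechanism. First, it does not extract a single limit ray $\sigma^*$ that $\gamma$ fellow-travels for unbounded time (your K\"onig step is not obviously justified: the long forward subpaths $\sigma_i\subset\gamma$ may drift off to infinity along $\gamma$, so no fixed subpath of $\gamma$ need fellow-travel any single forward ray forever). Instead it chooses a single sufficiently long forward subpath $\sigma_i\subset\gamma$ whose starting point projects close to $x_o$ in $V$. Second, and crucially, the lift of the wall is chosen so that $\psi_m(\tilde x_o)$ lies in the \emph{interior of a nucleus} $\widetilde C$, not at a primary bust. By Lemma~\ref{lem:no_two_incoming} (with $L$ large) every primary bust is separated from $x_o,\phi(x_o),\ldots,\phi^m(x_o)$ by a secondary bust, so the nucleus $\widetilde C$ has only \emph{outgoing} tunnels; these, together with condition~\eqref{eqn:gastrophus} of Proposition~\ref{prop:horizontal_tunnel_overlap}, give the bounded coarse overlaps needed for Lemma~\ref{lem:RBRquasi}. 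One then shows $\comapp(\overline W)\vee_p\sigma\hookrightarrow\widetilde X$ is an injective quasi-isometric embedding where $p=\psi_{L+m}(\tilde x_o)$ is the single crossing point, transfers this to $\comapp(\overline W)\vee_{p'}\gamma$ since $\sigma_i$ $\epsilon$-fellow-travels the relevant initial segment of $\sigma$, and then lifts to $\widetilde X_L$ where $p'$ sits in a nucleus-approximation separating the two halfspaces $\leftc,\rightc$ that the two rays of $\widehat\gamma_{_\succ}$ escape into. Your proposal would need to be reorganized around this nucleus-crossing mechanism rather than the tunnel-crossing/odd-parity one.
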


\begin{proof}
Since $\gamma$ is not $M$-deviating for any $M$, we can assume, by replacing $\gamma$ with an embedded uniform quasigeodesic if necessary, that for each $M\geq 0$, there is a forward subpath of $\gamma$ of length $M$.  Hence there is an increasing sequence $\{M_i\}$ of natural numbers and a sequence $\{\tilde y_i\}$ of points in $\cup_n\widetilde V_n$ such that the forward path $\sigma_i$ of length $M_i$ beginning at $\tilde y_i$ lies in $\gamma$.  Moreover, by passing if necessary to a subsequence, we can assume that $y_i\rightarrow x_o$ as $i\rightarrow\infty$, where $y_i$ is the image of $\tilde y_i$ in $V$ and $x_o$ is some point of $V$.

Let $x_1,\ldots,x_k\in V$ be periodic regular points obtained by applying Proposition~\ref{prop:horizontal_tunnel_overlap} to the point $x_o$ and some specified lift $\tilde x_o$ of $x_o$.  Let $B,B'$ be the constants from Proposition~\ref{prop:horizontal_tunnel_overlap}.  Let $W\rightarrow X$ be an immersed wall whose primary busts are $x_1,\ldots,x_k$, so that each exponential edge contains exactly one primary bust.

By Theorem~\ref{thm:quasiconvexity}, there is a constant $L_0=L_0(B)$ such that if the tunnel length $L$ of $W$ exceeds $L_0$, then $\comapp(\overline W)$ is quasiconvex in $\widetilde X$.  Moreover, Theorem~\ref{thm:approximation_is_tree_and_wall_is_wall} yields $L_1=L_1(B)\geq L_0$ so that if $L\geq L_1$, then $\overline W$ is a wall in $\widetilde X$.

Suppose that $\beta',\beta''$ are quasigeodesics, with fixed quasi-isometry constants, whose $3\delta$-coarse intersection is bounded by $B'$, and let the endpoints of $\beta',\beta''$ be joined by a $\kappa$-quasigeodesic $\beta$, where $\kappa$ is the constant provided by Theorem~\ref{thm:polynomial_subgraph_quasiconvexity}, and suppose that the $3\delta$-coarse intersection between $\beta$ and each of $\beta'$ and $\beta''$ is also bounded by $B'$.  Then there exists a constant $m=m(B')$ such that $\beta' \beta\beta''$ is a quasigeodesic provided $|\beta'|,|\beta''|\geq m$, with quasi-isometry constant independent of these lengths.

By Lemma~\ref{lem:no_two_incoming}, there exists $L_2\geq L_1$ such that if $L\geq L_2$, then every primary bust in $V$ is separated from $x_o,\phi(x_o),\ldots,\phi^m(x_o)$ by a secondary bust.  Finally, choose $W$ so that $L\geq\max\{m,L_2\}$.

Let $\sigma$ be the forward ray emanating from $\tilde x_o$.  Choose a lift $\widetilde W\rightarrow\widetilde X$ so that $\psi_{m}(\tilde x_o)$ lies in the interior of a (nontrivial) nucleus $\widetilde C$ belonging to a knockout $\widetilde K$ of the resulting wall $\overline W=\image(\widetilde W\rightarrow\widetilde X)$.  Hence $\comapp(\overline W)$ intersects $\sigma$ in the point $p=\psi_{L+m}(\tilde x_o)$, so that the inclusion $\comapp(\overline W)\rightarrow\widetilde X$ and the embedding $\sigma\rightarrow\widetilde X$ induce a map $\comapp(\overline W)\vee_p\sigma\rightarrow\widetilde X$.  This map is a quasi-isometric embedding.  Indeed, for each such tunnel $T$ of $\overline W$ such that $\comapp(T)$ and $\sigma$ are joined by a path in $\comapp(\widetilde K)$, the fact that every primary bust is separated from $\psi_m(\tilde x_o)$ by a secondary bust implies that $\comapp(T)$ is outgoing from $\comapp(\widetilde K)$, i.e. it travels in the direction of increasing $q$.  Hence the $3\delta$-coarse intersection between $\comapp(T)$ and $\sigma$ is bounded by $B'$.  Hence, by our choice of $m$ and $L$, any path $\beta'\beta\beta''$ is a uniform quasigeodesic, where $\beta'$ is a path in $\comapp(\overline W)$, and $\beta$ is a path in $\comapp(\widetilde K)$, and $\beta''$ is a subpath of $\sigma$.  It follows from the proof of Theorem~\ref{thm:approximation_is_tree_and_wall_is_wall}.\eqref{item:comapp_is_tree} that $\comapp(\overline W)\vee_p\sigma\rightarrow\widetilde X$ is an embedding provided $L\geq L_3$, where $L_3\geq L_2$ depends only on $B,B'$.  See Figure~\ref{fig:cut_leaflike}.

\begin{figure}[h]
\begin{overpic}[width=0.75\textwidth]{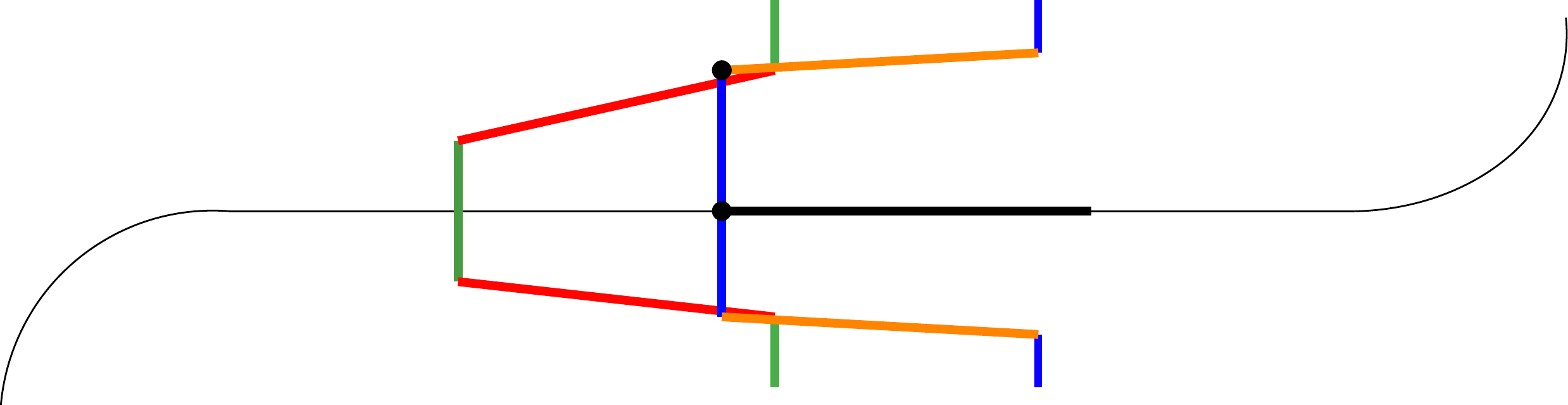}
\put(42,14){$p'$}
\put(45,0){$\overline W$}
\put(66.5,0){$\comapp(\overline W)$}
\put(10,14){$\gamma$}
\end{overpic}
\caption{Cutting a leaflike geodesic.  The bold subpath of $\gamma$ fellow-travels very closely with the path $\beta''$ mentioned in the proof.}\label{fig:cut_leaflike}
\end{figure}

Since $y_i\rightarrow x_o$, there exists $i$ so that $M_i\geq m+3L$ and so that, by translating $\sigma$ if necessary, $\sigma_i$ fellow-travels at distance $\epsilon$ with the initial length-$(m+3L)$ subpath of $\sigma$, where $\epsilon$ is less than the distance in $\widetilde C$ from $\psi_m(\tilde x_o)$ to any vertex.  Let $p'=\comapp(\widetilde K)\cap\sigma_i$, so that the images of $\comapp(\overline W)\vee_p\sigma\rightarrow\widetilde X$ and $\comapp(\overline W)\vee_{p'}\sigma_i\rightarrow\widetilde X$ lie at Hausdorff distance less than $\epsilon$.  Hence there is an injective quasi-isometric embedding $\comapp(\overline W)\vee_{p'}\gamma\rightarrow\widetilde X$.

Any path in $\gamma$ beginning at $p'$ has an initial horizontal subpath of length at least $\max\{L+m,2L\}$.  Let $\sigma_i'$ be a subpath of $\sigma_i$ extending $L$ to the left and right of $p'$ and hence starting and ending in $\mathfrak q^{-1}(L\integers)$.  Let $\widehat\gamma_{_{\succ}}$ be a lifted augmentation of $\gamma$ containing the unique lift $\widehat\sigma_i'$ of $\sigma'_i$ to $\widetilde X_L$.  Let $\widehat p'$ be the lift of $p'$ in $\widehat\sigma'_i$.  Then we have an injective quasi-isometric embedding $\comapp(\overline W)\vee_{\widehat p'}\widehat\gamma_{_\succ}\rightarrow\widetilde X_L$.  Since $\widehat p'$ lies in a nucleus-approximation, and the intervals in $\widehat\gamma_{_\succ}$ immediately succeeding and preceding $\widehat p'$ lie in distinct halfspaces of $\widetilde X_L$ associated to $\comapp(\overline W)$, it follows that $\widehat\gamma_{_\succ}$ contains two rays, one in each halfspace, neither of which lies at bounded distance from $\comapp(\overline W)$.  Applying the quasi-isometry $\widetilde X_L\rightarrow \widetilde X$ and noting that $\overline W$ is coarsely equal to $\comapp(\overline W)$ shows that $\overline W$ cuts $\gamma$.
\end{proof}

\begin{rem}[Periodicity of busts]\label{rem:periodic_busts}
The proofs of Proposition~\ref{prop:deviating_cut} and Proposition~\ref{prop:horizontal_cut} could have been carried out using walls whose primary busts are not periodic, as is done in~\cite{HagenWise:irreducible}; one again uses Proposition~\ref{prop:periodic_diverge} and chooses the primary bust points \emph{extremely close to} the periodic points obtained from that proposition.  This simplifies matters slightly, since the immersed walls no longer need extra vertices, and there are no trivial nuclei.  However, this simplification is outweighed by the care that must be taken when using non-periodic busts to first choose the periodic points and busts, then choose $L$, and then slightly perturb the busts.
\end{rem}

\bibliographystyle{alpha}
\bibliography{MrFreeByZ}

\begin{thebibliography}{BW13b}

\bibitem[Ago13]{Agol:virtual_haken}
Ian Agol.
\newblock {The virtual Haken conjecture (with an appendix by Ian Agol, Daniel
  Groves and Jason Manning).}
\newblock {\em Doc. Math., J. DMV}, 18:1045--1087, 2013.

\bibitem[BFH97]{BestvinaFeighnHandel_lamination}
M.~Bestvina, M.~Feighn, and M.~Handel.
\newblock Laminations, trees, and irreducible automorphisms of free groups.
\newblock {\em Geom. Func. Anal.}, 7(2):215--244, 1997.

\bibitem[BFH00]{BestvinaFeighnHandelTits}
Mladen Bestvina, Mark Feighn, and Michael Handel.
\newblock The {T}its alternative for ${O}ut({F}_n)$ {I}: {D}ynamics of
  exponentially-growing automorphisms.
\newblock {\em Annals of Math.}, 151:517--623, 2000.

\bibitem[BH92]{BestvinaHandel}
Mladen Bestvina and Michael Handel.
\newblock Train tracks and automorphisms of free groups.
\newblock {\em Ann. of Math.}, 135:1--51, 1992.

\bibitem[BH99]{BridsonHaefliger}
Martin~R. Bridson and Andr{\'e} Haefliger.
\newblock {\em Metric spaces of non-positive curvature}.
\newblock Springer-Verlag, Berlin, 1999.

\bibitem[Bri00]{Brinkmann}
P.~Brinkmann.
\newblock Hyperbolic automorphisms of free groups.
\newblock {\em Geom. Func. Anal.}, 10(5):1071--1089, 2000.

\bibitem[BW13a]{BergeronWise}
Nicolas Bergeron and Daniel~T. Wise.
\newblock A boundary criterion for cubulation.
\newblock {\em Amer. J. Math.}, 2013.

\bibitem[BW13b]{BigdelyWise}
Hadi Bigdely and Daniel~T. Wise.
\newblock Quasiconvexity and relatively hyperbolic groups that split.
\newblock {\em Michigan Math. J.}, 62(2):387--406, 2013.

\bibitem[DV96]{DicksVentura:book}
W.~Dicks and E.~Ventura.
\newblock {\em The Group Fixed by a Family of Injective Endomorphisms of a Free
  Group}.
\newblock Contemporary mathematics. American Mathematical Society, 1996.

\bibitem[Ger94]{Gersten:not_cat0}
S.M. Gersten.
\newblock The automorphism group of a free group is not a {C}{A}{T}(0) group.
\newblock {\em Proc. Amer. Math. Soc.}, 121:999--1002, 1994.

\bibitem[GL07]{GauteroLustig:relhyp}
Francois Gautero and Martin Lustig.
\newblock The mapping-torus of a free group automorphism is hyperbolic relative
  to the canonical subgroups of polynomial growth.
\newblock 2007.

\bibitem[Gro87]{Gromov87}
M.~Gromov.
\newblock Hyperbolic groups.
\newblock In {\em Essays in group theory}, volume~8 of {\em Math. Sci. Res.
  Inst. Publ.}, pages 75--263. Springer, New York, 1987.

\bibitem[HP13]{HagenPrzytycki:graph}
M.F. Hagen and P.~Przytycki.
\newblock Cocompactly cubulated graph manifolds.
\newblock {\em To appear in Isr. J. Math.}, pages 1--11, 2013.

\bibitem[HW]{HruskaWise}
G.~Christopher Hruska and Daniel~T. Wise.
\newblock Finiteness properties of cubulated groups.
\newblock {\em Comp. Math.}
\newblock pp. 1--58, to appear.

\bibitem[HW08]{HaglundWiseSpecial}
Fr{\'e}d{\'e}ric Haglund and Daniel~T. Wise.
\newblock Special cube complexes.
\newblock {\em Geom. Funct. Anal.}, 17(5):1 551--1620, 2008.

\bibitem[HW12]{HsuWiseCubulatingMalnormal}
Tim Hsu and Daniel~T. Wise.
\newblock Cubulating malnormal amalgams.
\newblock 2012.

\bibitem[HW13]{HagenWise:irreducible}
M.F. Hagen and D.T. Wise.
\newblock Cubulating hyperbolic free-by-cyclic groups: the irreducible case.
\newblock {\em Preprint}, pages 1--39, 2013.

\bibitem[Lev09]{Levitt:split}
Gilbert Levitt.
\newblock Counting growth types of automorphisms of free groups.
\newblock {\em Geometric and Functional Analysis}, 19(4):1119--1146, 2009.

\bibitem[Sag95]{Sageev:cubes_95}
Michah Sageev.
\newblock Ends of group pairs and non-positively curved cube complexes.
\newblock {\em Proc. London Math. Soc. (3)}, 71(3):585--617, 1995.

\bibitem[Wisa]{WiseTubular}
Daniel~T. Wise.
\newblock Cubular tubular groups.
\newblock {\em Trans. Amer. Math. Soc.}
\newblock To appear.

\bibitem[Wisb]{Wise:quasiconvex_hierarchy}
Daniel~T. Wise.
\newblock The structure of groups with a quasiconvex hierarchy.
\newblock 205 pp. Preprint 2011.

\end{thebibliography}
\end{document}